\numberwithin{equation}{section}
\newtheorem{conj}{Conjecture}
\newtheorem{thm}[conj]{Theorem}
\newtheorem{cor}[conj]{Corollary}
\newtheorem{prop}[conj]{Proposition}
\newtheorem{lemma}[conj]{Lemma}
\newtheorem{ass}{Assumption}
\newtheorem{example}{Example}
\theoremstyle{definition}\newtheorem{remark}{Remark}
\def\red{\color{red}}
\def\Dt{D^{\text{topic}}}
\def\Dtb{{\bm D}^{\text{topic}}}
\def\hDt{\wh D^{\text{topic}}}
\def\hDtb{\wh {\bm D}^{\text{topic}}}
\def\Dw{D^{\text{word}}}
\def\Dwb{{\bm D}^{\text{word}}}
\def\dtop{d_\text{topic}}
\def\b0{\bm 0}
\def\be{\bm e}
\def\bPi{\bm \Pi}
\def\bT{\bm T}
\def\bX{\bm X}
\def\1{\bm{1}}
\def\bI{\bm{I}}
\def\cO{\mathcal O}
\def\cT{\mathcal{T}}
\def\cC{\mathcal{C}}
\def\cX{\mathcal{X}}
\def\cA{\mathcal{A}}
\def\cN{\mathcal{N}}
\def\M {\mathcal{M}}
\def\H{\mathcal{H}}
\def\E{\mathcal{E}}
\def\P{\mathcal{P}}
\def\diag{\textrm{diag}}
\def\PP{\mathbb{P}}
\def\EE{\mathbb{E}}
\def\RR{\mathbb{R}}
\def\R{\mathbb{R}}
\def\wh{\widehat}
\def\eps{\varepsilon}
\def\I{\mathcal{I}}
\def\supp{\textrm{supp}}
\def\op{\textrm{op}}
\def\rI{{\rm I}}
\def\rII{{\rm II}}
\def\rIII{{\rm III}}
\def\i{\infty}
\def\D{\Delta}
\def\wt{\widetilde}
\let\emptyset\varnothing
\def\rs{\!\!\!}
\def\0{{\ell_0}}
\def\tr{\text{tr}}
\def\T{\top}
\def\oJ{\overline{J}}
\def\uJ{\underline{J}}
\def\olp{\overline{p}}
\def\ua{\underline{\alpha}}
\def\oa{\overline{\alpha}}
\def\mle{\wh T_{\rm mle}}
\def\Tm{T_{\min}}
\def\Pm{\Pi_{\min}}
\DeclareMathOperator*{\argmax}{arg\,max}
\DeclareMathOperator*{\argmin}{arg\,min}
\begin{document}

\title{
 Likelihood estimation of sparse topic distributions in topic models and its  applications to  Wasserstein document distance calculations
} 

\author{Xin Bing\thanks{Department of Statistics and Data Science, Cornell University, Ithaca, NY. E-mail: \texttt{xb43@cornell.edu}.}~~~~~Florentina Bunea\thanks{Department of Statistics and Data Science, Cornell University, Ithaca, NY. E-mail: \texttt{fb238@cornell.edu}.}~~~~~ Seth Strimas-Mackey\thanks{Department of Statistics and Data Science, Cornell University, Ithaca, NY. E-mail: \texttt{scs324@cornell.edu}.}~~~~~Marten Wegkamp\thanks{Departments of Mathematics, and   of Statistics and Data Science, Cornell University, Ithaca, NY. E-mail: \texttt{mhw73@cornell.edu}.} }

\date{}

\maketitle

{\bf Abstract:}  This paper studies the estimation of high-dimensional, discrete, possibly sparse, mixture models in the context  topic models. The data consists of   observed multinomial counts of $p$ words across $n$ independent documents. In topic models, the $p\times n$ expected word frequency matrix is assumed to be factorized as a $p\times K$ word-topic matrix $A$ and a $K\times n$ topic-document matrix $T$. Since columns of both matrices represent conditional probabilities belonging to probability simplices, columns of $A$ are viewed as   $p$-dimensional mixture components that are common to all documents while columns of $T$ are viewed as the $K$-dimensional mixture weights that are document specific and are allowed to be sparse. 

The main interest is to provide sharp, finite sample, $\ell_1$-norm convergence rates for estimators of the mixture weights $T$ when $A$ is either known or unknown. For known $A$, we suggest MLE estimation of $T$. Our non-standard analysis of the MLE not only establishes its $\ell_1$ convergence rate, but also reveals a remarkable property: the MLE, with no extra regularization, can be exactly sparse and
contain  the true zero pattern of  $T$.  We further show that the MLE is both minimax optimal and adaptive to the unknown sparsity in a large class of sparse topic distributions. When $A$ is unknown, we estimate $T$ by optimizing the likelihood function corresponding to a plug in, generic, estimator $\wh A$ of $A$. For any estimator $\wh A$ that satisfies carefully detailed conditions for proximity to $A$,  we show that the resulting estimator of $T$ retains the properties established for the  MLE. Our theoretical results allow the ambient dimensions $K$ and $p$ to grow with the sample sizes. 

Our main application is to the estimation of 1-Wasserstein distances between document generating distributions. We propose, estimate and analyze  new 1-Wasserstein distances between alternative  probabilistic document representations, at the word and topic level, respectively.  We derive finite sample bounds on the estimated proposed 1-Wasserstein distances. For word level document-distances,  we provide contrast with existing rates on the  1-Wasserstein distance between  standard empirical frequency estimates. The effectiveness of the proposed 1-Wasserstein distances is illustrated by an analysis of an IMDB movie reviews data set. 
Finally, our theoretical results are supported by extensive simulation studies.\\ 

\noindent{\em Keywords:}
{Adaptive estimation, high-dimensional  estimation,
maximum likelihood estimation, minimax estimation,
multinomial distribution, mixture model, 
 sparse estimation, non-negative matrix factorization,
topic models, anchor words}

    	\section{Introduction}

    We consider the problem of estimating high-dimensional, discrete, mixture distributions, in the context of topic models. The focus of this work is  the estimation, with sharp finite sample convergence rates, of the distribution of the latent topics within the documents of a corpus. Our main application is to the  estimation of  Wasserstein distances between document generating distributions.
    
    In the framework  and traditional jargon of topic models, one has access to a corpus of $n$ documents generated from a common set of $K$ latent topics. Each document $i\in [n]:=\{1,\ldots,n\}$ is modelled as a set of  $N_i$ words drawn from a discrete distribution $\Pi^{(i)}_*$ on $p$ points,  where $p$ is the dictionary size.  We observe the $p$-dimensional word-count vector $Y^{(i)}$ for each document $i\in[n]$, where we assume
    \[
    Y^{(i)} \sim  \text{Multinomial}_p(N_i,\Pi^{(i)}_{*}).
    \]
    The topic model assumption is that the matrix of expected word frequencies  in the corpus, $\bPi_* \coloneqq (\Pi^{(1)}_{*}, \ldots, \Pi^{(n)}_{*})$ can be factorized as
    \begin{equation}\label{topic}
    	\bPi_* = A \bT_*
    \end{equation} 
    Here $A$ represents the $p \times K$ matrix of conditional probabilities of a word, given a topic, and therefore each column of $A$ belongs to the $p$-dimensional probability simplex 
    \[ 
    \Delta_p\coloneqq\{ x\in \RR^p\mid x\succeq {\mathbf 0},\ \1_p^\T x=1\}.
    \]
    The notation $x\succeq \b0$ represents $x_j \ge 0$ for each $j\in[p]$, and $\1_p$ is the vector of all ones. 
    The $K \times n$ matrix $\bT_* \coloneqq  (T^{(1)}_{*},\ldots, T^{(n)}_*)$ collects the probability vectors  $T^{(i)}_* \in \Delta_K$, the simplex in $\R^K$. 
    The entries of  $T^{(i)}_*$ are  probabilities with which each of the $K$ topics occurs within document $i$, for each 
    $i\in [n]$. Relationship (\ref{topic}) would be a very basic application of Bayes' Theorem if $A$ also depended on $i$. A matrix $A$ that is common across documents is the topic model assumption, which we will make in this paper. 
    
    Under model (\ref{topic}), each distribution on words, $\Pi^{(i)}_* = A T^{(i)}_* \in \Delta_p$, is a  discrete mixture of $K < p$ distributions.  The mixture components correspond to the   columns of $A$, and  are therefore common to the entire corpus,
    while the weights, given by the entries of  $T^{(i)}_{*}$, are document specific. Since not all topics  are expected to be covered by all documents, the mixture weights are potentially sparse, in that $T^{(i)}_{*}$ may be sparse. Using their dual interpretation, throughout the paper  we will refer to a vector $T_*^{(i)}$ as either the topic distribution or  the vector of  mixture weights,  in document $i$. 
    
    The observed  word frequencies are collected in a $p\times n$ data matrix $\bX = (X^{(1)}, \ldots, X^{(n)})$ with independent columns  $X^{(i)} = Y^{(i)}/N_i$ corresponding to  the $i$th document. Our main interest is to estimate $\bT_*$ when either the matrix $A$ is known or unknown. We allow for the ambient dimensions $K$ and  $p$ to depend  on  the sizes of the samples $\{N_1,\ldots, N_n\}$ and $n$ throughout the paper.
    
    While, for ease of reference to the existing literature, we will continue to employ the text analysis jargon for the remainder of this work, and our main application will be to the analysis of a movie review data set, our results apply to any data set generated from a model satisfying (\ref{topic}), for instance in biology \citep{cisTopic,Chen2020}, hyperspectral unmixing \citep{ma2013signal} and collaborative filtering \citep{kleinberg2008using}.
    
    The specific problems treated in this work are listed below, and expanded upon in the following subsections.
    
    \begin{enumerate}
    	\item The main focus of this paper is on the derivation  of  sharp, finite-sample,  $\ell_1$-error bounds for estimators $\wh T^{(i)}$ of the potentially sparse topic distributions  $T^{(i)}_*$, under model  (\ref{topic}),  for each $i \in [n]$.  The finite sample analysis covers two cases,  corresponding to whether the components of the mixture, provided by the columns of $A$,  are either (i) known,  or  (ii) unknown, and estimated by $\wh A$ from the corpus data $\bX$.   As a corollary, we derive corresponding finite sample $\ell_1$-norm error bounds for mixture model-based estimators of $\Pi^{(i)}_*$.

    	
    	\item  The main application of our work is to  the construction  and analysis of similarity measures between the documents of a corpus, for measures corresponding to  estimates of the Wasserstein distance between different probabilistic representations of a document.  
    	

    \end{enumerate}

    \subsection{A finite sample analysis of topic and word distribution estimators}\label{sec:top word est}
    
    Finite sample error bounds for estimators  $\wh A$  of $A$ in topic models (\ref{topic}) have been studied in \cite{arora2012learning,arora2013practical,Tracy,TOP,bing2020optimal}, while the finite sample properties of estimators of $T^{(i)}_*$ and, by extension,  those of mixture-model-based estimators 
    of $\Pi_*^{(i)}$, are much less understood, even when $A$ is known beforehand, and therefore $\wh A = A$.


    
    When  $\Pi_*^{(i)}\in \Delta_p$ is a probability vector parametrized as $\Pi_*^{(i)} = g(T)$, with  $T \in \RR^K$, $K < p$ and some {\it known}  function $g$,  provided that  $T$ is identifiable, the study of the  asymptotic properties of the  maximum likelihood estimator (MLE) of $T$, derived  from the $p$-dimensional vector of observed counts $Y^{(i)}$,  is over eight decades old.  Proofs of  the consistency and asymptotic normality of the MLE, when the ambient dimensions $K$ and $p$ do not depend on the sample size, can be traced back to \cite{Rao55, Rao58} 
    and later to the  seminal work of \cite{birch}, and are reproduced, in updated forms, in standard textbooks on categorical data \citep{BFH1975,Agresti2012}.

    The mixture parametrization  treated in this work, when $A$ is known,  is an instance of these well-studied low-dimensional parametrizations. Specialized to our context, for document $i\in [n]$, the parametrization is  $\Pi_{*j}^{(i)}= A_{j\cdot }^\top T_*^{(i)}$ with $T_*^{(i)}\in \Delta_K$, for each component $j \in [p]$ of $\Pi_*^{(i)}$. 
    However, even when $p$ and $K$ are fixed,  the aforementioned classical asymptotic results  are not applicable, as they are established under the following key  assumptions that typically do not  hold for topic models: 
    \begin{enumerate}\setlength\itemsep{0mm}
    	\item [(1)] $ 0 < T^{(i)}_{*k} < 1$,  for {\em all}  $k\in [K]$,
    	\item[(2)] $\Pi_{*j}^{(i)} =A_{j.}^\top T^{(i)}_{*}>0$ for {\em all} $j\in [p]$.
    \end{enumerate}
    The regularity assumption (1)  is crucial in classical analyses \citep{Rao55,Rao58}, and stems from the basic requirement of  $M$-estimation that  $T_{*}^{(i)}$ be an interior point in its appropriate parameter space.  In effect, since $\sum_{k=1}^{K}T_{*k}^{(i)} = 1$, this is a requirement on  only a $(K-1)$ sub-vector of it. 
    In the context of topic models, a given document $i$ of the corpus may not touch upon all $K$ topics, and in fact is expected not to. Therefore,  it is expected that $T^{(i)}_{*k} = 0$, for some $k$.  Furthermore, $K$ represents the number of topics common to the entire corpus, and although topic $k$ may not appear in document $i$, it may be the leading topic of some other document $j$. Both presence and absence of a topic in a document are subject to discovery, and are not known prior to estimation. Moreover,
    one does not observe the topic proportions $T^{(i)}_*$ per document $i$ directly.  Therefore, one cannot use background knowledge, for any given document, to reduce $K$ to a smaller dimension in order to satisfy assumption (1).  
    
    The classical assumption (2) also typically does not hold for topic models. To see this, note that the matrix $A$ is also expected to be sparse: conditional on a topic $k$, some of the words in a large  $p$-dimensional 
    dictionary will not be used in that topic. Therefore, in each column $A_{\cdot k}$, we expect that  $A_{jk} = 0$, for many rows $j\in [p]$.  When the supports of $A_{j\cdot}$ and $T_{*}^{(i)}$ do not intersect, the corresponding probability of word $j$ in document $i$ is zero, $\Pi_{*j}^{(i)} = A_{j\cdot}^\T T_*^{(i)} = 0$. Since zero word probabilities are induced by unobservable sparsity in the topic distribution (or, equivalently, in the mixture weights), one once again cannot reduce the dimension $p$ a priori in a theoretical analysis. Therefore, the assumption (2) is also expected to fail. 
    
    The analysis on the MLE of $T^{(i)}_*$ is 
    thus an open problem  with $A$ being known even for fixed $p$ scenarios, when the standard assumptions (1) and (2) do not hold and  when the problem cannot be artificially reduced to a framework in which  they do. 
    
    \subsubsection*{Finite sample analysis of the rates of the MLE of topic distributions, for known $A$}
    In  Section \ref{sec_est_T_known_A}, we  provide a novel analysis  of the MLE  of  $T_{*}^{(i)}$ for known $A$,  
    under a sparse discrete mixture framework, in which both the ambient dimensions $K$ and $p$ are allowed to grow with the sample sizes $N_i$ and $n$. \cite{kleinberg2008using} refer to the assumption of $A$ being known as the semi-omniscient setting in the context of collaborative filtering and note that even this setting  is, surprisingly,  very  challenging for estimating the mixture weights. 
    By  studying the MLE  of $T_{*}^{(i)}$ when $A$ is known, one  gains appreciation of  the intrinsic difficulty of this problem, that is present even before one further takes into account the estimation of the entire $p\times K$ matrix $A$.

    To the best of our knowledge, the only existing work that treats the aspect of our problem is \cite{arora2016},
    under the assumptions that
    \begin{enumerate}[itemsep = 0mm]
    	\item[(a)] the support $S_*$ of $T_{*}^{(i)}$ is known and $
    	\Tm \coloneqq \min_{k\in S_*}T_{*k}^{(i)} \ge c/s$ with $s = |S_*|$ and $c\in (0,1]$,
    	\item [(b)] the matrix $A$ is known and $\kappa = \min_{\|x\|_1 = 1}\|Ax\|_1 > 0$. 
    \end{enumerate}
    The parameter $\kappa$ is called the $\ell_1\to\ell_1$ condition number of $A$ \citep{kleinberg2008using} which measures the amount of linear independence between columns of $A$ that belong to the simplex $\D_p$. 
    Under (a) and (b),  the problem framework is very close to the classical one, and the novelty in \cite{arora2016} resides in
    the provision of  a finite sample $\ell_1$-error bound of the difference between the restricted MLE (restricted to the known support $S_*$) and the true $T_*^{(i)}$,  a bound that is 
    valid for growing ambient dimensions. However, assumption (a) is rather strong, as the support of $T_*^{(i)}$ is typically unknown.  Furthermore, the restriction $\sum_{k\in S_*} T_{*k}^{(i)}=1$ implies that $\Tm \le 1/s$. Hence (a) essentially requires $T_{*}^{(i)}$ 
    to be  approximately uniform on its a priori known support. 
    This does not hold in general. For instance, even if the support were  known,  many documents will primarily cover a  very small number of topics, while only  mentioning the rest, and thus some topics will be much more likely to occur than others, per document.

    Our novel finite sample analysis in Section \ref{sec_est_T_known_A} avoids the strong condition (a) in \cite{arora2016}.  For notational simplicity, we pick one $i\in [n]$ and drop the superscripts $(i)$ in $X^{(i)}$, $T^{(i)}_*$ and $\Pi_*^{(i)}$ within this section. 
    In Theorem \ref{thm_mle} of Section \ref{sec_theory_genral_bound}, we  first establish  a general  bound for the $\ell_1$-norm of the error $(\mle - T_*)$, with $\mle$ being the MLE of $T_*$. Then, in Section \ref{sec_est_T_known_A_dense_T}, we use this bound as a preliminary result to characterize the regime in which the Hessian matrix of the loss in (\ref{def_MLE}), evaluated at $\mle$, is close to its population counterpart (see condition (\ref{cond_T_neigh}) in Section \ref{sec_est_T_known_A_dense_T}).
    When this is the case, we prove a potentially faster rate of $\|\mle - T_*\|_1$ in Theorem \ref{thm_mle_fast}. A consequence of both Theorem \ref{thm_mle} and Theorem \ref{thm_mle_fast} is summarized in Corollary \ref{cor_mle_fast_dense} of Section \ref{sec_est_T_known_A_dense_T} for the case when
    $T_*$ is dense such that $S_*=[K]$. For dense $T_*$, provided that $ T_{\min}^3 \ge C \log (K) / (\kappa^4 N_i)$ for some sufficiently large constant $C>0$,  $\|\mle - T_*\|_1$ achieves the parametric rate $\sqrt{K/N_i}$, up to a multiplicative factor $\kappa^{-1}$. 
    
    As mentioned earlier, since $T_*$ is not necessarily an interior point,  we cannot appeal to the standard theory of the MLE, nor can we rely on having a zero gradient of the log-likelihood at $\wh T_{\rm mle}$. 
    Instead, our proofs of Theorem \ref{thm_mle} and \ref{thm_mle_fast} consist of the following key steps: 
    \begin{itemize}[leftmargin=5mm, itemsep = 0mm]
    	\item  We prove that the
    	KKT conditions of maximizing the log-likelihood under the restriction that $\mle \in \Delta_K$ lead to a quadratic inequality in $(\mle - T_*)$ of the form 
    	$
    	(\mle - T_*)^\T \wt H (\mle - T_*) \le (\mle - T_*)^\T E,
    	$
    	where (the infinity norm of) $E$ is defined in the next point, and
    	\[
    	\wt H = \sum_{ j: X_j>0} \frac{ X_j}{\Pi_{*j}A_{j\cdot}^\T \mle } A_{j\cdot}A_{j\cdot}^\T.
    	\]
    	
    	\item We bound  the linear term of this inequality by $\|E\|_\i \|\mle - T_*\|_1$ together with a sharp concentration inequality (Lemma \ref{lem_oracle_error} of Appendix \ref{app_tech_lemma}) for 
    	\[
    	\|E\|_\i = \max_{k\in [K]}\left|\sum_{j:\Pi_{*j}>0} {A_{jk} \over \Pi_{*j}} (X_j-\Pi_{*j})\right|.
    	\]
    	\item 
    	We prove that the quadratic term can be bounded from below by
    	$(\kappa^2/2)\|\mle - T_*\|_1^2$, using the definition of the $\ell_1\to\ell_1$ condition number of $A$, and control of the ratios $X_j/\Pi_{*j}$ over a suitable subset of indices $j$ such that $X_j>0$.
    	\item The faster rate in Theorem \ref{thm_mle_fast} requires a more delicate control of $\wt H$, 
    	and its analysis is
    	complicated by the division by $A_{j\cdot}^\T \wh T_{\rm mle}$.
    	To this end, we use the bound in Theorem \ref{thm_mle} to first prove that $A_{j\cdot}^\T \wh T_{\rm mle}\le (1+c)\Pi_{*j}$, for all $j$ with $\Pi_{*j}>0$ and some constant $c\in(0,1)$. 
    	We then  prove a sharp concentration bound (Lemma \ref{lem_I_deviation} of Appendix \ref{app_tech_lemma}) for the operator norm of the matrix $H^{-1/2} (\wh H - H) H^{-1/2}$ for $\wh H= \sum_j X_j \Pi_{*j}^{-2} A_{j\cdot} A_{j\cdot}^\T$
    	and $H= \sum_j  \Pi_{*j}^{-1} A_{j\cdot} A_{j\cdot}^\T$.
    	This will lead to an improved quadratic inequality 
    	\begin{align*}
    		(\mle - T_*)^\T H (\mle - T_*) &\le (1+c)(\mle - T_*)^\T E\\ 
    		&\le (1+c)\|H^{1/2}(\mle - T_*)\|_2\|H^{-1/2}E\|_2.
    	\end{align*}
    	Finally, a sharp concentration inequality for $\|H^{-1/2}E\|_2$ gives the desired faster  rates on $\|\mle - T_*\|_1$.
    \end{itemize}

    \subsubsection*{Minimax optimality and adaptation to sparsity of the MLE of  topic distributions,  for known $A$}
    In Section \ref{sec_est_T_sparse_known_A} we show that the MLE of $T_*$ can be sparse, without any need for extra regularization, a remarkable property that holds in the topic model set-up. Specifically, we introduce in Theorem \ref{thm_supp} a new incoherence condition on the matrix $A$ under which $\{\supp(\wh T_{\rm mle}) \subseteq \supp(T_*)\}$ holds with high probability.  Therefore, if the vector  $T_*$ is sparse, its zero components  will be among those of $\mle$. Our analysis uses a primal-dual witness approach based on the KKT conditions from solving the MLE. To the best of our knowledge, this is the first work proving that the MLE of sparse mixture weights can be exactly sparse, without extra regularization, and determining conditions under which this can happen. 
    Since  $\supp(\wh T_{\rm mle}) \subseteq \supp(T_*)$ implies that if $T_{*k} =0$ for some $k$, so is $[\wh T_{\rm mle}]_k$, this sparsity recovery property further leads to  a faster $\sqrt{s/N_i}$ rate (up to a logarithmic factor) for $\|\mle - T_*\|_1$ with $s = |S_*|$, as summarized in Corollaries \ref{cor_mle_sparse_supp} and \ref{cor_mle_final}  of Section \ref{sec_est_T_sparse_known_A}. In Section \ref{sec_minimax} we prove  that $\sqrt{s/N_i}$
    in fact is the minimax rate of estimating $T_*$ over a large class of sparse topic distributions, implying the minimax optimality of the MLE as well as its adaptivity to the unknown sparsity $s$.

    \subsubsection*{Finite sample analysis of the estimators of topic distributions, for unknown $A$}
    We study the estimation of $T_*$ when $A$ is unknown in Section \ref{sec_est_A_T_unknown_A}. Our procedure of estimating $T_*$ is valid for any estimator $\wh A$ of $A$ with columns of $\wh A$ belonging to $\Delta_p$. For any such estimator $\wh A$, we propose to plug it into the log-likelihood criterion 
    $ \sum_{j} X_j \log ( \wh A_{j\cdot}^\T T)$
    for estimating $T_*$.
    While the proofs are more technical, we can prove that the resulting estimate $\wh T$ of $T_*$ by using $\wh A$ retains all the properties proved for the MLE $\wh T_{\rm mle}$ based on the known $A$ in Section \ref{sec_est_T_known_A}, provided that  the error
    $\| \wh A- A\|_{1,\i} \coloneqq \max_k \|\wh A_{\cdot k}-A_{\cdot k}\|_1$ is sufficiently small. In fact, all bounds of $\|\wh T-T_*\|_1$ in
    Theorems \ref{thm_mle_unknown} and \ref{thm_mle_fast_unknown} and Corollary \ref{cor_mle_unknown} of Section \ref{sec_est_T_unknown_A}, have an extra additive term $\|\wh A- A\|_{1,\i}$ reflecting the effect of estimating $A$. In Theorem \ref{thm_supp_unknown} of Section \ref{sec_est_A_T_unknown_A}, we also show that the estimator $\wh T$ retains the sparsity recovery property despite using $\wh A$. 
    Essentially, our take-home message is that 
    the rate for $\| \wh T- T_*\|_1$ is the same as $\|\mle - T_*\|_1$ plus the  additive error $\|\wh A- A\|_{1,\i}$, provided that
    $\wh A$ estimates $A$ well in $\| \cdot\|_{1,\i}$ norm, with one instance given by the estimator in \cite{TOP} and fully analyzed in Section \ref{sec_est_A_T_unknown_A_eg}.
    
    \subsubsection*{Finite sample analysis of the estimators of  word distributions}
    In Section \ref{sec:est pi} we compare the mixture-model-based estimator $\wt \Pi_{A} = A \mle $ of $\Pi_*$ with the empirical estimator $\wh \Pi=X$ (we drop the document-index $i$), which is simply the $p$-dimensional observed word frequencies, in two aspects: the $\ell_1$ convergence rate and the estimation of probabilities corresponding to zero observed frequencies. 
    For the empirical estimator $\wh\Pi$,   
    we find
    $\EE[ \|\wh\Pi-\Pi_*\|_1 ]\le  \sqrt{\|\Pi_*\|_0 / N}$ with $\|\Pi_*\|_0 = \sum_{j} 1\{ \Pi_{*j}>0\}$, while
    $\EE[ \| \wt\Pi_A - \Pi_*\|_1 ] \le \EE[ \| \wh T_{\rm mle} - T_* \|_1 ] = \cO(\sqrt{ K\log(K) /N})$. We thus expect a faster rate for the model-based estimate $\wt\Pi_A$ whenever $K\log(K) = \cO(\|\Pi_*\|_0)$. 
    Regarding the second aspect, we note that we can have  zero observed frequency ($X_j=0$) for some word $j$ that has
    strictly positive word probability ($\Pi_{*j}>0$). The probabilities of these words are estimated incorrectly by zeroes by the empirical estimate $\wh \Pi$  whereas the model-based estimator $\wt \Pi_A$ can produce strictly positive estimates, for instance, under conditions stated in Section \ref{sec:est pi}.  On the other hand, for the words that have zero probabilities in $\Pi_*$ (hence zero observed frequencies),  the empirical estimate $\wh \Pi$ makes no mistakes in estimating their probabilities while the estimation error of $\wt \Pi_A$ tends to zero at a rate that is no slower than $\sqrt{K\log(K)/N}$. In the case that $\mle$ has correct one-sided sparsity recovery, detailed in Section \ref{sec_est_T_sparse_known_A},  $\wt \Pi_A$ also estimates zero probabilities by zeroes.

    \subsection{Estimates  of the  1-Wasserstein document distances  in topic models} 
    
    In Section \ref{sec:wass} we introduce two alternative probabilistic representations of a document $i\in [n]$: via the word generating probability vector, $\Pi^{(i)}_*$,  or via the topic generating probability vector $T^{(i)}_*$.
    We  use either the 1-Wasserstein distance (see Section \ref{sec:wass} for the definition) between the word distributions,  $ W_1(\Pi_*^{(i)}, \Pi_*^{(j)};\Dw)$,  or the 1-Wasserstein distance between the topic distributions, 
    $W_1(T_*^{(i)}, T_*^{(j)}; \Dt)$, in order to evaluate the proximity of a pair of documents $i$ and $j$, for  metrics $\Dw$ and  $\Dt$ between words and topics, defined in displays  (\ref{def_D_word}) and  (\ref{d top w}) -- (\ref{def_D_topic_TV}), respectively. In particular, in  Section \ref{sec:wass theory} we explain in detail that we regard a topic as a distribution on words, given by a column of $A$, and therefore distances between topics are distances between discrete distributions in $\Delta_p$, and need to be estimated when $A$ is not known.
    
    In Section \ref{sec:w bounds} we propose to estimate the two 1-Wasserstein distances by plug-in estimates $ W_1( \wt \Pi^{(i)},  \wt\Pi^{(j)}; \Dw)$ and 
    $W_1(\wh T^{(i)}, \wh T^{(j)}; \hDt)$, respectively, where 
    $\wt \Pi^{(i)} = \wh A\wh T^{(i)}$ is the model-based estimator of $\Pi_*^{(i)}$ based on a generic estimator $\wh A$ of $A$ and the estimator $\wh T^{(i)}$ of $T_*^{(i)}$ that uses the same $\wh A$, as studied in Section \ref{sec_est_T}.
    We prove in   Proposition \ref{allbounds} of Section \ref{sec:w bounds} that the absolute values of the  errors of both estimates 
    can be bounded by  
    $$ 
    \max_{\ell \in \{i,j\}}\|\wh T^{(\ell)} - T_*^{(\ell)}\|_1 + \|\wh A - A\|_{1,\i}.
    $$
    A main theoretical application
    of the  $\ell_1$-error bounds for the topic distributions derived in Section \ref{sec_est_T} can be used to bound the first term while the second term  reflects the order of the error in estimating $A$, and therefore vanishes  if $A$ is known. For completeness, we take the estimator $\wh A$ proposed in \cite{TOP} and provide in Corollary \ref{cor_allbounds} of Section \ref{sec:w bounds} explicit rates of convergence of both errors of estimating two 1-Wasserstein distances by using this $\wh A$. 
    The  practical implications of the corollary are that a short document length (small $N$)  can be compensated for, in terms of speed of convergence, by having a relatively small number of topics $K$ covered by the entire corpus, whereas working with a very large dictionary (large $p$) will not be detrimental to the rate in a very large corpus (large $n$).

    
    

    To the best of our knowledge, this rate analysis of the estimates of 1-Wasserstein distance corresponding to estimators of discrete distributions in topic models is new.  The only related results, discussed in  Section \ref{sec:wass theory},  have  been established relative to empirical frequency estimators of discrete distributions, from an asymptotic perspective \citep{sommerfeld2017inference,tameling2018empirical} or in finite samples \citep{weed2017sharp}. 
    
    In Remark \ref{rem:allbounds} of Section \ref{sec:w bounds} we discuss the net computational benefits of representing documents in terms of their $K$-dimensional topic distributions, for 1-Wasserstein distance calculations. Using an IMBD movie review corpus as a real data example, we illustrate in Section \ref{sec:imdb} the practical benefits of these distance estimates, relative to the more commonly used  earth(word)-mover's distance \citep{kusner2015wmd}  between observed empirical word-frequencies, $ W_1( \wh \Pi^{(i)}, \wh \Pi^{(j)} ;D^{word}  )$, with $\wh \Pi^{(i)} \coloneqq X^{(i)}$, for all $i \in [n]$. Our analysis reveals that all our proposed 1-Wasserstein distance estimates successfully capture differences in the relative weighting of topics between documents, whereas the standard $W_1(\wh\Pi^{(i)},\wh\Pi^{(j)};\Dw)$  is substantially less successful, likely owing in part to the fact noted in Section \ref{sec:top word est} above, that when the dictionary size $p$ is large, but the document length $N_i$ is relatively small, 
    the quality of $\wh\Pi^{(i)}$ as an estimator of $\Pi^{(i)}_*$ will deteriorate, and the quality of $W_1(\wh\Pi^{(i)},\wh\Pi^{(j)};\Dw)$ as an estimator of  (\ref{wmd pop}) will deteriorate accordingly.\\

    The remainder of the paper is organized as follows. In Section \ref{sec_est_T_known_A} we study the estimation of $T_*$ when $A$ is known. A general bound of $\|\mle - T_*\|_1$ is stated in Section \ref{sec_theory_genral_bound} and is improved in Section \ref{sec_est_T_known_A_dense_T}. The sparsity of the MLE is discussed in Section \ref{sec_est_T_sparse_known_A} and the minimax lower bounds of estimating $T_*$ are established in Section \ref{sec_minimax}. Estimation of $T_*$ when $A$ is unknown is studied in Section \ref{sec_est_A_T_unknown_A}. In Section \ref{sec:est pi} we discuss the comparison between model-based estimators and the empirical estimator of $\Pi_*$. Section \ref{sec:wass} is devoted to our main application: the 1-Wasserstein distance between documents. In Section \ref{sec:wass theory} we introduce alternative Wasserstein distances between probabilistic representations of documents with their estimation studied and analyzed in Section \ref{sec:w bounds}.  
    Section \ref{sec:imdb} contains the analysis of a real data set of IMDB movie reviews. 
    The Appendix contains all proofs, auxiliary results and all simulation results.

    \subsubsection*{Notation} 	For any positive integer $d$, we write $[d] := \{1,\ldots, d\}$. For two real numbers $a$ and $b$, we write $a\vee b = \max\{a,b\}$ and $a\wedge b = \min\{a, b\}$. For any set $S$, its cardinality is written as $|S|$. 
    For any vector $v\in \RR^d$, we write its $\ell_q$-norm as $\|v\|_q$ for $0\le q\le \i$.
    For a subset $S\subset [d]$, we define $v_S$ as the subvector of $v$ with corresponding indices in $S$.
    Let $M\in \RR^{d_1\times d_2}$ be any matrix. For any set $S_1\subseteq [d_1]$ and $S_2 \subseteq [d_2]$, we use $M_{S_1S_2}$ to denote the submatrix of $M$ with corresponding rows $S_1$ and columns $S_2$. In particular, $M_{S_1 \cdot }$ ($M_{\cdot S_2}$) stands for the whole rows (columns) of $M$ in $S_1$ ($S_2$). Sometimes we also write $M_{S_1} = M_{S_1\cdot}$ for succinctness. 
    We use $\|M\|_{\rm op}$ and $\|M\|_q$ to denote the operator norm and elementwise $\ell_q$ norm, respectively. We write $\|M\|_{1,\i} = \max_{j}\|M_{\cdot j}\|_1$. The $k$-th canonical unit vector in $\RR^d$ is denoted by $\be_k$ while $\1_d$ represents the $d$-dimensional vector of all ones. $\bI_d$ is short for the $d\times d$ identity matrix. For two sequences $a_n$ and $b_n$, we write $a_n \lesssim b_n$ if there exists $C>0$ such that $a_n \le Cb_n$ for all $n\ge 1$. For a metric $D$ on a finite set $\cX$, we use boldface ${\bm D}\coloneqq (D(a,b))_{a,b\in \cX}$ to denote the corresponding $|\cX|\times |\cX|$ matrix. The set $\H_d$ contains all $d\times d$ permutation matrices.

    \section{Estimation of topic distributions under topic models}\label{sec_est_T}

    We consider the estimation of the topic distribution vector,  $T^{(i)}_*\in \Delta_K$,  for  each $i\in [n]$.
    Pick any $i\in [n]$; for notational simplicity, we write $T_*=T_*^{(i)}$, $X = X^{(i)}$ and $\Pi_* = \Pi_*^{(i)}$ as well as $N = N_i$ throughout this section. 
    
    We allow, but do not assume, that  the vector $T_*$ is sparse, as sparsity is expected in topic models: a document will cover some, but most likely not all, topics under consideration.  We therefore introduce 
    the following parameter space for $T_*$:  
    \[
    \cT(s) 
    = \left\{T\in \Delta_K: |\supp(T)|  =  s
    \right\},
    \]
    with $s$ being any integer between $1$ and $K$. From now on, we let  $S_* \coloneqq \supp(T_*)$ and write $|S_*|$ for its cardinality.

    In Section \ref{sec_est_T_known_A} we study  the  estimation of $T_*$  from  the observed data $X$, generated from background probability vector $\Pi_*$ parametrized as $\Pi_* = AT_*$, with known matrix $A$. The  intrinsic difficulties associated with the optimal estimation of $T_*$ are already visible when $A$ is known, and we treat this in detail before providing, in Section  \ref{sec_est_A_T_unknown_A}, a full analysis that includes the estimation of $A$.  We remark that assuming $A$ known is not purely unrealistic in topic models used for text data, since then one  typically has access to a large corpus (with $n$ in the order of tens of thousands). When the corpus can be assumed to share the same $A$, this matrix  can be very accurately estimated.

    The results of Section  \ref{sec_est_T_known_A} hold for any known $A$, not required  to have any specific structure: in particular, we do not assume that it follows a topic model with anchor words (Assumption \ref{ass_sep} stated in Section \ref{sec_est_A} below).  We will make this assumption when we consider optimal estimation of $T_*$ when $A$ itself is unknown, in which case Assumption \ref{ass_sep} serves as both a needed identifiability condition and a condition under which estimation of both $A$ and $T_*$, in polynomial time, becomes possible. This is covered in detail in  Section \ref{sec_est_A_T_unknown_A}.

    \subsection{Estimation of $T_*$ when $A$ is known}\label{sec_est_T_known_A}

    When $A$ is known and given, with columns $A_{\cdot k}\in \Delta_p$, the data has a multinomial distribution, 
    \begin{equation}\label{model_single}
    	N X \sim \textrm{Multinomial}_p(N; AT_*),
    \end{equation}
    where $T_*\in \Delta_K$ is  the topic distribution vector,
    with entries corresponding to the  proportions of  the $K$ topics, respectively.  Under (\ref{model_single}), it is natural to consider the  Maximum Likelihood Estimator (MLE) $\wh T_{\rm mle}$ of $T_*$. The log-likelihood, ignoring terms independent of $T$, is proportional to 
    \[  \sum_{j=1}^p  X_j\log\left(A_{j\cdot}^\T T\right) =   \sum_{j\in J}  X_j\log\left(A_{j\cdot}^\T T\right),\] 
    where the last summation is taken over the index set of observed relative frequencies,
    \begin{equation}\label{def_J}
    	J \coloneqq \{j\in [p]: X_j > 0\},
    \end{equation}   
    and using  the convention that  $0^0=1$. Then 
    \begin{equation}\label{def_MLE}
    	\wh T_{\rm mle}
    :=\argmax_{T\in \Delta_K }
\sum_{j\in J}  X_j\log\left(A_{j\cdot}^\T T\right).
\end{equation}
This optimization problem is also known as the log-optimal investment strategy, see for instance \cite[Problem 4.60]{boyd2004convex}.  It can be computed efficiently, since the loss function in (\ref{def_MLE}) is concave
on its domain, the open half space $\cap_{j\in J} \{x\in \RR^K \mid A_{j\cdot}^\T x > 0\}$,  and the constraints $T\succeq0$ and $\1_K^\T T=1$ are convex.

The following two subsections state the theoretical properties of the MLE in (\ref{def_MLE}), and include a study of its adaptivity to the potential sparsity of $T_*$ and minimax optimality. In Section \ref{sec:est pi} we show that although $\wh T_{\rm mle}$ is constructed only from observed, non-zero, frequencies, 
$A_{j\cdot}^\T\wh T_{\rm mle}$ can be a non-zero estimate of $\Pi_{*j}$ for those indices $j \in J^c$ for which  we observe $X_j = 0$.

\subsubsection{A general finite sample bound for $\|\wh T_{\rm mle} - T_*\|_1$}\label{sec_theory_genral_bound}

To analyze $\wh T_{\rm mle}$, we 
first introduce two deterministic sets that control $J$ defined in (\ref{def_J}).  Recalling  $\Pi_* = AT_*$, we collect the words with non-zero probabilities in the set
\begin{equation}\label{def_oJ}
\oJ  \coloneqq \{j\in [p]: \Pi_{*j} > 0\}.
\end{equation}
We will also consider the set 
\begin{equation}\label{def_uJ}
\uJ \coloneqq \left\{
j\in[p]: \Pi_{*j} > 2\eps_j
\right\}
\end{equation}
where 
\begin{equation}\label{def_eps_j}
\eps_j \coloneqq 2\sqrt{\Pi_{*j} \log p \over N} + {4\log p\over 3N}, \qquad \forall ~ 1\le j\le p.
\end{equation}
The sets $\oJ$ and $\uJ$ are appropriately defined such that 
$
\uJ \subseteq J \subseteq \oJ
$
holds with probability at least $1-2p^{-1}$ (see Lemma \ref{lem_basic} of Appendix \ref{app_tech_lemma}).  Define
\begin{equation}\label{def_rho}
\rho \coloneqq \max_{j\in  \oJ} {\|A_{j\cdot}\|_\i \over \Pi_{*j}}.
\end{equation}
We note that $\oJ$, $\uJ$ and $\rho$ all depend on $T_*$ implicitly via $\Pi_*$.
Another important quantity is the following $\ell_1\to\ell_1$ restricted condition number of the submatrix $A_{\uJ}$ of $A$, defined as
\begin{equation}\label{def_kappa_A}
\kappa(A_{\uJ}, s) \coloneqq \min_{S\subseteq[K]: |S|\le s}\min_{v\in \cC(S)}{\|A_{\uJ}v\|_1 \over \|v\|_1},
\end{equation}
with
$$\cC(S) \coloneqq \{ v \in \RR^{K} \setminus \{\b0\}: \|v_S\|_1 \ge \|v_{S^c}\|_1\}.$$
We make the following simple, but very important,  observation that  
\begin{equation}\label{cone_mle}
\wh T_{\rm mle} - T_* \in \cC(S_*)
\end{equation}
with  $S_*= \supp(T_*)$, by using the fact that both $\mle$ and $T_*$ belong to $\Delta_K$. 
In fact, (\ref{cone_mle}) holds generally for any estimator $\wh T\in \Delta_K$ as
\begin{equation*}
0 = \|T_*\|_1 - \|\wh T\|_1 = \|(T_*)_{S_*}\|_1 - \|\wh T_{S_*}\|_1 - \|\wh T_{S_*^c}\|_1 \le \|(\wh T - T_*)_{S_*}\|_1 - \|(\wh T - T_*)_{S_*^c}\|_1.
\end{equation*}
Display (\ref{cone_mle}) implies that the ``effective'' $\ell_1$ error bound of $\wh T_{\rm mle} - T_*$ arises mainly from the estimation of $(T_*)_{S_*}$. Also because of this property, we need the condition number of $A$ to be positive only over the cone $\cC(S_*)$ rather than the whole $\RR^K$.

The following theorem states the convergence rate of $\|\wh T_{\rm mle} - T_*\|_1$. Its proof can be found in Appendix \ref{app_proof_thm_mle}.
\begin{thm}\label{thm_mle}
Assume $\kappa(A_{\uJ},s)>0$. 
For any $\epsilon\ge 0$, with probability $1-2p^{-1}-2\epsilon$, one has 
\begin{equation} \label{firstbound} 
\|\wh T_{\rm mle} - T_*\|_1 ~ \le   ~  {2 \over \kappa^{2}(A_{\uJ}, s)}
\left\{\sqrt{2\rho\log (K/\epsilon) \over N} + {2\rho \log (K/\epsilon)\over N}\right\}.
\end{equation} 
\end{thm}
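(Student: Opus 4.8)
The plan is to turn the stationarity (KKT) conditions of the constrained problem (\ref{def_MLE}) into a quadratic inequality in $\delta \coloneqq \mle - T_*$, then pinch it between a curvature lower bound driven by $\kappa(A_{\uJ},s)$ and a stochastic upper bound on a linear term, and finally read off (\ref{firstbound}). Throughout write $\pi_j \coloneqq \Pi_{*j} = A_{j\cdot}^\T T_*$ and $\hat\pi_j \coloneqq A_{j\cdot}^\T\mle$, noting $\hat\pi_j>0$ for $j\in J$ since the MLE lies in the open domain of the objective, and recalling $\rho\ge1$ because $\pi_j\le\|A_{j\cdot}\|_\infty\|T_*\|_1=\|A_{j\cdot}\|_\infty$. \textbf{Step 1: the quadratic inequality.} Introducing a multiplier $\lambda$ for $\1_K^\T T=1$ and multipliers $\mu_k\ge0$ for $T_k\ge0$, the first order conditions give $\sum_{j\in J}X_j A_{jk}/\hat\pi_j = \lambda-\mu_k$ for every $k$, with $\mu_k[\mle]_k=0$. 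Pairing with $\mle$ and using $\sum_j X_j=1$ pins down $\lambda=1$; pairing with $\delta$ and using $\mu_k\ge0$, $T_{*k}\ge0$ then yields $\sum_{j\in J}X_j(A_{j\cdot}^\T\delta)/\hat\pi_j\ge0$. The elementary identity $(\hat\pi_j-\pi_j)/\hat\pi_j=(\hat\pi_j-\pi_j)/\pi_j-(\hat\pi_j-\pi_j)^2/(\pi_j\hat\pi_j)$ rewrites this as $\delta^\T\wt H\delta\le\sum_{j\in J}(X_j/\pi_j)A_{j\cdot}^\T\delta$ with $\wt H=\sum_{j\in J}X_j(\pi_j\hat\pi_j)^{-1}A_{j\cdot}A_{j\cdot}^\T$. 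On the event $J\subseteq\oJ$ of Lemma \ref{lem_basic} the right side equals $\delta^\T E+\delta^\T(\sum_{j\in\oJ}A_{j\cdot})$ with $E_k=\sum_{j\in\oJ}(A_{jk}/\pi_j)(X_j-\pi_j)$; since $\sum_{j\in\oJ}A_{jk}=1$ for $k\in S_*$ (for $j\notin\oJ$, $\pi_j=0$ forces $A_{jk}=0$ whenever $k\in S_*$) and $\le1$ otherwise, while $\delta_k\ge0$ off $S_*$ and $\1_K^\T\delta=0$, the correction term is $\le0$, leaving $\delta^\T\wt H\delta\le\delta^\T E$.

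\textbf{Step 2: curvature lower bound.} Restricting the sum defining $\delta^\T\wt H\delta$ to $\uJ\subseteq J$ and using $X_j\ge\pi_j/2$ on $\uJ$ (Lemma \ref{lem_basic}), one gets $\delta^\T\wt H\delta\ge\tfrac12\sum_{j\in\uJ}(\hat\pi_j-\pi_j)^2/\hat\pi_j$. Cauchy--Schwarz together with $\sum_{j\in\uJ}\hat\pi_j\le\sum_{j=1}^p A_{j\cdot}^\T\mle=\1_K^\T\mle=1$ gives $\sum_{j\in\uJ}(\hat\pi_j-\pi_j)^2/\hat\pi_j\ge(\sum_{j\in\uJ}|A_{j\cdot}^\T\delta|)^2=\|A_{\uJ}\delta\|_1^2$, and since $\delta\in\cC(S_*)$ by (\ref{cone_mle}) with $|S_*|\le s$, the definition (\ref{def_kappa_A}) yields $\|A_{\uJ}\delta\|_1\ge\kappa(A_{\uJ},s)\|\delta\|_1$. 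Hence $\delta^\T\wt H\delta\ge\tfrac12\kappa^2(A_{\uJ},s)\|\delta\|_1^2$.

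\textbf{Step 3: control of the linear term and conclusion.} Bound $\delta^\T E\le\|E\|_\infty\|\delta\|_1$. Writing $NX=\sum_{t=1}^N Z_t$ with i.i.d.\ categorical $Z_t$, each coordinate is $E_k=\tfrac1N\sum_t W_{t,k}$ with $W_{t,k}=A_{Z_t,k}/\pi_{Z_t}-\sum_{j\in\oJ}A_{jk}$, mean zero, $|W_{t,k}|\le\rho$, and variance $\le\sum_{j\in\oJ}A_{jk}^2/\pi_j\le\rho$; Bernstein's inequality plus a union bound over $k\in[K]$ (this is Lemma \ref{lem_oracle_error}) gives $\|E\|_\infty\le\sqrt{2\rho\log(K/\epsilon)/N}+2\rho\log(K/\epsilon)/N$ with probability $\ge1-2\epsilon$. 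Intersecting with the event of Lemma \ref{lem_basic} (probability $\ge1-2p^{-1}$), Steps 1--3 give $\tfrac12\kappa^2(A_{\uJ},s)\|\delta\|_1^2\le\|E\|_\infty\|\delta\|_1$, i.e.\ $\|\delta\|_1\le 2\|E\|_\infty/\kappa^2(A_{\uJ},s)$, which is exactly (\ref{firstbound}).

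\textbf{Main obstacle.} The crux is Step 1: because $T_*$ may lie on the boundary of $\Delta_K$, the gradient of the log-likelihood need not vanish at $\mle$, so the quadratic inequality must be extracted from the \emph{inequality} KKT conditions while carefully bookkeeping the three index sets $J\subseteq\oJ$, $\uJ\subseteq J$, and in particular getting the sign of the $\sum_{j\in\oJ}A_{j\cdot}$ correction right so that the linear side collapses to $\delta^\T E$. The second quantitatively delicate ingredient is the Bernstein step: identifying $\rho$ as the correct variance proxy for $\|E\|_\infty$ is what produces the advertised $\sqrt{\rho/N}$ and $\rho/N$ terms, and hence the factor $\rho$ inside (\ref{firstbound}).
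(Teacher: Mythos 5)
Your proposal is correct and follows essentially the same route as the paper's proof: extract the quadratic inequality $\delta^\T\wt H\delta\le\delta^\T E$ from the KKT conditions (your normalization $\lambda=1$ is the paper's $\mu=-N$), lower-bound the quadratic form on $\uJ$ via Cauchy--Schwarz and $\kappa(A_{\uJ},s)$ using $\delta\in\cC(S_*)$, and control $\|E\|_\infty$ by Bernstein plus a union bound (the paper's Lemma \ref{lem_oracle_error}). The only cosmetic difference is that you kill the correction term $\delta^\T\sum_{j\in\oJ}A_{j\cdot}$ via column sums and $\delta_{S_*^c}\succeq 0$, whereas the paper writes it as $-\sum_{j\notin\oJ}A_{j\cdot}^\T\delta\le\sum_{j\notin\oJ}\Pi_{*j}=0$; both are valid.
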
 

Theorem \ref{thm_mle} is a general result that only requires $\kappa(A_{\uJ},s)>0$. The rates depend on two important quantities: $\kappa(A_{\uJ},s)$ and $\rho$, which we discuss below in detail. In the next section we will show that the bound in Theorem \ref{thm_mle} serves as an initial result, upon which one could obtain a faster rate of the MLE in certain regimes.

\begin{remark}[Discussion on $\kappa(A_{\uJ}, s)$]\label{rem_kappa}

The $\ell_1\to\ell_1$ condition number, $\kappa(A, K)$, is commonly used to quantify the linear independence
of the  columns belonging to $\D_p$  of the matrix $A \in \R_+^{p\times K}$ 
\citep{kleinberg2008using}. As remarked in \cite{kleinberg2008using}, the $\ell_1\to\ell_1$ condition number $\kappa(A,K)$ plays the   role of the smallest singular value, $\sigma_K(A) = \inf_{v\ne 0}\|Av\|_2/\|v\|_2$, but it is more appropriate for matrices with columns belonging to a probability simplex. Because of the chain inequalities
\[
{\kappa(A,K) \over \sqrt p} \le  \sigma_K(A) \le \sqrt{K} ~ \kappa(A,K),
\]
and the fact that $K \ll p$, having
$\kappa^{-1}(A,K)$ appear in the bound loses at most a $\sqrt{K}$ factor comparing to $\sigma_K^{-1}(A)$. But using $\sigma_K^{-1}(A)$ potentially yields a much worse bound than using $\kappa^{-1}(A,K)$: there are instances for which $\kappa(A,K)$ is lower bounded by a constant whereas $\sigma_K(A)$ is only of order $o(1)$ (see, for instance, \citet[Appendix A]{kleinberg2008using}). 

The restricted $\ell_1\to\ell_1$  condition number $\kappa(A, s)$ in (\ref{def_kappa_A}) for $1\le s\le K$ generalizes $\kappa(A, K)$ by requiring the condition of $A$ over the cones $\cC(S)$ with $S\subseteq [K]$ and $|S|\le s$. We thus view $\kappa(A, s)$ as the analogue of the restricted eigenvalue \citep{bickel2009simultaneous} of the Gram matrix in the sparse regression settings. In topic models, it has been empirically observed that the (restricted) condition number of $A$ is oftentimes bounded from below by some absolute constant \citep{arora2016}.

To understand why $\kappa(A_{\uJ}, s)$ appears in the rates, recall that the MLE in (\ref{def_MLE}) only uses the words in $J$ as defined in (\ref{def_J}).
Intuitively, only the condition number of $A_{J}$ should play a role as we do not observe any information from words in $J^c \coloneqq [p]\setminus J$. Since $\uJ \subseteq J \subseteq \oJ$ holds with high probability, we can thus bound  $\kappa(A_{J}, s)$ from below by $\kappa(A_{\uJ},s)$. For the same reason, $\rho$ in (\ref{def_rho}) is defined over $j\in \oJ$ rather than $j\in J$.  
\end{remark}

\begin{remark}[Discussion on $\rho$]\label{rem_rho}
Define the smallest non-zero entry in $T_*$ as 
$$
T_{\min} \coloneqq \min_{k\in S_*} T_{*k}.
$$
Recall $\Pi_{*j} = A_{j\cdot}^\T T_* = A_{jS_*}^\T T_{*S_*}$. 
We have
$\rho = \max\left\{\rho_{S_*}, \rho_{S_*^c}\right\}$
where 
\begin{align}\label{eqn_rho_S}
\rho_{S_*} &= \max_{k \in S_*} \max_{j\in \oJ} {A_{jk}\over \sum_{a\in S_*} A_{ja}T_{*a}} \le {1\over T_{\min}},\\\label{eqn_rho_Sc}
\rho_{S_*^c} &= \max_{k \in S_*^c} \max_{j\in \oJ} {A_{jk}\over \sum_{a\in S_*} A_{ja}T_{*a}} \le 
{1\over T_{\min}} \cdot \max_{k\in S_*^c}\max_{j\in \oJ} {A_{jk} \over \sum_{a\in S_*} A_{ja}}.
\end{align}
The magnitudes of both $\rho_{S_*}$ and $\rho_{S_*^c}$ closely depend on  $T_{\min}$ while $\rho_{S_*^c}$ also depends  on
\begin{equation}\label{def_xi}
\xi \coloneqq \max_{j\in \oJ} {\|A_{jS_*^c}\|_\i \over \|A_{jS_*}\|_1},
\end{equation}
a quantity that essentially balances the entries of $A_{jS_*}$ and those of $A_{j S_*^c}$.
Clearly, when $T_*$ is dense, that is, $|S_*|=K$, we have $\xi = 0$.
In general, we have 
\begin{equation}\label{bd_rho}
\rho  \le  (1\vee \xi) / \Tm.
\end{equation}
We further remark that if $A$ has a special structure such that there exists at least one anchor word for each topic $k\in S_*$, that is, for each $k\in [K]$, there exists a row $A_{j\cdot} \propto \be_k$ (see Assumption \ref{ass_sep} in Section \ref{sec_est_A} below), it is easy to verify that the inequality for $\rho_{S_*}$ in (\ref{eqn_rho_S}) is in fact an equality. 
\end{remark}

\subsubsection{Faster rates of $\|\mle - T_*\|_1$}\label{sec_est_T_known_A_dense_T}

In this section we state conditions under which the general bound stated in  Theorem \ref{thm_mle}  can be improved. We begin by noting that  one of the main difficulties in deriving a faster rate for $\|\mle - T_*\|_1$ is in establishing a link between the  Hessian matrix (the second order derivative) of the loss function in (\ref{def_MLE}) evaluated at $\mle$ to that evaluated at $T_*$.  

To derive this link, we prove 
in Appendix \ref{app_proof_thm_mle}
that a relative weighted error of estimating $T_*$ by $\mle$ stays bounded in probability, in the precise sense that  
\begin{equation}\label{cond_T_neigh}
\max_{j\in \oJ} {|A_{j\cdot}^\T (\mle - T_*)|\over A_{j\cdot}^\T T_*} = \cO_\PP(1).
\end{equation}
Further, we show in Lemma \ref{lem_I_deviation} in Appendix \ref{app_tech_lemma}  that the Hessian matrix of (\ref{def_MLE}) at  $T_*$ concentrates around its population-level counterpart, with $X_j$ replaced by $\Pi_{*j}$. 
A sufficient condition under which (\ref{cond_T_neigh}) holds can be derived as follows. 
First note that
\begin{equation} \label{ineq:explain}
\max_{j\in \oJ}{|A_{j\cdot}^\T (\mle - T_*)| \over A_{j\cdot}^\T T_*} \le \max_{j\in \oJ}{\|A_{j\cdot}\|_\i  \over \Pi_{*j}}\|\mle - T_*\|_1 \overset{(\ref{def_rho})}{=} \rho \|\mle - T_*\|_1.
\end{equation}
We have bounded $\rho$  by $\rho \le (1\vee \xi)/ T_{\min}$ in  (\ref{bd_rho}),  and have provided an initial bound on $\|\mle - T_*\|_1$ in Theorem \ref{thm_mle}. Therefore, (\ref{cond_T_neigh}) holds if these two bounds combine to show $\rho \|\mle - T_*\|_1$  is of order $\cO_\PP(1)$. This is summarized in the following theorem. Let $\kappa(A_{\oJ}, K)$ be defined in (\ref{def_kappa_A}) with $s = K$ and $A_{\oJ}$ in place of $A_{\uJ}$. Recall that $\xi$ is defined in (\ref{def_xi}). In addition, we define
\begin{align}\label{M1,M2}
M_1 &\coloneqq  {\log K \over \kappa^4(A_{\uJ},s) ~T_{\min}^3}(1\vee \xi^3),\\
M_2 &\coloneqq {\log K \over \kappa^2(A_{\oJ}, K) T_{\min}^2}(1\vee \xi)(1+ \xi\sqrt{K-s}). \nonumber
\end{align}

\begin{thm}\label{thm_mle_fast}
For any $T_*\in \cT(s)$ with $1\le s \le K$, assume there exists some sufficiently large constant $C>0$ such that 
\begin{equation}\label{cond_N_sparse}
N\ge C \max \{M_1, M_2\}.
\end{equation}
Then, with probability $1-2p^{-1}-4K^{-1}-2e^{-K}$, we have
\[
\|\mle - T_*\|_1 \lesssim \kappa^{-1}(A_{\oJ}, s)\sqrt{K \over N}.
\]
\end{thm}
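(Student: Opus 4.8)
The plan is to upgrade the quadratic inequality that comes out of the KKT conditions of \eqref{def_MLE}, exactly along the lines sketched in the introduction, but now with the \emph{population} Hessian $H = \sum_{j\in\oJ}\Pi_{*j}^{-1}A_{j\cdot}A_{j\cdot}^\T$ in place of the random, $\mle$-dependent matrix $\wt H$. Writing $u := \mle - T_*$, the starting point is the same KKT-derived inequality $u^\T \wt H u \le u^\T E$ with $\|E\|_\i = \max_k |\sum_{j\in\oJ}(A_{jk}/\Pi_{*j})(X_j-\Pi_{*j})|$. To pass from $\wt H$ to $H$ I would first use Theorem~\ref{thm_mle} together with the bound $\rho\le(1\vee\xi)/\Tm$ and the hypothesis \eqref{cond_N_sparse} (specifically the $M_1$ part, which forces $\rho\|\mle-T_*\|_1 = \cO_\PP(1)$ and in fact $\le$ a small constant $c\in(0,1)$) to establish the relative-error control \eqref{cond_T_neigh}, i.e. $A_{j\cdot}^\T\mle \ge (1-c)\Pi_{*j}$ and $A_{j\cdot}^\T\mle \le (1+c)\Pi_{*j}$ for all $j\in\oJ$ on the relevant high-probability event. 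The lower bound on $A_{j\cdot}^\T\mle$ then gives $\wt H \succeq (1+c)^{-1}\wh H$ where $\wh H = \sum_{j\in\oJ} X_j\Pi_{*j}^{-2}A_{j\cdot}A_{j\cdot}^\T$ is the empirical Hessian at $T_*$.

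Next I would invoke Lemma~\ref{lem_I_deviation} to compare $\wh H$ with $H$: it gives $\|H^{-1/2}(\wh H - H)H^{-1/2}\|_\op \le 1/2$, say, on a high-probability event, provided $N$ is large enough — this is where the $M_2$ part of condition \eqref{cond_N_sparse} enters, controlling the deviation in terms of $\kappa(A_{\oJ},K)$, $\Tm$, $\xi$ and $\sqrt{K-s}$. Consequently $\wh H \succeq \tfrac12 H$, and combining with the previous step, $u^\T H u \le 2(1+c)\, u^\T\wt H u \le 2(1+c)\, u^\T E$. Then I Cauchy–Schwarz the right-hand side in the $H$-inner product: $u^\T E \le \|H^{1/2}u\|_2\,\|H^{-1/2}E\|_2$, so that $\|H^{1/2}u\|_2 \le 2(1+c)\|H^{-1/2}E\|_2$. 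The final analytic ingredient is a sharp concentration bound for $\|H^{-1/2}E\|_2$: since $E$ has mean zero and coordinates built from the multinomial fluctuations $X_j-\Pi_{*j}$, and $H$ is exactly the covariance-type normalization, a Bernstein-type argument (the companion of Lemma~\ref{lem_oracle_error}, with the $\ell_2$ rather than $\ell_\infty$ aggregation) yields $\|H^{-1/2}E\|_2 \lesssim \sqrt{K/N}$ on an event of probability $\ge 1-2e^{-K}$, after a union bound producing the $4K^{-1}$ and $2p^{-1}$ terms from the earlier steps.

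It remains to convert the bound on $\|H^{1/2}u\|_2$ into the claimed $\ell_1$ bound. Here I would use that $\|H^{1/2}u\|_2^2 = u^\T H u = \sum_{j\in\oJ}\Pi_{*j}^{-1}(A_{j\cdot}^\T u)^2 \ge \sum_{j\in\oJ}(A_{j\cdot}^\T u)^2$ (since $\Pi_{*j}\le 1$), so $\|H^{1/2}u\|_2 \ge \|A_{\oJ}u\|_2 \ge \|A_{\oJ}u\|_1/\sqrt{|\oJ|}$ is the wrong direction; instead I want a lower bound of the form $\|H^{1/2}u\|_2 \gtrsim \kappa(A_{\oJ},s)\|u\|_1$. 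This follows because $u\in\cC(S_*)$ by \eqref{cone_mle}, hence $\|A_{\oJ}u\|_1 \ge \kappa(A_{\oJ},s)\|u\|_1$, and then a Cauchy–Schwarz / reweighting step relating $\|A_{\oJ}u\|_1$ to $\|H^{1/2}u\|_2$ — using $\sum_j \Pi_{*j} \le$ const (the rows of $\Pi_*$ sum to $\|\Pi_*\|_1 = 1$, more precisely $\sum_{j\in\oJ}\Pi_{*j}=1$) via $\|A_{\oJ}u\|_1 = \sum_j |A_{j\cdot}^\T u| = \sum_j \sqrt{\Pi_{*j}}\cdot\Pi_{*j}^{-1/2}|A_{j\cdot}^\T u| \le (\sum_j\Pi_{*j})^{1/2}(\sum_j\Pi_{*j}^{-1}(A_{j\cdot}^\T u)^2)^{1/2} = \|H^{1/2}u\|_2$ — gives $\kappa(A_{\oJ},s)\|u\|_1 \le \|A_{\oJ}u\|_1 \le \|H^{1/2}u\|_2 \lesssim \sqrt{K/N}$, which is exactly the statement. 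Stringing the events together with a union bound produces the stated probability $1-2p^{-1}-4K^{-1}-2e^{-K}$.

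The main obstacle, as flagged in the introduction, is the passage from $\wt H$ to $H$: the division by $A_{j\cdot}^\T\mle$ inside $\wt H$ couples the Hessian to the estimator itself, so one cannot directly apply a concentration inequality to $\wt H$. The resolution is the two-step detour $\wt H \succeq (1+c)^{-1}\wh H \succeq \tfrac{1}{2(1+c)}H$, but making the first inequality rigorous requires the relative-error bound \eqref{cond_T_neigh}, whose proof in turn consumes the initial (slow) rate from Theorem~\ref{thm_mle} and the sample-size lower bound $M_1$ — so the argument is genuinely bootstrapped, and care is needed to track which high-probability event each sub-bound lives on and to ensure the constants in \eqref{cond_N_sparse} are chosen large enough to close the loop (in particular that $c<1$ strictly, and that Lemma~\ref{lem_I_deviation}'s deviation term is below $1/2$).
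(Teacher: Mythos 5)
Your proposal is correct and takes essentially the same route as the paper: bootstrap the slow rate of Theorem \ref{thm_mle} under the $M_1$ part of (\ref{cond_N_sparse}) to get the multiplicative control $(1\pm c)\Pi_{*j}$ on $A_{j\cdot}^\T\mle$ over $\oJ$, pass from $\wt H$ to $\wh H$ to $H$ via Lemma \ref{lem_I_deviation} (the $M_2$ part), Cauchy--Schwarz in the $H$-inner product against the whitened score $\|H^{-1/2}E\|_2\lesssim\sqrt{K/N}$ (the paper's Lemma \ref{lem_oracle_error_whitening}), and convert to $\ell_1$ through $\|H^{1/2}u\|_2\ge\|A_{\oJ}u\|_1\ge\kappa(A_{\oJ},s)\|u\|_1$ on the cone $\cC(S_*)$. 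One tiny slip: the comparison $\wt H\succeq(1+c)^{-1}\wh H$ uses the \emph{upper} bound $A_{j\cdot}^\T\mle\le(1+c)\Pi_{*j}$ (since $A_{j\cdot}^\T\mle$ sits in the denominator of $\wt H$), not the lower bound as you cite, but since you established both bounds nothing breaks.
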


Condition (\ref{cond_N_sparse}) requires the sample size $N$ to be  sufficiently large  relative to $\Tm$, $\xi$ and the $\ell_1\to \ell_1$ condition number of $A$. 
If
$N\ge C M_1$,
then the argument  in (\ref{ineq:explain}) above implies (\ref{cond_T_neigh}), while we use 
$N\ge C M_2$ 
to prove in Appendix \ref{app_tech_lemma}   that the  Hessian matrix of (\ref{def_MLE}) at $T_*$ concentrates around its population-level counterpart. \\

Combining the bounds in Theorem \ref{thm_mle} and Theorem \ref{thm_mle_fast}, we immediately have the following faster rate of the MLE under (\ref{cond_N_sparse}),
\begin{equation}\label{fast_rates}
\|\mle - T_*\|_1 = \cO_{\PP}\left( \min\left\{\kappa^{-2}(A_{\uJ},s)\sqrt{\rho \log K \over  N}, ~ \kappa^{-1}(A_{\oJ}, s)\sqrt{K \over N}\right\}\right).
\end{equation}
We remark that, when $T_*$ is sparse, the first term in the minimum on the right of (\ref{fast_rates}) could be smaller than the second one (see one instance under item {\it (a)} of Corollary \ref{cor_mle_final}).

However, for dense $T_*\in \cT(K)$ such that $|S_*| = K$, the newly derived rate in Theorem \ref{thm_mle_fast} (the second term in (\ref{fast_rates})) is always faster than that in Theorem \ref{thm_mle} (the first term in (\ref{fast_rates})), as summarized in the following corollary. Its proof follows immediately from Theorem \ref{thm_mle_fast} by replacing $s$ by $K$ and noting that in that case $\xi = 0$,  by (\ref{def_xi}).

\begin{cor}[Dense $T_*$]\label{cor_mle_fast_dense}
For any $T_*\in \cT(K)$, assume there exists some sufficiently large constant $C>0$ such that 
\begin{equation}\label{cond_N_dense}
N \ge C {\log K \over \kappa^4(A_{\uJ},K) ~T_{\min}^3}.
\end{equation}
Then, we have  
\[
\PP\left\{\|\mle - T_*\|_1 \lesssim \kappa^{-1}(A_{\oJ}, K)\sqrt{K \over N}\right\} \ge 1-2p^{-1}-4K^{-1}-2e^{-K}.
\]
\end{cor}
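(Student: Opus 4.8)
The plan is to obtain Corollary~\ref{cor_mle_fast_dense} as the dense ($s=K$) specialization of Theorem~\ref{thm_mle_fast}; the only task is to verify that, in this regime, hypothesis~(\ref{cond_N_dense}) is exactly hypothesis~(\ref{cond_N_sparse}) with $s=K$, after which the conclusion and the probability bound of that theorem transfer verbatim.

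First I would record the simplifications forced by $T_*\in\cT(K)$. Here $S_*=\supp(T_*)=[K]$, so $S_*^c=\emptyset$ and, by the definition~(\ref{def_xi}) of $\xi$ (with the convention that the $\ell_\infty$-norm of the empty sub-vector $A_{jS_*^c}$ is $0$, as already noted after~(\ref{def_xi})), we get $\xi=0$. Substituting $s=K$ and $\xi=0$ into~(\ref{M1,M2}): the factors $1\vee\xi^3$ and $1\vee\xi$ equal $1$, the factor $1+\xi\sqrt{K-s}=1+\xi\sqrt{K-K}$ equals $1$, and hence $M_1=\log K/(\kappa^4(A_{\uJ},K)\,T_{\min}^3)$ and $M_2=\log K/(\kappa^2(A_{\oJ},K)\,T_{\min}^2)$. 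In particular the right-hand side of~(\ref{cond_N_dense}) is precisely $C M_1$.

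Next I would show $M_2\le M_1$, so that $\max\{M_1,M_2\}=M_1$ and the hypothesis $N\ge C M_1$ of the corollary coincides with $N\ge C\max\{M_1,M_2\}$. For this I would use the deterministic inclusion $\uJ\subseteq\oJ$ (immediate from~(\ref{def_uJ})--(\ref{def_eps_j}) since each $\eps_j>0$), which gives $\|A_{\uJ}v\|_1\le\|A_{\oJ}v\|_1$ for every $v$ and therefore $\kappa(A_{\uJ},K)\le\kappa(A_{\oJ},K)$; moreover $\kappa(A_{\oJ},K)\le 1$ by evaluating the minimum in~(\ref{def_kappa_A}) at $v=\be_1\in\cC([K])$, which gives $\|A_{\oJ}\be_1\|_1=\sum_{j\in\oJ}A_{j1}\le\1_p^\T A_{\cdot 1}=1$. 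Together with $T_{\min}\le 1/K\le 1$ (since $\sum_{k=1}^K T_{*k}=1$ with all $K$ entries positive), this yields $M_2/M_1=\kappa^4(A_{\uJ},K)\,T_{\min}/\kappa^2(A_{\oJ},K)\le\kappa^2(A_{\uJ},K)\,T_{\min}\le 1$. Hence~(\ref{cond_N_dense}) is exactly~(\ref{cond_N_sparse}) with $s=K$, Theorem~\ref{thm_mle_fast} applies, and its conclusion $\|\mle-T_*\|_1\lesssim\kappa^{-1}(A_{\oJ},s)\sqrt{K/N}$ read with $s=K$ is the asserted bound, valid on the same event of probability at least $1-2p^{-1}-4K^{-1}-2e^{-K}$.

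I do not expect any real obstacle: the argument is pure bookkeeping. The only points deserving a line of justification are the empty-support convention making $\xi=0$ when $S_*=[K]$, and the monotonicity $\kappa(A_{\uJ},K)\le\kappa(A_{\oJ},K)\le 1$ of the restricted $\ell_1\to\ell_1$ condition number under enlarging the row index set, both elementary from the definitions~(\ref{def_kappa_A}) and~(\ref{def_xi}).
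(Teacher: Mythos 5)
Your proposal is correct and follows essentially the same route as the paper, which obtains the corollary directly from Theorem \ref{thm_mle_fast} by setting $s=K$ and noting $\xi=0$. Your extra verification that $M_2\le M_1$ (via $\uJ\subseteq\oJ$, $\kappa(A_{\uJ},K)\le\kappa(A_{\oJ},K)\le 1$ and $T_{\min}\le 1/K$) correctly fills in the bookkeeping the paper leaves implicit when it states condition (\ref{cond_N_dense}) with only the $M_1$ term.
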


Although in our current application we expect $T_*$ to be exactly sparse,  there are many other applications where $T_*$ can only be approximately sparse. For instance, in a standard Latent Dirichlet Allocation model \citep{BleiLDA}, $T_*$ follows a Dirichlet distribution and is never exactly sparse. The theoretical results derived above are directly applicable to these situations.

Theorem \ref{thm_mle_fast} and  Corollary \ref{cor_mle_fast_dense} allow us to pin-point  the  difficulty in establishing rate adaptation to sparsity of the $\mle$ of  a potentially sparse $T_*$, when its 
sparsity pattern is neither known, nor recovered. To this end, notice that although  the bound in Theorem \ref{thm_mle_fast} is derived for sparse $T_*$, the rate is essentially the same as that of Corollary \ref{cor_mle_fast_dense},  that pertains to a dense $T_*$ and, moreover, is established under the  stronger condition (\ref{cond_N_sparse}). 
This condition involves the quantity
$\xi$ defined in (\ref{def_xi}),  which  balances  entries of $A_{\oJ S_*^c}$ and $A_{\oJ S_*}$.
We thus view (\ref{cond_N_sparse}) as the price to pay, compared to (\ref{cond_N_dense}), for not knowing the support $S_*$ of $T_*$. 
We recall that all  prior existing literature on this problem, either classical
\citep{Agresti2012, BFH1975}
or more recent \cite{arora2016}
assumes that $S_*$ is known. 

The next section establishes the remarkable fact that the MLE of $T_*$ in topic models  can be exactly sparse, under conditions that we establish in this section. This property allows us to relax  (\ref{cond_N_sparse}) and prove that the rate of  $ \|\mle - T_*\|_1$  can adapt to the unknown sparsity of $T_*$, when the support of $\mle$ is included in the support of $T_*$.

\subsubsection{The sparsity  of the MLE in topic models }\label{sec_est_T_sparse_known_A}

We will shortly investigate and discuss conditions under which $\mle$ in topic models is sparse, a remarkable feature of the MLE  since there is no explicit regularization in (\ref{def_MLE}). To that end, we will show that    
\begin{equation}\label{event_supp_mle}
\E_{\supp} \coloneqq \left\{\supp(\mle)\subseteq \supp(T_*)\right\}
\end{equation}
holds with high probability.  Therefore, when $T_*$ has zero entries,   $\mle$ will  also be sparse, and have at least as many zeroes. 
Before stating these results more formally, we give  a first implication, in Corollary \ref{cor_mle_sparse_supp},  of the sparsity of the MLE on its  $\ell_1$-norm rate. 


\begin{cor}\label{cor_mle_sparse_supp}
For any $T_*\in \cT(s)$ with $1\le s < K$, assume there exists some sufficiently large constant $C>0$ such that 
\begin{equation}\label{cond_N_sparse_supp}
N \ge C
{\log (s \vee n) \over \kappa^4(A_{\uJ},s) ~T_{\min}^3}.
\end{equation}
Then, for any $\epsilon \ge 0$, 
\[
\PP\left[ \E_{\supp} \cap    \left\{      \|\mle - T_*\|_1 \lesssim \kappa^{-1}(A_{\oJ}, s)\sqrt{s\log (1/\epsilon ) \over N}\right\} \right] \ge 1-\frac{2}{p} -\frac{4}{ s \vee n} - 2\epsilon^s.
\]
\end{cor}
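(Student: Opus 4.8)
The plan is to condition on the sparsity-recovery event and then observe that, on that event, $\mle$ is simply the MLE of a correctly specified, \emph{dense} sub-model of dimension $s$, so that the faster-rate machinery of Section~\ref{sec_est_T_known_A_dense_T} applies with $K$ replaced by $s$ and $\xi$ replaced by $0$.

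First I would invoke the sparsity-recovery result (Theorem~\ref{thm_supp}, stated below): under its incoherence condition on $A$ together with~(\ref{cond_N_sparse_supp}), the event $\E_{\supp}=\{\supp(\mle)\subseteq S_*\}$ holds with probability at least $1-2p^{-1}-4(s\vee n)^{-1}$. This already settles the case $\epsilon=0$, where the $\ell_1$ bound is vacuous, so from now on I fix $\epsilon>0$ and only need to bound $\|\mle-T_*\|_1$ on $\E_{\supp}$.

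Second, I would set up the reduction. On $\E_{\supp}$ the map $U\mapsto T$ with $T_{S_*}=U$ and $T_{S_*^c}=0$ is a bijection from $\Delta_s$ onto $\{T\in\Delta_K:\supp(T)\subseteq S_*\}$ that preserves the log-likelihood, since $A_{j\cdot}^\T T=A_{jS_*}^\T U$; because $\mle$ is a maximizer over $\Delta_K$ lying in this subset, it is also a maximizer over it, so $\mle_{S_*}$ is exactly the MLE~(\ref{def_MLE}) of the sub-model $NX\sim\textrm{Multinomial}_p(N;A_{\cdot S_*}T_{*S_*})$, which is correctly specified with weight vector $T_{*S_*}\in\Delta_s$ of \emph{full} support, and $\|\mle-T_*\|_1=\|\mle_{S_*}-T_{*S_*}\|_1$. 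I would then apply Theorem~\ref{thm_mle_fast} to this sub-model, with $s$ in the role of $K$, using three bookkeeping observations: (i) $\oJ$, $\uJ$ and $\Pi_*$ are unchanged, as they depend only on $\Pi_*,N,p$; (ii) the sub-model's weight vector being dense forces $\xi=0$ in~(\ref{def_xi}), so by Remark~\ref{rem_rho} the relevant $\rho$ for the sub-model is at most $1/T_{\min}$; (iii) restricting $A$ to its columns in $S_*$ only enlarges the restricted condition numbers, $\kappa((A_{\cdot S_*})_{\oJ},s)\ge\kappa(A_{\oJ},s)$ and $\kappa((A_{\cdot S_*})_{\uJ},s)\ge\kappa(A_{\uJ},s)$, since $\R^s$ embeds into the cone $\cC(S_*)\subseteq\R^K$. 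With $\xi=0$ the binding requirement of~(\ref{cond_N_sparse}) for the sub-model is $N\gtrsim\log s/(\kappa^4((A_{\cdot S_*})_{\uJ},s)T_{\min}^3)$ (there $M_2\le M_1$ because $\oJ\supseteq\uJ$ and $\kappa,T_{\min}\le1$), and by (i)--(iii) this is implied by~(\ref{cond_N_sparse_supp}). An $\epsilon$-flexible form of Theorem~\ref{thm_mle_fast} (discussed next) then gives, on $\E_{\supp}$ intersected with a data event $\mathcal{G}$,
\[
\|\mle-T_*\|_1=\|\mle_{S_*}-T_{*S_*}\|_1
~\lesssim~\kappa^{-1}((A_{\cdot S_*})_{\oJ},s)\,\sqrt{(s+s\log(1/\epsilon))/N}
~\lesssim~\kappa^{-1}(A_{\oJ},s)\,\sqrt{s\log(1/\epsilon)/N},
\]
and a union bound over $\E_{\supp}^c$ and the extra tail event in $\mathcal{G}$ yields the stated probability $1-2p^{-1}-4(s\vee n)^{-1}-2\epsilon^{s}$, the $2p^{-1}$ and $(s\vee n)^{-1}$-level losses being shared with those already incurred in Theorem~\ref{thm_supp}.

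The step I expect to be the main obstacle is producing this $\epsilon$-flexible version of Theorem~\ref{thm_mle_fast} with the correct bookkeeping: as stated, the theorem carries the fixed confidence $1-2p^{-1}-4K^{-1}-2e^{-K}$, so I would re-run its proof keeping the tail parameters of the underlying concentration bounds explicit --- the ones for the $\|E\|_\infty$-type quantities and for the Hessian deviation of Lemma~\ref{lem_I_deviation}, calibrated to an $(s\vee n)^{-1}$ loss (which is what the $\log(s\vee n)$ in~(\ref{cond_N_sparse_supp}) pays for), and especially the final concentration of $\|H^{-1/2}E\|_2$: a Hanson--Wright or Bernstein-type inequality should give $\|H^{-1/2}E\|_2^2\lesssim(s+\log(1/\delta))/N$ with probability $1-\delta$, and taking $\delta=\epsilon^{s}$ produces the displayed rate (for $\epsilon\le e^{-1}$ this is $\lesssim\kappa^{-1}(A_{\oJ},s)\sqrt{s\log(1/\epsilon)/N}$). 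A secondary, routine point is checking that the Hessian-concentration hypothesis behind Theorem~\ref{thm_mle_fast} --- the analogue of~(\ref{cond_T_neigh}) --- holds for the sub-model under~(\ref{cond_N_sparse_supp}), which is the computation in~(\ref{ineq:explain}) with $\rho\le1/T_{\min}$, the monotonicity of $\kappa$ under column-restriction, and the initial $\ell_1$ bound of Theorem~\ref{thm_mle} specialized to the sub-model.
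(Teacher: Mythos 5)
Your core argument is exactly the paper's proof: on $\E_{\supp}$ the MLE coincides with the restricted MLE of the correctly specified dense sub-model on $S_*$, so $\|\mle-T_*\|_1=\|[\mle-T_*]_{S_*}\|_1$ and the bound follows from Corollary~\ref{cor_mle_fast_dense} (equivalently Theorem~\ref{thm_mle_fast} with $\xi=0$) applied with $K$ replaced by $s$, after re-running the concentration steps with $\log(s\vee n)$ in the sample-size requirement and a deviation level of order $s\log(1/\epsilon)$, which is precisely where the tail $1-2p^{-1}-4(s\vee n)^{-1}-2\epsilon^{s}$ comes from. Your bookkeeping observations (ii)--(iii) — $\xi=0$ hence $\rho\le 1/\Tm$ for the sub-model, and monotonicity of the restricted condition numbers under column restriction via embedding $\R^s$ into $\cC(S_*)$ — are correct and simply make explicit what the paper's one-line proof leaves implicit.

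The one place you depart from the paper is your first step, and you should drop it. The paper does not invoke Theorem~\ref{thm_supp} inside this proof: the corollary is a conditional statement in which $\E_{\supp}$ enters only as a conditioning event, and the stated probability accounts solely for the concentration events of the dense-case analysis; the incoherence condition (\ref{cond_supp}) is not among the corollary's hypotheses, so importing it proves a more heavily assumed statement than the one asserted. Moreover, invoking Theorem~\ref{thm_supp} here creates an apparent circularity, since the paper proves Theorem~\ref{thm_supp} afterwards and its proof uses the rate from this very corollary (for the oracle estimator $\wt T$ in (\ref{est_T_tilde})). The circularity is avoidable — the rate needed there is for the restricted MLE on the known support, which follows from Corollary~\ref{cor_mle_fast_dense} without any support-recovery event — but the clean route is the paper's: prove the conditional bound first, exactly as in your steps two through four, and leave the lower bound on $\PP(\E_{\supp})$ to Theorem~\ref{thm_supp}.
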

To compare the rates with Theorem \ref{thm_mle}, suppose Assumption \ref{ass_sep} in Section \ref{sec_est_A}  holds and we have $\rho \ge \rho_{S_*} = {1/\Tm}$ from Remark \ref{rem_rho}. Since $\Tm \le 1/s$ and $1\ge \kappa(A_{\oJ},s) \ge \kappa(A_{\uJ}, s)$, we conclude that the rate in Corollary \ref{cor_mle_sparse_supp} is no slower than that in Theorem \ref{thm_mle}.

Compared to Theorem \ref{thm_mle_fast} and condition (\ref{cond_N_sparse}), on the even $\E_{\supp}$, the faster rate in Corollary \ref{cor_mle_sparse_supp} is obtained under a weaker condition (\ref{cond_N_sparse_supp}). This reflects the benefit of (one-sided) support recovery, $\supp(\mle) \subseteq \supp(T_*)$.

In the following theorem, we show that $\E_{\supp}$ indeed holds with high probability under an incoherence condition on $A$. 

\begin{thm}\label{thm_supp}
For any $T_*\in \cT(s)$ with any $1\le s< K$, assume (\ref{cond_N_sparse_supp}). 
Further assume
there exists some sufficiently small constant $c>0$ such that 
\begin{equation}\label{cond_supp}
\left(\kappa^{-1}(A_{\uJ},s)\sqrt{\xi  s\over \Tm}
+ 1\right)\sqrt{\xi\log(K) \over \Tm N}
~ \le  ~ c \min_{k\in S_*^c} \sum_{j\in \oJ^c}A_{jk}.
\end{equation}
Then, one has 
$
\PP(\E_{\supp}) \ge 1-2p^{-1}-4(s\vee n)^{-1} -4K^{-1}.
$
\end{thm}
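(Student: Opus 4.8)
The plan is to establish $\E_{\supp} = \{\supp(\mle) \subseteq \supp(T_*)\}$ via a primal--dual witness argument built on the KKT conditions of the constrained maximization in (\ref{def_MLE}). First I would write down the Lagrangian for maximizing $\sum_{j\in J} X_j \log(A_{j\cdot}^\T T)$ subject to $T \succeq \b0$ and $\1_K^\T T = 1$, introducing a multiplier $\mu \in \R$ for the equality constraint and multipliers $\lambda_k \ge 0$ for the inequality constraints. The stationarity condition reads
\[
\sum_{j\in J} \frac{X_j A_{jk}}{A_{j\cdot}^\T \mle} = \mu - \lambda_k, \qquad k\in[K],
\]
with complementary slackness $\lambda_k [\mle]_k = 0$. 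For $k \in S_*$ we expect $[\mle]_k > 0$ (so $\lambda_k = 0$), giving an equation that pins down $\mu$; for $k \in S_*^c$, showing $[\mle]_k = 0$ is equivalent to exhibiting $\lambda_k \ge 0$, i.e. to proving
\[
\sum_{j\in J} \frac{X_j A_{jk}}{A_{j\cdot}^\T \mle} ~\le~ \mu \qquad \text{for all } k \in S_*^c.
\]
So the witness construction is: solve the restricted problem on $S_*$ (which, since $\supp(T_*)\subseteq S_*$, has the same optimum as a genuinely restricted MLE), and then verify the dual feasibility inequality above for the off-support coordinates.

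The core estimate is a lower bound on $\mu$ and an upper bound on the off-support score $\sum_{j\in J} X_j A_{jk}/(A_{j\cdot}^\T \mle)$. The key observation is that $J \subseteq \oJ$ with high probability, so for $j \in J$ we have $\Pi_{*j} > 0$ and $A_{j\cdot}^\T \mle$ can be compared with $\Pi_{*j}$. I would use condition (\ref{cond_N_sparse_supp}) together with Theorem \ref{thm_mle} (applicable because (\ref{cond_N_sparse_supp}) forces $\rho\|\mle-T_*\|_1$ small, as in (\ref{ineq:explain})) to deduce the two-sided control $A_{j\cdot}^\T \mle \asymp \Pi_{*j}$ uniformly over $j \in \oJ$ — in particular $A_{j\cdot}^\T\mle \ge (1-c')\Pi_{*j}$. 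Plugging this in and centering $X_j$ around $\Pi_{*j}$,
\[
\sum_{j\in J} \frac{X_j A_{jk}}{A_{j\cdot}^\T \mle}
~\le~ \frac{1}{1-c'}\sum_{j\in \oJ} \frac{X_j A_{jk}}{\Pi_{*j}}
~=~ \frac{1}{1-c'}\left(\sum_{j\in\oJ} A_{jk} + \sum_{j\in\oJ}\frac{(X_j-\Pi_{*j})A_{jk}}{\Pi_{*j}}\right),
\]
and a concentration bound of the type in Lemma \ref{lem_oracle_error} controls the stochastic term by (something like) $\sqrt{\xi \log K/(\Tm N)}$ times the relevant scale. For the lower bound on $\mu$, I would derive from the on-support stationarity equations that $\mu$ is essentially $\sum_{j\in J} X_j \approx 1$ (exactly $1$ if all words are on $\oJ$), plus lower-order fluctuations. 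The crucial gap that makes dual feasibility hold is that $\sum_{j\in\oJ^c} A_{jk}$ for $k\in S_*^c$ is the "mass missing" from $\sum_{j\in\oJ}A_{jk} = 1 - \sum_{j\in\oJ^c}A_{jk}$, so the deterministic part of the off-support score is bounded by $1 - \min_{k\in S_*^c}\sum_{j\in\oJ^c}A_{jk}$, strictly below $\mu\approx 1$; condition (\ref{cond_supp}) is precisely the requirement that this deterministic slack dominates all the stochastic error terms.

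The main obstacle I anticipate is the coupling between $\mle$ appearing in the denominator $A_{j\cdot}^\T\mle$ and the quantity one is trying to bound: the dual feasibility inequality involves $\mle$ on both sides, so one cannot simply plug in a known vector. The resolution is the bootstrapping already set up in the paper — use Theorem \ref{thm_mle} as a crude preliminary bound to get $A_{j\cdot}^\T\mle \asymp \Pi_{*j}$, then feed that back into the KKT analysis. A secondary technical point is handling the distinction between $J$, $\uJ$ and $\oJ$: one must be careful that words in $\oJ \setminus J$ (unobserved but positive-probability) and words in $\oJ^c$ (zero probability) are treated correctly, which is why the conclusion's probability $1 - 2p^{-1} - 4(s\vee n)^{-1} - 4K^{-1}$ combines the event $\uJ\subseteq J\subseteq\oJ$ (Lemma \ref{lem_basic}) with the concentration events feeding Theorem \ref{thm_mle} and the score bound. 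I would also need to confirm that the restricted optimizer genuinely has $[\mle]_k>0$ for all $k\in S_*$ (so that $\lambda_k=0$ there), which again follows from the $\ell_1$ bound being smaller than $\Tm$ under (\ref{cond_N_sparse_supp}).
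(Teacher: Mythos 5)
You have the right skeleton: the paper's proof is also a primal--dual witness argument that constructs $\wt T$ by solving the restricted problem (\ref{est_T_tilde}) on $S_*$, sets $\wt T_{S_*^c}=\b0$, and verifies the dual feasibility inequality $\sum_{j\in J}X_jA_{jk}/(A_{jS_*}^\T\wt T_{S_*})<1$ for $k\in S_*^c$, with the deterministic slack supplied by $\sum_{j\in\oJ^c}A_{jk}$ exactly as you describe. But there are two concrete gaps. First, the witness argument only certifies that \emph{some} optimizer of (\ref{def_MLE}) is supported on $S_*$; the theorem is about $\mle$, and the log-likelihood is concave but not strictly concave on $\Delta_K$ (only along $\mathrm{span}\{A_{j\cdot}:j\in J\}$), so the argmax need not be unique. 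The paper therefore needs a second step: because the constructed multipliers satisfy $\wt\lambda_k>0$ \emph{strictly} for all $k\in S_*^c$, any other optimizer $\bar T$ must satisfy $\wt\lambda^\T\bar T=0$ (by a first-order/concavity argument comparing $f(\bar T)$ with $f(\wt T)$), which forces $\bar T_{S_*^c}=\b0$. Your proposal omits this entirely, and without it the conclusion about $\mle$ does not follow. (Incidentally, your final worry about confirming $[\mle]_k>0$ for $k\in S_*$ is unnecessary: the equality-constraint multiplier is pinned down exactly by taking the inner product of the stationarity equation with $\wt T_{S_*}$ and using complementary slackness, whether or not the restricted optimizer has full support on $S_*$.)

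Second, your displayed bound is too crude to prove the theorem under (\ref{cond_supp}) as stated. Multiplying the \emph{entire} sum by $(1-c')^{-1}$ with $c'\asymp\rho\,\|\mle-T_*\|_1$ also inflates the deterministic part $\sum_{j\in\oJ}A_{jk}=1-\sum_{j\in\oJ^c}A_{jk}$, so to land back below the threshold you need $c'\lesssim\min_{k\in S_*^c}\sum_{j\in\oJ^c}A_{jk}$, i.e.\ $(1\vee\xi)\Tm^{-1}\|\D\|_1\lesssim\min_{k}\sum_{j\in\oJ^c}A_{jk}$. Condition (\ref{cond_supp}) only supplies the analogous requirement with the smaller factor $\rho_{S_*^c}\le\xi/\Tm$ in place of $\rho\le(1\vee\xi)/\Tm$: its left-hand side vanishes as $\xi\to0$, whereas your requirement does not, so your argument fails precisely in the favorable, highly incoherent regime the theorem is designed for. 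The fix --- and what the paper does --- is to keep the numerator $A_{jk}$ with $k\in S_*^c$ attached when expanding the reciprocal difference: write $\sum_{j}X_jA_{jk}\bigl[(A_{jS_*}^\T\wt T_{S_*})^{-1}-(A_{jS_*}^\T T_{*S_*})^{-1}\bigr]$, pull out $\max_{j\in\oJ}A_{jk}/\Pi_{*j}=\rho_{S_*^c}$ together with $\|\D\|_1$, and bound the remaining factor $\max_{a\in S_*}\sum_{j\in J}X_jA_{ja}/(A_{jS_*}^\T\wt T_{S_*})$ by $1$ using the restricted problem's own KKT conditions. That is where the $\xi$ in (\ref{cond_supp}) comes from.
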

\begin{proof}[Sketch of the proof]
We defer the detailed proof to Appendix \ref{app_proof_supp}, but offer a sketch here. For any $T_*$ with $\supp(T_*) \subseteq [K]$, our proof of $\supp(\mle) \subseteq \supp(T_*)$  consists in two steps. We show that
\begin{enumerate}\setlength\itemsep{0mm}
\item[(i)]   there exists an optimal solution $\wt T$ to (\ref{def_MLE}) such that $\supp(\wt T) \subseteq \supp(T_*)$;
\item[(ii)]   if there exists 
any other optimal solution $\bar T$ to (\ref{def_MLE}) that is different from $\wt T$, we also have 
$\supp(\bar T) \subseteq \supp(T_*)$. 
\end{enumerate}
Since $\mle$ itself is an optimal solution to (\ref{def_MLE}), combining (i) and (ii) yields the desired result. 

To prove (i), we use the primal-dual witness approach based on the KKT condition of (\ref{def_MLE}). Specifically, we construct the (oracle) optimal solution $\wt T$ as
\begin{equation}\label{est_T_tilde}
\wt T_{S_*} =  \argmax_{\beta\in \Delta_s} N\sum_{j\in J}X_j \log\left(A_{jS_*}^\T \beta\right),\qquad \wt T_{S_*^c} = \b0.
\end{equation}
Here $S_* = \supp(T_*)$ and $s = |S_*|$. On the   random event
\begin{equation}\label{event_supp}
\max_{k\in S_*^c}\sum_{j\in J}X_j {A_{jk} \over A_{jS_*}^\T \wt T_{S_*}} < 1,
\end{equation}
we prove step (i) by showing that $\wt T$ is an optimal solution to (\ref{def_MLE}) via its KKT condition. Also on the event (\ref{event_supp}), we prove step (ii) by using the concavity of the loss function in (\ref{def_MLE}) together with some intermediate results from proving (i). Finally, we show that the random event (\ref{event_supp_mle}) holds with the specified probability in Theorem \ref{thm_supp} under condition (\ref{cond_supp}).
\end{proof}

For completeness, in Appendix \ref{app_supp_recovery},  we show that for a certain class of  topic models  $\mle$ is not only sparse, but can also consistently estimate the zero entries in $T_*$.  Other examples are possible, but we restrict our attention to  topic  models (\ref{topic}) with anchor words, satisfying Assumption \ref{ass_sep} stated in Section \ref{sec_est_A}, for which we  show that we also have  $\supp(T_*) \subseteq \supp(\mle)$ with high probability. Combination with Theorem \ref{thm_supp} proves consistent support recovery of $\mle$, in this class of topic models, a fact also  confirmed by our simulations in Appendix \ref{sec:sims}.

\begin{example}
We argued above  that, when $A$ has a certain configuration, if $T_*$ has zero entries, so will  $\mle$.  We provide below a simple but illuminating  example of this fact.
Assume that all words are anchor words: each topic uses its own dedicated words, and there is no overlap between words per topic. We collect the respective  word indices, per topic, in the set $\{I_1,\ldots, I_K\}$ which forms a partition of $[p]$. In this case, the columns of $A$ have disjoint supports, and by inspecting the displays (\ref{eq_kkt_oracle_1}) -- (\ref{eq_kkt_oracle_2}) in the proof of Theorem \ref{thm_mle}, one can deduce that  $\mle$ has the following closed-form expression 
\[
[\mle]_{k} = \sum_{i\in I_k}X_i,\qquad \forall\  k\in [K].
\]
Indeed, the above expression can be understood by noting that 
$
Z \sim \textrm{Multinomial}_K(N; T_*) 
$
where  $Z_k = N\sum_{i\in I_k}X_i$  \ for each $k\in [K]$. Therefore, when $T_{*k} = 0$ \ for some $k\in [K]$, we immediately have $Z_k = 0$, w.p. 1. Thus,  $[\mle]_k = 0$, and  $\supp(\mle) \subseteq \supp(T_*)$. 
For a more general $A$, the phenomenon $\supp(\mle) \subseteq \supp(T_*)$ still remains under the incoherence condition (\ref{cond_supp}) that we explain in detail in the following remark.     
\end{example} 

\begin{remark}\label{rem_cond_supp}
Condition (\ref{cond_supp}) can be interpreted as an incoherence condition on the submatrices $A_{\cdot S_*}$ and $A_{\cdot S_*^c}$. To see this, recall from Remark \ref{rem_rho} that $\xi$ controls the largest ratio of $\|A_{jS_*^c}\|_\i$ to $\|A_{jS_*}\|_1$ over all $j\in \oJ$. Since
\[
\oJ = \{j\in [p]: A_{jS_*}\ne \b0\} \quad \textrm{ and }\quad \oJ^c  = \{j\in [p]: A_{jS_*} = \b0\},
\]
the left hand side of (\ref{cond_supp}) controls from above the magnitude of the entries $A_{jS_*^c}$ for the rows with $A_{jS_*} \ne \b0$, whereas the right hand side bounds from below $A_{\cdot S_*^c}$ on the rows with $A_{jS_*} = \b0$. To aid intuition, the following figure illustrates the restriction on $A$ where the submatrix $A_{\oJ S_*^c}$ is required to have relatively small entries, while the submatrix $A_{\oJ^cS_*^c}$ needs to have relatively large entries.
\begin{figure}[H]
\centering
\includegraphics[width = .3\textwidth]{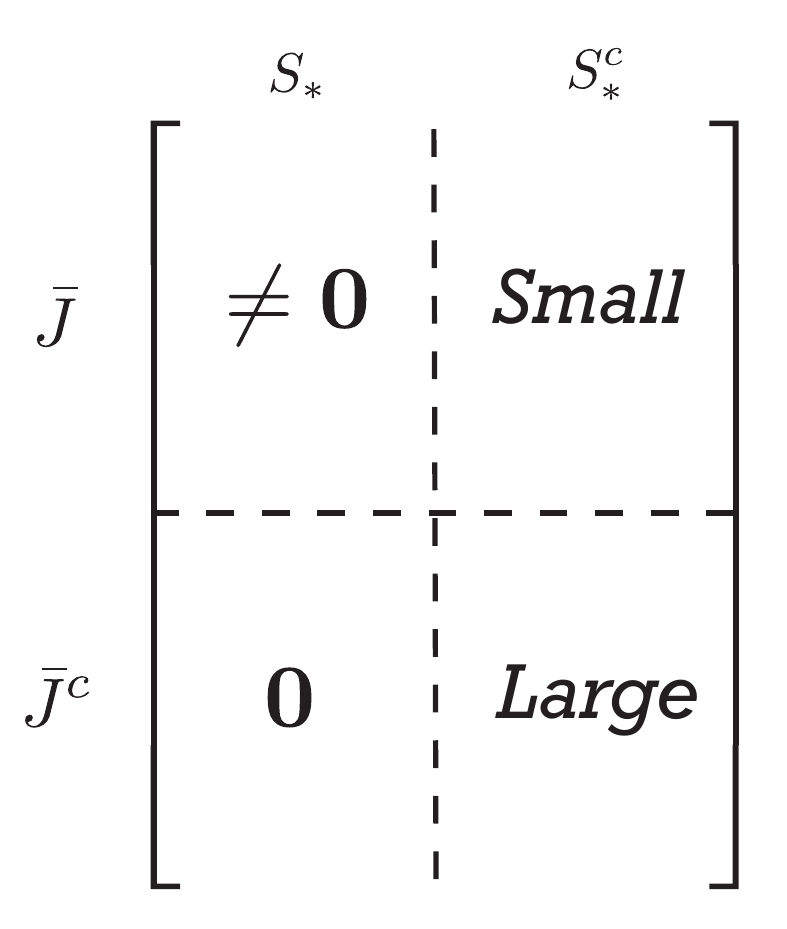}
\end{figure}
\noindent Generally speaking, the more incoherent $A_{\cdot S_*}$ and $A_{\cdot S_*^c}$ are, the more likely condition (\ref{cond_supp}) holds.

In particular,  condition (\ref{cond_supp}) always holds if $A_{\cdot S_*}$ and $A_{\cdot S_*^c}$ have disjoint support. Another favorable situation for (\ref{cond_supp}) is when there exist anchor words in the dictionary  (see, Assumption \ref{ass_sep} in Section \ref{sec_est_A}). Specifically, when there exist at least $m$ anchor words for each of the topics indexed by  $S_*^c$, and their non-zero entries in the corresponding rows of $A$ are lower bounded by $\delta\in (0, 1/m]$ (recall that columns of $A$ sum up to one), the right hand side of (\ref{cond_supp}) is no smaller than $c( m\delta)$. 
In general, suppose $\kappa^{-1}(A_{\oJ},s) = \cO(1)$. If $\xi = \cO(1)$, condition (\ref{cond_supp}) is implied by
\[
\min_{k\in S_*^c} \sum_{j\in \oJ^c}A_{jk} \ge C\sqrt{s\log(K) \over T_{\min}^2 N}.
\]
In case $\xi = \cO(1/s)$, condition (\ref{cond_supp}) requires 
\[
\min_{k\in S_*^c} \sum_{j\in \oJ^c}A_{jk} \ge C \sqrt{\log(K) \over sT^2_{\min} N}.
\]
\end{remark}

To conclude our discussion of the fast rates of the MLE, we remark that the rate in Theorem \ref{thm_mle} per se could be as fast as $\sqrt{s\log K / N}$ under additional conditions and if we restrict ourselves to the following subspace of $\cT(s)$:
\begin{equation}\label{par_space_restrict}
\cT'(s) \coloneqq \cT'(s, c_\star) = \{T\in \cT(s): T_{\min} \ge c_\star/s\}.
\end{equation}
Here $c_\star\in(0,1]$ is some absolute constant.
The following corollary summarizes the conditions that we need to simplify the rates in Theorem \ref{thm_mle} and combines them with the conditions in Corollary \ref{cor_mle_sparse_supp} and Theorem \ref{thm_supp} to yield a faster rate of the MLE when $T_*$ is sparse.


\begin{cor}\label{cor_mle_final}
For any $T_*\in\cT(s)$ with $1\le s < K$, 
\begin{enumerate}
\item[(a)] if  $T_*\in \cT'(s)$,
$\xi = \cO(1)$, $\kappa^{-1}(A_{\uJ}, s) = \cO(1)$ and $s\log (K) = \cO(N)$, then 
\[
\|\wh T_{\rm mle} - T_*\|_1 ~ =  ~ \cO_{\PP}\left(\sqrt{s\log (K) \over N}\right);
\]
\item[(b)] if conditions (\ref{cond_N_sparse_supp}) -- (\ref{cond_supp}) and $\kappa^{-1}(A_{\oJ}, s) = \cO(1)$ hold, then 
\[
\|\wh T_{\rm mle} - T_*\|_1 ~ =  ~ \cO_{\PP}\left(\sqrt{s \over N}\right).
\]
\end{enumerate}
\end{cor}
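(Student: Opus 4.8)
The plan is to obtain both parts by specializing the finite-sample bounds already established, and converting them to $\cO_\PP$ statements through a suitable constant choice of the free parameter $\epsilon$; the failure probabilities of the form $2p^{-1}$, $4(s\vee n)^{-1}$, and $4K^{-1}$ vanish because the ambient dimensions grow with the sample sizes.

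For part (a) I would invoke Theorem \ref{thm_mle}, which applies since $\kappa^{-1}(A_{\uJ},s) = \cO(1)$ forces $\kappa(A_{\uJ},s) > 0$. The only quantity in \eqref{firstbound} not already controlled by hypothesis is $\rho$, which I would bound via \eqref{bd_rho}, i.e.\ $\rho \le (1 \vee \xi)/T_{\min}$, together with $T_{\min} \ge c_\star/s$ (this is precisely $T_*\in\cT'(s)$) and $\xi = \cO(1)$, to get $\rho = \cO(s)$. Substituting this, $\kappa^{-2}(A_{\uJ},s) = \cO(1)$, and a fixed $\epsilon$ (so that $\log(K/\epsilon) \lesssim \log K$, using $K \ge 2$) into \eqref{firstbound} yields, on an event of probability at least $1 - 2p^{-1} - 2\epsilon$,
\[
\|\mle - T_*\|_1 = \cO\!\left(\sqrt{\tfrac{s\log K}{N}} + \tfrac{s\log K}{N}\right).
\]
Since $s\log K = \cO(N)$ we have $s\log K/N = \cO(\sqrt{s\log K/N})$ (because $u \le \sqrt{C'}\sqrt u$ whenever $0 \le u \le C'$), so the remainder term is absorbed; letting $\epsilon$ be an arbitrarily small fixed constant then gives the asserted $\cO_\PP(\sqrt{s\log K/N})$ bound.

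For part (b) I would combine Theorem \ref{thm_supp} with Corollary \ref{cor_mle_sparse_supp}. Conditions \eqref{cond_N_sparse_supp}--\eqref{cond_supp} are exactly the hypotheses of Theorem \ref{thm_supp}, which therefore gives $\PP(\E_{\supp}) \ge 1 - 2p^{-1} - 4(s\vee n)^{-1} - 4K^{-1}$. On the event $\E_{\supp}$, Corollary \ref{cor_mle_sparse_supp} (its hypothesis \eqref{cond_N_sparse_supp} being in force) supplies, for any $\epsilon \ge 0$, the bound $\|\mle - T_*\|_1 \lesssim \kappa^{-1}(A_{\oJ},s)\sqrt{s\log(1/\epsilon)/N}$ with probability at least $1 - 2p^{-1} - 4(s\vee n)^{-1} - 2\epsilon^s$. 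Intersecting the two events, using $\kappa^{-1}(A_{\oJ},s) = \cO(1)$, and taking $\epsilon$ a small fixed constant (so that $\log(1/\epsilon) = \cO(1)$ and $\epsilon^s \le \epsilon$) delivers $\|\mle - T_*\|_1 = \cO_\PP(\sqrt{s/N})$.

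I do not anticipate a substantive obstacle: the corollary is bookkeeping on top of Theorems \ref{thm_mle} and \ref{thm_supp} and Corollary \ref{cor_mle_sparse_supp}. The only points requiring care are (i) absorbing $\log(K/\epsilon)$ into $\log K$ and $\log(1/\epsilon)$ into an absolute constant when passing from the explicit finite-sample bounds to the $\cO_\PP$ statements, and (ii) checking that the scaling $s\log K = \cO(N)$ is exactly what discards the quadratic-in-$\rho$ remainder term of \eqref{firstbound}; both are elementary.
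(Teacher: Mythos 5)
Your proposal is correct and follows essentially the same route the paper intends: part (a) is Theorem \ref{thm_mle} with $\rho\le(1\vee\xi)/T_{\min}=\cO(s)$ under $T_*\in\cT'(s)$ and the remainder term absorbed via $s\log K=\cO(N)$, and part (b) is Theorem \ref{thm_supp} (giving $\E_{\supp}$ with high probability under (\ref{cond_N_sparse_supp})--(\ref{cond_supp})) combined with Corollary \ref{cor_mle_sparse_supp} at a fixed small $\epsilon$. The bookkeeping with $\log(K/\epsilon)$, $\log(1/\epsilon)$, $\epsilon^s\le\epsilon$, and the vanishing tail probabilities is handled as the paper does, so there is nothing to add.
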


We note that the bound in case (a) from Theorem \ref{thm_mle} is slower by a factor $\sqrt{\log(K)}$, which is the price to pay for not recovering the support of $T_*$.
In Section \ref{sec_minimax} we benchmark the fast rate $\sqrt{s/N}$ in Corollary \ref{cor_mle_final} and show that it is minimax rate optimal, by establishing the minimax lower bounds of estimating $T_*\in \cT(s)$ for any $1\le s\le K$.

\begin{remark}[Comparison with existing work]
For known $A$, and when $S_*$ is also known, \cite{arora2016} analyzes the estimator $\wt T$ as defined in (\ref{est_T_tilde}). Note that $\wt T$ is not the MLE in general and $\|\wt T -T_*\|_1 = \|(\wt T -T_*)_{S_*}\|_1$ holds by definition. 
Under $\kappa^{-1}(A_{\oJ}, s) = \cO(1)$ and a condition similar to (\ref{cond_N_sparse_supp}), 
\citet[Theorem 5.3]{arora2016} proves $
\|(\wt T - T_*)_{S_*}\|_1 = \cO_{\PP}(\sqrt{s/ N})
$ {\em only} for $T_*\in \cT'(s)$. Therefore, the result of \cite{arora2016} is only comparable to ours when $T_*$ is dense with $S_* = [K]$. Even in this case, our result (see, for instance, Corollary \ref{cor_mle_fast_dense}) is more general in the sense that we do not require $\Tm \ge c/K$ to obtain the same rate. 
More generally, when $S_*$ is unknown, our result in Corollary \ref{cor_mle_final} shows that the MLE can still have fast rates in many scenarios. Moreover, we prove that the MLE is actually sparse and consistently estimate the zero entries of $T_*$ under the incoherence condition (\ref{cond_supp}).
\end{remark}

\subsubsection{Minimax lower bounds and the optimality of the MLE}\label{sec_minimax}

To benchmark the rate of $\mle$ in Corollary \ref{cor_mle_final}, we now establish the minimax lower bound of estimating $T_*$ over $\cT'(s)$ for any $1 < s\le K$. Notice that such a lower bound is also a minimax lower bound over $T_*\in\cT(s)$, a larger parameter space. 

The following theorem states the $\ell_1$-norm  minimax lower bound of estimating $T_*$ in (\ref{model_single}), from data $X$. 
\begin{thm}\label{thm_minimax}
Under (\ref{model_single}), assume $1<s \le c N$ for some small constant $c>0$. Then there exists some absolute constants $c_0 > 0$ and $c_1 \in (0, 1]$, depending on $c$ only, such that 
\[
\inf_{\wh T}\sup_{T_* \in \cT'(s)}\PP\left\{
\|\wh T - T_*\|_1 \ge 
c_0\sqrt{s\over N}
\right\} \ge c_1.
\]
The infimum is taken over all estimators $\wh T \in \Delta_K$.
\end{thm}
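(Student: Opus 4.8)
The plan is to reduce the problem, by a data-processing argument, to the classical one of lower bounding the $\ell_1$-risk of estimating a probability vector from direct multinomial samples, and then to run a hypercube (Assouad-type) construction on a sub-simplex sitting inside $\cT'(s)$.

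First I would observe that the word-count vector $NX$ is the image of the latent topic-count vector under a Markov kernel not involving $T_*$: draw $Z\sim\mathrm{Multinomial}_K(N;T_*)$, then, conditionally on $Z$, assign a word to each of the $N$ tokens independently according to the columns of $A$; the resulting counts have exactly the law $\mathrm{Multinomial}_p(N;AT_*)$ of $NX$. Thus $T_*\to Z\to NX$ is a Markov chain, and the data-processing inequality for Kullback--Leibler divergence gives, for any $T,T'\in\Delta_K$,
\[
\mathrm{KL}\bigl(\mathcal{L}_T(NX)\,\big\|\,\mathcal{L}_{T'}(NX)\bigr)\ \le\ \mathrm{KL}\bigl(\mathrm{Mult}_K(N;T)\,\big\|\,\mathrm{Mult}_K(N;T')\bigr)\ =\ N\,\mathrm{KL}(T\,\|\,T').
\]
So the $A$-structure plays no role beyond this bound: it suffices to separate the candidate vectors as distributions on $[K]$. (Equivalently, estimating $T_*$ from $NX$ is never easier than estimating it from $Z$ itself.)

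Next I would construct the competing hypotheses. Set $m=\lfloor s/2\rfloor\ge 1$ (using $s>1$) and, for $\omega\in\{0,1\}^m$, define $T_\omega\in\Delta_K$ supported on $[s]$ by perturbing the uniform vector in disjoint pairs: $[T_\omega]_{2j-1}=\tfrac1s+(-1)^{\omega_j}\delta$, $[T_\omega]_{2j}=\tfrac1s-(-1)^{\omega_j}\delta$ for $j\le m$, $[T_\omega]_k=\tfrac1s$ for a possible leftover index $k\le s$, and $[T_\omega]_k=0$ for $k>s$, with $\delta=\alpha/\sqrt{sN}$ for a small constant $\alpha=\alpha(c)$ to be fixed. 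Since $s\le cN$, one has $\delta/(1/s)=\alpha\sqrt{s/N}\le\alpha\sqrt c$, so for $\alpha\sqrt c$ small enough every $T_\omega$ has support exactly $[s]$ and $\Tm\ge(1-\alpha\sqrt c)/s\ge c_\star/s$; hence $\{T_\omega:\omega\in\{0,1\}^m\}\subseteq\cT'(s)$. If $\omega,\omega'$ are at Hamming distance $h$, then $T_\omega$ and $T_{\omega'}$ differ in $2h$ coordinates by $2\delta$ each, so $\|T_\omega-T_{\omega'}\|_1=4\delta h$, while, using $1/s-\delta\ge 1/(2s)$,
\[
\mathrm{KL}(T_\omega\|T_{\omega'})\ \le\ \chi^2(T_\omega\|T_{\omega'})\ =\ \sum_k\frac{\bigl([T_\omega]_k-[T_{\omega'}]_k\bigr)^2}{[T_{\omega'}]_k}\ \le\ 16\,s\,\delta^2\,h,
\]
and therefore $\mathrm{KL}\bigl(\mathcal{L}_{T_\omega}(NX)\,\big\|\,\mathcal{L}_{T_{\omega'}}(NX)\bigr)\le 16Ns\delta^2h=16\alpha^2h$.

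Finally I would invoke a standard multiple-testing bound. The $\ell_1$-separation between neighbours is $4\delta$ per flipped bit and the neighbouring-hypothesis KL is at most $16\alpha^2$, so Assouad's lemma in its probabilistic form gives, for $\alpha$ fixed small enough,
\[
\inf_{\wh T}\ \max_{\omega\in\{0,1\}^m}\ \PP_{T_\omega}\!\Bigl\{\|\wh T-T_\omega\|_1\ \ge\ c' m\delta\Bigr\}\ \ge\ c_1>0,
\]
and since $m\delta\ge(\alpha/3)\sqrt{s/N}$ this is precisely the assertion of the theorem, with $c_0\asymp\alpha$ and $c_1$ depending only on $c$ (when $m=1$, i.e. $s\in\{2,3\}$, this is just Le Cam's two-point bound). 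Alternatively, for large $s$ one may replace $\{0,1\}^m$ by a Varshamov--Gilbert sub-code with pairwise Hamming distance $\gtrsim m$ and apply Fano's inequality, which needs only that the pairwise KL is $o(\log|\text{sub-code}|)\asymp o(m)$, again guaranteed by $16\alpha^2h\le 16\alpha^2m$. The one genuinely non-routine point is the first step: a naive two-point construction could be rendered uninformative by a (nearly) rank-deficient $A$, and the data-processing reduction is what neutralizes this, showing that recovering $T_*$ from word counts is never easier than from the topic counts, where $A$ is absent; the rest is bookkeeping --- checking that $\delta=\alpha/\sqrt{sN}$ keeps every $T_\omega$ in $\cT'(s)$ and that the constants depend only on $c$ and the absolute constant $c_\star$, both immediate from $s\le cN$.
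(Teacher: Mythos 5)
Your proposal is correct, and its combinatorial skeleton coincides with the paper's: both perturb the uniform vector on $[s]$ over a hypercube with amplitude of order $1/\sqrt{sN}$, check membership in $\cT'(s)$ using $s\le cN$, and conclude via a Varshamov--Gilbert packing combined with a Fano-type multiple-testing bound (the paper invokes Tsybakov's Theorem 2.5; your Assouad variant is fine, but the VG-plus-Fano alternative you sketch is the one that directly yields the probability statement in the theorem). Where you genuinely diverge is in the control of the Kullback--Leibler divergence between the laws of the observed word counts. The paper works with the distributions of $NX\sim\mathrm{Multinomial}_p(N;AT)$ directly: it expands the KL via a lemma from \cite{bing2020optimal}, obtaining a quadratic form in $A$ weighted by $1/\Pi^{(0)}_i$, and then bounds the largest eigenvalue of the resulting matrix $G_0$ by $s$ using $\|G_0\|_{\infty,1}$ and the unit column sums of $A$. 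You instead exploit the latent-variable representation $T_*\to Z\to NX$ (topic counts $Z\sim\mathrm{Multinomial}_K(N;T_*)$, with the word counts conditionally independent of $T_*$ given $Z$) and apply data processing, reducing the KL to $N\,\mathrm{KL}(T\Vert T')\le N\chi^2(T\Vert T')\lesssim Ns\delta^2 h$. Your route is more elementary and makes transparent that the lower bound holds uniformly over all $A\in\cA$, with no spectral computation; the paper's route keeps the argument entirely at the level of the observed distributions and produces, as a by-product, the explicit eigenvalue bound $\sigma_1(G_0)\le s$ that quantifies how little the mixing by $A$ can help. Both yield the same KL scaling $\lesssim \alpha^2 s$ against $\log(\text{packing})\gtrsim s$, so the constants work out identically; the only cosmetic difference is your handling of odd $s$ by a leftover coordinate versus the paper's separate case $\M'=\{0,1\}^{(s-1)/2}$.
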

Different from the standard $\ell_1$-norm minimax rate, $s/\sqrt{N}$, of estimating an $s$-dimensional unconstrained vector from $N$ i.i.d. observations, for instance the regression coefficient vector  in linear regression, 	Theorem \ref{thm_minimax} shows that the $\ell_1$-minimax rate  of estimating  the  probability vector $T_* \in \cT'(s)$ is of order  $\sqrt{s/N}$.

In view of Theorem \ref{thm_minimax}, under the conditions of Corollary \ref{cor_mle_final}, the MLE is minimax optimal for $T_*\in \cT'(s)$. In fact, Corollary \ref{cor_mle_final} also shows that under conditions therein, the optimal rate can be still achieved by the MLE on a larger space $\cT(s)$. Furthermore, the derived rates in the minimax lower bounds in Theorem \ref{thm_minimax} are  sharp.

It is also worth mentioning that in contrast to the sparse linear regression setting where the minimax optimal rates of estimating a $p$-dimensional vector with at most $s$ non-zero entries contain a $\log(ep/s)$ term, the minimax optimal rates in our context do not contain an additional $\log(eK/s)$ term, an  advantage of MLE support recovery.

\begin{remark}[Method of moments and least squares estimators]\label{rem_MOM}\mbox{}
The method of moments is a natural alternative to MLE-based estimation. It would correspond to estimating $T_*$ by the solution $X = AT_*$. Since this solution may not lie in the probability simplex $\D_K$, one can consider instead   the restricted least squares estimator (RLS) that regresses $X$ onto $A$ over the probability simplex $\D_K$. However, this method is not optimal,  as it does not take into account the heteroscedasticity of the data $X$. We confirmed this via our simulation study in Appendix \ref{sec:sims}.
An iterative weighted RLS could be used to improve the performance of the RLS.  It is well known that in  
the classical setting with $K$ and $p$ fixed, this technique is asymptotically (as $N\to\i$) equivalent with the MLE (in fact, both are efficient), see, for instance,  \cite{BFH1975} and \cite{Agresti2012}. 
We confirmed this in our simulation studies in Appendix \ref{sec:sims}, but found that it never improved upon the MLE, and furthermore, had a significantly greater computational time than the MLE.

\begin{remark}
Since our target $T_*$ lies in a probability simplex, we view the $\ell_1$ norm as a natural metric for quantifying the estimation error. Nevertheless, our analysis readily gives the error bounds of estimating $T_*$ in $\ell_2$-norm, as stated in Appendix \ref{app_rate_ell_2}.
\end{remark}


\end{remark}

\subsection{Estimation of $T_*$ when $A$ is unknown}\label{sec_est_A_T_unknown_A}

When $A$ is unknown, we propose to estimate $A$ first. The estimation of $A$ has been well understood in the literature of topic models, as reviewed in Section \ref{sec_est_A}.
Our procedure of estimating $T_*$ for unknown $A$  is valid for  any estimator of $A$,  and is  stated and analyzed in Section \ref{sec_est_T_unknown_A}. In Section \ref{sec_est_A_T_unknown_A_eg}, we illustrate our general result by applying it to a particular estimator of $A$. 

\subsubsection{Estimation of $A$}\label{sec_est_A}

The estimation of $A$ under topic models has been  originally studied within a Bayesian framework \citep{BleiLDA,MCMC}, and  variational-Bayes  type  approaches were further proposed to accelerate the computation of fully Bayesian approaches. We refer to \cite{blei-intro} for an in-depth overview of this class of techniques.  

More recently   \citep{arora2012learning,arora2013practical,ding,Anandkumar,Tracy,TOP,bing2020optimal} studied provably fast algorithms for estimating $A$ from a frequentist point of view. The common thread of these works, both theoretically and computationally, is the usage of  the following separability condition.

\begin{ass}\label{ass_sep}
For each $k\in [K]$, there exists $j\in [p]$ such that $A_{jk} \ne 0$ and $A_{jk'} = 0$ for all $k' \in [K]\setminus \{k\}$.
\end{ass}

Assumption \ref{ass_sep} is also known as the anchor word assumption as it translates into assuming the existence of  words that are only related to a single topic. It has been empirically shown in \cite{Ding2015} that Assumption \ref{ass_sep} holds in most large corpora  for which the  topic models are reasonable modeling tools.  Assumption \ref{ass_sep}, coupled with a mild regularity condition on the topic matrix $\bT_*\in \R^{K\times n}$,  also serves as an identifiablity condition on model (\ref{topic}),  in that it can be shown that the matrix $A$ can be uniquely recovered from the expected frequency matrix $\bPi_*$. See, \cite{rechetNMF,arora2013practical} for the case when $K$ is known and more recently, \cite{TOP} for the case when $K$ is unknown. Since $K$ can be consistently estimated when it is unknown (see, for instance, \cite{TOP}), in the sequel we focus on estimators of $A$ that have $K$ columns and belong to the space
\begin{equation}\label{def_space_A}
\cA = \left\{A \in \R^{p\times K}: A_{\cdot k}\in \D_p, ~ \forall ~ k\in [K]\right\}.
\end{equation}
Our results of estimating $T_*$ in Section \ref{sec_est_T_unknown_A} below will apply to any estimator $\wh A \in \cA$ that 
is sufficiently close to $A$ in the matrix norms $\| \cdot \|_{\infty}$ and $\| \cdot \|_{1, \infty}$.

\subsubsection{Estimation of $T_*$}\label{sec_est_T_unknown_A}

Our theory for estimating $T_*$ in this section holds for any estimator $\wh A \in \cA$.   We therefore state them as such, and offer an example of the theory applied with a  particular estimator at the end of this section. 
Motivated by (\ref{def_MLE}), given any estimate $\wh A \in \cA$, we propose to estimate $T_*$ by 
\begin{equation}\label{def_T_hat}
\wh T = \argmax_{T\in \Delta_K} N\sum_{j\in J} X_j \log\left(\wh A_{j\cdot}^\T T\right).
\end{equation}
Note that, in contrast to $\mle$ in (\ref{def_MLE}) for known $A$, the above $\wh T$ depends on $\wh A$ and is not the MLE in general for unknown $A$. 

Since one can only identify and estimate  $A$ up to some permutation of columns, the following theorem provides the convergence rate of $\|\wh T -P^\T T_*\|_1$ with $P\in\H_K$ being some $K\times K$ permutation matrix.  	Its proof can be found in Appendix \ref{app_proof_thm_mle_unknown}.
Recall that the sets $\uJ$ and $\oJ$ are defined in  (\ref{def_oJ}) and (\ref{def_uJ}), and the quantity  $\rho$ is defined in (\ref{def_rho}).

\begin{thm}\label{thm_mle_unknown}
Suppose the events 
\begin{equation}
\label{ineq:max}
\bigcap_{j\in \bar J}
\left\{\|\wh A _{j\cdot} - (A  P)_{j\cdot}\|_\i \le \frac12  \Pi_{*j}\right\}
\end{equation}
and
\begin{equation}\label{ineq:ell1infty}
\left\{\|\wh A_{\uJ} - (A P)_{\uJ}\|_{1,\i} \le {1\over 2} \kappa(A_{\uJ},s)\right\} 
\end{equation}
hold with probability $1-\alpha$,	
for some permutation matrix $P\in \H_K$. Then,
we have, with probability $1-4p^{-1}-\alpha$,
\begin{align*}
\|\wh T - P^\T  T_*\|_1 &\le {3\over  \kappa^{2}(A_{\uJ},s)}\left\{
\sqrt{2\rho\log(p) \over N} + {2\rho \log(p)\over 3N}+16\rho \bigl\|\wh A_{\oJ}-  (AP)_{\oJ}\bigr\|_{1,\i}\right.\\
&\hspace{2.4cm}\left.   + {56\over 3}\rho \sum_{j\in \oJ\setminus \uJ} {\|\wh A_{j\cdot} - (AP)_{j\cdot}\|_\i \over \Pi_{*j}}{\log(p) \over N}
\right\}.
\end{align*}
\end{thm}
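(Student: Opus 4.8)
The plan is to follow the blueprint of the proof of Theorem \ref{thm_mle}, carrying one extra layer of bookkeeping to absorb the error $\wh A\neq A$; the three summands beyond $\sqrt{2\rho\log(p)/N}+\tfrac23\rho\log(p)/N$ in the statement are exactly the residue of that bookkeeping. By permuting columns I may assume $P=\bI_K$; write $\delta=\wh T-T_*$, $\Pi_{*j}=A_{j\cdot}^\T T_*$ and $\nu_j=\wh A_{j\cdot}^\T T_*$. I would work on the intersection of the two hypothesized events (\ref{ineq:max})--(\ref{ineq:ell1infty}), the event $\{\uJ\subseteq J\subseteq\oJ\}$ of Lemma \ref{lem_basic} on which additionally $X_j\ge\tfrac12\Pi_{*j}$ for $j\in\uJ$ and $X_j\lesssim\log(p)/N$ for $j\in\oJ\setminus\uJ$, and the event of Lemma \ref{lem_oracle_error} on which $\|E\|_\infty\le\sqrt{2\rho\log(p)/N}+\tfrac23\rho\log(p)/N$ with $E_k=\sum_{j\in\oJ}(A_{jk}/\Pi_{*j})(X_j-\Pi_{*j})$; a union bound gives probability $1-4p^{-1}-\alpha$. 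Note (\ref{ineq:max}) forces $\nu_j\ge\Pi_{*j}-\|\wh A_{j\cdot}-A_{j\cdot}\|_\infty\ge\tfrac12\Pi_{*j}>0$ for $j\in\oJ\supseteq J$, so the objective in (\ref{def_T_hat}) is finite and concave along the segment joining $\wh T$ and $T_*$.

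The core step is to convert the first-order optimality condition for (\ref{def_T_hat}) into a quadratic inequality. Concavity of $T\mapsto\sum_{j\in J}X_j\log(\wh A_{j\cdot}^\T T)$ and feasibility of $T_*$ give $\sum_{j\in J}X_j\,\wh A_{j\cdot}^\T\delta/(\wh A_{j\cdot}^\T\wh T)\ge0$; applying the elementary identity $a/(b+a)=a/b-a^2/(b(b+a))$ with $a=\wh A_{j\cdot}^\T\delta$ and $b=\nu_j$ rearranges this to $\delta^\T\wt{\wh H}\delta\le\sum_{j\in J}X_j\,\wh A_{j\cdot}^\T\delta/\nu_j$, where $\wt{\wh H}=\sum_{j\in J}X_j(\nu_j\,\wh A_{j\cdot}^\T\wh T)^{-1}\wh A_{j\cdot}\wh A_{j\cdot}^\T$ --- the exact analogue of the inequality behind Theorem \ref{thm_mle}, with $\wh A$ and $\nu_j$ in place of $A$ and $\Pi_{*j}$. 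To lower-bound $\delta^\T\wt{\wh H}\delta$ I would restrict its defining sum to $j\in\uJ$ and apply Cauchy--Schwarz, obtaining $\delta^\T\wt{\wh H}\delta\ge\|\wh A_{\uJ}\delta\|_1^2\big/\sum_{j\in\uJ}\nu_j(\wh A_{j\cdot}^\T\wh T)/X_j$; on $\uJ$ we have $X_j\ge\tfrac12\Pi_{*j}$ and $\nu_j\le\tfrac32\Pi_{*j}$ by (\ref{ineq:max}), so the denominator is at most $3\sum_{j\in\uJ}\wh A_{j\cdot}^\T\wh T\le3\,\1_K^\T\wh T=3$ because the columns of $\wh A$ lie in $\Delta_p$, while $\delta\in\cC(S_*)$ by (\ref{cone_mle}), $|S_*|\le s$, the definition of $\kappa(A_{\uJ},s)$ and (\ref{ineq:ell1infty}) give $\|\wh A_{\uJ}\delta\|_1\ge\|A_{\uJ}\delta\|_1-\|\wh A_{\uJ}-A_{\uJ}\|_{1,\infty}\|\delta\|_1\ge\tfrac12\kappa(A_{\uJ},s)\|\delta\|_1$. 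Hence $\delta^\T\wt{\wh H}\delta\gtrsim\kappa^2(A_{\uJ},s)\|\delta\|_1^2$.

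For the linear term I would split $\sum_{j\in J}X_j\,\wh A_{j\cdot}^\T\delta/\nu_j$ as $\sum_{j\in J}\tfrac{X_j}{\Pi_{*j}}A_{j\cdot}^\T\delta+\sum_{j\in J}\tfrac{X_j}{\Pi_{*j}}(\wh A_{j\cdot}-A_{j\cdot})^\T\delta+\sum_{j\in J}X_j\tfrac{\Pi_{*j}-\nu_j}{\nu_j\Pi_{*j}}\wh A_{j\cdot}^\T\delta$. In the first summand, writing $X_j/\Pi_{*j}=1+(X_j-\Pi_{*j})/\Pi_{*j}$ and extending the noise part from $J$ to $\oJ$ (the correction over $j\in\oJ\setminus J$, where $X_j=0$, cancels the ``$1$'' contribution) produces $\delta^\T E+\delta^\T\sum_{j\in\oJ}A_{j\cdot}$; since columns of $A$ sum to one and $\Pi_{*j}=0$ forces $A_{j\cdot}^\T\delta=A_{j\cdot}^\T\wh T\ge0$ on $\oJ^c$, $\delta^\T\sum_{j\in\oJ}A_{j\cdot}=\1_K^\T\delta-\sum_{j\in\oJ^c}A_{j\cdot}^\T\wh T\le0$, so this summand is $\le\|E\|_\infty\|\delta\|_1$, which Lemma \ref{lem_oracle_error} turns into the first two terms of the bound. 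The second summand I would bound by $\|\delta\|_1\max_k\sum_{j\in J}(X_j/\Pi_{*j})|\wh A_{jk}-A_{jk}|$ and split $J=\uJ\cup(\oJ\setminus\uJ)$: on $\uJ$, $X_j/\Pi_{*j}\le\tfrac32$ and $\sum_{j\in\uJ}|\wh A_{jk}-A_{jk}|\le\|\wh A_{\oJ}-A_{\oJ}\|_{1,\infty}$ for the fixed index $k$, while on $\oJ\setminus\uJ$, $X_j\lesssim\log(p)/N$ yields the term $\tfrac{\log(p)}{N}\sum_{j\in\oJ\setminus\uJ}\|\wh A_{j\cdot}-A_{j\cdot}\|_\infty/\Pi_{*j}$. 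The third summand is the delicate one: I would use that $\Pi_{*j}-\nu_j=(A_{j\cdot}-\wh A_{j\cdot})^\T T_*=\sum_l(A_{jl}-\wh A_{jl})T_{*l}$ is a $T_*$-\emph{average} of the perturbation, that $|\wh A_{j\cdot}^\T\delta|\le\|\wh A_{j\cdot}\|_\infty\|\delta\|_1\le2\rho\Pi_{*j}\|\delta\|_1$ (using $\rho\ge1$, which holds since $\Pi_{*j}\le\|A_{j\cdot}\|_\infty$, together with (\ref{ineq:max})), and that $\nu_j\ge\tfrac12\Pi_{*j}$, to reduce it to $\lesssim\rho\|\delta\|_1\sum_{j\in J}\tfrac{X_j}{\Pi_{*j}}\sum_l|\wh A_{jl}-A_{jl}|T_{*l}$; splitting $J$ as before and using $\sum_{j\in\uJ}\sum_l|\wh A_{jl}-A_{jl}|T_{*l}=\sum_lT_{*l}\sum_{j\in\uJ}|\wh A_{jl}-A_{jl}|\le\|\wh A_{\oJ}-A_{\oJ}\|_{1,\infty}$ --- the weights $T_{*l}$ summing to one is exactly what prevents a spurious factor $K$ here --- together with the analogous $\oJ\setminus\uJ$ contribution. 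Collecting the three summands gives $\sum_{j\in J}X_j\,\wh A_{j\cdot}^\T\delta/\nu_j\le\|\delta\|_1\big\{\sqrt{2\rho\log(p)/N}+\tfrac23\rho\log(p)/N+C_1\rho\|\wh A_{\oJ}-A_{\oJ}\|_{1,\infty}+C_2\rho\sum_{j\in\oJ\setminus\uJ}\|\wh A_{j\cdot}-A_{j\cdot}\|_\infty\Pi_{*j}^{-1}\log(p)/N\big\}$.

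To finish I would combine the quadratic lower bound with the linear upper bound, divide through by $\|\delta\|_1$ (the claim being trivial when $\delta=\b0$), and keep track of the numerical constants --- the $\tfrac32$'s from $X_j\le\tfrac32\Pi_{*j}$ on $\uJ$, the $3$ in the Cauchy--Schwarz denominator, the $2$ in $\nu_j\ge\tfrac12\Pi_{*j}$ --- which should produce exactly $3/\kappa^2(A_{\uJ},s)$, $16\rho$ and $\tfrac{56}{3}\rho$. The step I expect to be the main obstacle is the estimate of the third summand of the linear term: any crude bound on the $\wh A$-perturbation of the form $\sum_j\|\wh A_{j\cdot}-A_{j\cdot}\|_\infty$ costs a spurious factor $K$ relative to $\|\wh A_{\oJ}-A_{\oJ}\|_{1,\infty}$, so one must systematically route the perturbation through the $T_*$-averaged inner product $(A_{j\cdot}-\wh A_{j\cdot})^\T T_*$; a secondary difficulty is establishing, on the event of Lemma \ref{lem_basic}, the one-sided control $X_j\lesssim\log(p)/N$ on $\oJ\setminus\uJ$, without which those small-probability words would inflate the ratios $X_j/\Pi_{*j}$.
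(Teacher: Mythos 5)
Your proposal is correct and follows essentially the same route as the paper's proof in Appendix \ref{app_proof_thm_mle_unknown}: the same conversion of the optimality condition into a quadratic inequality, the same restricted Cauchy--Schwarz lower bound using $\kappa(\wh A_{\uJ},s)\ge \tfrac12\kappa(A_{\uJ},s)$ from (\ref{ineq:ell1infty}) and (\ref{ineq:conditionnumber}), and a three-way split of the linear term whose pieces (a), (b), (c) correspond, after minor algebra, to the paper's terms $\rIII$, $\rII$, $\rI$, including the $T_*$-averaging (Fubini) trick and the $\uJ$ versus $\oJ\setminus\uJ$ split. The only differences are cosmetic --- you use the first-order concavity inequality in place of explicit KKT multipliers, and your exact numerical constants (e.g.\ a $\kappa^2/12$ rather than $\kappa^2/3$ quadratic lower bound) need not land precisely on $3$, $16$, $56/3$ --- neither of which affects correctness.
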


The restrictions (\ref{ineq:max}) and (\ref{ineq:ell1infty})  and the last two terms in the bound above reflect both the requirement and the effect of estimating $A$ on the overall $\ell_1$-convergence rate of $\wh T$. Note that by using condition  (\ref{ineq:max}) the last term in the bound can be simply bounded from above by 
$$
{\rho \over \kappa^{2}(A_{\uJ},s)}{|\oJ \setminus \uJ|\log(p)\over  N}.
$$ 
This term originates from words that have very small probability of occurrence, $\Pi_{*j}= \cO(\log(p)/N)$,  but have non-zero  observed frequencies,  $X_j>0$. For ease of presentation, we assume in the sequel that the number of such words is bounded, that is, $|\oJ\setminus \uJ| \le C$ for some finite constant $C>0$. Still, our analysis allows one to track their presence throughout the proof. 

To provide intuition of the first requirement (\ref{ineq:max}), suppose $P = \bI_K$ and note that this event guarantees that, for all $T_*\in \Delta_K$ and  for all $j\in \oJ$,
\begin{align}\label{ineq:hat AT-AT}
\wh A_{j\cdot}^\T T_* \in \left[
A_{j\cdot}^\T T_* \pm |\wh A_{j\cdot}^\T T_*-A_{j\cdot}^\T T_*| \right]
\subseteq \left[  \Pi_{*j} \pm \|\wh A_{j\cdot} -A_{j\cdot}\|_\i \right] 
\subseteq \left[  {1\over 2}\Pi_{*j}, ~
{3\over 2} \Pi_{*j} \right]
\end{align}
so that $\wh A_{j\cdot}^\T T_*$ and $\Pi_{*j}$ are the same up to a constant factor.
In particular,   $\Pi_{*j} > 0$ implies $\wh A_{j\cdot}^\T T_*>0$, ensuring that 
$T_*$ lies in the domain of the log-likelihood
function 
$N\sum_{j\in J} X_j \log (\wh A_j^\T T)$.

The second restriction (\ref{ineq:ell1infty})  allows us to replace the $\ell_1\to\ell_1$ condition number of the random matrix  $\wh A_{\uJ}$ by that of $A_{\uJ}$.
Since
\begin{align}\label{ineq:conditionnumber}
\kappa({\wh A_{\uJ}}, s) 
&= \min_{S\subseteq[K]: |S|\le s}\min_{v\in \cC(S)} \frac{ \| \wh A_{\uJ} v\|_1}{\| v\|_1} \nonumber\\
&\ge \kappa(A_{\uJ}, s) - \max_{S\subseteq[K]: |S|\le s}\max_{v\in \cC(S)}  \frac{ \| (\wh A_{\uJ} - A_{\uJ} )v\|_1}{\| v\|_1} \\
&\ge \kappa(A_{\uJ}, s) - \|\wh A_{\uJ}-A_{\uJ}\|_{1,\i},
\nonumber
\end{align}
the bound in (\ref{ineq:ell1infty}) immediately yields
\begin{equation}\label{conditionnumbers}
\kappa(\wh A_{\oJ} , s) \ge \frac12 \kappa(A_{\oJ} ,s).
\end{equation}


\medskip

Similar to the case of $A$ known, treated in Section \ref{sec_est_T_known_A_dense_T}, when $\wh T$ lies in the vicinity of $T_*$ in the sense of (\ref{cond_T_neigh}), the rate of $\|\wh T- P^\T T_*\|_1$ can be improved. The following result is an analogue of Theorem \ref{thm_mle_fast} for unknown $A$. Recall that $M_1$ and $M_2$ are defined in (\ref{M1,M2}). 

\begin{thm}\label{thm_mle_fast_unknown}
Assume there exists a sufficiently large constant $C>0$ such that 
\begin{equation}\label{cond_N_unknown}
N \ge C {\log(p)\over \log(K)}  \max \{M_1, M_2\}.
\end{equation}
Further assume $|\oJ\setminus \uJ| \le C'$ for some constant $C'>0$.
Suppose the events (\ref{ineq:max})
and
\begin{equation}\label{ineq:ell1infty2}
\left\{
\rho^2
{\|\wh A_{\uJ} - (A P)_{\uJ}\|_{1,\i} \le {1\over 2} \kappa^2(A_{\uJ},s)}   \right\}
\end{equation}
hold with probability $1-\alpha$,	
for some permutation matrix $P\in \H_K$. Then,
we have, with probability $1-8p^{-1}-\alpha$,
\[
\|\wh T - P^\T  T_*\|_1 ~ \lesssim ~ 
{1\over \kappa(A_{\oJ},s)}\sqrt{K\log(p) \over N} +{\rho \over  \kappa^{2}(A_{\oJ}, s)}\Bigl\|\wh A_{\oJ}-  (AP)_{\oJ}\Bigr\|_{1,\i}.
\]
\end{thm}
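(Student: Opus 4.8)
The plan is to mirror the architecture of the proof of Theorem~\ref{thm_mle_fast} for known $A$, tracking the extra error terms contributed by replacing $A$ by $\wh A$. The starting point is the KKT inequality arising from the optimality of $\wh T$ in (\ref{def_T_hat}): exactly as in the sketch preceding Section~\ref{sec_est_T_known_A_dense_T}, the KKT conditions for maximizing $\sum_{j\in J} X_j \log(\wh A_{j\cdot}^\T T)$ over $\Delta_K$ yield a quadratic inequality of the form $(\wh T - P^\T T_*)^\T \wt H_{\wh A}(\wh T - P^\T T_*) \le (\wh T - P^\T T_*)^\T \wt E$, where now $\wt H_{\wh A} = \sum_{j\in J} X_j \wh A_{j\cdot}\wh A_{j\cdot}^\T / (\Pi_{*j}'\, \wh A_{j\cdot}^\T \wh T)$ with $\Pi_{*j}' = \wh A_{j\cdot}^\T P^\T T_*$, and the linear term $\wt E$ decomposes into the ``statistical'' piece $E = \max_k |\sum_{j\in\oJ} A_{jk}\Pi_{*j}^{-1}(X_j - \Pi_{*j})|$ (controlled by Lemma~\ref{lem_oracle_error} and its refinement $\|H^{-1/2}E\|_2$) plus an ``approximation'' piece arising from $\wh A - AP$. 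The first step is therefore to carry out this decomposition cleanly, using event (\ref{ineq:max}) via the chain (\ref{ineq:hat AT-AT}) to guarantee that $P^\T T_*$ lies in the domain of the plug-in log-likelihood and that all the ratios $\wh A_{j\cdot}^\T(\cdot)/\Pi_{*j}$ appearing are comparable to their $A$-based counterparts up to constant factors.

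Next, I would establish the faster quadratic lower bound. Using Theorem~\ref{thm_mle_unknown} (valid since (\ref{ineq:ell1infty2}) implies (\ref{ineq:ell1infty}) by $\rho\ge 1$, modulo adjusting constants) as the preliminary bound, together with (\ref{ineq:explain}) and the bound $\rho\le (1\vee\xi)/\Tm$, condition (\ref{cond_N_unknown}) is exactly what forces $\max_{j\in\oJ}|\wh A_{j\cdot}^\T(\wh T - P^\T T_*)|/\Pi_{*j} = \cO_\PP(1)$, i.e.\ the analogue of (\ref{cond_T_neigh}). Granted this, $\wt H_{\wh A} \succeq c\, \wh H_{\wh A}$ for $\wh H_{\wh A} = \sum_j X_j (\Pi'_{*j})^{-2}\wh A_{j\cdot}\wh A_{j\cdot}^\T$; then I replace $\wh H_{\wh A}$ by the population Hessian $H = \sum_j \Pi_{*j}^{-1} A_{j\cdot}A_{j\cdot}^\T$ in two stages --- first swapping $X_j$ for $\Pi_{*j}$ and $\wh A$ for $AP$ inside the Hessian using Lemma~\ref{lem_I_deviation} (whose hypotheses are met by condition $N\ge C M_2$, rescaled by $\log p/\log K$) plus the $\|\cdot\|_{1,\infty}$ control (\ref{ineq:ell1infty2}), and second using $\kappa(\wh A_{\oJ},s)\ge \tfrac12\kappa(A_{\oJ},s)$ from (\ref{conditionnumbers}). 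This produces the improved inequality $(\wh T - P^\T T_*)^\T H(\wh T - P^\T T_*) \lesssim \|H^{1/2}(\wh T - P^\T T_*)\|_2\big(\|H^{-1/2}E\|_2 + \rho\,\|\wh A_{\oJ} - (AP)_{\oJ}\|_{1,\infty}\big)$, where the second summand comes from bounding the approximation part of $\wt E$ in $H^{-1/2}$-norm. Dividing through and converting the resulting $\ell_2$ control of $H^{1/2}(\wh T - P^\T T_*)$ back to $\ell_1$ via $\|v\|_1 \le \kappa^{-1}(A_{\oJ},s)\|A_{\oJ}v\|_1 \lesssim \kappa^{-1}(A_{\oJ},s)\|H^{1/2}v\|_2$ on the cone $\cC(S_*)$ (using $\Pi_{*j}\le 1$), then invoking the sharp bound $\|H^{-1/2}E\|_2 \lesssim \sqrt{K\log p/N}$, gives the two stated terms.

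The main obstacle, and the place where this proof genuinely diverges from the known-$A$ case, is the careful handling of the mismatch between the ``denominator'' $\wh A_{j\cdot}^\T P^\T T_*$ used implicitly by the estimator and the true $\Pi_{*j} = (AP)_{j\cdot}^\T P^\T T_* = A_{j\cdot}^\T T_*$: every occurrence of $\Pi_{*j}$ in the known-$A$ argument must be tracked as $\Pi'_{*j}$, and one must repeatedly invoke (\ref{ineq:hat AT-AT}) to trade one for the other, each time generating a controlled multiplicative $(1+\cO(1))$ factor or an additive $\rho\|\wh A - AP\|_{1,\infty}$-type term; keeping these from compounding uncontrollably — in particular ensuring the approximation error enters only \emph{linearly}, with the advertised $\rho/\kappa^2(A_{\oJ},s)$ prefactor, rather than being amplified by the inversion of the Hessian — is the delicate bookkeeping that condition (\ref{ineq:ell1infty2}) (note the $\rho^2$ weighting, precisely matched to $\kappa^2$) is designed to make possible. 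I would also need to confirm that the failure probabilities $2p^{-1}$ from Lemma~\ref{lem_basic} (so $\uJ\subseteq J\subseteq\oJ$), plus the probabilities from Lemmas~\ref{lem_oracle_error} and~\ref{lem_I_deviation} and the event hypotheses, aggregate to the claimed $8p^{-1}+\alpha$; here one benefits from having stated the concentration lemmas with $\log p$ rather than $\log K$ in (\ref{cond_N_unknown}), which is exactly why the rate carries $\sqrt{K\log p/N}$ rather than $\sqrt{K\log K/N}$. The remainder is routine constant-chasing of the kind already carried out in Appendix~\ref{app_proof_thm_mle}.
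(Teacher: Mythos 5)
Your proposal is correct and follows essentially the same route as the paper's proof in Appendix \ref{app_proof_thm_mle_fast_unknown}: use Theorem \ref{thm_mle_unknown} (noting (\ref{ineq:ell1infty2}) implies (\ref{ineq:ell1infty}) since $\rho\ge 1$ and $\kappa\le 1$) as the preliminary bound to establish the analogue of (\ref{cond_T_neigh}), decompose the KKT linear term into the statistical piece controlled by $\|H^{-1/2}E\|_2\lesssim\sqrt{K\log p/N}$ and approximation pieces of order $\rho\|\wh A_{\oJ}-(AP)_{\oJ}\|_{1,\i}$, lower-bound the plug-in Hessian by $\|H^{1/2}\D\|_2^2$ via Lemma \ref{lem_I_deviation} together with the $\rho^2$-weighted condition (\ref{ineq:ell1infty2}), and convert back to $\ell_1$ through $\kappa(A_{\oJ},s)$. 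The paper executes exactly this bookkeeping (its terms $\rI,\rII,\rIII$ and the comparison of $\wt H$ with $\wh H$), so there is no substantive difference to report.
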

\begin{proof}
The proof of Theorem \ref{thm_mle_fast_unknown} can be found in Appendix \ref{app_proof_thm_mle_fast_unknown}.
\end{proof}

Condition (\ref{cond_N_unknown}) only differs from condition (\ref{cond_N_sparse}) for known $A$ by a $\log (p)$ term. Compared to the restrictions (\ref{ineq:max}) and (\ref{ineq:ell1infty}) in Theorem \ref{thm_mle_unknown}, Theorem \ref{thm_mle_fast_unknown}  
replaces (\ref{ineq:ell1infty}) by the 
stronger requirement (\ref{ineq:ell1infty2}) on $\|\wh A_{\oJ} - (AP)_{\oJ}\|_{1,\i}$ by a factor $ \rho^2/\kappa(A_{\oJ},s)$. \\

Regarding the support recovery of $\wh T$,  we also have an analogue of Theorem \ref{thm_supp} for unknown $A$.  The following theorem states the one-sided support recovery of $\wh T$ in (\ref{def_T_hat}) when $A$ is unknown and estimated by $\wh A \in \cA$.  Its proof can be found in Appendix \ref{app_proof_supp_unknown}.

\begin{thm}\label{thm_supp_unknown}
Assume there exists some positive constants $C,C',C''$ such that 
$
N \ge C
{\log (p) / T_{\min}^3},
$ $|\oJ\setminus \uJ| \le C'$ and $\kappa^{-1}(A_{\uJ}, s) \le C''$.
Suppose the intersection of events (\ref{ineq:max}), (\ref{ineq:ell1infty2}) and 
\begin{align}\label{cond_supp_unknown_A}
\Biggl\{\sqrt{\xi \log(p) \over  \Tm N}\left(1 + \sqrt{\xi s\over \Tm}\right)  &+ {\log(p)\over N}\\
&+\left(1 + {\xi \rho\over \Tm}\right) \|\wh A_{\oJ}- (AP)_{\oJ}\|_{1,\i} \le c \min_{k\in S_*^c} \sum_{j\in \oJ^c}A_{jk}\Biggr\},\nonumber
\end{align}
holds with probability at least $1-\alpha$,
for some permutation matrix $P\in \H_K$ and some sufficiently small constant $c>0$. Then $$
\PP\left\{\supp(\wh T) \subseteq \supp(T_*)\right\}\ge 1-10p^{-1}-\alpha.
$$
\end{thm}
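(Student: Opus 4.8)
The plan is to run the primal--dual witness argument of Theorem~\ref{thm_supp} with the plug-in criterion (\ref{def_T_hat}) in place of (\ref{def_MLE}). Fix the permutation $P\in\H_K$ from the hypotheses and, throughout, redefine $S_*,\oJ,\uJ,\rho,\xi$ relative to the column-permuted matrix $AP$ so that the events (\ref{ineq:max}), (\ref{ineq:ell1infty2}) and (\ref{cond_supp_unknown_A}) apply verbatim; under this relabelling the claim $\supp(\wh T)\subseteq\supp(T_*)$ is equivalent to $\supp(\wh T)\subseteq S_*$. Introduce the oracle solution
\[
\wt T_{S_*}=\argmax_{\beta\in\Delta_s}N\sum_{j\in J}X_j\log\bigl(\wh A_{jS_*}^\T\beta\bigr),\qquad \wt T_{S_*^c}=\b0 ,
\]
write down the KKT/stationarity conditions of (\ref{def_T_hat}) exactly as was done for (\ref{def_MLE}), and isolate the event
\[
\E_0\coloneqq\Bigl\{\,\max_{k\in S_*^c}\,\sum_{j\in J}X_j\,\frac{\wh A_{jk}}{\wh A_{jS_*}^\T\wt T_{S_*}}<1\,\Bigr\}.
\]
On $\E_0$ the two steps (i) ``$\wt T$ is optimal for (\ref{def_T_hat})'' and (ii) ``every optimal solution of (\ref{def_T_hat}) is supported in $S_*$'' go through precisely as in the sketch of Theorem~\ref{thm_supp}: both use only concavity of the objective on its domain and the first-order conditions, and are insensitive to replacing $A$ by $\wh A$. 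Since $\wh T$ is one such optimal solution, $\supp(\wh T)\subseteq S_*$ follows; no new idea is needed here.

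The substantive work is to show $\PP(\E_0)\ge 1-\cO(p^{-1})-\alpha$ under the hypotheses. Condition on $\{\uJ\subseteq J\subseteq\oJ\}$ (Lemma~\ref{lem_basic}) and on (\ref{ineq:max}), (\ref{ineq:ell1infty2}), (\ref{cond_supp_unknown_A}). The first preparatory step is to show that $\wh A_{jS_*}^\T\wt T_{S_*}$ stays within a constant factor of $\Pi_{*j}$ for every $j\in\oJ$: the numerator-level bound (\ref{ineq:max}) controls $\|\wh A_{j\cdot}-(AP)_{j\cdot}\|_\infty$ against $\Pi_{*j}$, while the accuracy $\|\wt T_{S_*}-(P^\T T_*)_{S_*}\|_1$ follows by applying Theorem~\ref{thm_mle_unknown}, and Theorem~\ref{thm_mle_fast_unknown} where applicable, to the restricted problem built from $\wh A_{\cdot S_*}$, which inherits the needed proximity to $(AP)_{\cdot S_*}$; here the hypotheses $N\ge C\log(p)/\Tm^3$, $\kappa^{-1}(A_{\uJ},s)\le C''$ and $|\oJ\setminus\uJ|\le C'$ enter (the last making the few words with $\Pi_{*j}=\cO(\log p/N)$ contribute only an $\cO(\log p/N)$ term). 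Having pinned down the denominators, expand $\sum_{j\in J}X_j\,\wh A_{jk}/(\wh A_{jS_*}^\T\wt T_{S_*})$ by peeling off three fluctuations in turn: (a) replace $X_j$ by $\Pi_{*j}$ via a Bernstein bound, using $\wh A_{jk}/(\wh A_{jS_*}^\T\wt T_{S_*})\lesssim A_{jk}/\Pi_{*j}\le\xi/\Tm$ for $k\in S_*^c$ together with the variance bound $\sum_{j\in\oJ}A_{jk}^2/\Pi_{*j}\le(\xi/\Tm)\sum_jA_{jk}\le\xi/\Tm$, which yields the $\sqrt{\xi\log(p)/(\Tm N)}$ term; (b) replace $\wt T_{S_*}$ by $(P^\T T_*)_{S_*}$, bringing in $\|\wt T_{S_*}-(P^\T T_*)_{S_*}\|_1$ amplified by factors of the form $\sqrt{\xi s/\Tm}$ arising from $A_{jk}\le\xi\|A_{jS_*}\|_1$ and $\sum_{j\in\oJ}\|A_{jS_*}\|_1\le s$, which, after substituting the restricted rate, reproduces the $(1+\sqrt{\xi s/\Tm})\sqrt{\xi\log(p)/(\Tm N)}$ contribution; and (c) replace $\wh A$ by $AP$ in both numerator and denominator, which costs $\rho$- and $\xi/\Tm$-weighted copies of $\sum_{a\in S_*}\sum_{j\in\oJ}|(\wh A-AP)_{ja}|\le\|\wh A_{\oJ}-(AP)_{\oJ}\|_{1,\infty}$, giving the $(1+\xi\rho/\Tm)\|\wh A_{\oJ}-(AP)_{\oJ}\|_{1,\infty}$ term. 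After all three replacements the leading term is the population quantity $\sum_{j\in\oJ}A_{jk}=1-\sum_{j\in\oJ^c}A_{jk}$, so the remaining margin to $1$ is exactly $\sum_{j\in\oJ^c}A_{jk}$; condition (\ref{cond_supp_unknown_A}) is precisely the statement that, for the sufficiently small constant $c$, the three fluctuation terms plus the $\cO(\log p/N)$ term sum to at most $c\min_{k\in S_*^c}\sum_{j\in\oJ^c}A_{jk}$, so $\E_0$ holds. A union bound over the at most $K\le p$ indices $k\in S_*^c$ and over the exceptional events (Lemma~\ref{lem_basic}, the Bernstein steps and the hypothesis events) gives the stated $1-10p^{-1}-\alpha$.

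The main obstacle is the circular dependence inside the first preparatory step: the ratio $\wh A_{jk}/(\wh A_{jS_*}^\T\wt T_{S_*})$ is most sensitive for words $j$ near the boundary of $\uJ$, where $\Pi_{*j}$ is of the order of the threshold $\eps_j$ and any denominator error is amplified, yet the very bound on $\|\wt T_{S_*}-(P^\T T_*)_{S_*}\|_1$ needed to keep $\wh A_{jS_*}^\T\wt T_{S_*}$ comparable to $\Pi_{*j}$ is itself proved (through Theorems~\ref{thm_mle_unknown}--\ref{thm_mle_fast_unknown}) under conditions referring to the restricted condition number and to those same denominators. As in the known-$A$ proof of Theorem~\ref{thm_supp}, the resolution is to bootstrap: first use the crude Theorem~\ref{thm_mle_unknown}-type $\ell_1$ bound on $\wt T_{S_*}$ to secure that $\wh A_{jS_*}^\T\wt T_{S_*}$ is within a constant factor of $\Pi_{*j}$ uniformly over $j\in\oJ$, then rerun the peeling with the sharper rate where it is available. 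Everything new relative to Theorem~\ref{thm_supp} is the $\wh A$-error, which once the denominators are controlled enters only linearly through $\|\wh A_{\oJ}-(AP)_{\oJ}\|_{1,\infty}$; the delicate part is tracking the correct powers of $\xi$, $\rho$ and $\Tm$ in each of the three peeling steps so that they collapse exactly into the clean form of (\ref{cond_supp_unknown_A}).
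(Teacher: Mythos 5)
Your proposal follows essentially the same route as the paper's proof: a primal--dual witness with the oracle restricted estimator $\wt T$ built from $\wh A_{\cdot S_*}$, the same strict dual-feasibility event $\max_{k\in S_*^c}\sum_{j\in J}X_j\,\wh A_{jk}/(\wh A_{jS_*}^\T\wt T_{S_*})<1$, the same peeling into the multinomial fluctuation, the $\wt T$-estimation error (controlled via Theorem~\ref{thm_mle_fast_unknown} applied with $K=s$), and the $\wh A$-errors in numerator and denominator (controlled via (\ref{ineq:max})--(\ref{ineq:ell1infty2})), closed by (\ref{cond_supp_unknown_A}) and a union/Bernstein bound. The details you flag (denominator control, the KKT bound on the restricted sums, and the $\xi,\rho,\Tm$ bookkeeping) are exactly how the paper's bounds $R_{1,k},\dots,R_{4,k}$ are handled, so no genuinely different ideas or gaps are involved.
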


Comparing to (\ref{cond_supp}) in Theorem \ref{thm_supp},  condition (\ref{cond_supp_unknown_A}) is stronger by the factor $\log(p)/N + \rho\|\wh A_{\oJ}- (AP)_{\oJ}\|_{1,\i}$ due to the error of estimating $A$. 
Theorem \ref{thm_supp_unknown} in conjunction with Theorem \ref{thm_mle_fast_unknown} immediately implies that, under the conditions therein, 
\[
\|\wh T - P^\T T_*\|_1 = \cO_{\PP}\left(
\sqrt{s\log p \over N} + \rho \|\wh A_{\oJ} - (AP)_{\oJ}\|_{1,\i}
\right).
\]

Theorem \ref{thm_supp_unknown} provides the one-sided support recovery of the estimator $\wh T$ based on an estimated $A$ that satisfies (\ref{ineq:max}), (\ref{ineq:ell1infty2}) and (\ref{cond_supp_unknown_A}). 
Similar to the results  we established for $\mle$ in Section \ref{sec_est_T_sparse_known_A},  the support of  $\wh T$  can also  consistently recover the support of $T_*$,  over a certain class of topic models, as discussed in Appendix \ref{app_supp_recovery_unknown_A}.

\begin{remark}
Our estimation of $T_*$ uses a plug-in estimator $\wh A$ of $A$ in (\ref{def_T_hat}). The estimation error naturally depends on how well $\wh A$ estimates $A$. Alternatively, if one is willing to assume additional structure on $\bT_*$,  then there exist approaches that directly estimate $\bT_*$ without estimating $A$ first. See, for instance, \cite{bansal2014provable} and \cite{klopp2021assigning}.
\end{remark}

\subsubsection{Application with the estimator proposed in \cite{TOP}}\label{sec_est_A_T_unknown_A_eg}

Our results in Section \ref{sec_est_T_unknown_A} hold for any estimator $\wh A\in\cA$ provided that the rate of $\wh A$ satisfies certain requirements. In this section, we illustrate these general results by taking $\wh A$ as the estimator proposed in \cite{TOP} and by providing concrete conditions for the aforementioned requirements on $\wh A$. 

Since \cite{TOP} studies the estimation of $A$ under Assumption \ref{ass_sep}, we denote by 
$I_k$ the index set of anchor words in topic $k$ for each $k\in [K]$. We write $\Im = \max_{k\in [K]}|I_k|$ and $I = \cup_{k=1}I_k$ with its complement set $I^c = [p]\setminus I$. Let $M := n\vee p\vee N.$ Under conditions stated in Appendix \ref{app_sec_A_cond},  \cite{TOP} establishes the following guarantees on $\wh A$,
\begin{equation}\label{up_bd_A_hat}
\min_{P\in \H_K}\|\wh A - AP\|_{1,\i} = \cO_\PP\left(
\sqrt{K(\Im + |I^c|)\log (M) \over n N}
\right).
\end{equation}
The above rate of convergence in $\|\cdot\|_{1,\i}$ norm is useful to apply Theorem \ref{thm_mle_fast_unknown} and is further shown to be minimax optimal, up to the factor $\log(M)$, in \cite{TOP} under Assumption \ref{ass_sep}. To validate condition (\ref{ineq:max}) in  Theorem \ref{thm_mle_fast_unknown}, one also needs a  control of $\|\wh A_{j\cdot} - (AP)_{j\cdot}\|_\i$ for $j\in \oJ$, which is not  studied in \cite{TOP}. We establish a new result on the rate of convergence of $\|\wh A_{j\cdot} - (AP)_{j\cdot}\|_\i$, that is,
\begin{equation}\label{up_bd_A_hat_rowwise}
\min_{P\in \H_K}\|\wh A_{j\cdot} - (AP)_{j\cdot}\|_\i ~ \lesssim ~ \sqrt{\|A_{j\cdot}\|_\i{K\log(M) \over nN}}\left(
1 \vee \sqrt{p\|A_{j\cdot}\|_\i}
\right)
\end{equation}
holds uniformly over $j\in \oJ$ with probability at least $1-\cO(M^{-1})$.
We defer its precise statement and proof to Theorem \ref{thm_A_sup_norm} of Appendix \ref{app_sec_A_cond}. Equipped with the guarantees on $\wh A$ in (\ref{up_bd_A_hat}) and (\ref{up_bd_A_hat_rowwise}),  for the estimator $\wh T$ of $T_*$ that uses $\wh A$ as the estimator of $A$, the following corollary provides the rate of convergence of $\|\wh T-T_*\|_1$ and its one-sided support recovery.  Set $\Pm \coloneqq \min_{j\in \oJ}\Pi_{*j}$.

\begin{cor}\label{cor_mle_unknown}
Assume that the quantities $\kappa^{-1}(A_{\uJ}, s)$, $\kappa^{-1}(A_{\oJ}, K)$, $\xi$ and $|\oJ\setminus \uJ|$ are bounded,  	     
\begin{equation}\label{cond_N_unknown_simp}
N \ge C~{\log(p)\over \Tm^2}\max\left\{
{1 \over \Tm}~, ~  1+ \sqrt{K-s}
\right\}
\end{equation}
and 
\begin{equation}\label{cond_event_A}
\Tm^2\gtrsim \sqrt{pK\log(M)\over nN}, \qquad \Pm \Tm \gtrsim {K\log (M)\over nN}.
\end{equation}
Then, the estimator  $\wh T$ from (\ref{def_T_hat}) based on $\wh A$
satisfies
\[
\min_{P\in \H_K}   \|\wh T - P^\T  T_*\|_1 
= \cO_\PP \left( 
\sqrt{K\log(p) \over N} + {1\over \Tm}\sqrt{K(\Im + |I^c|) \log (M)\over nN} ~ \right). 
\]
Furthermore, if 
\begin{equation}\label{cond_supp_unknown_A_simp}
{1\over \Tm}\sqrt{s\log(p)\over N} + {1\over \Tm^2}\sqrt{K(\Im + |I^c|) \log (M)\over nN}  \le c \min_{k\in S_*^c} \sum_{j\in \oJ^c}A_{jk},
\end{equation}
holds for some sufficiently small constant $c>0$, then, with probability tending to one as $p\to \i$, we have  $\supp(\wh T) \subseteq \supp(T_*)$ and 
\[
\min_{P\in \H_K}   \|\wh T - P^\T  T_*\|_1 ~ \lesssim ~ 
\sqrt{s\log(p) \over N} + {1\over \Tm}\sqrt{K(\Im + |I^c|) \log (M)\over nN}.
\]
\end{cor}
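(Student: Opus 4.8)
The plan is to read both conclusions off Theorem~\ref{thm_mle_fast_unknown} (for the $\ell_1$-rate) and Theorem~\ref{thm_supp_unknown} (for one-sided support recovery), specialized to the choice $\wh A$ equal to the estimator of \cite{TOP}; essentially all the work is in checking that the simplified hypotheses (\ref{cond_N_unknown_simp})--(\ref{cond_supp_unknown_A_simp}), together with the guarantees (\ref{up_bd_A_hat}) and (\ref{up_bd_A_hat_rowwise}), imply the abstract conditions of those theorems. The first step is to simplify the deterministic conditions. Under the assumed boundedness of $\kappa^{-1}(A_{\uJ},s)$, of $\kappa^{-1}(A_{\oJ},K)$ (which, since $\kappa(A_{\oJ},K)\le\kappa(A_{\oJ},s)$, also bounds $\kappa^{-1}(A_{\oJ},s)$), of $\xi$ and of $|\oJ\setminus\uJ|$, definition (\ref{M1,M2}) yields $M_1\asymp\log(K)/\Tm^3$ and $M_2\lesssim(\log K/\Tm^2)(1+\sqrt{K-s})$, so that $\tfrac{\log p}{\log K}\max\{M_1,M_2\}\lesssim\tfrac{\log p}{\Tm^2}\max\{1/\Tm,\,1+\sqrt{K-s}\}$; hence (\ref{cond_N_unknown_simp}) implies both (\ref{cond_N_unknown}) and the requirement $N\ge C\log(p)/\Tm^3$ of Theorem~\ref{thm_supp_unknown}. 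From (\ref{bd_rho}) one also has $\rho\le(1\vee\xi)/\Tm\lesssim1/\Tm$, while $\Pi_{*j}=A_{j\cdot}^\T T_*\le\|A_{j\cdot}\|_\i$ gives $\rho\ge1$; both bounds are used repeatedly below.

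Next I would verify the random events required by those theorems. For (\ref{ineq:max}), I would bound $\|\wh A_{j\cdot}-(AP)_{j\cdot}\|_\i$ via (\ref{up_bd_A_hat_rowwise}) and split according to whether $p\|A_{j\cdot}\|_\i\le1$ or $p\|A_{j\cdot}\|_\i>1$, using $\|A_{j\cdot}\|_\i\le\rho\Pi_{*j}\lesssim\Pi_{*j}/\Tm$ in both cases: in the first case the bound is $\lesssim\sqrt{\rho\Pi_{*j}K\log(M)/(nN)}$, which is $\le\tfrac12\Pi_{*j}$ as soon as $K\log(M)/(nN)\lesssim\Pm\Tm$; in the second it is $\lesssim\rho\Pi_{*j}\sqrt{pK\log(M)/(nN)}$, which is $\le\tfrac12\Pi_{*j}$ as soon as $\sqrt{pK\log(M)/(nN)}\lesssim\Tm$. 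These are exactly the two parts of (\ref{cond_event_A}), so by Theorem~\ref{thm_A_sup_norm} the event (\ref{ineq:max}) holds uniformly over $j\in\oJ$ with probability $1-\cO(M^{-1})$. For (\ref{ineq:ell1infty2}), the bound (\ref{up_bd_A_hat}) together with $\Im+|I^c|\le p$ gives $\rho^2\|\wh A_{\uJ}-(AP)_{\uJ}\|_{1,\i}\lesssim\Tm^{-2}\sqrt{pK\log(M)/(nN)}\le\tfrac12\kappa^2(A_{\uJ},s)$ again by the first part of (\ref{cond_event_A}); since $\rho\ge1$ and $\kappa(A_{\uJ},s)\le1$, this in particular also gives (\ref{ineq:ell1infty}).

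With (\ref{ineq:max}) and (\ref{ineq:ell1infty2}) in hand, Theorem~\ref{thm_mle_fast_unknown} yields, for the appropriate $P\in\H_K$,
\[
\|\wh T-P^\T T_*\|_1\lesssim\frac{1}{\kappa(A_{\oJ},s)}\sqrt{\frac{K\log p}{N}}+\frac{\rho}{\kappa^2(A_{\oJ},s)}\bigl\|\wh A_{\oJ}-(AP)_{\oJ}\bigr\|_{1,\i},
\]
and plugging in $\kappa^{-1}(A_{\oJ},s)=\cO(1)$, $\rho\lesssim1/\Tm$, $\|\wh A_{\oJ}-(AP)_{\oJ}\|_{1,\i}\le\|\wh A-AP\|_{1,\i}$ and (\ref{up_bd_A_hat}) delivers the first displayed rate. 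For the support statement I would check that (\ref{cond_supp_unknown_A_simp}) implies (\ref{cond_supp_unknown_A}): using $\Tm\le1/s$ (so $1+\sqrt{\xi s/\Tm}\lesssim\sqrt{s/\Tm}$), the boundedness of $\xi$, $\rho\lesssim1/\Tm$, and $\log(p)/N\lesssim\Tm^2$ from (\ref{cond_N_unknown_simp}) to absorb the additive $\log(p)/N$ term, the left-hand side of (\ref{cond_supp_unknown_A}) is $\lesssim\Tm^{-1}\sqrt{s\log(p)/N}+\Tm^{-2}\sqrt{K(\Im+|I^c|)\log(M)/(nN)}$, i.e.\ the left-hand side of (\ref{cond_supp_unknown_A_simp}). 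Theorem~\ref{thm_supp_unknown} then gives $\supp(\wh T)\subseteq\supp(T_*)$ with probability $1-\cO(p^{-1})-\cO(M^{-1})$, and combining it with Theorem~\ref{thm_mle_fast_unknown} exactly as in the display in the remark immediately preceding the corollary sharpens the first term from $\sqrt{K\log p/N}$ to $\sqrt{s\log p/N}$, giving the last displayed bound. All exceptional probabilities are $\cO(p^{-1})+\cO(M^{-1})$, which tends to $0$ as $p\to\i$ because $M\ge p$.

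\textbf{Main obstacle.} The one genuinely delicate point is the verification of (\ref{ineq:max}): it rests on the new row-wise control of $\wh A$ in Theorem~\ref{thm_A_sup_norm} (not present in \cite{TOP}) and on recognizing that the two somewhat unusual requirements in (\ref{cond_event_A})---one a condition on $\Tm^2$, the other on $\Pm\Tm$---are precisely what the case split on $p\|A_{j\cdot}\|_\i$ produces. The remainder is bookkeeping on top of the already-established Theorems~\ref{thm_mle_fast_unknown} and~\ref{thm_supp_unknown}, the bound $\rho\lesssim1/\Tm$, and the elementary inequalities $\rho\ge1$, $\kappa(A_{\uJ},s)\le1$ and $\Tm\le1/s$.
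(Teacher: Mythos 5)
Your proposal is correct and follows essentially the same route as the paper's own (much terser) proof: verify conditions (\ref{ineq:max}), (\ref{cond_N_unknown}), (\ref{ineq:ell1infty2}) and (\ref{cond_supp_unknown_A}) of Theorems \ref{thm_mle_fast_unknown} and \ref{thm_supp_unknown} via the guarantees (\ref{up_bd_A_hat})--(\ref{up_bd_A_hat_rowwise}) and the bound $\rho\lesssim 1/\Tm$, then read off the rates. Your extra details — the case split on $p\|A_{j\cdot}\|_\i$ matching the two parts of (\ref{cond_event_A}), the explicit simplification of $M_1,M_2$, and absorbing the $\log(p)/N$ term via $\Tm\le 1/s$ — are exactly the bookkeeping the paper leaves implicit, and they check out.
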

\begin{proof}
The result follows from Theorem \ref{thm_mle_fast_unknown} and Theorem \ref{thm_supp_unknown} after we  verify its conditions (\ref{ineq:max}), (\ref{cond_N_unknown}), (\ref{ineq:ell1infty2}) and (\ref{cond_supp_unknown_A}).
Condition (\ref{cond_N_unknown}) simplifies to (\ref{cond_N_unknown_simp}).
The rate on $\|\wh A_{j\cdot}-(AP)_{j\cdot}\|_{\i}$ in (\ref{up_bd_A_hat_rowwise}), condition (\ref{cond_event_A}) and the inequality $\max_{j\in \oJ}\|A_{j\cdot}\|_\i / \Pi_{*j} \le \rho\le (1\vee \xi)/\Tm \lesssim 1/\Tm$
imply (\ref{ineq:max}).
The 
rate on $\|\wh A-AP\|_{1,\i}$ in (\ref{up_bd_A_hat}) and the bounds
$\kappa^{-1} (A_{\oJ}, K)=\cO(1)$ and $\xi=\cO(1)$ together with conditions (\ref{cond_event_A}) and (\ref{cond_supp_unknown_A_simp}) imply (\ref{ineq:ell1infty2}) and (\ref{cond_supp_unknown_A}). 
\end{proof}

The result of Corollary \ref{cor_mle_unknown} requires that
\begin{enumerate}\setlength\itemsep{0mm}
\item[(a)] $A$ is well-behaved in that the quantities
$\kappa^{-1}(A_{\uJ}, s)$, $\kappa^{-1}(A_{\oJ}, K)$ and $\xi$ are bounded,
\item[(b)] there are only finitely many very small probability words ($|\oJ\setminus \uJ|$ stays bounded),
\item[(c)] the sample size $N$ is large enough to guarantee (\ref{cond_N_unknown_simp}),
\item[(d)] the corpus size $n$ and sample size $N$ are large enough and both topic probabilities and word probabilities need to satisfy mild signal strength conditions to guarantee  (\ref{cond_event_A}), and
\item[(e)]  $A$ is incoherent,  to satisfy (\ref{cond_supp_unknown_A_simp}) for one-sided support recovery. 
\end{enumerate}

The final bound for $\min_{P} \|\wh T-P^\T T_*\|_1$ involves two terms. Provided 
\begin{equation}\label{cond_n_p}
n ~  \gtrsim  ~ K(\Im + |I^c|) \log (M) \big / (s\Tm^2),
\end{equation}
the  rate $\sqrt {s\log(p) /N}$  dominates and
compared to Corollary \ref{cor_mle_final} and Theorem \ref{thm_minimax},
Corollary \ref{cor_mle_unknown} implies that the estimator $\wh T$ that uses $\wh A$ in \cite{TOP} has the same optimal convergence rate as $\mle$ that uses the true $A$, up to a $\log(p)$ factor.  By using $\Im + |I^c| < p$, one set of sufficient conditions for (\ref{cond_n_p}) is  $n \gg p\log (M) $ and both $K$ and $\Tm^{-1}$ are bounded. In many topic model applications, the number of documents $n$ is typically much larger than the vocabulary size $p$ and the number of topics remains small. For instance, in the IMDB movie reviews in Section \ref{sec:imdb}, we have $p\approx 500$ while $n\approx 20,000$ with the estimated $K$ being $6$.




\subsection{Estimation of $\Pi_*$ in topic models}\label{sec:est pi}

We compare the model-based estimator of $\Pi_*$ with the empirical estimator in two aspects: the $\ell_1$ convergence rate and the estimation of probabilities corresponding to zero observed frequencies. 

\subsubsection*{Improved convergence rate}
We begin   our discussion for known $A$. Let $\wt \Pi_A = A\mle$ be the model-based estimator of $\Pi_*$ with $\mle$ obtained in (\ref{def_MLE}) of Section \ref{sec_est_T_known_A}. Recall that $\wh\Pi = X$ is the empirical estimator of $\Pi_*$. Further recall $\oJ = \{j: \Pi_{*j} > 0\}$ from (\ref{def_oJ}) and write $\olp= |\oJ|$. 
Consider $s = K$, for simplicity. 

For $\wh \Pi$, it is easy to see, using the fact that each component of $N\wh \Pi$ has a Binomial distribution and the Cauchy-Schwarz inequality (twice), that
\begin{equation}\label{bd_Pi_n}
\EE \|\wh \Pi - \Pi_*\|_1  \le 
\sum_{i\in \bar J} \sqrt{ \Pi_{*i} \over N} \le 
\sqrt{\olp\over N}
%
\end{equation}
holds. 
Furthermore, the bound (\ref{bd_Pi_n}) is also sharp (one instance is when $\Pi_{*i} \asymp 1/\olp$). On the other hand, Corollary \ref{cor_mle_fast_dense} together with $\|A\|_{1,\i} = 1$ implies 
$$
\EE\|\wt \Pi_A - \Pi\|_1 \le \EE\|\mle  - T\|_1 \lesssim \sqrt{K\over N},
$$
provided that $\kappa^{-1}(A, K)$ is bounded. 
This rate is   faster than the rate (\ref{bd_Pi_n}) for $\wh\Pi$ by a factor $\sqrt{K/\olp}$. 
In the high-dimensional setting where $p\ge \olp \gg N$,
the bound in (\ref{bd_Pi_n}) does not converge to zero unless the summability condition $\sum_{i\in\oJ}\sqrt{\Pi_{*i}} = \cO(1)$ holds. In contrast, consistency of $\wt \Pi_A$ is  guaranteed 
as long as  $K = {o}(N)$.

When $A$ is unknown, the rate of the empirical estimator can still be improved by the model-based estimator $\wt \Pi_{\wh A} = \wh A\wh T$ with $\wh T$ obtained from (\ref{def_T_hat}) by using an accurate estimator $\wh A\in \cA$ of $A$. Specifically, provided that $\kappa^{-1}(A_{\oJ},s)$ and $\xi$ are bounded, the error due to estimating $A$ plays the following role in estimating $\Pi_*$,
\begin{align*}
\EE \| \wt \Pi_{\wh A} - \Pi_*\|_1 &\le \min_P\left\{\EE\|\wh A_{\oJ} - (AP)_{\oJ}\|_{1,\i} + \EE \|\wh T-P^\T T_*\|_1\right\}\\
&\lesssim  \min_P {1\over \Tm} \EE\|\wh A_{\oJ} - (AP)_{\oJ}\|_{1,\i} + \sqrt{K\log p \over N}
\end{align*}
where we used $\|A\|_{1,\i} = 1$ and $\|\wh T\|_1 = 1$ in the first line and invoked Theorem \ref{thm_mle_fast_unknown} to derive the second line. For the estimator $\wh A$ studied in Section \ref{sec_est_A_T_unknown_A_eg}, we have 
\[ 
\EE \| \wt \Pi_{\wh A} - \Pi_*\|_1 \lesssim {1\over \Tm}\sqrt{K(\Im + |I^c|)\log(M)\over nN}+\sqrt{K\log (p)\over N}.
\]
If $(\Im + |I^c|)\log(M)  \le n \Tm^2$, the above rate simplifies to $\sqrt{K\log(p)/ N}$. Moreover, 
as long as 
\[
n ~ \ge ~  {K(\Im + |I^c|) \over \olp}{\log(M) \over \Tm^2}
\]
and $K \log p\le \olp$, the estimate $\wt \Pi_{\wh A}$ improves upon $\wh \Pi$ (in the $\ell_1$ norm). 

Our model-based estimation of $\bPi_*$ uses the topic model assumption (\ref{topic}) and is to some extent related to other works, such as \cite{cao2020multisample,zhu2021learning}, where the estimation of $\bPi_*$ is studied under a low-rank structure of $\bPi_*$.

\subsubsection*{Estimating word probabilities corresponding to zero observed frequencies}    
One distinct aspect of the model-based mixture estimator compared to the empirical estimator lies in the estimation of the cell probabilities $\Pi_{*j}$ with $j\in J^c = \{j: X_j = 0\}$.

We distinguish between two situations: (i) $j \in J^c$ and $\Pi_{*j} > 0$ and (ii)  $j \in J^c$ and $\Pi_{*j} = 0$. We  discuss them separately.  For ease of reference to the results of the previous sections, recall that  $\oJ = \{j: \Pi_{*j}>0\}$.

In case (i), the empirical estimator always estimates $\Pi_{*j}$ by $\wh \Pi_j = X_j = 0$, while the mixture estimator $\wt \Pi_A $ may produce non-zero estimates, as it is designed to combine the strength of  the mixture components.  For instance, if condition (\ref{cond_T_neigh}) holds, then $[1-o_{\PP}(1)]\Pi_{*j} \le \wt \Pi_{A,j} \le [1+o_{\PP}(1)]\Pi_{*j}$, for all $j\in \oJ$, 
that is,
\[
|\wt \Pi_{A,j} - \Pi_{*j}| =  o_{\PP}\left(
\Pi_{*j} \right)
= o_{\PP}\left(
|\wh \Pi_j - \Pi_{*j}|
\right) \qquad \forall j\in \oJ\cap J^c,
\]
showing that, indeed, $\wt \Pi_{A,j}$ is a non-zero estimator of a non-zero  $\Pi_{*j}$, and has smaller estimation error than $\wh\Pi_j$.

In case (ii), for any $j$  such that $\Pi_{*j} = X_j = 0$, the empirical estimator makes no mistake while the model-based estimator $\wt \Pi_{A,j} = A_{j\cdot}^\T \mle$ could be non-zero. However, we remark that the total error of estimating $j\in \oJ^c$ made by $\wt \Pi_A$ is at most $\|(\mle - T_*)_{S_*^c}\|_1$ which converges to zero no slower than $\sqrt{K\log(K)/N}$ as shown in Section \ref{sec_est_T_known_A}. Indeed, by the fact that $A_{jS_*} = \b0$ for $j\in \oJ^c$, 
\[
\sum_{j\in \oJ^c}|\wt \Pi_{A,j} - \Pi_{*j}| = \sum_{j\in \oJ^c} A_{jS_*^c}^\T (\mle)_{S_*^c} \le \max_{k\in S_*^c}\sum_{j\in \oJ^c}A_{jk}\|(\mle - T_*)_{S_*^c}\|_1\le \|(\mle - T_*)_{S_*^c}\|_1.
\]
In particular, if $\supp(\mle)\subseteq \supp(T_*)$ holds, $ \|(\mle - T_*)_{S_*^c}\|_1 = 0$ and $\wt \Pi_A$ makes no mistake of estimating $\Pi_{*j}$ for $j\in \oJ^c$.

Summarizing, on the one hand, we expect the model-based estimator to outperform the empirical estimator for estimating the cell probabilities in (i). On the other hand, the model-based estimator is no worse than the empirical estimator for estimating the cell probabilities in (ii) when $A$ satisfies an incoherence condition (for instance, condition (\ref{cond_supp})). We verify these two points in our simulation studies in 
Appendix \ref{sec:sims}.

\section{The 1-Wasserstein distance  between documents in topic models}\label{sec:wass}
We now turn to the  main application of the  results of Section \ref{sec_est_T}. By abuse of terminology, we refer to the 1-Wasserstein distance between probabilistic  representations of documents as the distance between documents.  This section is devoted to the theoretical evaluation of the Wasserstein distance between appropriate discrete distributions, in topic models,  and to the illustration of our  proposed methods and theory to the analysis of  a real data set.

Consider  two  discrete  distributions $\gamma, \rho$ on $ \mathcal{X} \coloneqq \{x_1, \ldots,x_{\ell}, \ldots,  x_L\}$, with $x_{\ell} \in E$, where $E$ is a general, abstract, space,  and for some $L\ge 1$.  Let $D$ be a metric on $\mathcal{X}$ and denote by ${\bm D} \coloneqq \left(D(x_a, x_b)\right)_{ 1 \leq a, b\leq L}$ the  $L\times L$  matrix that collects pairwise distances between the elements in  $\mathcal{X}$. Then,  the $W_1$ distance between $\gamma$ and $\rho$ with respect to the metric $D$ is defined as
\begin{equation}\label{w1 def}
W_1(\gamma,\rho~; D) \coloneqq \inf_{w\in \Gamma(\gamma,\rho)} \tr(w{\bm D}).
\end{equation}
where $\Gamma(\gamma,\rho)$ is the set of couplings of $\gamma$ and $\rho$, namely, discrete distributions $w$ on $ \mathcal{X} \times \mathcal{X}$ with marginals $\gamma$ and $\rho$ respectively. In the above notation, $w$ is a doubly-stochastic $L \times L$ matrix.

\subsection{The  1-Wasserstein distance between probabilistic representations of documents at the word  and topic level} \label{sec:wass theory}

We consider two alternative probabilistic representations of a document $i$: (1) as a probability vector on $p$ words, $\Pi^{(i)}_*$, or (2) as a probability vector on $K$ topics, $T^{(i)}_*$.

In view of our data example in Section \ref{sec:imdb}, we regard  words as vectors in $\R^d$, for some $d$. Pre-trained embeddings of words \citep{mikolov2013efficient}, sentences \citep{reimers2019sentencebert}, and documents \citep{le2014distributed}, have become a popular general approach in natural language processing \citep{qiu2020pretrained}, and in particular allow one to define metrics  between words as metrics between their Euclidean vector representations. Specifically, let $\mathcal{X}_{word} \coloneqq \{ x_1, \ldots, x_a, \ldots, x_p\}$, so $x_a\in \R^d$ is a vector representing word $a$ in the dictionary via an embedding in $\R^d$. Then, with $\| \cdot  \|_2$ denoting the Euclidean distance on $\R^d$, we define 
\begin{equation}\label{def_D_word}
\Dw(a,b) \coloneqq \|x_a - x_b\|_2
\end{equation}
as the distance between words $a$ and $b$ for $a,b\in[p]$. The 1-Wasserstein distance between two discrete distributions $\Pi^{(i)}_*$ and $\Pi^{(j)}_*$ supported on these words, for any $i, j \in \{1, \ldots, n\} $,  is 
\begin{equation}\label{wmd pop}
W_1(\Pi^{(i)}_*,\Pi^{(j)}_*;\Dw) \coloneqq \inf_{w\in \Gamma(\Pi^{(i)}_{*}, \Pi^{(j)}_*)} \tr\left( w\Dwb \right).
\end{equation}

Alternatively, viewing the corpus as an ensemble, and under model (\ref{topic}), document differences can be explained in terms  of  $1-$Wasserstein distances between what can be regarded as sketches of the documents, the topic distributions $\bT_*$ in (\ref{topic}). For each document $i \in [n]$, the topic proportion  $T^{(i)}_*$ is  a discrete distribution supported on $K$ topics. Analogous to (\ref{wmd pop}), we define a population-level distance between topic distributions in document $i$ and $j$, based on the $1-$Wasserstein distance, by
\begin{equation}\label{top dist pop}
W_1(T^{(i)}_*, T^{(j)}_*; \Dt)   = \inf_{\alpha\in \Gamma(T^{(i)}_*, T^{(j)}_*)}\tr \left(\alpha \Dtb\right),
\end{equation}
where $\Dtb \in \R_+^{K\times K}$ is a metric matrix on $K$ topics. 

To define $\Dt$, we view a topic as being itself a distribution, on words. Specifically, for every $k \in [K]$, topic $k$ is a distribution on the $p$ words of the dictionary, with mass corresponding to $A_{\cdot k} \in \Delta_p$. We recall that the topic model specifies $A_{jk}$ as the probability of word $j$ given topic $k$. We therefore let $\mathcal{X}_{topic} = \{ A_{\cdot 1}, \ldots, A_{\cdot k}, \ldots, A_{\cdot K}: \  A_{\cdot k} \in \Delta_p\, \ \text{for} \ k \in [K] \}$.
With this view, metrics  between two topics $k$ and $l$ are distances  between discrete distributions $A_{\cdot k}$ and $A_{\cdot \ell}$ in $\Delta_p$,  with supports in  $\mathcal{X}_{word}$.

In this work we focus on two closely related such metrics.  The first one is itself a  1-Wasserstein distance: 
\begin{equation}\label{d top w}
\Dt_{W}(k,\ell) \coloneqq  W_1( A_{\cdot k}, A_{\cdot \ell}, \Dw),\quad \forall k,\ell \in [K],
\end{equation}
the calculation of which  requires optimization in $p$ dimensions and employs  input $\Dwb$ which, in the context of text analysis, is obtained from domain knowledge, as explained above, and further discussed in Section \ref{sec:imdb}.   
The second  metric  is the total variation, TV,  distance: 
\begin{equation}\label{def_D_topic_TV}
\Dt_{TV}(k,\ell) \coloneqq  {1\over 2}\|A_{\cdot k} - A_{\cdot \ell}\|_1,\quad \forall k,\ell \in [K],
\end{equation}
which is optimization free, and independent of the domain knowledge required by  (\ref{d top w}).  


We note that the space $\mathcal{X}_{topic}$ is bounded with respect to both metrics (\ref{d top w}) and (\ref{def_D_topic_TV}). In particular, the total variation distance is always bounded by $1$, and hence, $\|\Dtb_{TV}\|_{\i}\le 1$. Furthermore, by Lemma \ref{thm:w props} in Appendix \ref{proof:allbounds}, for any $k,\ell\in[K]$,
\[\Dt_{W}(k,\ell) =  W_1( A_{\cdot k}, A_{\cdot \ell}, \Dw)\le \|\Dwb\|_\i {1\over 2}\|A_{\cdot k} - A_{\cdot \ell}\|_1\le \|\Dwb\|_\i,\]
and thus $\|\Dtb_{W}\|_\i\le \|\Dwb\|_\i$. As noted in Remark \ref{rem:allbounds} below, $\|\Dwb\|_\i$ is typically bounded; in practice, word embeddings are often normalized to unit-length, in which case $\|\Dwb\|_\i\le 2$.

\subsection{Finite sample error bounds for   estimates of the 1-Wasserstein distance between documents} \label{sec:w bounds}

The theoretical analysis of estimates of the 1-Wasserstein distance $ W_1(\gamma,\rho;D)$ between discrete probability measures $\gamma$ and $\rho$ supported on a metric space $\mathcal{X}$ endowed with metric $D$  has been restricted, to the best of our knowledge,  to estimates  $W_1(\wh \rho ^{(i)},\wh\gamma^{(j)}; D)$ corresponding to  observed empirical frequencies   
$\wh \rho ^{(i)},\wh\gamma^{(j)}$, respectively observed on samples $i$ and $j$, of sizes $N_i$ and $N_j$.  

We drop the superscripts and subscripts in the next few paragraphs, for ease of presentation,  to give a brief overview of the  one-sample related results.

When $L$  is fixed and  $(\mathcal{X}, D)$ has bounded diameter, \cite{sommerfeld2017inference} showed that $\sqrt{N}W_1(\wh \rho,\rho; D)$ converges in distribution, while \cite{tameling2018empirical} showed that when $p = \infty$ and their summability condition (3) holds, $\sqrt{N}W_1(\wh \rho,\rho; D)$  converges weakly over the set of probability measures with finite first moment with respect to $D$, defined in their Section 2.1.

Finite sample rates of convergence for  $W_1(\wh \rho,\rho; D)$ when $L = L(N)$ are less studied, with the exception of \cite{weed2017sharp}, who showed that they are of the order $\sqrt{L/N}$, for $L < N$,  when $(\mathcal{X}, D)$ has bounded diameter, and obtained this result as a particular case of a general theory.

When  $(\mathcal{X}, D)$ has bounded diameter, the rate of $W_1(\wh \rho,\rho; D)$ can be obtained directly from a  bound on $\| \wh \rho - \rho\|_1$,  via the basic inequalities $c\| \wh \rho - \rho\|_1 \leq W_1(\wh \rho,\rho; D) \leq C \| \wh \rho - \rho\|_1 $ \citep{gibbs2002metrics}, where  
$c = \min_{x\neq y\in \mathcal{X}}D(x,y)$ and $C = \max_{x, y\in \mathcal{X}}D(x,y)$.
Therefore, when  $\rho, \wh \rho \in \Delta_L$, and $\wh \rho$  are observed frequencies, the rate $W_1(\wh \rho,\rho; D) \lesssim \sqrt{L/N}$,  with high probability, is therefore immediate,  and is small when $L < N$. Furthermore, $W_1(\wh \rho,\rho; D) \lesssim \sqrt{1/N}$ when $\sum_{j = 1}^{L}\sqrt{\rho_j} < \infty$, for any $L$,  allowed to depend on $N$ and be larger than $N$, matching the rate established for  $L = \infty$ in \citep{tameling2018empirical}.

We complement this literature by constructing and analyzing alternate estimates of the 1-Wasserstein distance between discrete distributions generated according to a topic model (\ref{topic}).  After obtaining any estimate 
$\wh A \in \cA$  and the estimate $\wh T^{(\ell)}$ from (\ref{def_T_hat}) by using this $\wh A$ and $X^{(\ell)}$, for each $\ell\in \{i,j\}$,  we propose to estimate the word-level document distance (\ref{wmd pop}) by 
\begin{equation}\label{def_W_word_tilde}
W_1(\wt \Pi^{(i)}, \wt \Pi^{(j)}; \Dw),\qquad \textrm{ with }\quad \wt \Pi^{(\ell)} = \wh A \wh T^{(\ell)},\quad \forall \ell \in \{i,j\}.
\end{equation}
For the Wasserstein distance between topic distributions in (\ref{top dist pop}) with the two choices of $\Dt$ in (\ref{d top w}) and (\ref{def_D_topic_TV}), we propose to estimate  $W_1(T_*^{(i)}, T_*^{(j)}; \Dt_W)$ and $W_1(T_*^{(i)}, T_*^{(j)}; \Dt_{TV})$, respectively, by 
\begin{alignat}{2}\label{def_D_topic_W_hat}
&W_1(\wh T^{(i)}, \wh T^{(j)}; \hDt_W),\qquad \textrm{ with }\quad \hDt_W(k,\ell) = W_1(\wh A_{\cdot k},\wh A_{\cdot \ell}; \Dw),\quad && \forall k,\ell \in [K];\\
\label{def_D_topic_TV_hat}
&W_1(\wh T^{(i)}, \wh T^{(j)}; \hDt_{TV}),\qquad \textrm{ with }\quad \hDt_{TV}(k,\ell) = {1\over 2}\|\wh A_{\cdot k} - \wh A_{\cdot \ell}\|_1, && \forall k,\ell \in [K].
\end{alignat}
The following proposition shows how error rates of the various Wasserstein distance estimates depend on the estimation of $A$ and $T^{(\ell)}_*$. 
Its proof can be found in Appendix \ref{proof:allbounds}. Recall that $\|M\|_{1,\i} = \max_j \|M_{\cdot j}\|_1$ for any matrix $M$. 
Define 
\[
R(\wh A, \wh T^{(i)}, \wh T^{(j)}) \coloneqq \min_{P\in \H_K}\left\{\|\wh A-AP\|_{1,\i} + \frac{1}{2}\sum_{\ell\in \{i,j\}}\|\wh T^{(\ell)}- P^\T T^{(\ell)}_*\|_1\right\}.
\]

\begin{prop}\label{allbounds}  
For any estimator  $\wh A\in \cA$ and the estimators $\wh T^{(i)}, \wh T^{(j)}$ from (\ref{def_T_hat})  based on this $\wh A$, we have:   
\begin{align}\label{word}
&\left|W_1(\wt \Pi^{(i)},\wt \Pi^{(j)}; \Dw) - W_1( \Pi_*^{(i)}, \Pi_*^{(j)}; \Dw)\right|  ~ \le ~ \|\Dwb\|_\i  R(\wh A, \wh T^{(i)}, \wh T^{(j)});\\\label{topicw}
&\left|W_1(\wh T^{(i)},\wh T^{(j)}; \hDt_W) - W_1(T^{(i)}_*,T^{(j)}_*; \Dt_W)\right| ~ \le ~  \|\Dwb\|_\i  R(\wh A, \wh T^{(i)}, \wh T^{(j)});\\\label{topictv}
& \left|W_1(\wh T^{(i)},\wh T^{(j)}; \hDt_{TV}) - W_1(T^{(i)},T^{(j)}; \Dt_{TV})\right| ~ \le  ~ R(\wh A, \wh T^{(i)}, \wh T^{(j)}). 
\end{align} 
\end{prop}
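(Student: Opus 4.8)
The plan is to bound each of the three differences by a single recurring argument: the Wasserstein distance $W_1(\cdot,\cdot;D)$ is, for fixed metric matrix $\bm D$, Lipschitz in each of its two probability arguments with respect to $\|\cdot\|_1$, and it is also (in the appropriate sense) Lipschitz in the metric matrix $\bm D$ itself. Concretely, I would first record two elementary lemmas (or cite Lemma \ref{thm:w props} in Appendix \ref{proof:allbounds}, which the excerpt already references): (i) for any two metric matrices $\bm D, \bm D'$ on a common finite set and any pair of distributions $\gamma,\rho$, one has $|W_1(\gamma,\rho;D) - W_1(\gamma,\rho;D')| \le \|\bm D - \bm D'\|_\i$, since an optimal coupling $w$ for one metric is feasible for the other and $|\tr(w\bm D) - \tr(w\bm D')| \le \|\bm D - \bm D'\|_\i \sum_{a,b} w_{ab} = \|\bm D - \bm D'\|_\i$; and (ii) for fixed $\bm D$, $|W_1(\gamma,\rho;D) - W_1(\gamma',\rho';D)| \le \|\bm D\|_\i (\tfrac12\|\gamma - \gamma'\|_1 + \tfrac12\|\rho - \rho'\|_1)$, which follows from the dual (Kantorovich) form or from a gluing/coupling construction. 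Because a permutation $P$ of the columns of $A$ simultaneously relabels topics and their pairwise distances, I should work throughout with the permutation $P$ achieving the minimum in $R(\wh A,\wh T^{(i)},\wh T^{(j)})$; all three bounds are invariant under relabeling topics, so this is harmless.

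For the word-level bound \eqref{word}, the metric $\Dw$ is fixed and known, so only Lipschitz property (ii) is needed: $|W_1(\wt\Pi^{(i)},\wt\Pi^{(j)};\Dw) - W_1(\Pi_*^{(i)},\Pi_*^{(j)};\Dw)| \le \|\Dwb\|_\i(\tfrac12\|\wt\Pi^{(i)}-\Pi_*^{(i)}\|_1 + \tfrac12\|\wt\Pi^{(j)}-\Pi_*^{(j)}\|_1)$. Then I bound each $\|\wt\Pi^{(\ell)} - \Pi_*^{(\ell)}\|_1 = \|\wh A\wh T^{(\ell)} - AT_*^{(\ell)}\|_1$ by adding and subtracting $\wh A(P^\T T_*^{(\ell)})$: the first piece is $\|\wh A(\wh T^{(\ell)} - P^\T T_*^{(\ell)})\|_1 \le \|\wh A\|_{1,\i}\|\wh T^{(\ell)} - P^\T T_*^{(\ell)}\|_1 = \|\wh T^{(\ell)} - P^\T T_*^{(\ell)}\|_1$ since columns of $\wh A$ lie in $\Delta_p$; the second piece is $\|(\wh A - AP)(P^\T T_*^{(\ell)})\|_1 \le \|\wh A - AP\|_{1,\i}\|T_*^{(\ell)}\|_1 = \|\wh A - AP\|_{1,\i}$ since $T_*^{(\ell)}\in\Delta_K$. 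Summing over $\ell\in\{i,j\}$ and collecting terms reproduces exactly $\|\Dwb\|_\i R(\wh A,\wh T^{(i)},\wh T^{(j)})$ after taking the minimum over $P$.

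For the topic-level bounds \eqref{topicw} and \eqref{topictv}, both Lipschitz properties come into play because here the metric matrix is estimated. I split via the triangle inequality into a ``wrong distributions, right metric'' term and a ``right distributions, wrong metric'' term. The first is handled by property (ii) and gives $\|\hDtb\|_\i(\tfrac12\|\wh T^{(i)} - P^\T T_*^{(i)}\|_1 + \tfrac12\|\wh T^{(j)} - P^\T T_*^{(j)}\|_1)$, where for \eqref{topictv} I use $\|\hDtb_{TV}\|_\i \le 1$ and for \eqref{topicw} I use $\|\hDtb_W\|_\i \le \|\Dwb\|_\i$ (both from Lemma \ref{thm:w props}). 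The second term is $|W_1(P^\T T_*^{(i)}, P^\T T_*^{(j)}; \hDt) - W_1(T_*^{(i)}, T_*^{(j)}; \Dt^{(P)})|$ — after relabeling by $P$ this is controlled by property (i) as $\|\hDtb - \Dtb^{(P)}\|_\i$, i.e. the max over $k,\ell$ of the discrepancy between the estimated and population topic distances under the matching $P$. For the TV choice, $|\hDt_{TV}(k,\ell) - \Dt_{TV}(\text{matched})| = \tfrac12\big|\,\|\wh A_{\cdot k} - \wh A_{\cdot\ell}\|_1 - \|(AP)_{\cdot k} - (AP)_{\cdot\ell}\|_1\,\big| \le \tfrac12(\|\wh A_{\cdot k} - (AP)_{\cdot k}\|_1 + \|\wh A_{\cdot\ell} - (AP)_{\cdot\ell}\|_1) \le \|\wh A - AP\|_{1,\i}$; for the $\Dw$-Wasserstein choice, an extra application of property (ii) at the inner optimization gives $|\hDt_W(k,\ell) - \Dt_W(\text{matched})| \le \|\Dwb\|_\i\cdot\tfrac12(\|\wh A_{\cdot k} - (AP)_{\cdot k}\|_1 + \|\wh A_{\cdot\ell} - (AP)_{\cdot\ell}\|_1) \le \|\Dwb\|_\i\|\wh A - AP\|_{1,\i}$. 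Adding the two terms and taking the minimum over $P$ yields exactly \eqref{topicw} and \eqref{topictv}.

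The main obstacle is not any single estimate but the bookkeeping around the permutation $P$: I must make sure the same $P$ that relabels the columns of $\wh A$ is the one used to relabel both the topic-distribution arguments and the rows/columns of the topic-distance matrix, so that the ``right distributions, wrong metric'' term genuinely reduces to $\|\hDtb - \Dtb\|_\i$ under a consistent matching rather than picking up a spurious combinatorial mismatch; once the invariance of $W_1$ under simultaneous relabeling of support points and their pairwise distances is stated cleanly, everything else is the routine Lipschitz arithmetic sketched above. A secondary point to state carefully is the two Lipschitz lemmas themselves — in particular property (ii) — for which I would either invoke the standard gluing lemma or give the short Kantorovich-duality argument, referencing Lemma \ref{thm:w props}.
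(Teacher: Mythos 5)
Your proposal is correct and follows essentially the same route as the paper's proof: the same three ingredients appear there — the coupling/trace bound $|\tr(w[\hDtb-\Dtb])|\le\|\hDtb-\Dtb\|_\i$ for the change of metric, the $W_1$ triangle inequality combined with $W_1(a,b;D)\le\|\bm D\|_\i\tfrac12\|a-b\|_1$ for the change of distributions, and the invariance of $W_1$ under simultaneous relabeling of topics and of the columns of $A$ (the paper's Lemma on permutations) to absorb $P$. The only differences are cosmetic: you insert $\wh A P^\T T_*^{(\ell)}$ where the paper inserts $A\wh T^{(\ell)}$, and you keep the estimated metric $\hDtb$ (bounded by $1$ or $\|\Dwb\|_\i$) in the distribution-perturbation term where the paper keeps the population $\Dtb$; both give identical final bounds.
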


We provide supporting simulations in Appendix \ref{sec:semi syn sim} to study the rate of estimation of document distances, focusing on the estimator (\ref{def_D_topic_TV_hat}) as an illustrative example.

\begin{cor}\label{cor_allbounds}
Under conditions of Corollary \ref{cor_mle_unknown}, 
for the estimator $\wh A$ proposed in \cite{TOP}, and estimators $\wh T^{(i)}, \wh T^{(j)}$ from (\ref{def_T_hat})  based on this $\wh A$, with probability tending to one, the bounds given in Proposition \ref{allbounds} hold with 
\[
R(\wh A, \wh T^{(i)}, \wh T^{(j)}) ~\lesssim ~  \sqrt{\max\{\|T^{(i)}_*\|_0, \|T^{(j)}_*\|_0\}\log(p) \over N} + {1\over \Tm}\sqrt{K(\Im + |I^c|)\log(M)\over nN}.
\]
\end{cor}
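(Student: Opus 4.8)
The plan is to combine Proposition~\ref{allbounds} with the explicit convergence guarantees for $\wh A$ and $\wh T^{(\ell)}$ that were already assembled in Corollary~\ref{cor_mle_unknown}. The only quantity to control is $R(\wh A, \wh T^{(i)}, \wh T^{(j)})$, which, by its definition, is a minimum over permutations $P\in\H_K$ of $\|\wh A - AP\|_{1,\i} + \tfrac12\sum_{\ell\in\{i,j\}}\|\wh T^{(\ell)} - P^\T T^{(\ell)}_*\|_1$. Since the conditions of Corollary~\ref{cor_mle_unknown} are assumed, Proposition~\ref{allbounds} applies verbatim and the three displayed Wasserstein bounds hold with this single $R$-term; it remains only to plug in rates.

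First I would bound the term $\|\wh A - AP\|_{1,\i}$: by the guarantee (\ref{up_bd_A_hat}) on the estimator of \cite{TOP}, there is a permutation $P^\star\in\H_K$ such that $\|\wh A - AP^\star\|_{1,\i} = \cO_\PP\bigl(\sqrt{K(\Im+|I^c|)\log(M)/(nN)}\bigr)$, with probability tending to one. Next, for that \emph{same} permutation $P^\star$, I would invoke Corollary~\ref{cor_mle_unknown} separately for each document $\ell\in\{i,j\}$ --- noting that its hypotheses (\ref{cond_N_unknown_simp}), (\ref{cond_event_A}), (\ref{cond_supp_unknown_A_simp}) and the boundedness of $\kappa^{-1}(A_{\uJ},s)$, $\kappa^{-1}(A_{\oJ},K)$, $\xi$, $|\oJ\setminus\uJ|$ are exactly what Corollary~\ref{cor_mle_unknown} presumes --- to get, after one-sided support recovery $\supp(\wh T^{(\ell)})\subseteq\supp(T^{(\ell)}_*)$ so that $\|\wh T^{(\ell)} - (P^\star)^\T T^{(\ell)}_*\|_1$ replaces $K$ by $\|T^{(\ell)}_*\|_0$,
\[
\|\wh T^{(\ell)} - (P^\star)^\T T^{(\ell)}_*\|_1 \lesssim \sqrt{\|T^{(\ell)}_*\|_0\log(p)\over N} + {1\over \Tm}\sqrt{K(\Im+|I^c|)\log(M)\over nN}.
\]
One small technical point I would be careful about: Corollary~\ref{cor_mle_unknown} is stated with a per-document minimizing permutation, but $R$ requires a single common $P$; here this is harmless because the permutation in Corollary~\ref{cor_mle_unknown} is precisely the column-matching permutation of $\wh A$ --- i.e. $P^\star$ above --- which does not depend on the document index $\ell$, so the same $P^\star$ works in all three summands of $R$.

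Assembling, I would evaluate $R$ at $P=P^\star$ (which upper-bounds the minimum over $\H_K$), collect the $\sqrt{\|T^{(\ell)}_*\|_0\log(p)/N}$ contributions into $\sqrt{\max\{\|T^{(i)}_*\|_0,\|T^{(j)}_*\|_0\}\log(p)/N}$ via $\max\{a,b\}\le a+b\le 2\max\{a,b\}$, and absorb the (repeated) $A$-estimation error $\tfrac{1}{\Tm}\sqrt{K(\Im+|I^c|)\log(M)/(nN)}$ into a single such term up to an absolute constant. This yields the claimed bound on $R(\wh A,\wh T^{(i)},\wh T^{(j)})$, and Proposition~\ref{allbounds} then delivers the stated conclusion for all three distance estimates (the factor $\|\Dwb\|_\i$ in (\ref{word})--(\ref{topicw}) being a constant under the normalization discussed after (\ref{def_D_topic_TV})). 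I do not expect any genuine obstacle: the proof is a bookkeeping exercise in threading the hypotheses of Corollary~\ref{cor_mle_unknown} through Proposition~\ref{allbounds}, and the mildest point of care is simply ensuring the permutation is chosen once, from the $\wh A$-alignment, and reused consistently.
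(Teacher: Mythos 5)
Your proposal is correct and follows exactly the route the paper intends: the corollary is obtained by plugging the rate (\ref{up_bd_A_hat}) for $\|\wh A - AP\|_{1,\i}$ and the sparse-recovery rate of Corollary \ref{cor_mle_unknown} (applied to each of the two documents) into the definition of $R(\wh A,\wh T^{(i)},\wh T^{(j)})$ and then invoking Proposition \ref{allbounds}. Your remark that the permutation in Corollary \ref{cor_mle_unknown} is the single $\wh A$-alignment permutation, hence can be shared across both documents and the $A$-error term as required by the common $P$ in $R$, is the right technical point and matches how the paper's results (Theorems \ref{thm_mle_fast_unknown} and \ref{thm_supp_unknown}) are stated conditionally on that one permutation.
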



\begin{remark}\label{rem:allbounds} We make the following remarks:
\begin{enumerate}
\item All error upper bounds given by Proposition \ref{allbounds} are of the same order, when $\|\Dwb\|_\i \leq C$, for some constant $C > 0$. In practice, word embedding vectors are often
normalized to unit length when used to define $\Dw$, in which case $\|\Dwb\|_\i \leq 2$.

\item The first two error bounds are the same, but in the first the estimation of both $A$ and $T_*$ play a role in the estimation of $\Pi_*$, whereas the second bound is influenced by the estimation of $A$ via the estimation of the distance metric. 

Although the error bounds are the same, computing the LHS of (\ref{word}) 
involves an optimization in dimension $p$, whereas the LHS of (\ref{topicw}) is  in the much lower  dimension $K$.  Although the distance metric  in (\ref{topicw}) does require the computation of $K(K-1)/2$ Wasserstein  distances in dimension $p$, as in (\ref{word}), all $n(n-1)/2$ pairwise distances between the documents in the corpus can be computed by only a $K$-dimensional Wasserstein distance; this results in a substantial computational gain for $K$ small and $n$ and $p$ large, the typical case in topic modelling (in our example in Section \ref{sec:imdb}, $n=20,605$ and $p=500$, whereas $K=6$). We note that approximations to the $W_1$ distance can be considered to reduce computational complexity at the cost of accuracy, as in \citep{kusner2015wmd}; we instead focus on exact calculation of the $W_1$ distance, but in a reduced dimension ($K$).

\item The LHS in (\ref{topictv}) is once again an optimization in dimension $K$, with input independent of $\|\Dwb\|_{\i}$, and therefore its bound is also independent of this quantity. Furthermore, $\hDt_{TV}$ is computed from simple $\ell_1$ norms of the columns of $\wh A$, so avoids the computational issues of the Wasserstein distance entirely.


\item We will shortly illustrate the advantage of our Wasserstein distance estimates in Section \ref{sec:imdb} below, where we analyze an IMBD movie review corpus.
To exploit the geometry of the word embeddings, \cite{kusner2015wmd} was the first to suggest using the $1$-Wasserstein distance (also known as the Earth Mover's Distance) between the word frequency vectors $\wh\Pi^{(i)}, \wh\Pi^{(j)}$. The benefit of using the Wasserstein distance, relative to the previously used $\ell_2$ or TV distances,  is that it takes into account the relative distance between words, as captured by $\Dw$, so documents with similar meaning can have a small distance even if there is little overlap in the exact words they use.

The analysis of a  corpus of movie reviews, presented in Section \ref{sec:imdb}, illustrates, on the same data set, 
that the three newly proposed document-distance estimates, $W_1(\wt \Pi^{(i)},\wt \Pi^{(j)};\Dw)$,  and  
$W_1(\wh T^{(i)},\wh T^{(j)}; \hDt)$, for estimates of  $\hDt$ of the two metrics defined in (\ref{d top w}) and (\ref{def_D_topic_TV}), are competitive. In particular,  $W_1(\wh T^{(i)},\wh T^{(j)}; \hDt_{TV})$ yields qualitatively similar results, relative to our other two proposed distances, while having the net benefit of  involving  optimization only in $K$ dimensions, and $K \ll p$, typically by several orders of magnitude. Furthermore, it obviates the need for pre-trained word embeddings. Our analysis further reveals that all our proposed distance estimates capture  well topical differences between the documents, while the standard $W_1(\wh\Pi^{(i)},\wh\Pi^{(j)};\Dw)$ between observed document frequencies is substantially less successful. 


\end{enumerate}

\end{remark}

\subsection{Application: IMDB movie reviews}\label{sec:imdb}
In this section we demonstrate our proposed approach of estimating topic proportions for use in document distance estimation.   Using a popular movie-review dataset \citep{maas-EtAl:2011:ACL-HLT2011}, we perform the following steps:

\begin{enumerate}
\item   Estimate the word-topic matrix $A$
using the method in  \cite{bing2020optimal}. For the reader's convenience, we restate the procedure in Appendix \ref{app_alg_A}. 
Use anchor words defined via $\wh A$ to give an initial interpretation of each topic. 
\item Estimate the  topic distributions  $\wh T^{(i)}$ from $\wh A$ and $X^{(i)}$,  for each document $i\in [n]$,  by solving (\ref{def_T_hat}). Use these estimates, in the context of the corpus, to adjust and refine the initial topic interpretation. 
\item Calculate document distances (\ref{def_W_word_tilde}) -- (\ref{def_D_topic_TV_hat}), along with other candidate distances, and compare their ability to capture similarity between the documents.
\end{enumerate}




\subsubsection*{Data and preprocessing} We use a collection of 50K IMDB movie reviews designed for unsupervised learning from the Large Movie Review Dataset \citep{maas-EtAl:2011:ACL-HLT2011}. We preprocess the data by removing stop words and words that have document frequency of less than 1\%. Among the remaining $1685$ words, we keep only the $500$ most common (by term frequency), for ease of interpretation of the topics (we found qualitatively similar results and reached the same conclusions when including all $1685$ words).  We also only keep documents with greater than $50$ words. After preprocessing, we end up with a $p\times n$ word-count matrix $\bX$, where $p=500$, $n= 20,605$.

\begin{remark}\mbox{}
\begin{enumerate}
	\item[(1)]  We recall from Section \ref{sec:top word est} that one motivation of our theoretical analysis of the estimation of $T_*^{(i)}$ is to address the case when $\Pi_{*j}^{(i)}=0$ for a document $i$ and word $j$. After preprocessing, the total number of distinct words in each review in this dataset is $63$ on average, much less than the vocabulary size $p=500$. Thus, for each document $i$ there are typically many words $j$ with $X^{(i)}_j =0$. For at least some of these words, it is possible that $\Pi^{(i)}_{*j}=0$. For example, we find reviews of films in genres such as horror and comedy that have no relation to `war', one of the $500$ words in the vocabulary: for these reviews, it is reasonable to expect the word `war' to have cell probability $\Pi^{(i)}_{*j}=0$. These observations provide a real-data example further motivating the need for a theoretical analysis allowing for this case. 
	
	
	\item[(2)] We also emphasize that our discrete mixture probability estimates allow us to construct non-zero estimates of non-zero $\Pi^{(i)}_{*j}$, even when $X_j^{(i)}=0$. 
	In fact, we find that the average number of non-zero entries in the estimator $\wt \Pi^{(i)}$, over all documents $i\in[n]$, is $490$, much larger than the average number of non-zero entries of $X^{(i)}$ (which we recall was $68$). In most cases, we found zero entries of $\wt \Pi^{(i)}$ correspond to anchor words for topics that are not present in document $i$. This demonstrates that $\wt \Pi^{(i)}$ is able to produce zero estimates for words that we expect to have no chance of occurring in document $i$, while still producing non-zero estimates corresponding to words that could occur in that document, but were not observed in that particular sample. 
	
\end{enumerate}
\end{remark}

\subsubsection{Estimating topic distributions for a refined understanding of the topics covered by a document corpus}

We run the method in \cite{bing2020optimal} on $\bX$ to estimate $A$ for this dataset, with tuning parameter $C_1 = 4$, and denote the output $\wh A$. The number of topics is estimated to be $\wh K = 6$. In Table \ref{table:IMDB top words} in Appendix \ref{sec:imdb supp}, we show the anchor words for each of the 6 topics, from which we can give an initial interpretation to the topics (shown in the third column of the table). In particular, the only anchor words for Topics 3 and 5 are `game' and `episode' respectively, despite this dataset nominally being composed of reviews of full-length movies.

To further interpret the topics (in particular Topics 3 and 5), we compute the estimated topic proportions $\wh T^{(i)}$ from $\wh A$ and $X^{(i)}$ for each document $i\in [n]$  using (\ref{def_T_hat}). Table \ref{table:IMDB excerpts} in Appendix \ref{sec:imdb supp} shows, for each $k\in[\wh K]$, examples of documents such that $\wh T_k = 1$; namely, documents that are generated entirely from topic $k$. This table demonstrates the usefulness of estimating the topic proportions $\wh T^{(i)}$: inspecting these topic-specific documents provides detailed information on what each topic captures. For space limitations, we only give an excerpt of Table \ref{table:IMDB excerpts} here in Table \ref{table:IMDB excerpts short}, featuring Topic 3 and 5.  We find that the documents displayed for Topics 3 and 5 are in fact not movie reviews, but reviews of video games and TV shows, respectively.

\begin{table}
\caption{Excerpts from documents that are estimated to be exclusively generated from Topics 3 and 5 (formally, documents with $\wh T^{(i)}_k = 1$ for each topic $k$). The third column gives the ID number in the original dataset \citep{maas-EtAl:2011:ACL-HLT2011}. See Table \ref{table:IMDB excerpts} in Appendix \ref{sec:imdb supp} for further excerpts for all 6 topics.}
\label{table:IMDB excerpts short}
\begin{tabular}{|l|l|l|p{9cm}|}
	\hline \textbf{Topic} &\textbf{Interpretation}&\textbf{Movie ID}&\textbf{Document excerpt}\\
	\hline 
	\multirow{2}{*}{Topic 3} & \multirow{2}{*}{Video Games} & 23,753&\textit{This game really is worth the ridiculous prices out there\ldots}
	\\ \cline{3-4}
	&  & 12,261&\textit{I remember playing this game at a friend\ldots} \\
	\hline
	\multirow{2}{*}{Topic 5} & \multirow{2}{*}{TV Shows} & 32,315& \textit{I used to watch this show when I was a little girl\ldots }
	\\ \cline{3-4}
	&  & 10,454&\textit{I've watched the TV show Hex twice over and I still can not get enough of it. The show is excellent\ldots } \\
	\hline
\end{tabular}
\end{table}


Besides these non-movie reviews, we confirm that the examples from Topics 1, 4, and 6 are indeed book adaptations, horror films, and films related to war and history, respectively. We see that Topic 2 is indeed related to sentiment in these examples, with both reviews being very negative. All details can be found in Table \ref{table:IMDB excerpts} in Appendix \ref{sec:imdb supp}.

In summary, we have demonstrated that  the estimated topic proportions $\wh\bT$ are useful tools for topic interpretation, and a needed companion to the estimation of $A$, on the basis of which one gives the initial definition of the topics.

\subsubsection{Estimating the 1-Wasserstein distance between documents}

We recall that, by abuse of terminology,  but for clarity of exposition, we refer to distances between probabilistic representations of documents as distances between documents. 

We now compare a set of candidate document distance measures, including our proposed methods. We select several representative documents among the documents kept from the IMDB dataset after preprocessing, and compute the distance between them. We recall that in order to compute the 1-Wasserstein distance between two documents represented via their respective topic-distributions, we need to first calculate the distance between elements on their supports, the topics, which in turn are probability distributions on words, estimated by the columns of $\wh A$. Therefore, with $\wh K = 6$, we first compute (\ref{def_D_topic_W_hat}) and (\ref{def_D_topic_TV_hat}), which we repeat here for clarity: 
\begin{equation}\label{doc dists imdb}
\hDt_W(k,l) = W_1(\wh A_{\cdot k},  \wh A_{\cdot l}; \Dw), \qquad 
\hDt_{TV}(k,l) = \frac{1}{2}\|\wh A_{\cdot k} - \wh A_{\cdot l}\|_1,
\end{equation}
for all $k,l\in \{1, \ldots, 6\}$.
To compute $\Dw$, we use open-source word embeddings from Google\footnote{\url{https://code.google.com/archive/p/word2vec/}} that come pre-trained using the word2vec model \citep{mikolov2013efficient} on a Google News corpus of around 100 billion words. These word embeddings contain a word vector $x_i$ for each the 500 words in our dictionary, except one item in the vocabulary (the number `10', common in movie ratings out of 10), for which we remove the corresponding row from $\wh A$ (then re-normalize to have unit column sums) when computing $\hDt_W$. We follow standard practice of normalizing all word-embeddings to unit length. The distance $\Dw(i,j)$ between words $i$ and $j$ is then computed as $\Dw(i,j) = \|x_i - x_j\|_2/\max_{i,j} \|x_i - x_j\|_2$. We divide by the normalizing factor $\max_{i,j} \|x_i - x_j\|_2$ so that the elements of $\Dw$ are in the range $[0,1]$. This results in $\hDt_W$ also being in the range $[0,1]$, and so on the same scale as $\hDt_{TV}$.

See Table \ref{table: doc details} for details on each document, including the estimated topic proportions, and Table \ref{table:doc dist} for the computed distances.
We make several remarks based on the results in Table \ref{table:doc dist}. 
\begin{enumerate}
\item Consider the distance between documents $D_1$ and $D_2$, which have $\wh T^{(1)}=\wh T^{(2)}$ and are both entirely generated from the Horror topic. Since $\wh T^{(1)}=\wh T^{(2)}$, all distances between the topic proportions (panels (a), (b), and (e)) are equal to zero. Since $\wh T^{(1)}=\wh T^{(2)}$ implies $\wt \Pi^{(1)}=\wt \Pi^{(2)}$,  the distance based on the latter estimators is also zero (Table \ref{table:doc dist}, panel (d)). The only distance that does not capture this underlying topical similarity is the Word Mover's Distance (WMD), which has a value of $0.56$ between $D_1$ and $D_2$.

\item Compare the distances between $D_4$ (a video game review) and each other document (all movie reviews) to the distances between pairs of movie reviews. For the two Wasserstein distances between the topic proportions, as well as the distance based on $\wt \Pi$ (Table \ref{table:doc dist}, panels (a), (b), and (d), respectively), the distance between the video game review $D_4$ and any other review is much greater than the distance between any two movie  reviews. (The one exception is that the distance between $D_4$ and $D_6$ is not large, since $D_6$ has substantial weight on the Video game topic). Thus, these methods are able to detect the difference between video game and movie reviews. We similarly find that these methods detect documents from the TV show topic as outliers from full-length movie reviews, but don't include this in Table \ref{table:doc dist} for simplicity of presentation.

In contrast, the WMD in panel (c) computes the distances between all pairs of distinct documents to be all relatively close together. In fact, based on the WMD, the Horror film review $D_1$ is the same distance to the Video game review $D_4$ as the War \& History film review $D_3$; we note that this is perhaps unsurprising, given that the WMD is not designed to capture similarity based on topics. On the other hand, the TV distance in panel (e) computes the distance between any two documents with disjoint topics to be the maximum value of $1$, not distinguishing between topics that are more or less similar. 


\item The Wasserstein distance based on $\hDt_{TV}$ (panel (a) of Table \ref{table:doc dist}) gives qualitatively similar results to the other two model-based Wasserstein distances (panels (b) and (d)), while obviating the need for the pre-trained word embeddings used to compute $\Dw$ and $\hDt_{W}$, and the calculation of any $p$-dimensional Wasserstein distances, which are computationally expensive.
\end{enumerate}

In summary, the three Wasserstein-based distances defined with the estimated parameters of the topic model (panels (a), (b), (d) in Table \ref{table:doc dist}) are the most successful in capturing topic-based document similarity, and the distance based on $\hDt_{TV}$ (panel (a)) has the further benefit of not requiring the use of pre-trained word embeddings or $p$-dimensional optimization.


\begin{table}[h]
	\centering
	\caption{For each document in Table \ref{table:doc dist}, we give the document ID from the original IMDB dataset, the topic proportions (estimated using (\ref{def_T_hat})), and the interpretations of each topic in the document.}
	\label{table: doc details}
	\begin{tabular}{c|c|c|c}
		\textbf{Document}&\textbf{ID}&\textbf{Topic proportions} & \textbf{Topic interpretations}  \\
		\hline
		$D_1$ & 29,114  & $\wh T = (0,0,0,1,0,0)$ & Horror\\
		$D_2$ & 3,448 & $\wh T = (0,0,0,1,0,0)$ & Horror\\
		$D_3$ & 26,918 & $\wh T = (0,0,0,0,0,1)$ & War \& History\\
		$D_4$ & 23,753 & $\wh T = (0,0,1,0,0,0)$ & Video games\\
		$D_5$ & 4,058 & $\wh T = (0,0,0,0.5,0,0.5)$ & Horror + War \& History\\
		$D_6$ & 5,977 & $\wh T = (0,0,0.5,0.5,0,0)$ & Horror + Video games\\
	\end{tabular}
\end{table}

\begin{table}[h!]
	\caption{Distances between documents using various metrics.}
	\label{table:doc dist}
	\begin{center}
		\subfloat[$W_1(\wh T^{(i)}, \wh T^{(j)},\hDt_{TV})$]{
			\begin{tabular}{ |c|c|c|c|c|c|c|}
				\hline
				&$D_1$ & $D_2$ & $D_3$ & $D_4$ & $D_5$ & $D_6$\\
				\hline 
				$D_1$ &0&	0&	0.14&	0.21&	0.07&	0.10 \\
				\hline
				$D_2$ &$\cdot$ &	0&	0.14&	0.21&	0.07&	0.10\\
				\hline 
				$D_3$ & $\cdot$&$\cdot$&0&	0.23&	0.07&	0.18\\
				\hline
				$D_4$& $\cdot$ &$\cdot$&$\cdot$&0&	0.22&	0.10\\
				\hline
				$D_5$ &$\cdot$ &$\cdot$&$\cdot$&$\cdot$& 0&	0.11\\
				\hline
				$D_6$ &$\cdot$ &$\cdot$&$\cdot$&$\cdot$&$\cdot$&0\\
				\hline
		\end{tabular}}
		\quad
		\subfloat[$W_1(\wh T^{(i)}, \wh T^{(j)},\hDt_{W})$]{
			\begin{tabular}{ |c|c|c|c|c|c|c|}
				\hline
				&$D_1$ & $D_2$ & $D_3$ & $D_4$ & $D_5$ & $D_6$\\
				\hline 
				$D_1$ &0&	0&	0.10&	0.16&	0.05&	0.08 \\
				\hline
				$D_2$ &$\cdot$ &	0&	0.10&	0.16&	0.05&	0.08\\
				\hline 
				$D_3$ & $\cdot$&$\cdot$&0&	0.17&	0.05&	0.13\\
				\hline
				$D_4$& $\cdot$ &$\cdot$&$\cdot$&0&	0.16&	0.08\\
				\hline
				$D_5$ &$\cdot$ &$\cdot$&$\cdot$&$\cdot$& 0&	0.09\\
				\hline
				$D_6$ &$\cdot$ &$\cdot$&$\cdot$&$\cdot$&$\cdot$&0\\
				\hline
		\end{tabular}}
		
		\subfloat[$W_1(\wh \Pi^{(i)}, \wh \Pi^{(j)},\Dw)$ (WMD)]{
			\begin{tabular}{ |c|c|c|c|c|c|c|}
				\hline
				&$D_1$ & $D_2$ & $D_3$ & $D_4$ & $D_5$ & $D_6$\\
				\hline 
				$D_1$ &0&	0.56&	0.62&	0.62&	0.56&	0.60 \\
				\hline
				$D_2$ &$\cdot$ &	0&	0.66 & 0.68 &	0.63&	0.64\\
				\hline 
				$D_3$ & $\cdot$&$\cdot$&0&	0.71&	0.61 &	0.67\\
				\hline
				$D_4$& $\cdot$ &$\cdot$&$\cdot$&0&	0.65&	0.63\\
				\hline
				$D_5$ &$\cdot$ &$\cdot$&$\cdot$&$\cdot$& 0&	0.59\\
				\hline
				$D_6$ &$\cdot$ &$\cdot$&$\cdot$&$\cdot$&$\cdot$&0\\
				\hline
		\end{tabular}}
		\quad
		\subfloat[$W_1(\wt \Pi^{(i)}, \wt \Pi^{(j)},\Dw)$]{
			\begin{tabular}{ |c|c|c|c|c|c|c|}
				\hline
				&$D_1$ & $D_2$ & $D_3$ & $D_4$ & $D_5$ & $D_6$\\
				\hline 
				$D_1$ &0&	0&	0.10&	0.16&	0.05&	0.08 \\
				\hline
				$D_2$ &$\cdot$ &	0&	0.10&	0.16&	0.05&	0.08\\
				\hline 
				$D_3$ & $\cdot$&$\cdot$&0&	0.17&	0.05&	0.12\\
				\hline
				$D_4$& $\cdot$ &$\cdot$&$\cdot$&0&	0.16&	0.08\\
				\hline
				$D_5$ &$\cdot$ &$\cdot$&$\cdot$&$\cdot$& 0&	0.09\\
				\hline
				$D_6$ &$\cdot$ &$\cdot$&$\cdot$&$\cdot$&$\cdot$&0\\
				\hline
		\end{tabular}}
		
		\subfloat[$TV(\wh T^{(i)}, \wh T^{(j)})$]{
			\begin{tabular}{ |c|c|c|c|c|c|c|}
				\hline
				&$D_1$ & $D_2$ & $D_3$ & $D_4$ & $D_5$ & $D_6$\\
				\hline 
				$D_1$ &0&	0&	1&	1&	0.50&	0.50  \\
				\hline
				$D_2$ &$\cdot$ &0&	1&	1 &0.50&	0.50\\
				\hline 
				$D_3$ & $\cdot$&$\cdot$&0&	1&	0.50&	1\\
				\hline
				$D_4$& $\cdot$ &$\cdot$&$\cdot$&0&	1&	0.50\\
				\hline
				$D_5$ &$\cdot$ &$\cdot$&$\cdot$&$\cdot$& 0&	0.50\\
				\hline
				$D_6$ &$\cdot$ &$\cdot$&$\cdot$&$\cdot$&$\cdot$&0\\
				\hline
		\end{tabular}}
	\end{center}
\end{table}

	%
	%

\section*{Acknowledgements}
	Bunea and Wegkamp are supported in part by NSF grant DMS-2015195.



\newpage

\appendix

Appendix \ref{app_supp_recovery} contains results on the support recovery of $T_*$ for both known $A$ and unknown $A$.  Appendix \ref{sec:imdb supp} contains supplementary results on the IMDB data set.  Simulation results on estimation of $T_*$ and $\Pi_*$ are presented in Appendix \ref{sec:sims} while semi-synthetic simulations to compare document-distance estimation rates are stated in Appendix \ref{sec:semi syn sim}. All the proofs are collected in Appendices \ref{app_proof_T_known_A} -- \ref{app_tech_lemma}. Appendix \ref{app_alg_A} contains the algorithm used for estimating the word-topic matrix $A$. Appendix \ref{app_upperbounds_A} states guarantees on estimation of $A$ based on some existing results. 
Finally, discussion on the $\ell_2$ convergence rate of estimating $T_*$ is stated in Appendix \ref{app_rate_ell_2}.

\section{Recovery of the support of $T_*$}\label{app_supp_recovery}

\subsection{Support recovery when $A$ is known}\label{app_supp_recovery_known_A}

We discuss the consistent support recovery of the estimator $\mle$, and introduce another simple consistent estimator of $S_* = \supp(T_*)$ in the presence of anchor words.

In light of Theorem \ref{thm_supp}, establishing consistent support recovery for $\mle$ also requires the other direction, $\supp(T_*)\subseteq \supp(\mle)$, for which we provide a simple sufficient condition below in the presence of anchor words. 

\begin{prop}[Consistent support recovery of $\mle$]\label{prop_supp}
	Suppose there exists at least one anchor word $j_k$ for each topic $k\in S_*$ such that $\Pi_{*j_k} \ge 2\eps_{j_k}$ with $\eps_{j_k}$ defined in (\ref{def_eps_j}). Then, with probability $1-2p^{-1}$,
	\[
	\supp(T_*) \subseteq \supp(\mle).
	\]
	Furthermore, if additionally (\ref{cond_supp}) holds, then, with probability $1-2p^{-1}-6s^{-1} - 2K^{-1}$,
	\[
	\supp(T_*) = \supp(\mle).
	\]
\end{prop}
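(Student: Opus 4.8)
The plan is to prove the two set inclusions $\supp(T_*)\subseteq\supp(\mle)$ and $\supp(\mle)\subseteq\supp(T_*)$ separately. The first inclusion needs only the anchor-word hypothesis of the first assertion and follows from an elementary ``the objective cannot be $-\infty$'' argument; the second inclusion is precisely the content of Theorem \ref{thm_supp}. The first assertion is then the first inclusion alone, and the second assertion follows by intersecting the two events.

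For the first inclusion I would begin with two preliminary observations. Since $NX\sim\textrm{Multinomial}_p(N;\Pi_*)$, any word $j$ with $\Pi_{*j}=0$ has $X_j=0$ almost surely, so $J\subseteq\oJ$ with probability one; consequently the objective $\sum_{j\in J}X_j\log(A_{j\cdot}^\T T)$ in (\ref{def_MLE}) takes the finite value $\sum_{j\in J}X_j\log\Pi_{*j}$ at $T=T_*$, and therefore any maximizer $\mle$ attains a value at least this large, in particular a finite value (hence strictly larger than $-\infty$). Secondly, by Lemma \ref{lem_basic}, the event $\{\uJ\subseteq J\}$ holds with probability at least $1-2p^{-1}$. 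Working on this event, fix $k\in S_*$ and let $j_k$ be an anchor word for topic $k$ with $\Pi_{*j_k}\ge 2\eps_{j_k}$; then $j_k\in\uJ\subseteq J$, so $X_{j_k}>0$, while Assumption \ref{ass_sep} gives $A_{j_k\ell}=0$ for all $\ell\ne k$ and $A_{j_kk}>0$, so that $A_{j_k\cdot}^\T T=A_{j_kk}[T]_k$ for every $T\in\Delta_K$. If $[\mle]_k$ were $0$, the term $X_{j_k}\log(A_{j_k\cdot}^\T\mle)$ would equal $-\infty$, forcing the entire objective at $\mle$ to be $-\infty$ and contradicting the finiteness established above. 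Hence $[\mle]_k>0$ for every $k\in S_*$, i.e.\ $\supp(T_*)\subseteq\supp(\mle)$, on an event of probability at least $1-2p^{-1}$, which is the first assertion.

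For the reverse inclusion I would invoke Theorem \ref{thm_supp}: under the incoherence condition (\ref{cond_supp}) (together with the sample-size requirement (\ref{cond_N_sparse_supp}) needed to apply that theorem), $\E_{\supp}=\{\supp(\mle)\subseteq\supp(T_*)\}$ holds on a good event of probability at least $1-2p^{-1}-4(s\vee n)^{-1}-4K^{-1}$. Since establishing $\E_{\supp}$ in Theorem \ref{thm_supp} already uses (and hence conditions on) the event $\{\uJ\subseteq J\subseteq\oJ\}$, on that same good event the first inclusion also holds, so both inclusions hold simultaneously there; using $s\le K$ to control the failure probabilities then gives $\supp(T_*)=\supp(\mle)$ with probability at least $1-2p^{-1}-6s^{-1}-2K^{-1}$, which is the second assertion.

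I do not expect a genuine obstacle inside this argument: the heavy lifting for the reverse inclusion is outsourced to Theorem \ref{thm_supp} (assumed available), and the first inclusion is short. The only points requiring a little care are (i) checking that the log-likelihood at $T_*$ is finite, so that optimality of $\mle$ really does forbid a $-\infty$ objective value, and (ii) the bookkeeping of failure probabilities, ensuring the $p^{-1}$-type terms shared between the two steps are not double-counted and that the final bound matches the one stated.
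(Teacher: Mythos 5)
Your proof is correct and follows essentially the route the paper intends: on the event $\E$ of Lemma \ref{lem_basic} each anchor word $j_k$ with $\Pi_{*j_k}\ge 2\eps_{j_k}$ is observed ($X_{j_k}>0$), so finiteness of the log-likelihood at $T_*$ forces $[\mle]_k>0$, while the reverse inclusion is exactly Theorem \ref{thm_supp}. Your probability bookkeeping is also fine, since the $2p^{-1}$ event is shared with Theorem \ref{thm_supp} and $4(s\vee n)^{-1}+4K^{-1}\le 6s^{-1}+2K^{-1}$ for $s\le K$.
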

Proposition \ref{prop_supp} imposes a signal condition on the frequency of the anchor words corresponding to the non-zero topics. Recall  $\eps_{j_k}$ from (\ref{def_eps_j}) that the signal condition simply requires 
\[
\Pi_{*j_k} \gtrsim {\log(p) \over N}, \quad \textrm{for one anchor word $j_k$ of topic $k\in S_*$.}
\]

In addition to the above signal condition, if Assumption \ref{ass_sep} holds (or equivalently, there exists at least one anchor word for each of the zero topics, that is, the topic $k\in S_*^c$), then the following simple estimator 
\begin{equation}\label{def_S_hat}
	\wh S\coloneqq \{k\in [K]:\ \exists X_j>0 \text{  corresponding to anchor word $j$ of topic $k$}\}
\end{equation}
consistently estimates $S_*$, as stated in the following proposition.
\begin{prop}\label{prop_S_hat}
	Under Assumption \ref{ass_sep}, 
	we have $\wh S \subseteq S_*$ with probability one.
	Furthermore, if additionally there exists at least one anchor word $j_k$ for each topic $k\in S_*$ such that $\Pi_{*j_k} \ge 2\eps_{j_k}$ with $\eps_{j_k}$ defined in (\ref{def_eps_j}). Then,
	\[
	\PP\{\wh S = S_*\} \ge 1 - 2p^{-1}.
	\]
\end{prop}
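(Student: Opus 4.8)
The plan is to prove the two inclusions $\wh S\subseteq S_*$ and $S_*\subseteq\wh S$ separately, relying only on the anchor-word structure of $A$ and on Lemma \ref{lem_basic}.

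First I would handle the inclusion $\wh S\subseteq S_*$, which I expect to hold with probability one. Fix a topic $k\in[K]$ and let $j$ be any anchor word of topic $k$, so that $A_{jk}\neq 0$ and $A_{jk'}=0$ for all $k'\neq k$ by Assumption \ref{ass_sep}; then $\Pi_{*j}=A_{j\cdot}^\T T_*=A_{jk}T_{*k}$, so $\Pi_{*j}>0$ if and only if $k\in S_*=\supp(T_*)$. Hence if $k\notin S_*$ then $\Pi_{*j}=0$ for every anchor word $j$ of topic $k$, and since the marginal law of $Y_j=NX_j$ is $\textrm{Binomial}(N,\Pi_{*j})$, such a word satisfies $X_j=0$ almost surely. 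A union bound over the finitely many anchor words attached to topics outside $S_*$ then shows that, with probability one, no anchor word of any topic $k\notin S_*$ is observed, i.e.\ $\wh S\cap S_*^c=\emptyset$ by the definition (\ref{def_S_hat}).

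Next I would establish $S_*\subseteq\wh S$ on a high-probability event, which together with the first part gives $\wh S=S_*$. By Lemma \ref{lem_basic}, the event $\{\uJ\subseteq J\}$, with $\uJ=\{j:\Pi_{*j}>2\eps_j\}$ and $J=\{j:X_j>0\}$, has probability at least $1-2p^{-1}$; on this event every word $j$ with $\Pi_{*j}\ge 2\eps_j$ has $X_j>0$ (passing from the strict to the non-strict threshold costs only a harmless adjustment of the constants in $\eps_j$). For each $k\in S_*$ the extra hypothesis provides an anchor word $j_k$ of topic $k$ with $\Pi_{*j_k}\ge 2\eps_{j_k}$, so on this event $X_{j_k}>0$ and hence $k\in\wh S$ by (\ref{def_S_hat}). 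Since this holds simultaneously for all $k\in S_*$ on the single event $\{\uJ\subseteq J\}$, we get $S_*\subseteq\wh S$ with probability at least $1-2p^{-1}$, and combining with the first part yields $\PP\{\wh S=S_*\}\ge 1-2p^{-1}$.

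The argument is short and I anticipate no genuine obstacle; the only points needing a little care are the almost-sure step in the first part (a cell with zero probability is never observed) and matching the non-strict signal condition $\Pi_{*j_k}\ge 2\eps_{j_k}$ with the strict inequality defining $\uJ$ used in Lemma \ref{lem_basic}.
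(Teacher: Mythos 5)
Your proof is correct and follows essentially the same route as the paper: $\wh S\subseteq S_*$ holds almost surely because any anchor word $j$ of a topic $k\notin S_*$ has $\Pi_{*j}=A_{jk}T_{*k}=0$ and hence is never observed, while $S_*\subseteq\wh S$ follows from the concentration event of Lemma \ref{lem_basic}. The only cosmetic difference is that the paper argues directly on the event $\E$ of (\ref{def_event}), giving $X_{j_k}\ge \Pi_{*j_k}-\eps_{j_k}\ge \eps_{j_k}>0$ under the non-strict condition $\Pi_{*j_k}\ge 2\eps_{j_k}$, which sidesteps the strict-versus-non-strict mismatch you flag when routing the argument through the inclusion $\uJ\subseteq J$.
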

\begin{proof}
	To show $\wh S\subseteq S_*$,
	if $k\in \wh S$, then we must have $k\in S_*$. This is because if $k\not\in S_*$, with probability one, we couldn't have observed any anchor word $X_j>0$ of topic $k$ as $\Pi_{*j} = A_{jk}T_{*k} = 0$. Conversely, to show $S_* \subseteq \wh S$, if $k\in S_*$ and there exists a $\Pi_{*j_k} > 2 \eps_{j_k}$, then on the event $\E$, $X_{j_k} \ge \Pi_{*j_k} - |X_{j_k}-\Pi_{*j_k}| > \eps_{j_k}>0$, that is, $k\in \wh S$. This completes the proof. 
\end{proof}

The estimator $\wh S$ simply collects the topics for which we have observed anchor words. 
Proposition \ref{prop_supp} ensures that we always have $\wh S \subseteq S_*$ under Assumption \ref{ass_sep}. In practice, this property is helpful to check whether $\E_{\supp}$ holds. Specifically, if Assumption \ref{ass_sep} holds and 
we find $\supp(\mle) \subseteq \wh S$, then we necessarily have $\supp(\mle) \subseteq \supp(T_*)$.

\subsection{Support recovery when $A$ is unknown}\label{app_supp_recovery_unknown_A}

Regarding the consistent support recovery of $\wh T$, we remark that the results in Section \ref{app_supp_recovery_known_A} continue to hold provided that the anchor words can be consistently estimated. Consistent estimation of the anchor words has been fully established in \cite{TOP}. Also, see, \cite{rechetNMF,arora2013practical} for other procedures of estimating anchor words.

\section{Supplementary results on IMDB data}\label{sec:imdb supp}

In this section we present further details of our analysis in Section \ref{sec:imdb} of the IMDB movie review dataset. We first give Table \ref{table:IMDB top words}, which gives an initial interpretation to each estimated topic based on its anchor words.

\begin{table}[H]
	\centering
	\caption{Anchor words for each topic in the IMDB dataset, along with an initial interpretation of each topic.}
	\label{table:IMDB top words}
	\begin{tabular}{ |c|p{8cm}||l|} 
		\hline \textbf{Topic} &\textbf{Anchor words}& \textbf{Initial interpretation}\\
		\hline 
		1& book, read, version & Book adaptations
		\\
		\hline
		2&  crap, talent & Sentiment\\
		\hline
		3& game & Game-related\\
		\hline
		4& blood, dark, dead, evil, fans, flick, genre, gore, horror, house, killer, sequel, strange & Horror films\\
		\hline
		5&episode & TV Shows\\
		\hline 
		6& history, war & History \& war films\\
		\hline
	\end{tabular}
\end{table}

Table \ref{table:d_top} gives the computed values of the two topic-distance matrices (\ref{doc dists imdb}) and shows that these  distances qualitatively capture the same similarity relationships between the topics. This is despite the fact that $\hDt_W$ incorporates word similarity from pre-trained word embeddings, whereas $\hDt_{TV}$ depends only parameters estimated directly from the IMDB corpus. 

\begin{table}[h!]
	\caption{The two $\wh K\times \wh K$ matrices of distances between topics. }
	\label{table:d_top}
	\begin{center}
		\subfloat[$\hDtb_{TV}$]{
			\begin{tabular}{ |c|c|c|c|c|c|c|}
				\hline
				&\textbf{Topic 1} & \textbf{Topic 2}&\textbf{Topic 3}&\textbf{Topic 4}&\textbf{Topic 5}&\textbf{Topic 6}\\
				
				\hline
				\textbf{Topic 1} &0  &0.14 & 0.22 &0.13 & 0.20 & 0.14 \\
				\hline
				\textbf{Topic 2} & 0.14 & 0 & 0.22 & 0.14 & 0.22 & 0.17\\
				\hline 
				\textbf{Topic 3} & 0.21 & 0.22 & 0 & 0.21 & 0.22 &0.23\\
				\hline
				\textbf{Topic 4} &  0.13 & 0.14 & 0.21 & 0 & 0.21 & 0.14\\
				\hline
				\textbf{Topic 5} & 0.20 & 0.22 & 0.22 & 0.21 & 0 & 0.22\\
				\hline
				\textbf{Topic 6} & 0.14 & 0.17 & 0.23 & 0.14 & 0.22 &0\\
				\hline
		\end{tabular}}
		\vspace{3mm}
		\subfloat[$\hDtb_W$]{
			\begin{tabular}{ |c|c|c|c|c|c|c|}
				\hline
				&\textbf{Topic 1} & \textbf{Topic 2}&\textbf{Topic 3}&\textbf{Topic 4}&\textbf{Topic 5}&\textbf{Topic 6}\\
				
				\hline
				\textbf{Topic 1} &0  &0.10 & 0.16 &0.10 & 0.15 & 0.10 \\
				\hline
				\textbf{Topic 2} & 0.10 & 0 & 0.17 & 0.10 & 0.16 & 0.12\\
				\hline 
				\textbf{Topic 3} & 0.16 & 0.17 & 0 & 0.16 & 0.17 &0.17\\
				\hline
				\textbf{Topic 4} &  0.10 & 0.10 & 0.16 & 0 & 0.16 & 0.10\\
				\hline
				\textbf{Topic 5} & 0.15 & 0.16 & 0.17 & 0.16 & 0 & 0.17\\
				\hline
				\textbf{Topic 6} & 0.10 & 0.12 & 0.17 & 0.10 & 0.17 &0\\
				\hline
		\end{tabular}}
	\end{center}
\end{table}

Finally, we give Table \ref{table:IMDB excerpts}, which gives excerpts of two documents for each topic that are estimated to be exclusively generated from that topic. These excerpts allow for further interpretation of the topics.


\begin{table}
	\caption{Excerpts from documents that are estimated to be exclusively generated from Topics 3 and 5 (formally, documents with $\wh T^{(i)}_k = 1$ for each topic $k$). Excerpts from two separate documents with this property are given for each topic.  The third column gives the ID number in the original dataset \citep{maas-EtAl:2011:ACL-HLT2011} for reference. We find that the ``movie reviews" corresponding to Topics 3 and 5 are in fact reviews of video games and TV shows, respectively.}
	\label{table:IMDB excerpts}
	\begin{tabular}{|l|l|l|p{9cm}|}
		\hline \textbf{Topic} &\textbf{Interpretation}&\textbf{Movie ID}&\textbf{Document excerpt}\\
		\hline 
		\multirow{2}{*}{Topic 1} & Book Adaptations & 37,123 &\textit{This was an OK movie, at best, outside the context of the book. But having read and enjoyed the book quite a bit it was a real disappointment in comparison\ldots } 
		\\ \cline{3-4}
		&  & 15,709 &\textit{This has always been one of my favourite books. I was thrilled when I saw that the book had been made into a movie, for the first time since it was written, over 50 years before\ldots } \\
		\hline
		\multirow{2}{*}{Topic 2} & Negative reviews & 12,445&\textit{This was the most disappointing films I have ever seen recently. And I really hardly believe that people say goods things about this very bottom film!\ldots  }
		\\ \cline{3-4}
		&  & 32,442&\textit{Acting was awful. Photography was awful. Dialogue was awful. Plot was awful. (I'm not being mean here...It really was this bad.)\ldots } \\
		\hline
		\multirow{2}{*}{Topic 3} & Video Games & 23,753&\textit{This game really is worth the ridiculous prices out there. The graphics really are great for the SNES, though the magic spells don't look particularly great\ldots}
		\\ \cline{3-4}
		&  & 12,261&\textit{I remember playing this game at a friend. Watched him play a bit solo until we decided to try play 2 and 2, which we found out how to do\ldots} \\
		\hline
		\multirow{2}{*}{Topic 4} & Horror & 29,114&\textit{After watching such teen horror movies as Cherry Falls and I know what you did last summer, I expected this to be similar\ldots  }
		\\ \cline{3-4}
		&  & 3,448& \textit{Being a HUGE fan of the horror genre, I have come to expect and appreciate cheesey acted, plot-holes galore, bad scripts\ldots} \\
		\hline
		\multirow{2}{*}{Topic 5} & TV Shows & 32,315& \textit{I used to watch this show when I was a little girl\ldots }
		\\ \cline{3-4}
		&  & 10,454&\textit{I'v watched the TV show Hex twice over and I still can not get enough of it. The show is excellent\ldots } \\
		\hline
		\multirow{2}{*}{Topic 6} & War \& History & 6,709&\textit{Carlo Levi, an Italian who fought against the arrival of Fascism in his native Torino, was arrested for his activities\ldots }
		\\ \cline{3-4}
		&  & 26,918&\textit{As directed masterfully by Clint Eastwood, ``Flags of Our Fathers" plays both as a war film and a sensitive human drama\ldots } \\
		\hline
		
	\end{tabular}
\end{table}

\section{Simulations on the estimation of $T_*$ and $\Pi_*$}\label{sec:sims}
In this section we present a simulation study of the estimation of topic proportions $T_*$ and word-distribution $\Pi_*$ to accompany our theoretical analysis in Section \ref{sec_est_T}. 

We perform simulations to study the performance of the MLE in (\ref{def_MLE}) (known $A$) and the estimator in (\ref{def_T_hat}) (unknown $A$) for estimating $T_*$, and compare to the Restricted Least Squares (RLS) estimator, 
\begin{equation}\label{rls}
	\wh T_{\text{rls}} \coloneqq  \min_{T\in \Delta_K}\|X - \wh A T\|_2^2,
\end{equation}
as well as the iterative weighted restricted least squares (IWRLS) estimator, both mentioned in Remark \ref{rem_MOM}. To compute the IWRLS estimator for \textit{known} $A$, we use the following steps (for unknown $A$, we just replace $A$ by $\wh A$ estimated using the Sparse-TOP method of \citet{bing2020optimal}). We use the parameter $\eps = 10^{-8}$ to avoid division by zero, $\delta = 10^{-4}$ as a stopping criterion, and $m_{it}=1000$ as the maximum number of iterations.
\begin{enumerate}
	\item Compute $\wh T_{\text{rls}}$ from (\ref{rls}) and set $\wh T = \wh T_{\text{rls}}$.
	\item Let
	\begin{equation*}
		\wh D = \diag(d_1, \ldots, d_p),\qquad \textrm{with}\quad  d_j = \frac{1}{\sqrt{(A\wh T)_j\vee \eps}}.
	\end{equation*}
	\item Update $\wh T$ as
	\begin{equation*}
		\wh T \leftarrow \argmin_{T \in \Delta_K}\|\wh D(X - A T)\|_2^2.
	\end{equation*}
	\item Repeat Steps 2 and 3 until either the $\ell_1$ distance between $\wh T$ from the current step and the previous step is less than $\delta$, or a maximum of $m_{it}$ iterations have been completed, then take $\wh T_{\rm iwrls} = \wh T$ as the final estimator.
\end{enumerate}

We then compare model-based estimators of $\Pi_*$ based on estimates of $T_*$ to the empirical estimate of $\Pi_*$. In terms of notation, recall that $N$ is the number of words in a document, $n$ is the number of documents in the corpus, $K$ is the number of topics, and $p$ is the dictionary size.

\subsubsection*{Data generating mechanism} For fixed anchor word sets $I_1,\ldots, I_K \subset [p]$, we generate the $p\times K$ matrix $A$ as follows. We set $A_{ik} = K/p$ for all $i\in I_k$ and $k\in [K]$. Draw all entries of non-anchor words from Uniform$(0,1)$, then normalize each sub-column $A_{I^c k}$ to have sum $1 - \sum_{i\in{I_k}}A_{ik}$ where $I^c = [p]\setminus (\cup_k I_k)$. We choose balanced anchor word sets such that $|I_k|=m_{anc}$ for all $k\in[K]$. We choose $m_{anc} = 5$, $K=20$, and $p=1500$ for all experiments in this section.

For fixed support size $s$, we generate $T_*^{(1)},\ldots, T_*^{(n)}$ identically and independently as follows. For each $T_*^{(i)}$, select a subset $S\subset [K]$ with $|S|=s$ by drawing elements from $[K]$ uniformly at random without replacement. Set $[T_*^{(i)}]_{S^c} = 0$ and generate each entry of $[T_*^{(i)}]_{S}$ independently from Uniform$(0,1)$. Finally, normalize $T_*^{(i)}$ so its entries sum to $1$. The result is that $T_*^{(i)}$ has support $s$ for all $i\in [n]$.

For each choice of $A$ and $T_*^{(1)},\ldots,T_*^{(n)}$, we then set $\Pi^{(i)}_* = AT_*^{(i)}$ for $1\le i\le n$ and generate $NX^{(i)}\sim \text{Multinomial}_p(N, \Pi_*^{(i)})$. We report the $\ell_1$ error of the estimation of $T_*$ by each method averaged over all $100$ repetitions of the simulation.

\subsubsection*{Estimation of $T_*$ with known $A$}
We first compare the MLE in (\ref{def_MLE}), RLS, and IWRLS when $A$ is known. In this case we can take $n=1$ and drop the superscript on $T_*$, $X$, and $\Pi_*$. 
To see the impact of sparsity in these methods, we also include a baseline estimator for each method that corresponds to the support of $T_*$ being exactly known. To be precise, let $S_*$ be the support of $T_*$, and $A_{\cdot S_*}$ the $p\times |S_*|$ submatrix of $A$ with columns restricted to the support of $T_*$. The baseline estimators are computed by estimating $T_{S_*}$ using the MLE, RLS, or IWRLS with $A_{\cdot S_*}$ and $X$ as input, and estimating $T_{S_*^c}$ by zeroes.
We plot the $\ell_1$ error of estimation of $T_*$ as a function of $N$ and $s=|S_*|$ in Figure \ref{fig:error T A true}.

\paragraph*{Results} In the left panel Figure \ref{fig:error T A true} we see how the $\ell_1$ error of all estimators decays as the document length increases. On the other hand, we see in the right panel that the error increases as the support size of $T_*$ increases. We observe in both panels  the remarkable feature that the MLE with unknown support has nearly identical risk to the MLE with exactly known support, empirically illustrating Theorem \ref{thm_supp}.  In contrast, the RLS with unknown support performs substantially worse than with known support, illustrating that the RLS does not enjoy the same support recovery properties as the MLE. Comparing the MLE and RLS, both with unknown support, we also observe that the risk of the MLE is uniformly lower than that of the RLS. This gives support to the MLE being a clearly superior estimator of $T_*$.

For the simulation parameters in Figure \ref{fig:error T A true}, IWRLS has approximately equal risk to that of the MLE. This in line with the fact the IWRLS is asymptotically (as $N\to\i$) equivalent to the MLE; see, for instance,  \cite{BFH1975} and \cite{Agresti2012}. However, in Figure \ref{fig:error T A true small N}, we plot the error of the MLE and IWRLS for smaller values of $N$, where the asymptotic equivalence of these two methods breaks down. We see that the MLE has lower error for small $N$, a regime of practical interest corresponding to short documents. Furthermore, from Table \ref{tab:time}, we observe that in the worst case, the IWRLS has a computation time of around two orders of magnitude more than that of the MLE (while the RLS has the lowest computation time of all). Lastly, from Table \ref{tab:max its} we see that for small $N$, the IWLS did not converge within 1000 iterations for a large proportion of runs (we found similar results, with even longer run times, when increasing the maximum allowed iterations for the IWRLS). The increased computation time of the IWRLS relative to the MLE, along with our observation that the error is either equal (for large $N$) or greater (for small $N$), give strong support for the MLE being preferred as an estimator of $T_*$.

\begin{figure}[h!]
	\centering
	\begin{tabular}{cc}
		\hspace{-4mm}
		\includegraphics[width=.50\textwidth]{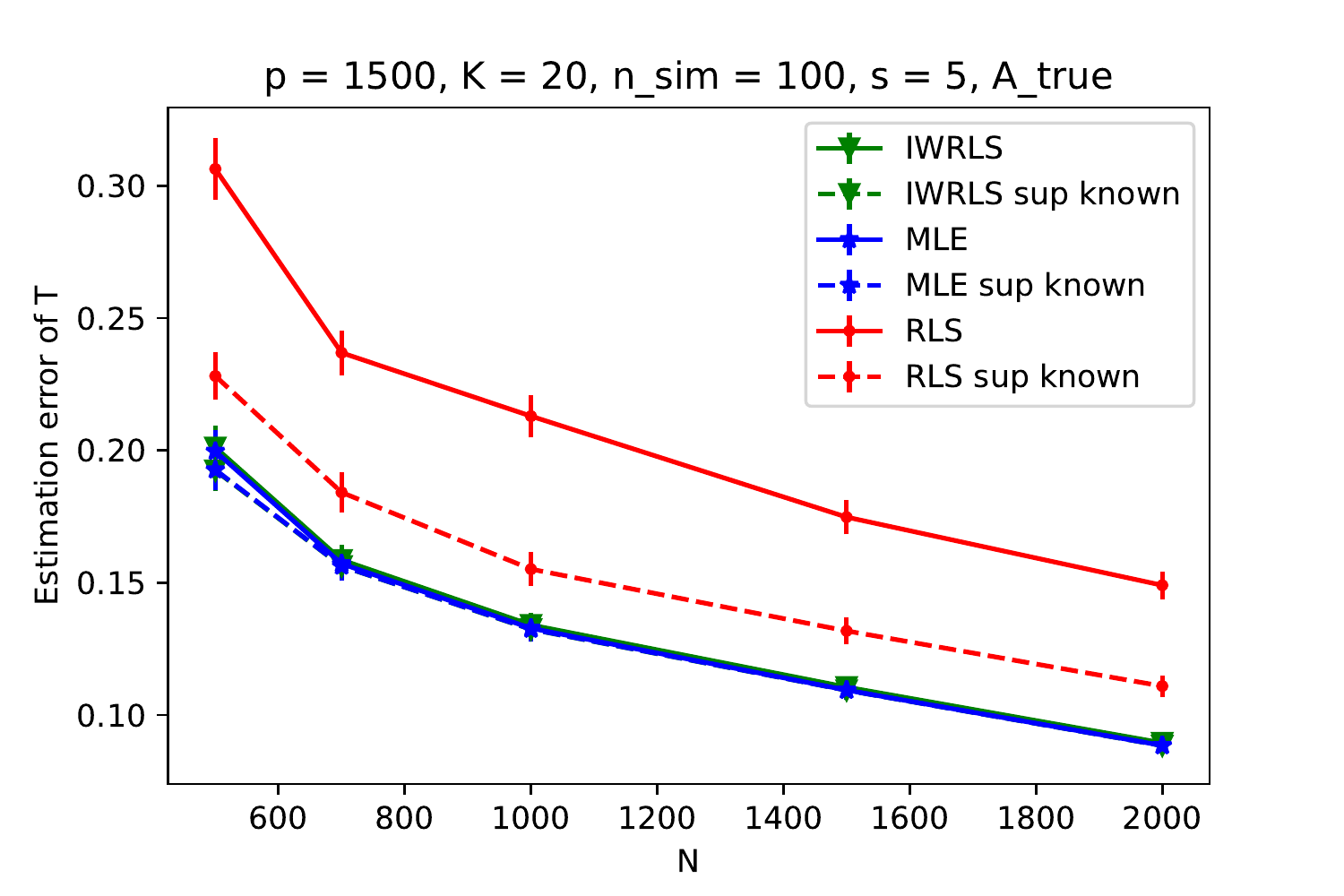}& \includegraphics[width=.50\textwidth]{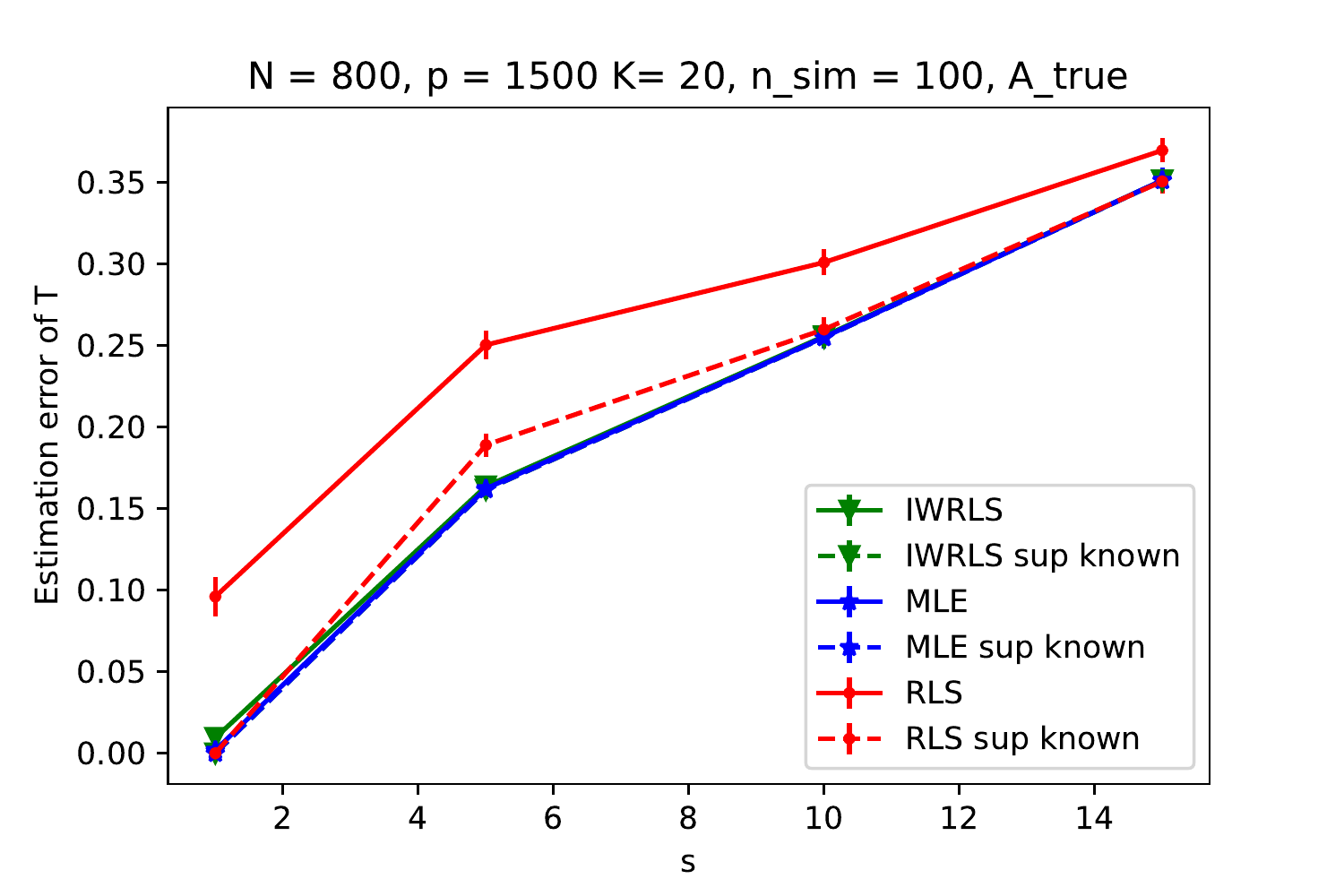}
	\end{tabular}
	\caption{$\ell_1$ error of the estimation of $T_*$ for the MLE, RLS, and IWRLS, as a function of document length $N$ (left) and support size $s$ (right), when $A$ is known. Dashed lines correspond to the predictors when the true support of $T_*$ is known. The error for the MLE and IWRLS are approximately equal for these simulation settings, for both known and unknonwn support.}
	\label{fig:error T A true}
	\vspace{-5mm}
\end{figure}

\begin{figure}[h!]
	\centering
	\begin{tabular}{cc}
		\hspace{-4mm}
		\includegraphics[width=.50\textwidth]{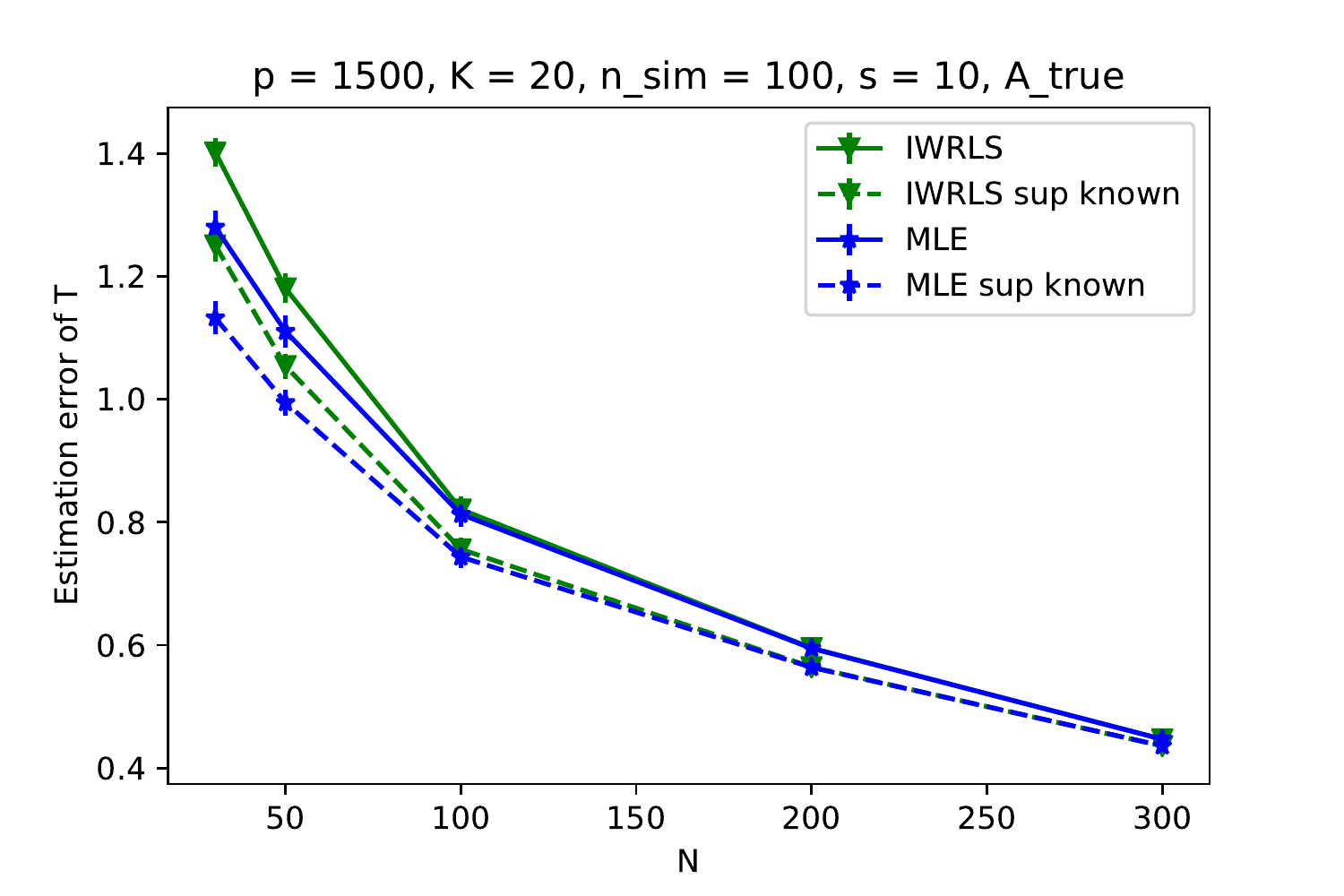}
	\end{tabular}
	\caption{$\ell_1$ error of the estimation of $T_*$ for the MLE and IWRLS, for small values of $N$, when $A$ is known. Dashed lines correspond to the predictors when the true support of $T_*$ is known.}
	\label{fig:error T A true small N}
	\vspace{-5mm}
\end{figure}

\begin{table}
	\centering
	\begin{tabular}{c|c|c|c|c|c}
		Method & $N=30$ & $N=50$ & $N=100$& $N=200$ & $N=300$\\
		\hline
		MLE & 0.68 &0.64& 0.60& 0.57 & 0.54\\
		RLS & 0.08& 0.08& 0.08& 0.08& 0.08\\
		IWRLS & 61.87& 43.31& 31.19&  9.63&  3.44\\
	\end{tabular}
	\caption{Average computation time for each method (in seconds) from the simulation in Figure \ref{fig:error T A true small N}.}
	\label{tab:time}
\end{table}

\begin{table}
	\centering
	\begin{tabular}{c|c|c|c|c}
		$N=30$ & $N=50$ & $N=100$& $N=200$ & $N=300$\\
		\hline
		40\% & 31\%& 23\%& 5\%& 0\%  \\
	\end{tabular}
	\caption{Percentage of IWRLS runs from the simulation in Figure \ref{fig:error T A true small N} that reached the maximum number of iterations (1000) and did not converge.}
	\label{tab:max its}
\end{table}

\subsubsection*{Estimation of $T_*$ with unknown $A$}
We next compare the estimator in (\ref{def_T_hat}), RLS, and IWRLS when $A$ is unknown and estimated by $\wh A$ from the Sparse-TOP method \cite{bing2020optimal}. For all values of $n$, we choose the first document $T_*^{(1)}$ to estimate (this choice is arbitrary, as $T_*^{(i)}$ is drawn from an identical distribution for all $i\in [n]$). In Figure \ref{fig:error T A hat}, we plot the average $\ell_1$ error of estimating $T_*^{(1)}$ as a function of $N$ and $s$. We also include the MLE in (\ref{def_MLE}) with known $A$ for comparison. We refer to the estimator (\ref{def_T_hat}) as MLE-A-hat in the plot.

\paragraph*{Results} Similarly to the case with known $A$, we see in Figure \ref{fig:error T A hat} that the estimator (\ref{def_T_hat}) uniformly outperforms the RLS, and for the (large) values of $N$ in the plot, the IWLS has approximately equal risk to estimator (\ref{def_T_hat}). Comparing the MLE with known and unknown $A$, we observe in the left panel that the impact of not knowing $A$ decreases as the document length increases, which is expected as longer documents improve the estimate $\wh A$. We also observe in the right panel that not knowing $A$ has less impact for $T_*$ that are more sparse. 
\begin{figure}[h!]
	\centering
	\begin{tabular}{ccc}
		\hspace{-4mm}
		\includegraphics[width=.50\textwidth]{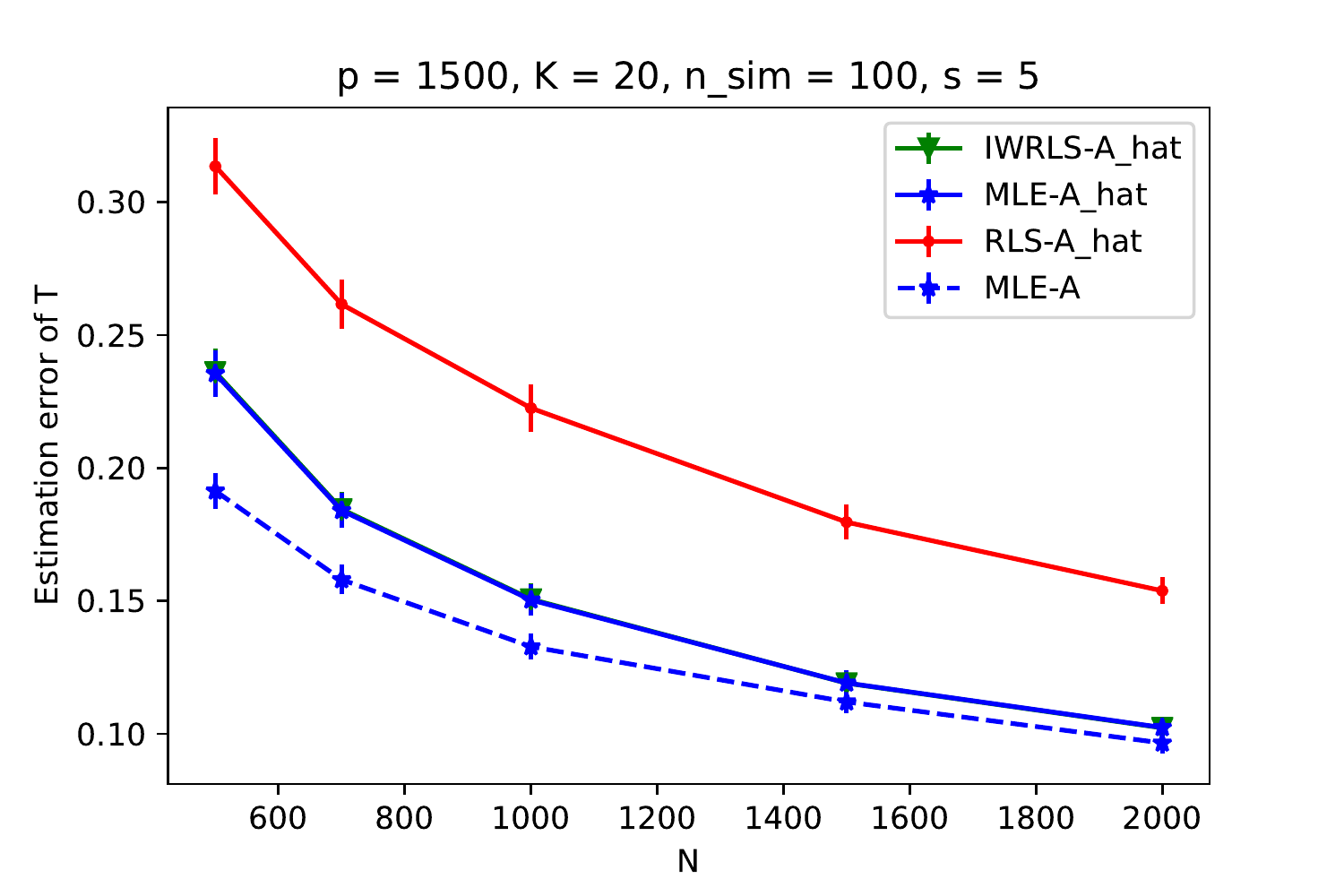} & \includegraphics[width=.50\textwidth]{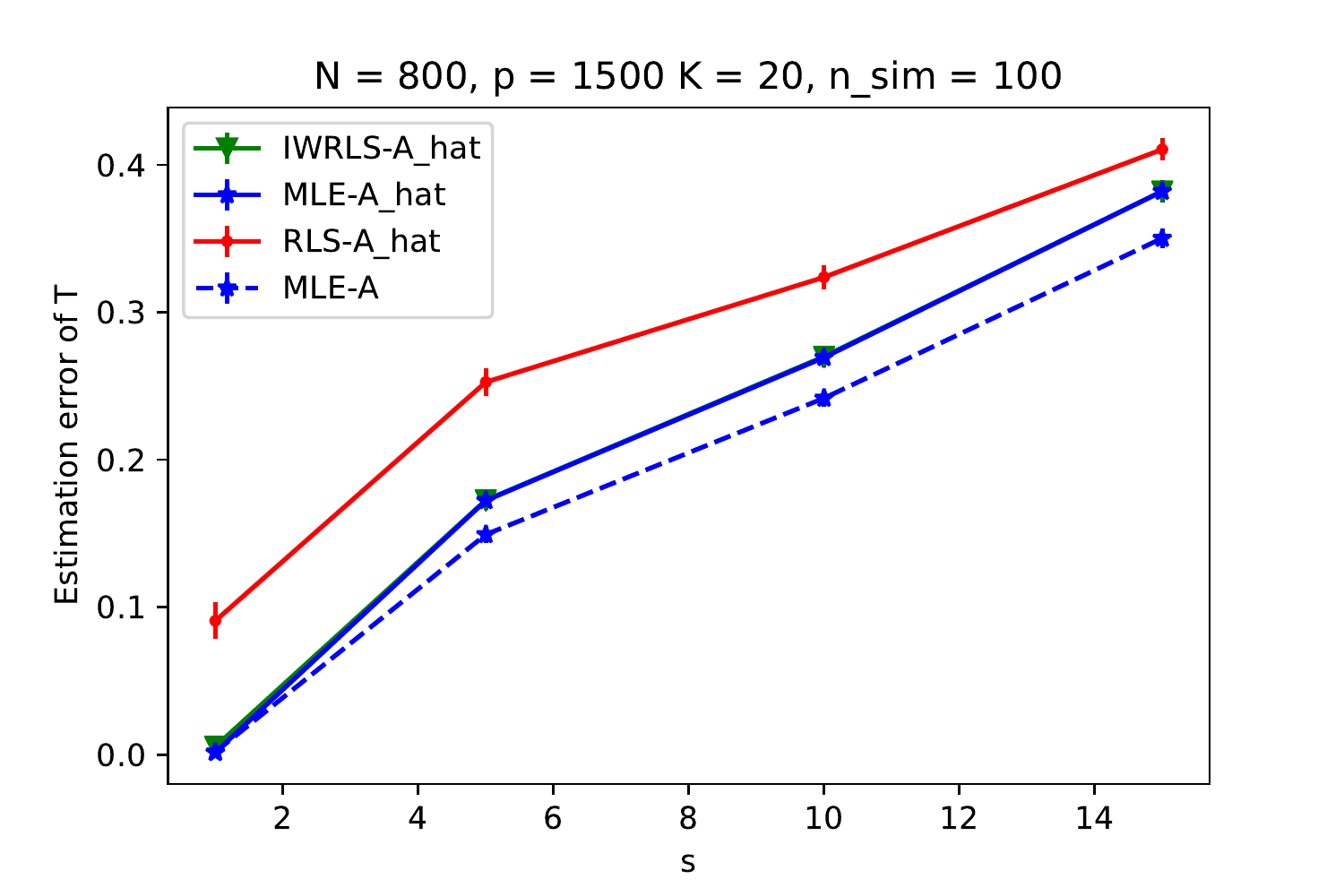}
		\\
	\end{tabular}
	\caption{$\ell_1$ error of the estimation of $T_*$ for the MLE, RLS, and IWRLS, as a function of document length $N$ (left) and support size $s$ (right). Solid lines correspond to estimators using $\wh A$, and the dashed line corresponds to the MLE with known $A$. The error for the MLE and IWRLS (both for unknown $A$) are approximately equal in these plots.}
	\label{fig:error T A hat}
\end{figure}

\subsubsection*{Estimation of $\Pi_*$}

We compare the three estimators of $\Pi_*$ presented in Section \ref{sec:est pi}: the empirical estimate $\wh\Pi$, and the model-based estimators $\wt \Pi_{A} = A \mle$ and $\wh \Pi_{\wh A} = \wh A\wh T$.
Setting $n=1000$, we repeat the simulation $100$ times and plot the average $\ell_1$ error in estimating the first document $\Pi^{(1)}_*$ as a function of document length $N$ in the top left panel of Figure \ref{fig:error pi}. Note that $\wh\Pi^{(1)}$ 
is a function only of the first document vector $X^{(1)}$, and ignores the other $n-1$ documents in the corpus. In contrast, the model-based estimator with unknown $A$, $\wt \Pi^{(1)}_{\wh A}$, uses all $n$ documents in the corpus via the estimation of $\wh A$. We drop the superscript $1$ in the remainder of this discussion for ease of notation.

Recall the definitions $\bar J \coloneqq \{j: \Pi_{*j}>0\}$ and $J \coloneqq \{j:X_j>0\}$ from Section \ref{sec:est pi}. As discussed in that section, a particular advantage of the model-based estimators $\wt \Pi_A$ and $\wt \Pi_{\wh A}$ over the empirical estimate $\wh \Pi$ is their ability to non-trivially estimate non-zero cell probabilities with zero counts ($\Pi_{*j}$ with $j\in \bar J\setminus J$), while still estimating the zero cell probabilities $j\in \bar J^c$ nearly as well as $\wh\Pi$. 
We here conduct a simulation to empirically study the ability of each method to estimate these two classes of cell probabilities.  In each simulation run, we compute for each estimator $\Pi^{\text{est}}$ among $\wh \Pi$, $\wt\Pi_A$, and $\wt \Pi_{\wh A}$ the quantities $\sum_{j\in \bar J\setminus J} | \Pi^{\text{est}}_j - \Pi_{*j}|$ and $\sum_{j\in \bar J^c} | \Pi^{\text{est}}_j - \Pi_{*j}|$; we plot their average values over 100 runs as a function of $N$ in the top right and bottom panels of Figure \ref{fig:error pi}, respectively.

\begin{figure}[h!]
	\centering
	\begin{tabular}{ccc}
		\hspace{-4mm}
		\includegraphics[width=.50\textwidth]{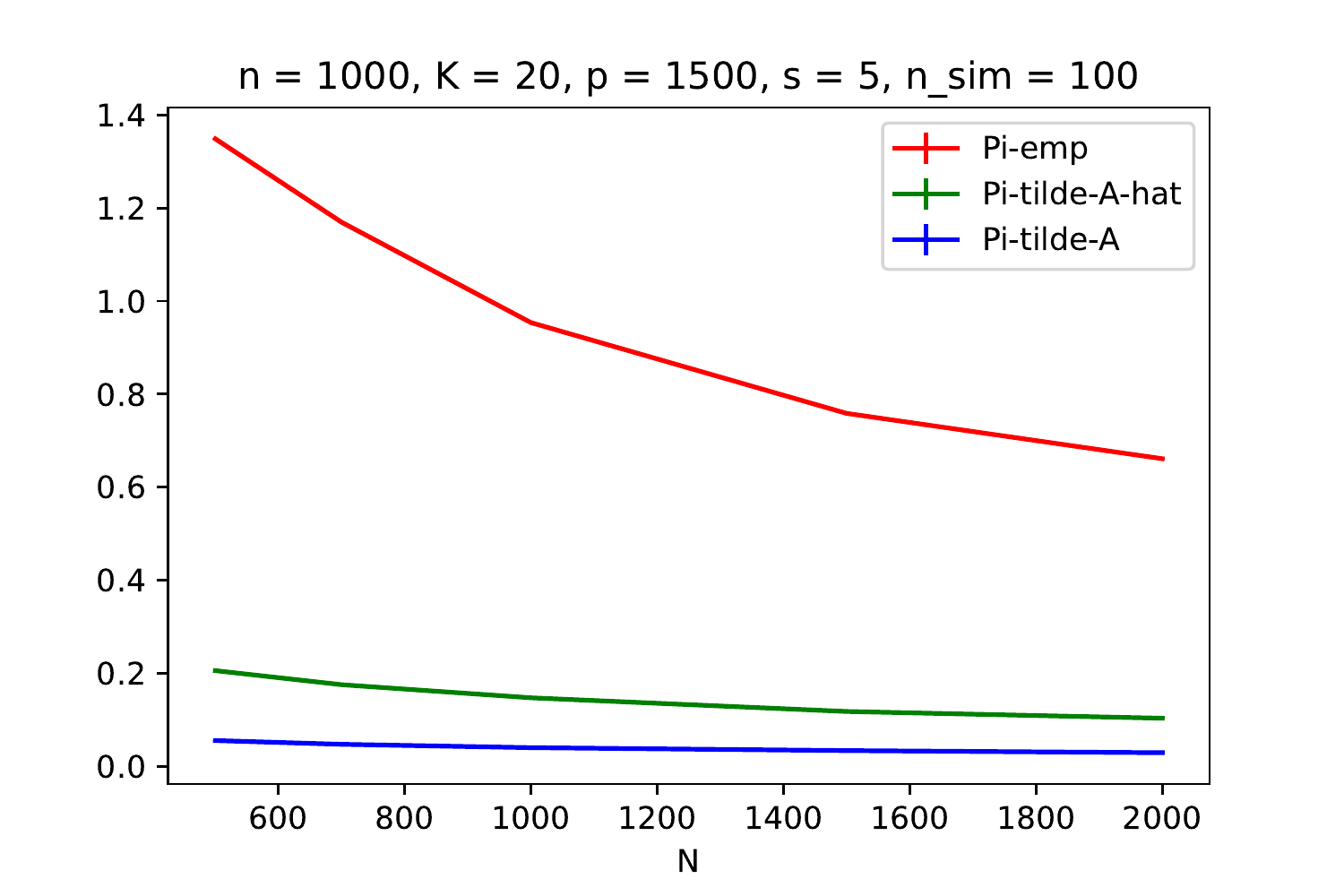} & \includegraphics[width=.50\textwidth]{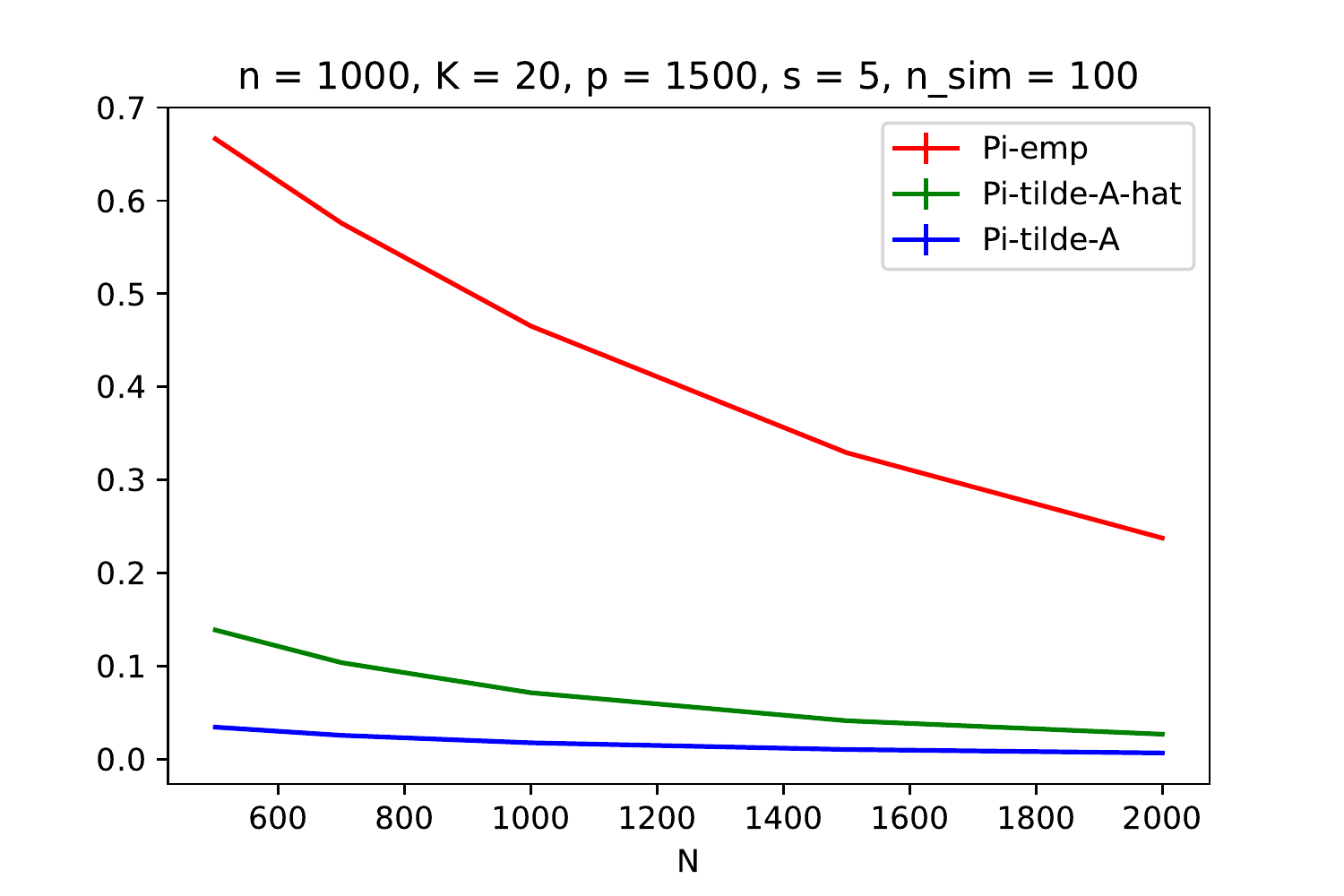}
	\end{tabular}
	\includegraphics[width=.50\textwidth]{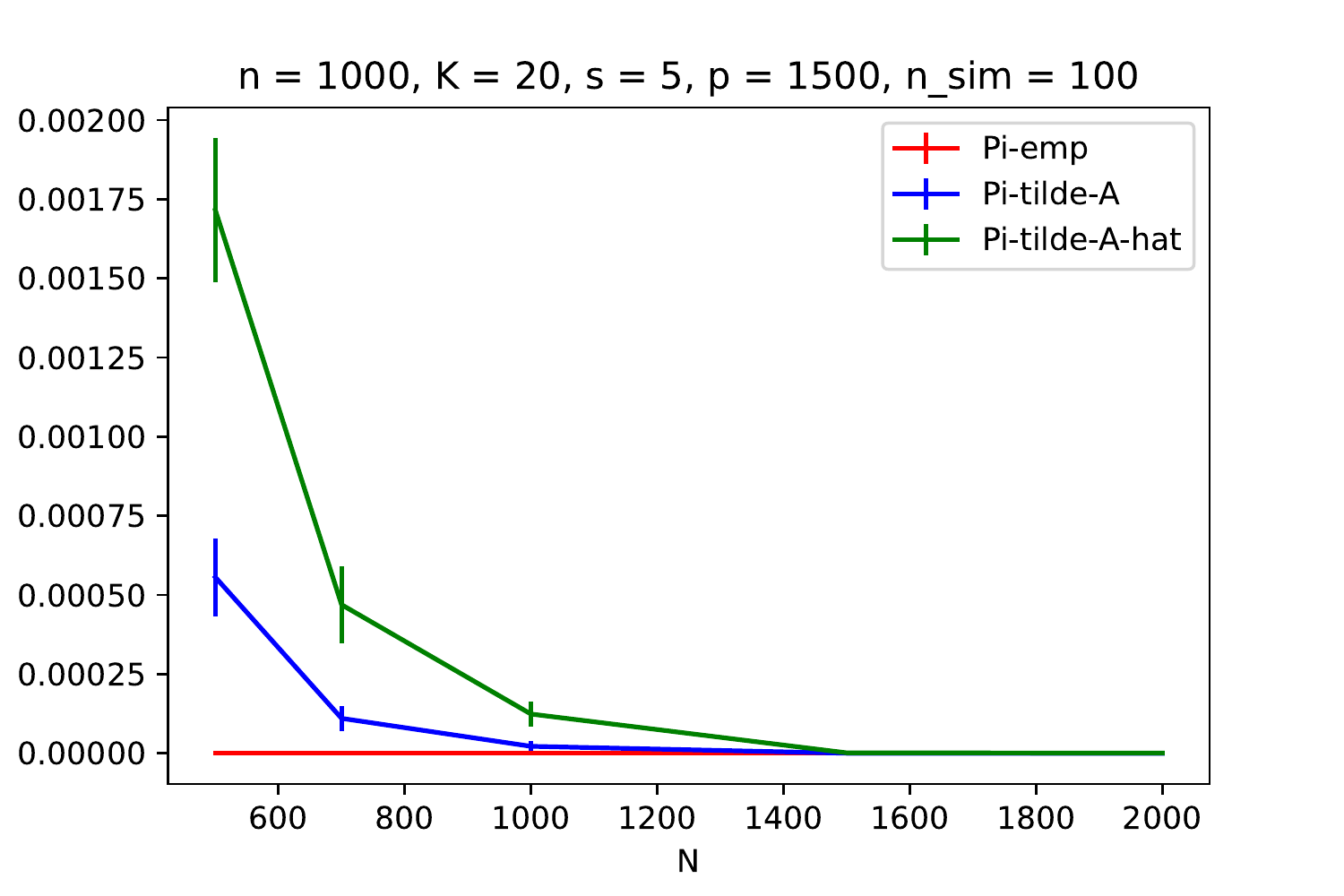} 
	\caption{Top left: $\ell_1$ error of the estimation of $\Pi_*$ as a function of document length $N$. Top right: Error in estimating the cell probabilities $\Pi_{*j}$ with $j\in \bar J\setminus J$. Bottom: Error in estimating $\Pi_{*j}$ with $j\in \bar J^c$. Error bars are present in top plots but too small to observe.}
	\label{fig:error pi}
\end{figure}

\paragraph*{Results} We observe from the left panel of Figure \ref{fig:error pi} that while the error of all three estimators decays with $N$, the error  of the empirical estimator is substantially larger than that of the model-based estimator $\wt \Pi_{\wh A}$, which is in turn larger than the error of model-based estimator with known $A$, $\wt \Pi_A$, while being very close to it.  This demonstrates the basic motivation of the model-based estimation approach: by borrowing statistical strength from across the full corpus, $\wt \Pi_{ A}$ and $\wt \Pi_{\wh A}$ provide a far superior estimate of the frequencies for an individual document. The difference between $\wt \Pi_{ A}$ and $\wt \Pi_{\wh A}$ on the other hand reflects the effect of estimating $A$. 

In the top right panel of Figure \ref{fig:error pi}, we verify that the two model-based estimators are able to estimate the non-zero cell probabilities with zero counts ($\Pi_{*j}$ with $j\in \bar J\setminus J$) much better than the trivial estimate of $\wh \Pi_{j}=0$. We note for clarity that for $\Pi^{\text{est}} = \wh\Pi$, $\sum_{j\in \bar J\setminus J}| \Pi^{\text{est}}_j - \Pi_{*j}|$ reduces to $ \sum_{j:\ \Pi_{*j}>0,\ X_j = 0} \Pi_{*j}$. While we see that this quantity decreases with $N$ from the red line in the top right panel of Figure \ref{fig:error pi}, this is simply due to the fact that $|\{j:X_j=0\}|$ decreases with $N$.

Lastly, in the bottom panel of Figure \ref{fig:error pi}, we see that while for small $N$ ($N\le 1500$ for $\wt\Pi_{\wh A}$) the error in estimating the zero cell probabilities by the model-based estimators is non-zero, it is several orders of magnitude smaller than the overall $\ell_1$ error in the top left panel, and in any case quickly decays to zero as $N$ increases. As expected, $\wh \Pi_{j} =0$ for all $j\in \bar J$, so the error for $\wh \Pi$ is exactly zero in this bottom panel.

In summary, by borrowing statistical strength across the corpus of $n$ documents, the two model-based estimators $\wt\Pi_{A}$ and $\wt \Pi_{\wh A}$ perform substantially better than the empirical estimator at estimating $\Pi_*$ in $\ell_1$ error and estimating non-zero cell probabilities with zero counts, while still having nearly the same performance as the empirical estimator at estimating the zero cell probabilities.


\section{Semi-synthetic simulations to compare document-distance estimation rates}\label{sec:semi syn sim}

We perform semi-synthetic simulations to empirically study the rate of estimation of the topic-based document distance (\ref{top dist pop}) for the choice (\ref{def_D_topic_TV}) of $\Dt$, by the estimator (\ref{def_D_topic_TV_hat}). We also ran the same simulations for the choice  (\ref{d top w}) of $\Dt$ with the estimator (\ref{def_D_topic_W_hat}) and found similar results, which we do not report here due to space limitations.

\subsubsection*{Data and preprocessing} We work with the NIPS bag-of-words dataset \citep{Dua:2019}. We preprocess the data by removing stop words, removing documents with less than $150$ words, and removing words that appear in less than $150$ documents. We are left with $1490$ documents and dictionary size $p = 1270$.
From this data we estimate a loading matrix $A_0$ using the Sparse-TOP algorithm \citep{bing2020optimal} and find $K=21$ topics. We then treat this estimated $A_0$ as our ground truth for semi-synthetic experiments.


\subsubsection*{Semi-synthetic data generation} We generate topic distributions $T_*^{(1)}, \ldots, T_*^{(n)}$ with $K=21$ exactly following the procedure in Section \ref{sec:sims}. In particular, $T_*^{(1)}, \ldots, T_*^{(n)}$ all have the same support size, which we denote as $s$. We choose $n= 2000$ for all simulations. For each $i\in [n]$, we set $\Pi^{(i)}_* = A_0 T^{(i)}_*$ and draw $X^{(i)}\sim \text{Mutlinomial}_p(N,\Pi^{(i)})$.\\

For each simulation, we form the estimate $\wh A$ using Sparse-TOP \cite{bing2020optimal}, and form estimates $\wh T^{(1)}$, $\wh T^{(2)}$ of the topic distributions of the first two documents using the MLE (\ref{def_T_hat}). From $\wh A$, we compute the estimated topic-distance metric 
\[\hDt_{TV}(k,l) = \frac{1}{2}\|\wh A_{\cdot k} - \wh A_{\cdot l}\|_1\quad \forall k,l\in \{1,2,\ldots,21\},\]
and its population counterpart with $\wh A$ replaced by $A_0$. We then compute the error
\begin{equation}\label{w err to plot}
	|W_1(\wh T^{(1)}, \wh T^{(2)}; \hDt_{TV}) - W_1( T_*^{(1)}, T_*^{(2)}; {\Dt_{TV}})|.
\end{equation}
We repeat this simulation $n_{sim} = 50$ times for different values of $N$ and $s$, and plot the average error in Figure \ref{fig:nips}.

\paragraph*{Results}  We see from Figure \ref{fig:nips} that the error (\ref{w err to plot}) grows significantly as the support size $s$ of $T_*^{(1)}$ and $T_*^{(2)}$ increases. This is can be understood by the fact that the error in estimating $T_*^{(1)}$ and $T_*^{(2)}$ also increases with $s$; recall Figure \ref{fig:error T A hat} for an empirical demonstration of this. For all values of $s$, we observe the error decaying as $N$ increases.

\begin{figure}[h!]
	\centering
	\begin{tabular}{cc}
		\hspace{-4mm}
		\includegraphics[width=.50\textwidth]{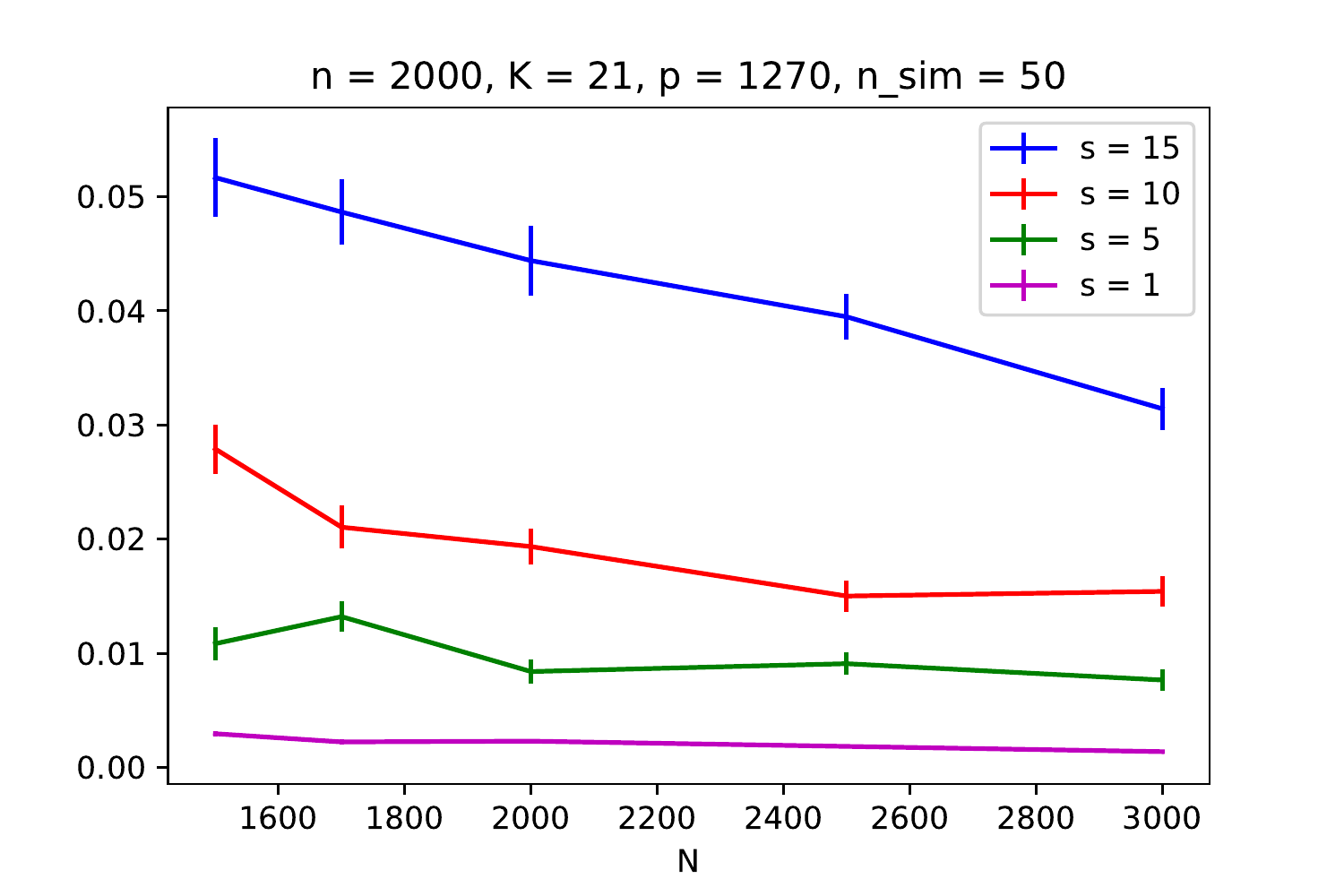}
	\end{tabular}
	\caption{Error (\ref{w err to plot}) as a function of $N$, for different values of support size $s$ of the synthetically generated topic distributions $T_*^{(1)}$ and $T_*^{(2)}$.}
	\label{fig:nips}
\end{figure}

\section{Proofs for Section \ref{sec_est_T_known_A}: Estimation with known $A$}\label{app_proof_T_known_A}

Throughout the proofs, we will suppress the subscript $*$ for notational simplicity. Correspondingly, we write $S_T = S_*$ to denote its dependency on $T$.

\subsection{Proof of Theorem \ref{thm_mle}: The general finite sample bounds of the $\ell_1$ norm convergence rate of the MLE}\label{app_proof_thm_mle}

Recall $\eps_j$ is defined in (\ref{def_eps_j}).
Define the event
\begin{equation}\label{def_event}
	\E \coloneqq \bigcap_{j=1}^p\left\{
	|X_j - \Pi_j| \le \eps_j
	\right\}
\end{equation}
which, according to Lemma \ref{lem_basic} in Appendix \ref{app_tech_lemma}, holds with probability at least $1 - 2p^{-1}.$  On the event $\E$, 
we have 
\[
\uJ \subseteq J \subseteq \oJ.
\]
Indeed, $\uJ \subseteq J$ follows by noting that, for any $j\in \uJ$, $X_j \ge \Pi_j - |X_j-\Pi_j|  > \eps_j$. The other direction $J \subseteq \oJ$ holds trivially since $\Pi_j = 0$ implies $X_j = 0$ for all $j\in [p]$. We work on the event $\E$ for the remainder of the proof.\\

For notational simplicity, we write $\wh T = \mle$. Recall that 
\begin{align*}
	\wh T &\coloneqq   \argmax_{T\in \D_K} N\sum_{j\in J}X_j \log\left(A_{j\cdot}^\T T\right).
\end{align*}
From the KKT conditions of this optimization problem we have 
\begin{align}\label{eq_kkt_1}
	&N\sum_{j\in J} X_j {A_{j\cdot} \over A_{j\cdot}^\T \wh T} + \lambda  + \mu \1_K = 0,\\\label{eq_kkt_2}
	& \lambda_k \ge 0, \quad \lambda_k \wh T_k = 0, ~ \forall k\in [K],\quad  \1_K^\T\wh T = 1.
\end{align}
After taking the inner-product with $\wh T$ on both sides of (\ref{eq_kkt_1}), we get 
\[
\mu = -N\sum_{j\in J} X_j = -N.
\]
Plugging this into (\ref{eq_kkt_1}) gives the expression
\[
N\sum_{j\in J} X_j {A_{j\cdot} \over A_{j\cdot}^\T \wh T} + \lambda  =  N~\1_K.
\]
Next, we take the inner-product on both sides with $\D \coloneqq \wh T - T$ and use the fact that $\1_K^\T \D = 0$ to obtain 
\begin{align*}
	N\sum_{j\in J} X_j {A_{j\cdot}^\T \D\over A_{j\cdot}^\T \wh T}  + \lambda^\T \D = 0.
\end{align*}
By adding and subtracting terms, we have 
\begin{align*}
	0 & = N\sum_{j\in J} X_j  \left({A_{j\cdot}^\T \D\over A_{j\cdot}^\T \wh T}   -{A_{j\cdot}^\T \D\over A_{j\cdot}^\T T}  \right) +N\sum_{j\in J} {X_j \over A_{j\cdot}^\T T} A_{j\cdot}^\T \D + \lambda^\T \D\\
	&= N\sum_{j\in J} X_j  \left({A_{j\cdot}^\T \D\over A_{j\cdot}^\T \wh T}   -{A_{j\cdot}^\T \D\over A_{j\cdot}^\T T}  \right) +N\sum_{j\in \oJ} {X_j \over A_{j\cdot}^\T T} A_{j\cdot}^\T \D + \lambda^\T \D\\
	& = N\sum_{j\in J} X_j  \left({A_{j\cdot}^\T \D\over A_{j\cdot}^\T \wh T}   -{A_{j\cdot}^\T \D\over A_{j\cdot}^\T T}  \right)+N\sum_{j\in \oJ} \left(X_j - A_{j\cdot}^\T T\right) {A_{j\cdot}^\T \D\over A_{j\cdot}^\T T} + N\sum_{j\in\oJ}  A_{j\cdot}^\T \D + \lambda^\T \D.
\end{align*}
In the second equality, we used $\Pi_j = A_{j\cdot}^\T T > 0$ for $j\in \oJ$ and $X_j = 0$ for $j\in \oJ\setminus J$. 
Since 
\[
{A_{j\cdot}^\T \D\over A_{j\cdot}^\T \wh T}   -{A_{j\cdot}^\T \D\over A_{j\cdot}^\T T}  = -{(A_{j\cdot}^\T \D)^2\over A_{j\cdot}^\T \wh T \cdot  A_{j\cdot}^\T T},
\]
we conclude 
\begin{align}\label{display_key}\nonumber
	N\sum_{j\in J} {X_j \over   A_{j\cdot}^\T T}{(A_{j\cdot}^\T \D)^2\over A_{j\cdot}^\T \wh T} &= N\sum_{j\in\oJ} \left(X_j - A_{j\cdot}^\T T\right) {A_{j\cdot}^\T \D\over A_{j\cdot}^\T T} + N\sum_{j\in\oJ}  A_{j\cdot}^\T \D + \lambda^\T \D\\
	&\le  N\sum_{j\in\oJ} \left(X_j - A_{j\cdot}^\T T\right) {A_{j\cdot}^\T \D\over A_{j\cdot}^\T T} +N\sum_{j\in\oJ}  A_{j\cdot}^\T \D 
\end{align}
by using
$\lambda^\T \D= -\lambda^\T T\le  0$ from (\ref{eq_kkt_2}) in the last step.			
For the left hand side in (\ref{display_key}), use $\uJ \subseteq J$ to obtain
\begin{align*}
	\sum_{j\in J} {X_j \over   A_{j\cdot}^\T T}{(A_{j\cdot}^\T \D)^2\over A_{j\cdot}^\T \wh T} & \ge \sum_{j\in  \uJ} {X_j \over   A_{j\cdot}^\T T}{(A_{j\cdot}^\T \D)^2\over A_{j\cdot}^\T \wh T} \ge \min_{j\in  \uJ} {X_j \over A_{j\cdot}^\T T} \sum_{j\in  \uJ}{(A_{j\cdot}^\T \D)^2\over A_{j\cdot}^\T \wh T}.
\end{align*}
Since $\sum_{j=1}^p A_{j\cdot}^\T \wh T = 1$, we further observe that 
\begin{align*}
	\sum_{j\in  \uJ}{(A_{j\cdot}^\T \D)^2\over A_{j\cdot}^\T \wh T} &= \sum_{j\in  \uJ}{(A_{j\cdot}^\T \D)^2\over A_{j\cdot}^\T \wh T}\left(
	\sum_{j\in  \uJ} A_{j\cdot}^\T \wh T + \sum_{j\in \uJ^c} A_{j\cdot}^\T \wh T
	\right)\\
	&\ge \sum_{j\in  \uJ}{(A_{j\cdot}^\T \D)^2\over A_{j\cdot}^\T \wh T}
	\sum_{j\in  \uJ} A_{j\cdot}^\T \wh T \\
	&\ge 	 \left(\sum_{j\in  \uJ}|A_{j\cdot}^\T \D|\right)^2\\
	&\ge \kappa^2({A_{\uJ}}, s) \|\D\|_1^2.
\end{align*}
Here we use the Cauchy-Schwarz inequality in the third line and the definition  (\ref{def_kappa_A}) of the
$\ell_1\to\ell_1$ condition number $\kappa(A_{\uJ},s)$ together with $\D\in\cC(S_T)$ in the last line. From the inequality
\[
{X_j \over A_{j\cdot}^\T T} \ge  {\Pi_j - |X_j - \Pi_j|\over \Pi_j} \ge 1 - {\eps_j \over \Pi_j} \ge {1 \over 2} \qquad \forall j\in \uJ,
\]
we can now conclude 
\begin{align*}
	\sum_{j\in J} {X_j \over   A_{j\cdot}^\T T}{(A_{j\cdot}^\T \D)^2\over A_{j\cdot}^\T \wh T} 
	~ \ge  ~ 
	{1 \over2}\kappa^2(A_{\uJ},s) \|\D\|_1^2.
\end{align*}
It remains to bound from above the right-hand side 
\[ N\sum_{j\in\oJ} \left(X_j - A_{j\cdot}^\T T\right) {A_{j\cdot}^\T \D\over A_{j\cdot}^\T T} +N\sum_{j\in\oJ}  A_{j\cdot}^\T \D \]
of (\ref{display_key}). The identity $\sum_{j=1}^pA_{j\cdot}^\T  \Delta = \1_K^\T \D = 0$ implies
\begin{align}\label{display_ADelta}
	\sum_{j\in\oJ}  A_{j\cdot}^\T \D =
	-\sum_{j\not\in\oJ}  A_{j\cdot}^\T \D 
	\le \sum_{j\not\in\oJ}A_{j\cdot}^\T T 
	= \sum_{j\not\in\oJ}\Pi_j = 0.
\end{align} 
The  last equality uses the definition of $\oJ$. The inequality $u^\T v \le \|u\|_1 \|v\|_\i$ gives 
\[
\sum_{j\in\oJ} \left(X_j - A_{j\cdot}^\T T\right) {A_{j\cdot}^\T \D\over A_{j\cdot}^\T T} \le   \|\Delta\|_1 \max_{k\in [K]}\left|\sum_{j\in\oJ} \left(X_j - A_{j\cdot}^\T T\right) {A_{jk}\over A_{j\cdot}^\T T}\right|.
\]
By invoking Lemma \ref{lem_oracle_error} in Appendix \ref{app_tech_lemma} with a union bound over $k\in [K]$ to bound the above term, we conclude that, for any $t> 0$,
\begin{align*}
	{1\over 2} \kappa^2(A_{\uJ},s)
	\|\D\|_1 \le & 	\sqrt{2\rho\log (K/t) \over N} + {2\rho \log (K/t)\over 3N}
\end{align*}
with probability $1-2t$.
The proof is complete. \qed

\subsection{Proof of Theorem \ref{thm_mle_fast}: Fast rates of the MLE} \label{app:thm_mle_fast}
To prove Theorem \ref{thm_mle_fast}, we first work on the event under which Theorem \ref{thm_mle} holds, that is, 
\[
\|\wh T_{\rm mle} - T\|_1 ~ \le   ~  {2 \over \kappa^{2}(A_{\uJ}, s)}
\left\{\sqrt{2\rho\log (K/\epsilon) \over N} + {2\rho \log (K/\epsilon)\over N}\right\}.
\]
We write $\wh T = \mle$ for notational ease for the remainder of the proof.
Condition (\ref{cond_N_sparse}) together with $\rho \le (1\vee \xi) / \Tm$ then ensures that 
\[
\max_{j\in \oJ}{|A_{j\cdot}^\T (\wh T - T)|\over A_{j\cdot}^\T T} \le \rho \|\mle - T\|_1 \le c
\]
for some sufficiently small constant $c>0$.
As a result, we can deduce  
\begin{equation}\label{bd_A_T_hat}
	(1-c)A_{j\cdot}^\T T ~ \le~  
	A_{j\cdot}^\T \wh T 
	~ \le ~  (1 + c)A_{j\cdot}^\T T,\quad \forall j\in \oJ.
\end{equation}
Recall from (\ref{display_key}) that
\begin{align*}
	\sum_{j\in J} {X_j \over  A_{j\cdot}^\T T}{(A_{j\cdot}^\T \D)^2\over  A_{j\cdot}^\T \wh T} 
	&\le  \sum_{j\in\oJ} \left(X_j - A_{j\cdot}^\T T\right) {A_{j\cdot}^\T \D\over A_{j\cdot}^\T T} +\sum_{j\in\oJ} A_{j\cdot}^\T \D.
\end{align*}
By (\ref{bd_A_T_hat}), $A_{j\cdot}^\T \wh T \ge (1-c)\Pi_j > 0$ for all $j\in \oJ\setminus J$. Together with $X_j = 0$ for all $j\in \oJ\setminus J$, the above display implies  
\[
\sum_{j\in \oJ} {X_j \over  A_{j\cdot}^\T T}{(A_{j\cdot}^\T \D)^2\over  A_{j\cdot}^\T \wh T} 
\le  \sum_{j\in\oJ} \left(X_j - A_{j\cdot}^\T T\right) {A_{j\cdot}^\T \D\over A_{j\cdot}^\T T} +\sum_{j\in\oJ} A_{j\cdot}^\T \D,
\]
which, by (\ref{bd_A_T_hat}) again, further implies 
\[
{1\over 1+c}\sum_{j\in\oJ} {X_j \over  (A_{j\cdot}^\T T)^2}(A_{j\cdot}^\T \D)^2 
\le  \sum_{j\in\oJ} \left(X_j - A_{j\cdot}^\T T\right) {A_{j\cdot}^\T \D\over A_{j\cdot}^\T T}.
\]
Define  
\begin{equation}\label{def_I_I_hat}
	H = \sum_{j\in \oJ}{1 \over \Pi_j}A_{j\cdot}A_{j\cdot}^\T,\qquad \wh H = \sum_{j\in \oJ}{X_j\over \Pi_j^2}A_{j\cdot}A_{j\cdot}^\T.
\end{equation}
Notice that, for any $v\in \R^K$,
\begin{align}\label{kappa_I}
	v^\T H v &=    \sum_{j\in \oJ}{(A_{j\cdot}^\T v)^2 \over \Pi_j} =  \sum_{j\in \oJ}{(A_{j\cdot}^\T v)^2 \over \Pi_j}\sum_{j\in \oJ}\Pi_j \ge \|A_{\oJ}v\|_1^2 \ge \kappa^2(A_{\oJ},K)\|v\|_1^2.
\end{align}
The first inequality uses the Cauchy-Schwarz inequality. Condition (\ref{cond_N_sparse}) implies $\kappa(A_{\oJ},K)>0$, hence $H$ is invertible. 
We thus have 
\[
{\Delta^\T \wh H \Delta \over 1+c}
\le  \left\|\sum_{j\in\oJ} \left(X_j - A_{j\cdot}^\T T\right) { H^{-1/2}A_{j\cdot}\over A_{j\cdot}^\T T}\right\|_2 \|H^{1/2}\D\|_2.
\]
By  invoking Lemma  \ref{lem_oracle_error_whitening} with $t = 4 K \log(5)$ and Lemma \ref{lem_I_deviation} in Appendix \ref{app_tech_lemma} concludes 
\begin{align}\label{bd_IDelta_ell_2}\nonumber
	\|H^{1/2}\D\|_2^2 &\lesssim   \|H^{1/2}\D\|_2 \left(\sqrt{K\over N} + {(1\vee \xi) \over 3\kappa(A_{\oJ},K) \Tm}\cdot {K\over N}\right)\\
	&\lesssim \|H^{1/2}\D\|_2 \sqrt{K\over N} & \textrm{by (\ref{cond_N_sparse})}
\end{align}
with probability $1 - 2K^{-1}-2e^{-K}$. Since (\ref{kappa_I}) also implies 
\begin{equation}\label{bd_Delta_1}
	\|H^{1/2}\Delta\|_2 \ge \kappa(A_{\oJ}, s)\|\Delta\|_1,
\end{equation}
by using $\Delta \in \cC(S_T)$, 
the result follows. The proof is complete.

\subsection{Proof of Corollary \ref{cor_mle_sparse_supp}: Fast rates of the MLE when it is sparse}

On the event $\E_{\supp}$, for any $T\in \cT(s)$ with $|S_T| = s$, we observe 
\[
[\mle]_{S_T} = \argmax_{T\in \Delta_s} N\sum_{j\in J}X_j \log\left(A_{jS_T}^\T T_{S_T}\right),\qquad [\mle]_{S_T^c} = 0.
\]
Since $\|\mle - T\|_1 = \|[\mle - T]_{S_T}\|_1$, the result follows immediately from Corollary \ref{cor_mle_fast_dense} with $K = s$. If we take $\log(s\vee n)$ instead of $\log(s)$ in (\ref{cond_N_dense}) and take $s\log(1/\epsilon)$ instead of $\log (s)$ in the bound, the resulting probability tail becomes $1-2p^{-1}-4(s\vee n)^{-1}-2\epsilon^{s}$.

\subsection{Proof of Theorem \ref{thm_supp}: One-sided sparsity recovery of the MLE}\label{app_proof_supp}

For any $T$ with $\supp(T) = S_T$, our proof of $\supp(\mle) \subseteq \supp(T)$  consists of two parts: 
\begin{enumerate}
	\item[(i)] we show that there exists an optimal solution $\wt T$ to (\ref{def_MLE}) such that $\supp(\wt T) \subseteq \supp(T)$;
	\item[(ii)] we show that if there exists any optimal solution $\bar T$ to (\ref{def_MLE}) that is different from $\wt T$, then $\supp(\bar T) \subseteq \supp(T)$. 
\end{enumerate}

\paragraph*{Proof of step (i)} Our proof of step (i) uses the primal-dual witness approach by first constructing an oracle estimator $\wt T$ with $\supp(\wt T)\subseteq \supp(T)$, and then proving that $\wt T$ is an optimal solution. 

Towards this end, we first notice that any pair $(\wh T, \lambda, \mu)$ is an optimal solution to (\ref{def_MLE}) if and only if it satisfies the KKT condition in (\ref{eq_kkt_1}) -- (\ref{eq_kkt_2}). Having this in mind, we define
$\wt T_{S_T^c} = 0$
and
\begin{equation}\label{def_MLE_oracle}
	\wt T_{S_T} = \argmax_{\beta\in \Delta_s} N \sum_{j\in J}  X_j\log\left(A_{jS_T}^\T \beta\right).
\end{equation}
The KKT condition corresponding to (\ref{def_MLE_oracle}) states
\begin{align}\label{eq_kkt_oracle_1}
	&N\sum_{j\in J} X_j {A_{jS_T} \over A_{jS_T}^\T \wt T_{S_T}} + \wt \lambda_{S_T}  + \wt \mu \1_s = 0;\\\label{eq_kkt_oracle_2}
	& \wt\lambda_k \ge 0, \quad \wt\lambda_k \wt T_k = 0, ~ \forall k\in S_T,\quad  \wt T_{S_T}^\T \1_s = 1.
\end{align}
Note that (\ref{eq_kkt_oracle_1}) and (\ref{eq_kkt_oracle_2}) together imply $\wt \mu = -N$ by multiplying both sides of (\ref{eq_kkt_oracle_1})  by $\wt T_{S_T}$. We thus define 
\[
\wt \mu = -N,\qquad \wt \lambda_k = N\left(
1 - \sum_{j\in J} X_j {A_{jk} \over A_{jS_T}^\T \wt T_{S_T}}
\right), \ k\in [K].
\]
Clearly, $\supp(\wt T) \subseteq \supp(T)$ by definition. It remains to verify $(\wt T, \wt \lambda, \wt \mu)$ satisfies (\ref{eq_kkt_1}) -- (\ref{eq_kkt_2}) in lieu of $(\wh T, \lambda, \mu)$. By construction, we only need to prove 
\begin{equation}\label{lambda_Sc_pos}
	\wt \lambda_{k} > 0,\footnote{We in fact only need a non-strict inequality for proving (i). The strict inequality is used to prove (ii).}\qquad \forall k\in S_T^c.
\end{equation}
Pick any $k\in S_T^c$. Adding and subtracting terms yields 
\begin{align*}
	\sum_{j\in J} X_j {A_{jk} \over A_{jS_T}^\T \wt T_{S_T}} &~=~  \sum_{j\in J} X_j \left({A_{jk} \over A_{jS_T}^\T \wt T_{S_T}} - {A_{jk} \over A_{jS_T}^\T T_{S_T}}\right) +\sum_{j\in J} X_j {A_{jk} \over A_{jS_T}^\T T_{S_T}}\\
	&~=~ 
	\sum_{j\in J} X_j {A_{jk} A_{jS_T}^\T (T_{S_T} - \wt T_{S_T})\over A_{jS_T}^\T \wt T_{S_T} A_{jS_T}^\T T_{S_T}} + \sum_{j\in \oJ} X_j {A_{jk} \over A_{jS_T}^\T T_{S_T}}\\
	&~=~\sum_{j\in J} X_j {A_{jk} A_{jS_T}^\T (T_{S_T} - \wt T_{S_T})\over A_{jS_T}^\T \wt T_{S_T} A_{jS_T}^\T T_{S_T}} + \sum_{j\in \oJ} \left(X_j - A_{jS_T}^\T T_{S_T}\right){A_{jk} \over A_{jS_T}^\T T_{S_T}} + \sum_{j\in \oJ} A_{jk}\\
	&~\coloneqq~ R_{1,k} + R_{2,k} + \sum_{j\in \oJ} A_{jk},
\end{align*}
where in the second step we used $\Pi_j = A_{jS_T}^\T T_{S_T} > 0$ for $j\in \oJ$ and $X_j = 0$ for $j\in \oJ\setminus J$. 
Since $\sum_{j=1}^p A_{jk} = 1$, it suffices to show 
\[
|R_{1,k} | + |R_{2,k}| \le \sum_{j\in \oJ^c}A_{jk},\qquad \forall k\in S_T^c.
\]
To bound $R_{1,k} $, by writing $\Delta = \wt T_{S_T} - T_{S_T}$ for simplicity, we observe 
\begin{align*}
	|R_{1,k}| &= \left|
	\sum_{j\in J} X_j {A_{jk} A_{jS_T}^\T \Delta\over A_{jS_T}^\T \wt T_{S_T}A_{jS_T}^\T T_{S_T}}
	\right|\\
	& = \left|
	\sum_{a\in S_T}\Delta_a
	\sum_{j\in J} X_j {A_{jk} A_{ja}\over A_{jS_T}^\T \wt T_{S_T}A_{jS_T}^\T T_{S_T}}
	\right|\\
	&\le \|\D\|_1\max_{a\in S_T}
	\sum_{j\in J} X_j {A_{jk} A_{ja}\over A_{jS_T}^\T \wt T_{S_T}A_{jS_T}^\T T_{S_T}}\\
	&\le \|\D\|_1 \max_{j\in \oJ}{A_{jk} \over A_{jS_T}^\T T_{S_T}} \max_{a\in S_T}
	\sum_{j\in J} X_j {A_{ja}\over A_{jS_T}^\T \wt T_{S_T}}.
\end{align*}
From the KKT conditions (\ref{eq_kkt_oracle_1}) -- (\ref{eq_kkt_oracle_2}), we deduce that 
\[
\max_{a\in S_T}
\sum_{j\in J} X_j {A_{ja}\over A_{jS_T}^\T \wt T_{S_T}}
\le 1.
\]
Also by $A_{jS_T}^\T T_{S_T} = \Pi_j$, we conclude 
\begin{equation*}
	\max_{k\in S_T^c}|R_{1,k}| \le \|\Delta\|_1 \max_{k\in S_T^c}\max_{j\in \oJ}{A_{jk}\over \Pi_j} \overset{(\ref{eqn_rho_Sc})}{=} \|\D\|_1\rho_{S_T^c}.
\end{equation*}	
Regarding $R_{2,k}$, invoking Lemma  \ref{lem_oracle_error} with an union bound over $k\in S_T^c$ yields 
\begin{equation*}
	\max_{k\in S_T^c}|R_{2,k}| \le \sqrt{2\rho_{S_T^c}\log ((K-s)/t) \over N} + {2\rho_{S_T^c}\log ((K-s)/t)\over 3N}
\end{equation*}
with probability $1-2p^{-1}-2t.$ 
The desired result follows, provided that 
\begin{align*}
	\min_{k\in S_T^c}\sum_{j\in \oJ^c} A_{jk} & > \rho_{S_T^c} \|\Delta\|_1+ \sqrt{2\rho_{S_T^c}\log ((K-s)/t) \over N} + {2\rho_{S_T^c}\log ((K-s)/t)\over 3N},
\end{align*}
which is ensured by (\ref{cond_supp}) in Theorem \ref{thm_supp} coupled with the rate of $\|\D\|_1$ in Corollary \ref{cor_mle_sparse_supp} and the bound $\rho_{S_T^c} \le \xi / \Tm$ from (\ref{eqn_rho_Sc}) of Remark \ref{rem_rho}.

\paragraph*{Proof of step (ii)} Suppose there exists $\bar T \ne \wt T$ such that $\bar T$ is also an optimal solution to (\ref{def_MLE}). Then, the fact that both $\bar T$ and $\wt T$ are optimal solutions implies 
\[
f(\wt T) = f(\bar T),\qquad \textrm{with}\qquad f(T) = N\sum_{j\in J} X_j \log (A_{j\cdot}^\T T).
\]
Let $\nabla f(\wt T)$ denote the gradient of $f(T)$ at $\wt T$. By adding and subtracting terms,  we obtain
\[
f(\wt T) - f(\bar T) + \langle \nabla f(\wt T), \bar T - \wt T\rangle =  \langle \nabla f(\wt T), \bar T - \wt T\rangle.
\]
The concavity of $f(T)$ ensures that the left hand side of the above equality is positive. We thus have 
\[
\langle \nabla f(\wt T), \wt T - \bar T\rangle \le 0.
\]
Since $\wt T$ satisfies the KKT condition in (\ref{eq_kkt_1}),
$\1_K^\T(\wt T - \bar T) = 0$ and $\wt \lambda^\T \wt T = 0$ from the restrictions (\ref{eq_kkt_2}),
we further deduce that 
\begin{eqnarray*}
	0&=&	\langle \nabla f(\wt T) +\wt \lambda +\wt \mu \1_K, \wt T-\bar T\rangle \\
	&=& \langle \nabla f(\wt T) , \wt T-\bar T\rangle + \langle \wt \lambda , \wt T-\bar T\rangle + \langle \wt \mu \1_K, \wt T-\bar T\rangle 
	\\
	&=& \langle \nabla f(\wt T) , \wt T-\bar T\rangle + \langle \wt \lambda , \wt T-\bar T\rangle\\
	&\le& \langle \wt \lambda , \wt T-\bar T\rangle\\
	&=& -\wt\lambda^\T \bar T\\
	&\le& 0,
\end{eqnarray*}
that is, $\wt\lambda^\T \bar T=0$. 	We conclude that since (\ref{lambda_Sc_pos}) holds, that is, $\wt\lambda_{S_T^c} \succ \mathbf{0}$, then we must have $\bar T_{S_T^c} = \mathbf{0}$.
This shows that $\supp(\bar T) \subseteq \supp(T)= S_T$ and completes our proof.  \qed


\subsection{Proof of Theorem \ref{thm_minimax}: Minimax lower bounds of estimating $T_*$ in $\ell_1$ norm}

We start by constructing the hypotheses. Pick any $1< s\le K$. We choose 
$$
T^{(0)} = {1\over s}(\1_s^\T, \b0^\T)^\T.
$$
For now, suppose $s$ is even. Let $\M = \{0,1\}^{s/2}$. Following \citet[Lemma 2.9]{tsybakov09}, there exists $w^{(j)} \in \M$ for $j = 1,\ldots, |\M|$ such that 
$w^{(0)} = \b0$,  $\log(|\M|) \ge s\log(2) / 16$ and 
\[
\|w^{(j)} - w^{(i)} \|_1 \ge {s\over 16},\quad \forall i\ne j. 
\]
For all  $1\le j\le |\M|$, let 
$
\wt w^{(j)}  = ([w^{(j)}]^\T, -[w^{(j)}]^\T, \b0^\T)^\T
$
and 
\[
T^{(j)} = T^{(0)} + \gamma~  \wt w^{(j)}
\]
with 
$
\gamma = \sqrt{c_0/(sN)}
$
for some constant $c_0>0$. It is easy to see that $T^{(j)} \in \cT'(s)$ for all $0\le j\le |\M|$, under $s\le cN$ for sufficiently small $c>0$. We aim to invoke \citet[Theorem 2.5]{tsybakov09} by proving the following:
\begin{enumerate}\setlength\itemsep{0mm}
	\item[(a)] $\textrm{KL}(\PP_{T^{(j)}}, \PP_{T^{(0)}}) \le \log (|\M|) / 16$, for all $1\le j\le |\M|$;
	\item[(b)] $\|T^{(j)} - T^{(i)}\|_1 \ge c'\sqrt{s/n}$ for all $1\le i\ne j\le |\M|$.
\end{enumerate}
Write $\Pi^{(j)} = AT^{(j)}$ for $0\le j\le |\M|$ and $\oJ^{(0)} = \{i: \Pi_i^{(0)} > 0\}$ for simplicity.
To prove (a), since 
\[
\max_{i\in \oJ^{(0)}} {|\Pi_i^{(j)} - \Pi_i^{(0)}| \over \Pi_i^{(0)}} = 
\gamma\max_{i\in \oJ^{(0)}} {|A_{i\cdot}^\T \wt w^{(j)}| \over A_{i\cdot}^\T T^{(0)}} \le s\gamma < 1, 
\]
and $\Pi_i^{(j)} > 0$ for all $i\in \oJ^{(0)}$, 
invoke \citet[Lemma 12]{bing2020optimal} with $n = 1$ to obtain 
\begin{align*}
	\textrm{KL}(\PP_{T^{(j)}}, \PP_{T^{(0)}}) &\le (1 + c'')\gamma^2 ~ N\sum_{i\in \oJ^{(0)}} {[A_{i\cdot}^\T \wt w^{(j)}]^2 \over A_{i\cdot}^\T T^{(0)}}\\
	&\le (1 + c'')\gamma^2  N  \sigma_1(G_0) \|\wt w^{(j)}\|_2^2\\
	&\le (1+c'') c_0 ~ \sigma_1(G_0) & \textrm{by }\|\wt w^{(j)}\|_2^2\le s
\end{align*}
where $\sigma_1(G_0)$ denotes the largest eigenvalue of $G_0$ with
\[
G_0 = \sum_{i\in \oJ^{(0)}}{1\over A_{i\cdot}^\T T^{(0)}}A_{iS_0}A_{iS_0}^\T.
\]
Here we write $S_0 = \supp(T^{(0)})$. 
The result of part (a) then follows by showing $\sigma_1(G_0) \le s$. To this end, by using the inequality $\sigma_1(M) \le \|M\|_{\infty, 1}$ for any symmetric matrix $M$, we have
\begin{align*}
	\sigma_1(G_0) &\le \max_{k\in S_0} \sum_{a\in S_0}\sum_{i\in \oJ^{(0)}}{A_{ja} A_{jk} \over A_{j\cdot}^\T T^{(0)}}\\
	& = s\max_{k\in S_T} \sum_{i\in \oJ^{(0)}}{ A_{jk} } && \textrm{by } A_{j\cdot}^\T T^{(0)} = {\|A_{jS_0}\|_1 \over s}\\
	& \le s &&\textrm{by }\sum_{j=1}^pA_{jk} = 1.
\end{align*}
We proceed to prove (b) by noting that 
\[
\|T^{(j)} - T^{(i)}\|_1 \ge \gamma {s\over 16} = \sqrt{c_0 \over 16^2 }\sqrt{s\over n},\quad \forall i\ne j.
\]
This concludes the proof when $s$ is even. When $s$ is odd and $s\ge 3$, the same arguments hold by defining 
$\M' = \{0,1\}^{(s-1)/2}$.

\section{Proofs for Section \ref{sec_est_A_T_unknown_A}: Estimation with unknown $A$}\label{app_proof_T_unknown_A}

\subsection{Proof of Theorem  \ref{thm_mle_unknown}: The general bound of the $\ell_1$-norm convergence rate of $\wh T$}\label{app_proof_thm_mle_unknown}

We work on the intersection of events defined in (\ref{ineq:max}) and (\ref{ineq:ell1infty}), and defined in (\ref{def_event}). Without loss of generality, we assume (\ref{ineq:max}) and (\ref{ineq:ell1infty}) hold for the permutation $P = \bI_K$. 
First, we recall that
\begin{equation}\label{bd_A_hat_T}
	{1\over 2}\Pi_j \le \Pi_j - |(\wh A_{j\cdot}-A_{j\cdot})^\T T| \ \le \ \wh A_{j\cdot}^\T T \ \le \ \Pi_j +|(\wh A_{j\cdot}-A_{j\cdot})^\T T| \le {3\over 2}\Pi_j,
\end{equation}
for all $j\in \oJ$, see (\ref{ineq:hat AT-AT}).
The proof resembles the proof of Theorem \ref{thm_mle}.		
The KKT conditions are now
\begin{align}\label{eq_kkt_1_unknown}
	&N\sum_{j\in J} X_j {\wh A_{j\cdot} \over \wh A_{j\cdot}^\T \wh T} + \lambda  + \mu \1_K = 0;\\\label{eq_kkt_2_unknown}
	& \lambda_k \ge 0, \quad \lambda_k \wh T_k = 0, ~ \forall k\in [K],\quad  \wh T^\T \1_K = 1.
\end{align}
Using the same reasoning in the proof of Theorem \ref{thm_mle}, we arrive at
\begin{align}\label{display_key_unknown}
	N\sum_{j\in J} {X_j \over   \wh A_{j\cdot}^\T T}{(\wh A_{j\cdot}^\T \D)^2\over \wh A_{j\cdot}^\T \wh T} &= N\sum_{j\in J} X_j  {\wh A_{j\cdot}^\T \D\over \wh A_{j\cdot}^\T T}   + \lambda^\T \D\\\nonumber
	&\le N\sum_{j\in \oJ} (X_j - \wh A_{j\cdot}^\T T) {\wh A_{j\cdot}^\T \D\over \wh A_{j\cdot}^\T T} + N\sum_{j \in \oJ}\wh A_{j\cdot}^\T \D
\end{align}
with $\D := \wh T - T$.
For the left hand side of (\ref{display_key_unknown}), use $\uJ \subseteq J$ to obtain
\[
\sum_{j\in J} {X_j \over   \wh A_{j\cdot}^\T T}{(\wh A_{j\cdot}^\T \D)^2\over \wh A_{j\cdot}^\T \wh T}  \ge \sum_{j\in  \uJ} {X_j \over   \wh A_{j\cdot}^\T T}{(\wh A_{j\cdot}^\T \D)^2\over \wh A_{j\cdot}^\T \wh T} \ge \min_{j\in \uJ}{X_j \over   \wh A_{j\cdot}^\T T}\sum_{j\in  \uJ} {(\wh A_{j\cdot}^\T \D)^2\over \wh A_{j\cdot}^\T \wh T}.
\]
We argue as before in the proof of Theorem \ref{thm_mle} to obtain
\begin{align*}
	\sum_{j\in  \uJ} {(\wh A_{j\cdot}^\T \D)^2\over \wh A_{j\cdot}^\T \wh T}&		\ge \kappa^2({\wh A_{\uJ}}, s)
\end{align*}
and use (\ref{ineq:conditionnumber}) to prove
$\kappa({\wh A_{\uJ}}, s)
\ge{{1\over 2} \kappa(A_{\uJ},s)
}$, cf.~(\ref{conditionnumbers}).
Since the inequality
\[
{X_j \over \wh A_{j\cdot}^\T T}  \overset{(\ref{bd_A_hat_T})}{\ge}~ {2X_j \over 3\Pi_j} \ge {2\over 3}\cdot {\Pi_j - |X_j - \Pi_j|\over \Pi_j} \ge {2\over 3}\left(1 - {\eps_j \over \Pi_j}\right) \ge {1 \over 3}
\]
holds	for all $j\in \uJ$,
we conclude 
\begin{align*}
	\sum_{j\in \uJ} {X_j \over   \wh A_{j\cdot}^\T T}{(\wh A_{j\cdot}^\T \D)^2\over \wh A_{j\cdot}^\T \wh T} 
	~ \ge  ~ 
	{1 \over 3}\kappa^2(A_{\uJ},s) \|\D\|_1^2.
\end{align*}
Next, for the right hand side of (\ref{display_key_unknown}), using the same argument in the proof of Theorem \ref{thm_mle} gives $\sum_{j\in \oJ} \wh A_{j\cdot}^\T \D \le  0$. It remains to bound from above 
\begin{align*}
	\left|\sum_{j\in \oJ} (X_j - \wh A_{j\cdot}^\T T) {\wh A_{j\cdot}^\T \D\over \wh A_{j\cdot}^\T T} \right| &\le \left|
	\sum_{j\in\oJ} (A_{j\cdot}- \wh A_{j\cdot})^\T T {\wh A_{j\cdot}^\T \D\over \wh A_{j\cdot}^\T T} 
	\right| \\
	&\hspace{-2cm}+ \left|
	\sum_{j\in\oJ} (X_j -A_{j\cdot}^\T T)\left({\wh A_{j\cdot}^\T \D \over \wh A_{j\cdot}^\T T} - {A_{j\cdot}^\T \D \over A_{j\cdot}^\T T} \right)
	\right|   + \left|
	\sum_{j\in\oJ} \left(X_j -  A_{j\cdot}^\T T\right) { A_{j\cdot}^\T \D \over  A_{j\cdot}^\T T} 
	\right|.
\end{align*}
In the proof of Theorem \ref{thm_mle}, we have shown 
\begin{equation}\label{bd_term_3}
	\left|\sum_{j\in \oJ} (X_j-\Pi_j) \frac {A_{j\cdot}^\T \D}{A_{j\cdot}^\T T}\right| \le 	\|\D\|_1\left\{\sqrt{2\rho\log (K/t) \over N} + {2\rho \log (K/t)\over 3N}\right\}
\end{equation}
with probability $1-2t$, for any $t\ge 0$. For the first term, by using $\|T\|_1 =1$,
\begin{align*}
	\left|
	\sum_{j\in\oJ} (A_{j\cdot}- \wh A_{j\cdot})^\T T {\wh A_{j\cdot}^\T \D\over \wh A_{j\cdot}^\T T} 
	\right| &\le \|\D\|_1 \max_{1\le k\le K}\left|
	\sum_{j\in\oJ} (A_{j\cdot}- \wh A_{j\cdot})^\T T {\wh A_{jk}\over \wh A_{j\cdot}^\T T} 
	\right|\\ 
	&\le \|\D\|_1 \sum_{j\in\oJ} |(A_{j\cdot}- \wh A_{j\cdot})^\T T| {\|\wh A_{j\cdot}\|_\i\over \wh A_{j\cdot}^\T T}\\
	&\le  \|\D\|_1  \max_{j\in\oJ}{\|\wh A_{j\cdot}\|_\i\over \wh A_{j\cdot}^\T T}\left\|\wh A_{\oJ}-  A_{\oJ}\right\|_{1,\i}.
\end{align*}
Use (\ref{bd_A_hat_T}) and (\ref{ineq:max}) to find
\begin{equation}\label{bd_Ahat_jk_Ahat_T}
	\max_{j\in\oJ}{\|\wh A_{j\cdot}\|_\i\over \wh A_{j\cdot}^\T T} \le 2\max_{j\in\oJ}{\|A_{j\cdot}\|_\i + \|\wh A_{j\cdot}-A_{j\cdot}\|_\i\over \Pi_j}  \le  2\left(\rho +{1\over 2}\right) \le 3\rho,
\end{equation}
hence
\begin{align}\label{bd_term_1}
	\left|
	\sum_{j\in\oJ} (A_{j\cdot}- \wh A_{j\cdot})^\T T {\wh A_{j\cdot}^\T \D\over \wh A_{j\cdot}^\T T} 
	\right|\le    3 \|\D\|_1 \rho \left\|\wh A_{\oJ}-  A_{\oJ}\right\|_{1,\i}.
\end{align}
Furthermore, the second term can be bounded from above by 
\begin{align*}
	&\|\D\|_1 \max_{1\le k\le K}\left\{\left|
	\sum_{j\in\oJ} (X_j -A_{j\cdot}^\T T){\wh A_{jk}(\wh A_{j\cdot}- A_{j\cdot})^\T T \over \wh A_{j\cdot}^\T T\cdot  A_{j\cdot}^\T T}\right| + 	\left|
	\sum_{j\in\oJ} (X_j -A_{j\cdot}^\T T){\wh A_{jk} - A_{jk}\over A_{j\cdot}^\T T}
	\right| \right\}\\
	\le &\|\D\|_1 \max_{1\le k \le K}\left\{
	\sum_{j\in\oJ} {|X_j - \Pi_j| \over \Pi_j}{\|\wh A_{j\cdot}\|_\i \over\wh A_{j\cdot}^\T T} |\wh A_{jk}- A_{jk}| +
	\sum_{j\in\oJ} {|X_j - \Pi_j| \over \Pi_j}|\wh A_{jk}- A_{jk}|\right\}\\
	\le & (1+3\rho)\|\D\|_1\max_{1\le k \le K}
	\sum_{j\in\oJ} {|X_j - \Pi_j| \over \Pi_j}|\wh A_{jk}- A_{jk}|
\end{align*}
where the last step uses (\ref{bd_Ahat_jk_Ahat_T}). Invoke the event $\E$ defined in (\ref{def_event}) to find
\begin{align}\label{bd_term_2}
	\left|
	\sum_{j\in\oJ} (X_j -A_{j\cdot}^\T T)\left({\wh A_{j\cdot}^\T \D \over \wh A_{j\cdot}^\T T} - {A_{j\cdot}^\T\D\over A_{j\cdot}^\T T} \right)
	\right|   
	\le  	4\|\D\|_1\rho\max_{1\le k \le K}\sum_{j\in \oJ} {\eps_j \over \Pi_j} |\wh A_{jk}- A_{jk}|.
\end{align}
By collecting terms (\ref{bd_term_3}), (\ref{bd_term_1}) and (\ref{bd_term_2}) and using the expression of $\eps_j$ in (\ref{def_eps_j}), we conclude
\begin{align*}
	{1\over 3} \kappa^2(A_{\uJ},s)
	\|\D\|_1\le ~&~ 4 \rho \max_{1\le k \le K}\sum_{j\in \oJ}|\wh A_{jk}-  A_{jk}|   \left(1 + 2\sqrt{\log(p)\over \Pi_j N} +  {4\log(p) \over 3\Pi_j N} \right)\\
	&\quad + 	\sqrt{2\rho\log (K/t) \over N} + {2\rho \log (K/t)\over 3N}
\end{align*}
with probability $1-2t$, for any $t\ge 0$. Take $t = p^{-1}$. Recall that $\uJ\subseteq \oJ$ and 
$\Pi_j \ge 8\log(p) / (3N)$ for all $j \in \uJ$. We have
\begin{align*}
	&\sum_{j\in \oJ}|\wh A_{jk}-  A_{jk}|   \left(1 + 2\sqrt{\log(p)\over \Pi_j N} +  {4\log(p) \over 3\Pi_j N} \right)\\ 
	&\le 2\sum_{j\in \oJ}|\wh A_{jk}-  A_{jk}|   \left(1 +    {7\log(p) \over 3\Pi_j N} \right)\\
	&\le 4\|\wh A_{\oJ k} - A_{\oJ k}\|_1 + {14\over 3}\sum_{j\in \oJ\setminus \uJ}{|\wh A_{jk}-A_{jk}|\over \Pi_j}{\log(p)\over N},
\end{align*}
completing the proof.
\qed

\subsection{Proof of Theorem \ref{thm_mle_fast_unknown}: Fast rates of $\wh T$}\label{app_proof_thm_mle_fast_unknown}

We work on 
the intersection of the events, defined in (\ref{ineq:max}) and (\ref{ineq:ell1infty2}), and the event $\E$ in (\ref{def_event}), so we can assume that 
the conclusion of Theorem \ref{thm_mle_unknown} holds for  $P = \bI_K$ without loss of generality, that is,
\begin{align*}
	\|\wh T - T_*\|_1 &\le {3\over  \kappa^{2}(A_{\uJ},s)}\left\{
	\sqrt{2\rho\log(p) \over N} + {2\rho \log(p)\over 3N}+16\rho \bigl\|\wh A_{\oJ}-  A_{\oJ}\bigr\|_{1,\i}\right.\\
	&\hspace{2.4cm}\left.  + {56\over 3}\rho \sum_{j\in \oJ\setminus \uJ} {\|\wh A_{j\cdot} - A_{j\cdot}\|_\i \over \Pi_j}{\log(p) \over N}
	\right\}\\
	&\lesssim {1\over  \kappa^{2}(A_{\uJ},s)}\left\{
	\sqrt{\rho\log(p) \over N} + {\rho \log(p)\over N}+\rho \bigl\|\wh A_{\oJ}-  A_{\oJ}\bigr\|_{1,\i}
	\right\}.
\end{align*}
The last step also uses (\ref{ineq:max}) and $|\oJ\setminus \uJ|\le C'$ to collect terms.
Similar to the arguments of proving Theorem \ref{thm_mle_fast}, we notice that  (\ref{ineq:max}), (\ref{cond_N_unknown}) and (\ref{ineq:ell1infty2})   guarantee that
\begin{equation}\label{bd_A_T_hat_unknown}
	(1-c)\Pi_j \le  A_{j\cdot}^\T \wh T \le (1+c) \Pi_j,\qquad \forall~  j\in J\subseteq \oJ.
\end{equation}
From (\ref{display_key_unknown}) and the observation that \begin{align*}
	\sum_{j \in \oJ}\wh A_{j\cdot}^\T \D &= -\sum_{j \in \oJ^c}\wh A_{j\cdot}^\T \D\le \sum_{j \in \oJ^c}\wh A_{j\cdot}^\T T \overset{(\ref{bd_A_hat_T})}{\le} {3\over2}\sum_{j \in \oJ^c}\Pi_j = 0,
\end{align*}
we have 
\[
N\sum_{j\in J} {X_j \over   \wh A_{j\cdot}^\T T}{(\wh A_{j\cdot}^\T \D)^2\over \wh A_{j\cdot}^\T \wh T} 
\le N\sum_{j\in \oJ} (X_j - \wh A_{j\cdot}^\T T)  {\wh A_{j\cdot}^\T \D\over \wh A_{j\cdot}^\T T}.
\]
In the proof of Theorem \ref{thm_mle_unknown} we have shown 
\begin{align*}
	\left|\sum_{j\in \oJ} (X_j - \wh A_{j\cdot}^\T T) {\wh A_{j\cdot}^\T \D\over \wh A_{j\cdot}^\T T} \right| &\le \rI + \rII + \rIII 
\end{align*}
where 
\begin{align*}
	&\rI = \left|
	\sum_{j\in\oJ} (A_{j\cdot}- \wh A_{j\cdot})^\T T {\wh A_{j\cdot}^\T \D\over \wh A_{j\cdot}^\T T} 
	\right| \le  3 \|\D\|_1 \rho \left\|\wh A_{\oJ}-  A_{\oJ}\right\|_{1,\i},\\
	&\rII =  \left|
	\sum_{j\in\oJ} (X_j -\Pi_j)\left({\wh A_{j\cdot}^\T \D \over \wh A_{j\cdot}^\T T} - {A_{j\cdot}^\T \D \over A_{j\cdot}^\T T} \right)
	\right|\le 4\|\D\|_1\rho\max_{1\le k \le K}\sum_{j\in \oJ} {\eps_j \over \Pi_j} |\wh A_{jk}- A_{jk}|,\\
	&\rIII =  \left|
	\sum_{j\in\oJ} \left(X_j -  \Pi_j\right) { A_{j\cdot}^\T \D \over  \Pi_j} 
	\right| \le \left|
	\sum_{j\in \oJ} (X_j-\Pi_j) \frac {A_{j\cdot}^\T H^{-1/2} H^{1/2} \D}{ \Pi_j}
	\right|.
\end{align*}
Also by the arguments in the proof of Theorem \ref{thm_mle_fast}, one can deduce that 
\begin{align*}
	\left|\sum_{j\in \oJ} (X_j-\Pi_j) \frac {A_{j\cdot}^\T H^{-1/2} H^{1/2} \D}{ \Pi_j}\right| &\le \|H^{1/2}\D\|_2 \left\|\sum_{j\in \oJ}{X_j-\Pi_j \over \Pi_j} H^{-1/2}A_{j\cdot}\right\|_2\\ 
	&\lesssim \|H^{1/2}\D\|_2 \sqrt{K\log(p) \over N}
\end{align*}
with probability $1 - 2p^{-1}$. With the same probability, by using (\ref{bd_Delta_1}), we conclude 
\begin{align}\label{disp_intermediate}
	&\sum_{j\in J} {X_j \over   \wh A_{j\cdot}^\T T}{(\wh A_{j\cdot}^\T \D)^2\over \wh A_{j\cdot}^\T \wh T}\\\nonumber
	&  \lesssim \|H^{1/2}\D\|_2 \left[\sqrt{K\log(p) \over N} +{\rho\over  \kappa(A_{\oJ}, s)}
	\max_{1\le k \le K}\sum_{j \in \oJ} 
	|\wh A_{jk}-A_{jk}|\left(
	1 + {\log(p)\over \Pi_j N}
	\right) 
	\right]\\\nonumber
	& \lesssim \|H^{1/2}\D\|_2\sqrt{K\log(p) \over N} +{\rho\|H^{1/2}\D\|_2 \over  \kappa(A_{\oJ}, s)}\left(\|\wh A_{\oJ}-A_{\oJ}\|_{1,\i}+
	\sum_{j \in \oJ\setminus \uJ} 
	{\|\wh A_{j\cdot}-A_{j\cdot}\|_\i \over \Pi_j} {\log(p)\over  N} 
	\right).
\end{align}
We proceed to bound from below the left hand side. Since
(\ref{ineq:ell1infty2})
together with (\ref{bd_A_T_hat_unknown}) implies 
\begin{align}\label{bd_A_hat_pi_hat}\nonumber
	(1/2-c)\Pi_j  &\le  A_{j\cdot}^\T \wh T -  \|\wh A_{j\cdot}-A_{j\cdot}\|_\i \|\wh T\|_1\\\nonumber
	&\le  
	\wh A_{j\cdot}^\T \wh T\\
	&\le A_{j\cdot}^\T \wh T + \|\wh A_{j\cdot}-A_{j\cdot}\|_\i \|\wh T\|_1 \le (3/2+c) \Pi_j
\end{align}
and 
\begin{equation}\label{bd_sandwich_A_hat_T}
	\Pi_j/2 \le  
	\wh A_{j\cdot}^\T  T  \le  3\Pi_j/2,
\end{equation}
for all $j\in J\subseteq \oJ$. We have 
\[
\sum_{j\in J} {X_j \over   \wh A_{j\cdot}^\T T}{(\wh A_{j\cdot}^\T \D)^2\over \wh A_{j\cdot}^\T \wh T} = \sum_{j\in \oJ} {X_j \over   \wh A_{j\cdot}^\T T}{(\wh A_{j\cdot}^\T \D)^2\over \wh A_{j\cdot}^\T \wh T} \gtrsim \D^\T \wt H \D  
\]
where we write
\[
\wt H = \sum_{j\in \oJ} {X_j \over  \Pi_j^2}\wh A_{j\cdot}\wh A_{j\cdot}^\T.
\]
Recall the definition of $\wh H$ from (\ref{def_I_I_hat}). It follows that
\begin{align*}
	\D^\T \wt H \D  & \ge \D^\T \wh H \D - 
	\sum_{j\in \oJ}X_j {|\wh A_{j\cdot}^\T \D|\over \Pi_j} {|(\wh A_{j\cdot} - A_{j\cdot})^\T \D|\over \Pi_j} - \sum_{j\in \oJ}X_j {|A_{j\cdot}^\T \D|\over \Pi_j} {|(\wh A_{j\cdot} - A_{j\cdot})^\T \D|\over \Pi_j}.
\end{align*}
By using 
\begin{equation*}
	\max_{j\in \oJ}{\|\wh A_{j\cdot}\|_\i \over \Pi_j} \le \rho +  	\max_{j\in \oJ}{\|\wh A_{j\cdot} - A_{j\cdot}\|_\i \over \Pi_j} \le  \rho + {1\over 2} \le 2\rho,
\end{equation*}
we first have 
\begin{align*}
	&\sum_{j\in \oJ}X_j {|\wh A_{j\cdot}^\T \D|\over \Pi_j} {|(\wh A_{j\cdot} - A_{j\cdot})^\T \D|\over \Pi_j}\\
	&\le 2\rho \|\D\|_1^2 \max_{k\in [K]}\sum_{j\in \oJ}{X_j \over \Pi_j}|\wh A_{jk}-A_{jk}| \\
	& \le 2\rho \|\D\|_1^2 \max_{k\in [K]}\sum_{j\in \oJ}\left( 1 + {|X_j-\Pi_j| \over \Pi_j}\right)|\wh A_{jk}-A_{jk}| \\
	&\le 4\rho \|\D\|_1^2 \max_{k\in [K]}\sum_{j\in \oJ}\left( 1 +{7\log(p)\over 3\Pi_j N}\right)|\wh A_{jk}-A_{jk}| \\
	&\lesssim   \rho \|\D\|_1^2\left(\|\wh A_{\oJ}-  A_{\oJ}\|_{1,\i}  + \sum_{j \in \oJ\setminus \uJ} 
	{\|\wh A_{j\cdot}-A_{j\cdot}\|_\i \over \Pi_j} {\log(p)\over  N} \right)\\
	&\le  {\rho \over \kappa(A_{\oJ},s)}  \left(\|\wh A_{\oJ}-  A_{\oJ}\|_{1,\i}  + {\log(p)\over  N} \right)\|H^{1/2}\D\|_2^2
\end{align*}
where the last line uses (\ref{bd_Delta_1}) and $|\oJ\setminus \uJ|\le C'$. 
A similar argument also gives the same upper bound for 
\[
\sum_{j\in \oJ}X_j {| A_{j\cdot}^\T \D|\over \Pi_j} {|(\wh A_{j\cdot} - A_{j\cdot})^\T \D|\over \Pi_j}.
\]
Under condition (\ref{cond_N_unknown}) and (\ref{ineq:ell1infty2}), and also by invoking Lemma \ref{lem_I_deviation}, we readily have 
\[
\PP\left\{\D^\T \wt H \D \gtrsim \|H^{1/2}\D\|_2^2\right\} \ge 1-2K^{-1},
\]
which together with (\ref{disp_intermediate})
gives 
\[
\|H^{1/2}\D\|_2 \lesssim \sqrt{K\log(p) \over N} +
{\rho \over  \kappa(A_{\oJ}, s)}\left(\|\wh A_{\oJ}-A_{\oJ}\|_{1,\i}+
\sum_{j \in \oJ\setminus \uJ} 
{\|\wh A_{j\cdot}-A_{j\cdot}\|_\i \over \Pi_j} {\log(p)\over  N} 
\right).
\]
Invoke (\ref{bd_Delta_1}) and use (\ref{cond_N_unknown}) and $|\oJ\setminus \uJ|\le C'$ to simplify the expression to 
complete the proof. \qed

\subsection{Proof of Theorem \ref{thm_supp_unknown}: One-sided sparsity recovery of $\wh T$}\label{app_proof_supp_unknown}

The arguments resemble the proof of Theorem \ref{thm_supp}. The proof of step (ii) follows exactly from the same argument by replacing $A_{j\cdot}$ by $\wh A_{j\cdot}$. We therefore only prove step (i): there exists an optimal solution $\wt T$ to (\ref{def_T_hat}) such that $\supp(\wt T) \subseteq \supp(T)$. Similarly, we define
$\wt T_{S_T^c} = 0$
and
\begin{equation}\label{def_MLE_oracle_unknown}
	\wt T_{S_T} = \argmax_{\beta\in \Delta_s} N \sum_{j\in J}  X_j\log\left(\wh A_{jS_T}^\T \beta\right).
\end{equation}
The KKT condition corresponding to (\ref{def_MLE_oracle_unknown}) states
\begin{align}\label{eq_kkt_oracle_1_unknown}
	&N\sum_{j\in J} X_j {\wh A_{jS_T} \over \wh A_{jS_T}^\T \wt T_{S_T}} + \wt \lambda_{S_T}  + \wt \mu \1_s = 0;\\\label{eq_kkt_oracle_2_unknown}
	& \wt\lambda_k \ge 0, \quad \wt\lambda_k \wt T_k = 0, ~ \forall k\in S_T,\quad  \wt T_{S_T}^\T \1_s = 1.
\end{align}
By similar reasoning as the proof of Theorem \ref{thm_supp}, we define
\[
\wt \mu = -N,\quad \wt \lambda_k = N\left(
1 - \sum_{j\in J} X_j {\wh A_{jk} \over \wh A_{jS_T}^\T \wt T_{S_T}}
\right),\qquad \forall k\in [K].
\]
The verification that $(\wt T, \wt \lambda, \wt \mu)$ satisfies (\ref{eq_kkt_1_unknown}) -- (\ref{eq_kkt_2_unknown}) in lieu of $(\wh T, \lambda, \mu)$ boils down to show
\begin{equation}\label{lambda_Sc_pos_unknown}
	\wt \lambda_{k} > 0,\qquad \forall k\in S_T^c.
\end{equation}
We show this on the event defined in Theorem \ref{thm_supp_unknown}. Notice that conditions in Theorem \ref{thm_supp_unknown} imply that both (\ref{bd_A_hat_pi_hat}) and (\ref{bd_sandwich_A_hat_T}) hold. 

Pick any $k\in S_T^c$. Adding and subtracting terms yields 
\begin{align*}
	\sum_{j\in J} X_j {\wh A_{jk} \over \wh A_{jS_T}^\T \wt T_{S_T}} &= \sum_{j\in J}X_j {\wh A_{jk} - A_{jk} \over \wh A_{jS_T}^\T \wt T_{S_T}} + 
	\sum_{j\in J}X_j{A_{jk} \wh A_{jS_T}^\T (T_{S_T} - \wt T_{S_T}) \over \wh A_{jS_T}^\T \wt T_{S_T}\wh A_{jS_T}^\T T_{S_T}}\\
	&\quad + \sum_{j\in J}X_j {A_{jk}(A_{jS_T} - \wh A_{jS_T})^\T T_{S_T} \over \wh A_{jS_T}^\T T_{S_T} A_{jS_T}^\T T_{S_T}} + \sum_{j\in J}(X_j - \Pi_j) {A_{jk} \over \Pi_j} + \sum_{j\in J}A_{jk}\\
	&= R_{1,k} + R_{2,k} + R_{3,k} + R_{4,k} + \sum_{j\in J}A_{jk}.
\end{align*}
We bound each term separately. 
For $R_{1,k}$, using (\ref{bd_A_hat_pi_hat}), $J\subseteq \oJ$  and the proof of Theorem \ref{thm_mle_fast_unknown} gives, on the event $\E$ in (\ref{def_event}),
\begin{align}\label{bd_R1}\nonumber
	|R_{1,k}| & \lesssim \sum_{j\in J} {X_j\over \Pi_j} |\wh A_{jk} - A_{jk}|\\\nonumber &\lesssim \|\wh A_{\oJ} - A_{\oJ}\|_{1,\i} + \sum_{j \in \oJ\setminus \uJ} 
	{\|\wh A_{j\cdot}-A_{j\cdot}\|_\i \over \Pi_j} {\log(p)\over  N}\\
	&\lesssim \|\wh A_{\oJ} - A_{\oJ}\|_{1,\i} + {\log(p)\over  N} && \textrm{by }|\oJ\setminus \uJ|\le C'.
\end{align}
To bound $R_{2,k} $, by writing $\Delta = \wt T_{S_T} - T_{S_T}$ for simplicity and using similar arguments in the proof of Theorem \ref{thm_supp}, we have 
\begin{align*}
	|R_{2,k}| &= \left|
	\sum_{j\in J} X_j {A_{jk} \wh A_{jS_T}^\T \Delta\over \wh A_{jS_T}^\T \wt T_{S_T}\wh A_{jS_T}^\T T_{S_T}}
	\right| \le \|\D\|_1 \max_{j\in \oJ}{A_{jk} \over \wh A_{jS_T}^\T T_{S_T}} \max_{a\in S_T}
	\sum_{j\in J} X_j {\wh A_{ja}\over \wh A_{jS_T}^\T \wt T_{S_T}}.
\end{align*}
From (\ref{eq_kkt_oracle_1_unknown}) -- (\ref{eq_kkt_oracle_2_unknown}), we deduce that 
\[
\max_{a\in S_T}
\sum_{j\in J} X_j {\wh A_{ja}\over \wh A_{jS_T}^\T \wt T_{S_T}}
\le 1.
\]
Also, by using (\ref{bd_sandwich_A_hat_T}) together with 
\[
\|\Delta\|_1 \lesssim  \sqrt{s\log(p)\over N} + \rho\|\wh A_{\oJ} - A_{\oJ}\|_{1,\i}
\]
deduced from Theorem \ref{thm_mle_fast_unknown} with $K = s$ and  $\kappa^{-1}(A_{\oJ},s) \le C''$, 
we conclude that
\begin{equation}\label{bd_R2}
	|R_{2,k}| \lesssim \rho_{S_T^c} \left(\sqrt{s\log(p)\over N} + \rho \|\wh A_{\oJ} - A_{\oJ}\|_{1,\i}\right).
\end{equation}	
holds with probability at least $1-8p^{-1}$.
For $R_{3,k}$, by the arguments of bounding $R_{1,k}$ and $R_{2,k}$, it is easy to see that 
\begin{align}\label{bd_R3}
	|R_{3,k}| &\le  \rho_{S_T^c} \max_{a\in S_T}\sum_{j\in J}{X_j \over \Pi_j} |\wh A_{ja} - A_{ja}| \lesssim \rho_{S_T^c}\left(\|\wh A_{\oJ} - A_{\oJ}\|_{1,\i} +  {\log(p)\over  N}\right).
\end{align}
Regarding $R_{4,k}$, invoking Lemma \ref{lem_oracle_error} with $t= 1/p$ and taking a union bounds over $k\in S_T^c$ yields 
\begin{align}\label{bd_R4}
	\max_{k\in S_T^c} |R_{4,k}| &\le \sqrt{2\rho_{S_T^c}\log (p) \over N} + {2\rho_{S_T^c}\log (p)\over 3N}
\end{align}
with probability $1-2p^{-1}.$
Finally, since 
\[
1 - \sum_{j\in J}A_{jk} \ge \sum_{j\in \oJ^c}A_{jk}, 
\]
by collecting terms in (\ref{bd_R1}), (\ref{bd_R2}), (\ref{bd_R3}) and (\ref{bd_R4}),
the desired result follows provided that 
\begin{align*}
	\min_{k\in S_T^c}\sum_{j\in \oJ^c} A_{jk} & \gtrsim  \rho_{S_T^c} \sqrt{s\log(p)\over N} + \sqrt{\rho_{S_T^c}\log (p) \over N}  + (1 + \rho_{S_T^c}\rho)\|\wh A_{\oJ} - A_{\oJ}\|_{1,\i}+  {\log(p)\over  N} 
\end{align*}
which is ensured by the condition in  Theorem \ref{thm_supp_unknown} coupled with the fact $\rho_{S_T^c} \le \xi /\Tm$ and $\rho \le (1\vee \xi)/\Tm$. The proof is then complete. \qed

\section{Proof of Proposition \ref{allbounds} in Section \ref{sec:w bounds}}\label{proof:allbounds}

We prove (\ref{word}) and (\ref{topicw}) -- (\ref{topictv}) separately in this section. We collect technical lemmas that are used in the proofs at the end of this section.

\subsection*{Proof of (\ref{word})}

Using the triangle inequality for $W_1$ (Lemma \ref{thm:w props} below), 
\[
W_1(\wt \Pi^{(i)}, \wt \Pi^{(j)};\Dw) \le W_1(\wt \Pi^{(i)}, \Pi^{(i)}_*;\Dw) + W_1(\Pi^{(i)}_*,  \Pi^{(j)}_*;\Dw) + W_1(\Pi^{(j)}_*, \wt \Pi^{(j)};\Dw),
\]
and thus
\[W_1(\wt \Pi^{(i)}, \wt \Pi^{(j)};\Dw) - W_1(\Pi^{(i)}_*,  \Pi^{(j)}_*;\Dw)\le  \sum_{k\in \{i,j\}} W_1(\wt \Pi^{(k)}, \Pi^{(k)}_*;\Dw).\]
Combining this with a second application of the triangle inequality with the roles of $\wt\Pi^{(k)}$ and $\Pi^{(k)}$ switched for $k\in \{i,j\}$, we find
\begin{align}
	\left|W_1(\wt \Pi^{(i)}, \wt \Pi^{(j)};\Dw) - W_1( \Pi^{(i)}_*, \Pi^{(j)}_*;D_w)\right| &\le \sum_{k\in \{i,j\}} W_1(\wt \Pi^{(k)}, \Pi^{(k)}_*;\Dw)\nonumber\\
	&\le \|\Dwb\|_\i \frac{1}{2}\sum_{k\in \{i,j\}} \|\wt \Pi^{(k)}- \Pi^{(k)}_*\|_1,\label{pi tilde tv bnd dword}
\end{align}
where the second step follows from Lemma \ref{thm:w props} below. For $k\in \{i,j\}$, we find
\begin{align}
	\|\wt \Pi^{(k)}- \Pi^{(k)}_{*}\|_1 &= \|\wh A\wh T^{(k)} - AT_*^{(k)}\|_1\nonumber\\
	&= \|\wh A\wh T^{(k)} - A\wh T^{(k)}+A\wh T^{(k)} - A T_*^{(k)}\|_1\nonumber\\
	&\le \|(\wh A - A)\wh T^{(k)}\|_1 + \|A(\wh T^{(k)} -  T_{*}^{(k)})\|_1 \nonumber\\
	&\le \max_{l\in [K]}\|\wh A_{\cdot l} - A_{\cdot l}\|_1\|\wh T^{(k)}\|_1 + \max_{l\in [K]}\|A_{\cdot l}\|_1\|\wh T^{(k)} -  T_{*}^{(k)}\|_1 \label{pi tilde bnd step}\\
	&= \max_{l\in [K]}\|\wh A_{\cdot l} - A_{\cdot l}\|_1 + \|\wh T^{(k)} -  T_{*}^{(k)}\|_1,\nonumber
\end{align}
where (\ref{pi tilde bnd step}) follows from the fact that for any $v\in \R^K$,
\[\|Av\|_1 = \sum_{i=1}^p\left|\sum_{k=1}^K A_{ik}v_k\right|\le \sum_{k=1}^K |v_k| \sum_{i=1}^p|A_{ik}|\le \max_{l\in [K]}\|A_{\cdot l}\|\|v\|_1,\]
and in the final step we use that $A_{\cdot l} \in \Delta_p$ for all $l\in [K]$ and $\wh T^{(k)}\in \Delta_K$.
Plugging this into (\ref{pi tilde tv bnd dword}) we find
\begin{align}
	&\left|W_1(\wt \Pi^{(i)}, \wt \Pi^{(j)};\Dw) - W_1( \Pi^{(i)}_*, \Pi^{(j)}_*;D_w)\right| \nonumber\\
	&\qquad\qquad \le \|\Dwb\|_\i \left\{\max_{l\in [K]}\|(\wh A - A)\be_l\|_1 +\frac{1}{2}\sum_{k\in \{i,j\}} \|\wh T^{(k)} -  T_*^{(k)}\|_1\right\}.\label{eqn:pi tilde final except P}
\end{align}
Finally, note that for any $P\in \H_K$, $PP^\top = I_K$, so for $k\in \{i,j\}$,
\[\Pi^{(k)}_* = AT_*^{(k)} = APP^\top T_*^{(k)}.\]
Furthermore, $AP\in \Delta_p$ and $P^\top T_*^{(k)}\in \Delta_K$.	Thus, (\ref{eqn:pi tilde final except P}) holds when $A$ and $T^{(k)}_*$ are replaced by $AP$ and $P^\top T_*^{(k)}$, respectively, for any $P\in \H_K$. We can thus take the maximum over $P\in \H_K$, which completes the proof of (\ref{word}).
\qed 

\subsection*{Proof of (\ref{topicw}) and (\ref{topictv})}
We will prove the bound
\begin{align}
	&\left|W_1(\wh T^{(i)},\wh T^{(j)}; \hDt) - W_1(T^{(i)}_*,T^{(j)}_*; \Dt)\right| \nonumber\\
	&\quad \le 2\max_{k\in [K]}d(\wh A_{\cdot k}, A_{\cdot k}) + \|\Dtb\|_\i\frac{1}{2}\sum_{k\in \{i,j\}}\|\wh T^{(k)}- T^{(k)}_*\|_1\label{t bnd d},
\end{align}
where $d$ is any metric on $\Delta_p$, and
\begin{equation}\label{eqn:hdt def}
	\hDt(k,l) \coloneqq d(\wh A_{\cdot k}, \wh A_{\cdot l}), \quad \Dt(k,l) \coloneqq d(A_{\cdot k}, A_{\cdot l})\quad \forall k,l\in [K].
\end{equation}
Combining this with Lemma \ref{thm:Hk topic} below, (\ref{t bnd d}) yields
\begin{align}
	&\left|W_1(\wh T^{(i)},\wh T^{(j)}; \hDt) - W_1(T^{(i)}_*,T^{(j)}_*; \Dt)\right| \nonumber\\
	&\quad \le \max_{P\in \H_K}\left\{2\max_{k\in [K]}d(\wh A_{\cdot k}, (AP)_{\cdot k}) + \|\Dtb\|_\i\frac{1}{2}\sum_{k\in \{i,j\}}\|\wh T^{(k)}- P^\top T^{(k)}_*\|_1\right\}\label{t bnd d w Hk}.
\end{align}
Equation (\ref{topictv}) follows immediately from (\ref{t bnd d w Hk}) using $d(a,b) = \frac{1}{2}\|a-b\|_1$ for $a,b\in \Delta_p$, and noting that for this choice of $d$, $\|\Dtb\|_\i\le 1$. To prove (\ref{topicw}), choose $d(a,b) = W_1(a,b;\Dw)$ for $a,b\in \Delta_p$, and note that by Lemma \ref{thm:w props},
\begin{equation}\label{topic w 1}
	W_1(\wh A_{\cdot k},(AP)_{\cdot k};\Dw)\le \|\Dw\|_\i\frac{1}{2}\|\wh A_{\cdot k}-(AP)_{\cdot k}\|\quad \forall k\in [K],
\end{equation}
and for this choice of $d$,
\begin{equation}\label{topic w 2}
	\|\Dtb\|_\i = \max_{k,l\in[K]}W_1(A_{\cdot k}, A_{\cdot l};\Dw) \le \|\Dw\|\max_{k,l\in [K]}\| A_{\cdot k}-A_{\cdot l}\|\le \|\Dw\|.
\end{equation}
Combining (\ref{topic w 1}) and (\ref{topic w 2}) with (\ref{t bnd d w Hk}) proves (\ref{topicw}).

\subsubsection*{Proof of (\ref{t bnd d})} We first find
\begin{align}
	W_1(\wh T^{(i)},\wh T^{(j)}; \hDt) &= \inf_{w\in \Gamma(\wh T^{(i)},\wh T^{(j)})} \tr( w\hDtb)\nonumber\\
	&= \inf_{w\in \Gamma(\wh T^{(i)},\wh T^{(j)})}\left\{ \tr( w\Dtb) + \tr(w[\hDtb - \Dtb])\right\}\nonumber\\
	&=  \inf_{w\in \Gamma(\wh T^{(i)},\wh T^{(j)})} \tr( w\Dtb) +  \|\hDtb - \Dtb\|_\i \label{eqn: d inf tr bnd} \\
	&= \|\hDtb - \Dtb\|_\i+  W_1(\wh T^{(i)},\wh T^{(j)}; \Dt),\label{eqn:d inf init bound}
\end{align}
where in (\ref{eqn: d inf tr bnd}) we use that for any $w\in \Gamma(\wh T^{(i)},\wh T^{(j)})$,
\begin{align}
	\tr(w[\hDtb - \Dtb]) &= \sum_{t,l=1}^K w_{tl}(\hDtb_{lt} - \Dtb_{lt})\nonumber\\
	&\le \|\hDtb - \Dtb\|_\i \cdot \sum_{t,l=1}^Kw_{tl} &\text{since } w_{lt}\ge 0 \text{ for } t,l\in [K]\nonumber\\
	&=  \|\hDtb - \Dtb\|_\i. \label{d hat d bnd}
\end{align}
Using the triangle inequality for $W_1$ (Lemma \ref{thm:w props}), we find
\begin{align*}
	W_1(\wh T^{(i)},\wh T^{(j)}; \Dt) \le  W_1(\wh T^{(i)},T_*^{(i)}; \Dt)+ W_1( T_*^{(i)}, T_*^{(j)}; \Dt)+ W_1(\wh T^{(j)}, T_*^{(j)}; \Dt).
\end{align*}
Plugging this into (\ref{eqn:d inf init bound}) we find
\begin{align}
	&W_1(\wh T^{(i)},\wh T^{(j)};\hDt) - W_1(T^{(i)}_*,T^{(j)}_*;\Dt)\nonumber \\
	&\qquad \qquad\le \|\hDt- \Dt\|_\i + \sum_{k\in \{i,j\}}W_1(\wh T^{(k)},T^{(k)}_*;\Dt).\label{eqn:g hat g}
\end{align}
Using the triangle inequality again,
\begin{align*}
	W_1(T^{(i)}_*,T^{(j)}_*;\Dt) &\le  W_1(\wh T^{(i)},\wh T^{(j)};\Dt) + \sum_{k\in \{i,j\}}W_1(\wh T^{(k)},T^{(k)}_*;\Dt)\\
	&\le  \|\hDtb - \Dtb\|_\i+ W_1(\wh T_i,\wh T_j;\hDt) + \sum_{k\in \{i,j\}}W_1(\wh T_k,T_k;\Dt),
\end{align*}
where in the second line we used the same argument as in (\ref{eqn:d inf init bound}) with the roles of $\hDtb$ and $\Dtb$ reversed. Combining this with (\ref{eqn:g hat g}), we find
\begin{align}
	&\left|W_1(\wh T^{(i)},\wh T^{(j)}; \hDt) - W_1(T^{(i)}_*,T^{(j)}_*; \Dt)\right| \nonumber\\
	&\quad \le \|\hDtb - \Dtb\|_\i + \sum_{k\in \{i,j\}}W_1(\wh T_k,T_k;\Dt)\nonumber\\
	&\quad \le \|\hDtb - \Dtb\|_\i + \|\Dtb\|_\i\frac{1}{2}\sum_{k\in \{i,j\}}\|\wh T^{(k)}_*- T^{(k)}_*\|_1,\label{final but dhat - d}
\end{align}
where we use Lemma \ref{thm:w props} in the last line.

Next, we find by the triangle inequality and (\ref{eqn:hdt def}) that
\[
\hDtb_{tl} \le d(\wh A_{\cdot t},  A_{\cdot t}) + d( A_{\cdot t},  A_{\cdot l}) + d( A_{\cdot l},  \wh A_{\cdot l}), 
\]
and 
\[\Dtb_{tl} \le d(A_{\cdot t},  \wh A_{\cdot t}) + d( \wh A_{\cdot t}, \wh A_{\cdot l}) +d( \wh A_{\cdot l},   A_{\cdot l}), \]
which together give
\[\|\hDtb - \Dtb\|_\i \le 2\max_{t\in [K]} d(A_{\cdot t},  \wh A_{\cdot t}).\]
Plugging this into (\ref{final but dhat - d}) completes the proof of (\ref{t bnd d}).

\qed \\

We use the following simple lemma in the proof of (\ref{topicw}) and (\ref{topictv}).
\begin{lemma}\label{thm:Hk topic}
	For any metric $d: \Delta_p\times \Delta_p\to \R$, $T,T'\in \Delta_K$, and $A\in \R^{p\times K}$ with columns in $\Delta_p$, and any $P\in \H_K$,
	\[\inf_{w\in \Gamma(  T,  T')} \sum_{k,l=1}^{ K}w_{kl}d( A_{\cdot k},  A_{\cdot l})=\inf_{w\in \Gamma(P^\top  T, P^\top  T')} \sum_{k,l=1}^{ K}w_{kl}d(( A P)_{\cdot k}, (A P)_{\cdot l}).\]
\end{lemma}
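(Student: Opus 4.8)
\textbf{Proof proposal for Lemma \ref{thm:Hk topic}.}

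The plan is to exhibit an explicit bijection between the feasible sets $\Gamma(T,T')$ and $\Gamma(P^\T T, P^\T T')$ that preserves the value of the objective. Fix $P \in \H_K$. Since $P$ is a permutation matrix, there is a permutation $\sigma$ of $[K]$ such that $(Pv)_k = v_{\sigma(k)}$ for all $v \in \R^K$; equivalently $(P^\T v)_k = v_{\sigma^{-1}(k)}$. The key observation is that the permuted columns satisfy $(AP)_{\cdot k} = A_{\cdot \sigma(k)}$, so that $d((AP)_{\cdot k}, (AP)_{\cdot l}) = d(A_{\cdot \sigma(k)}, A_{\cdot \sigma(l)})$ for all $k,l \in [K]$.

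First I would set up the correspondence on couplings. Given $w \in \Gamma(T, T')$, define $\tilde w$ by $\tilde w_{kl} := w_{\sigma(k)\,\sigma(l)}$; equivalently $\tilde w = P^\T w P$ viewing $w$ as a matrix. Then I check that $\tilde w \in \Gamma(P^\T T, P^\T T')$: the entries of $\tilde w$ are a reindexing of those of $w$ hence nonnegative, and its row sums are $\sum_l \tilde w_{kl} = \sum_l w_{\sigma(k)\sigma(l)} = \sum_{l'} w_{\sigma(k) l'} = T_{\sigma(k)} = (P^\T T)_k$, using that $l \mapsto \sigma(l)$ is a bijection; the column-sum computation is identical and yields $(P^\T T')_l$. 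The map $w \mapsto \tilde w$ is clearly invertible (its inverse is $\tilde w \mapsto P \tilde w P^\T$, i.e. reindex by $\sigma^{-1}$), so it is a bijection $\Gamma(T,T') \to \Gamma(P^\T T, P^\T T')$.

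Next I would verify the objective is invariant under this bijection: for any $w \in \Gamma(T,T')$ with image $\tilde w$,
\[
\sum_{k,l=1}^K \tilde w_{kl}\, d((AP)_{\cdot k}, (AP)_{\cdot l}) = \sum_{k,l=1}^K w_{\sigma(k)\sigma(l)}\, d(A_{\cdot \sigma(k)}, A_{\cdot \sigma(l)}) = \sum_{k',l'=1}^K w_{k'l'}\, d(A_{\cdot k'}, A_{\cdot l'}),
\]
where the last equality is the change of summation variables $k' = \sigma(k)$, $l' = \sigma(l)$. Taking the infimum over $w \in \Gamma(T,T')$ on the right (equivalently over $\tilde w \in \Gamma(P^\T T, P^\T T')$ on the left, by the bijection) yields the claimed identity. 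There is no real obstacle here — the statement is purely a relabeling argument; the only thing to be careful about is getting the direction of the permutation ($\sigma$ vs.\ $\sigma^{-1}$) consistent between the action on the probability vectors $T, T'$ and the action on the column index of $A$, which is handled by the identity $(AP)_{\cdot k} = A_{\cdot \sigma(k)}$ together with $(P^\T T)_k = T_{\sigma(k)}$ for the same $\sigma$.
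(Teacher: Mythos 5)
Your proof is correct and takes essentially the same route as the paper, which defines $w^\pi_{kl}=w_{\pi(k)\pi(l)}$, checks it lies in $\Gamma(P^\top T,P^\top T')$, proves one inequality, and then applies that inequality again with $P^\top$, $P^\top T$, $P^\top T'$, $AP$ to get the reverse; your remark that the reindexing map is a bijection merely packages those two inequalities into a single step. One notational slip worth fixing: under your stated convention $(Pv)_k=v_{\sigma(k)}$ one actually has $(AP)_{\cdot k}=A_{\cdot\sigma^{-1}(k)}$ and $(P^\top T)_k=T_{\sigma^{-1}(k)}$, so the two identities you invoke hold with $\sigma^{-1}$ in place of $\sigma$; since the same permutation appears in both (which is the only thing the argument needs), the proof is unaffected after renaming.
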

\begin{proof}
	Fix $P\in \H_K$ and let $\pi : [K]\to [K]$ be the associated bijection, so $P_{kl} = 1[k=\pi(l)]$ for all $k,l\in [K]$. From this it follows that
	\begin{equation}\label{perm iden}
		(P^\top T)_k = T_{\pi(k)},\quad (P^\top T')_k = T'_{\pi(k)}, \quad (AP)_{\cdot k} = A_{\cdot \pi(k)}. 
	\end{equation}
	Let $w\in \Gamma(T,T')$, and define $w^\pi$ by $w^\pi_{kl} = w_{\pi(k)\pi(l)}$. Then for $l\in [K]$,
	\begin{align*}
		\sum_{k=1}^K w_{kl}^\pi &= \sum_{k=1}^K w_{\pi(k)\pi(l)} \\
		&= T'_{\pi(l)} &\text{since } w\in \Gamma(T,T')\\
		&= (P^\top T')_l. & \text{by } (\ref{perm iden})
	\end{align*}
	A similar calculation shows $\sum_{l}w^\pi_{kl} = (P^\top T)_{k}$, and we thus conclude that $w^\pi\in \Gamma(P^\top T,P^\top T')$. 
	
	Next note that
	\begin{align*}
		\sum_{k,l=1}^K w_{kl}d(A_{\cdot k}, A_{\cdot l}) &= \sum_{k,l=1}^K w_{\pi(k)\pi(l)} d(A_{\cdot \pi(k)},A_{\cdot \pi(l)}) & \text{since }\pi\text{ is a bijection}\\
		&= \sum_{k,l=1}^K w^\pi_{kl} d((AP)_{\cdot k},(AP)_{\cdot l}) & \text{by } (\ref{perm iden})\\
		&\ge \inf_{w\in \Gamma(P^\top  T, P^\top  T')} \sum_{k,l=1}^{ K}w_{kl}d(( A P)_{\cdot k}, (A P)_{\cdot l}). & \text{since } w^\pi\in \Gamma(P^\top T,P^\top T')
	\end{align*}
	Since this holds for all $w\in \Gamma(T,T')$, we find
	\begin{equation}\label{perm ineq}
		\inf_{w\in \Gamma(  T,  T')} \sum_{k,l=1}^{ K}w_{kl}d( A_{\cdot k},  A_{\cdot l})\ge\inf_{w\in \Gamma(P^\top  T, P^\top  T')} \sum_{k,l=1}^{ K}w_{kl}d(( A P)_{\cdot k}, (A P)_{\cdot l}).
	\end{equation}
	Applying (\ref{perm ineq}) with $P$, $T$, $T'$ and $A$ replaced by $P^\top$, $P^\top T$, $P^\top T'$, and $AP$, respectively, gives the opposite inequality. Combined with (\ref{perm ineq}), this completes the proof.
\end{proof}

We also use the following standard results on the $1$-Wasserstein distance in the proofs in this section (see, for example, \citep{gibbs2002metrics, villani2003}).

\begin{lemma}\label{thm:w props}
	Let $D$ be a metric on a finite, non-empty, set $\cX$. Then,
	\begin{enumerate}
		\item $W_1(\cdot, \cdot; D)$ is a metric on $\Delta_{|\cX|}$.
		\item For any $a,b\in \Delta_{|\cX|}$,
		\begin{equation*}
			W_1(a,b;D)\le \max_{x,y\in\cX}D(x,y)\cdot  \frac{1}{2}\|a-b\|_1.
		\end{equation*}
	\end{enumerate}
\end{lemma}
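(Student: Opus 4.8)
The plan is to prove both parts by the standard, fully explicit, finite-dimensional arguments, exploiting that the coupling set $\Gamma(a,b)$ is a compact polytope (doubly-stochastic-type matrices with prescribed marginals) on which the linear objective $w\mapsto\tr(w\bm D)$ attains its infimum. I would establish symmetry and the identity of indiscernibles first, then the triangle inequality, and finally the diameter bound.

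For part 1, symmetry is immediate: transposition $w\mapsto w^\top$ is a bijection between $\Gamma(a,b)$ and $\Gamma(b,a)$, and $\tr(w^\top\bm D)=\tr(w\bm D)$ because $\bm D$ is symmetric, so the two infima coincide. Non-negativity follows since $w$ and $\bm D$ have non-negative entries. For $W_1(a,a;D)=0$ I would exhibit the diagonal coupling $w_{xx}=a_x$ with zero off-diagonal, of cost $\sum_x a_x D(x,x)=0$. Conversely, if $W_1(a,b;D)=0$ an optimal coupling $w$ has $\tr(w\bm D)=0$; since $D$ is a metric, $D(x,y)>0$ for $x\ne y$, which forces $w_{xy}=0$ off the diagonal, whence $a_x=\sum_y w_{xy}=w_{xx}=\sum_y w_{yx}=b_x$, i.e. $a=b$.

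The triangle inequality is the one genuinely nontrivial point, and I would prove it by the discrete gluing construction. Given optimal couplings $w^{(1)}\in\Gamma(a,b)$ and $w^{(2)}\in\Gamma(b,c)$, define $w^{(3)}_{xz}=\sum_{y:\,b_y>0}w^{(1)}_{xy}w^{(2)}_{yz}/b_y$. Using that the column marginal of $w^{(1)}$ and the row marginal of $w^{(2)}$ both equal $b$, and that $b_y=0$ forces $w^{(1)}_{xy}=w^{(2)}_{yz}=0$, a direct marginal computation shows $w^{(3)}\in\Gamma(a,c)$. Applying $D(x,z)\le D(x,y)+D(y,z)$ inside the defining sum and then summing out $z$, respectively $x$, gives $\tr(w^{(3)}\bm D)\le\tr(w^{(1)}\bm D)+\tr(w^{(2)}\bm D)=W_1(a,b;D)+W_1(b,c;D)$; since $W_1(a,c;D)\le\tr(w^{(3)}\bm D)$, the claim follows. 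For part 2, I would exhibit an explicit diagonal-plus-excess coupling. Writing $a_x^+=(a_x-b_x)_+$, $b_x^+=(b_x-a_x)_+$ and $M=\sum_x a_x^+=\sum_x b_x^+=\tfrac12\|a-b\|_1$ (the case $M=0$ being $a=b$, for which the bound is trivial), set $w_{xx}=\min(a_x,b_x)$ and $w_{xy}=a_x^+b_y^+/M$ for $x\ne y$. Since $a_x^+>0$ implies $b_x^+=0$, the off-diagonal product vanishes at $x=y$, and the identity $\min(a_x,b_x)+a_x^+=a_x$ (with its column analogue) shows $w\in\Gamma(a,b)$. With $C\coloneqq\max_{x,y}D(x,y)$ and $D(x,x)=0$, the cost is $\tr(w\bm D)=\sum_{x\ne y}w_{xy}D(x,y)\le C\sum_{x\ne y}w_{xy}=CM=\max_{x,y}D(x,y)\cdot\tfrac12\|a-b\|_1$, giving the stated inequality.

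The main obstacle is the triangle inequality: one must handle the division by $b_y$ and the indices with $b_y=0$ carefully so that the glued matrix $w^{(3)}$ is genuinely a coupling of $a$ and $c$, and verify the cost splits cleanly after invoking the triangle inequality for $D$. Everything else reduces to elementary marginal bookkeeping and the explicit coupling constructed for part 2.
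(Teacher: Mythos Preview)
Your proof is correct and complete. The paper itself does not prove this lemma; it simply states it as a standard result with citations to \cite{gibbs2002metrics} and \cite{villani2003}, so your explicit argument (diagonal coupling for identity of indiscernibles, the discrete gluing construction for the triangle inequality, and the diagonal-plus-excess coupling for the diameter bound) supplies what the paper delegates to the literature.
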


\section{Technical lemmas}\label{app_tech_lemma}

\begin{lemma}\label{lem_basic}
	For any $t\ge 0$, 
	with probability $1-2pe^{-t/2}$,
	\[
	|X_j - \Pi_j| \le \sqrt{\Pi_j t \over N} + {2t\over 3N},\qquad \text{uniformly over }1\le j\le p.
	\]
\end{lemma}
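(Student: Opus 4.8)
The plan is a standard Bernstein/Binomial tail argument followed by a union bound. Fix $j\in[p]$. Since $NX_j \sim \mathrm{Binomial}(N,\Pi_j)$, we can write $NX_j = \sum_{m=1}^N Z_m$ where the $Z_m$ are i.i.d.\ $\mathrm{Bernoulli}(\Pi_j)$, so $X_j - \Pi_j = \frac1N\sum_{m=1}^N (Z_m - \Pi_j)$ is an average of $N$ independent, centered, bounded random variables, each lying in $[-\Pi_j, 1-\Pi_j]$ (hence of absolute value at most $1$) and with variance $\Pi_j(1-\Pi_j) \le \Pi_j$. This is exactly the setting of Bernstein's inequality.

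First I would recall Bernstein's inequality in the form: for independent centered $W_1,\dots,W_N$ with $|W_m|\le b$ a.s.\ and $\sum_m \mathrm{Var}(W_m) \le v$, one has $\PP\{\sum_m W_m \ge u\} \le \exp(-u^2/(2v + 2bu/3))$ for all $u\ge 0$, and the same bound for the lower tail by applying it to $-W_m$. Applying this with $W_m = Z_m - \Pi_j$, $b = 1$, $v = N\Pi_j$, and $u = N\eta$ gives $\PP\{|X_j - \Pi_j| \ge \eta\} \le 2\exp\bigl(-N^2\eta^2/(2N\Pi_j + 2N\eta/3)\bigr)$. Then I would verify that the choice $\eta = \sqrt{\Pi_j t/N} + 2t/(3N)$ makes the exponent at most $-t/2$: indeed, with this $\eta$ one has $N\eta^2 \ge \Pi_j t$ (from the first term alone) and $N\eta \ge \tfrac{2t}{3}\cdot\tfrac{3}{2}=t$ wait---more carefully, $N\eta \ge 2t/3$ from the second term, so $2N\Pi_j + 2N\eta/3 \le 2N\eta^2/(\eta) \cdot(\cdots)$; the clean way is to use the elementary fact that if $\eta \ge \sqrt{at} + bt$ with $a = \Pi_j/N$, $b = 2/(3N)$ then $\eta^2/(a\eta + b\eta^2/\eta\cdots)$. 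Concretely: $N\eta^2/(2N\Pi_j+2N\eta/3)$; split the denominator, $2N\Pi_j \le 2N(\sqrt{\Pi_j t/N})^2 \cdot (t/t)$ is not quite it either. The robust route is: it is a standard inversion that $\PP\{|X_j-\Pi_j|\ge \sqrt{2\Pi_j t/N} + t/(3N)\}\le 2e^{-t}$ or, in the slightly looser constants of the lemma, $\sqrt{\Pi_j t/N}+2t/(3N)$ with probability $2e^{-t/2}$; I would just carry out this two-line inversion of the Bernstein bound, checking each of the two regimes (sub-Gaussian regime where the $\sqrt{\cdot}$ term dominates, and the sub-exponential regime where the $t/N$ term dominates).

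Finally I would take a union bound over $j\in[p]$: the bad event $\bigcup_{j=1}^p\{|X_j-\Pi_j|>\sqrt{\Pi_j t/N}+2t/(3N)\}$ has probability at most $p\cdot 2e^{-t/2} = 2pe^{-t/2}$, which is exactly the claimed statement, and on its complement the inequality holds uniformly over all $j$. I do not anticipate any genuine obstacle here; the only mild care needed is in the algebraic inversion of the Bernstein bound to land on precisely the stated constants $\sqrt{\Pi_j t/N}$ and $2t/(3N)$ with the probability $2pe^{-t/2}$ (one can also absorb the factor-of-two discrepancies by the elementary inequality $\sqrt{a+b}\le\sqrt a+\sqrt b$ and by noting $2v+2bu/3$ in the denominator). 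Note also the cosmetic point that the lemma's $\eps_j$ in $(\ref{def_eps_j})$ corresponds to taking $t = 4\log p$ and doubling, which is consistent.
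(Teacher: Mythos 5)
Your proposal is correct and matches the paper's own argument, which likewise just applies Bernstein's inequality to the Bernoulli decomposition $NX_j=\sum_{m=1}^N Z_m$ and takes a union bound over $j\in[p]$ (the paper cites Lemma 15 of \cite{TOP} for this). The inversion you were hesitant about does go through cleanly: the requirement $N\eta^2-\tfrac{t}{3}\eta-\Pi_j t\ge 0$ is satisfied already by $\eta=\sqrt{\Pi_j t/N}+t/(3N)$ (via the quadratic formula and $\sqrt{a+b}\le\sqrt a+\sqrt b$), so the lemma's slightly larger $2t/(3N)$ term is certainly sufficient.
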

\begin{proof}
	The proof follows by a simple application of the Bernstein's inequality for bounded random variables (see, for instance, the proof of Lemma 15 in \cite{TOP}). 
\end{proof}

\begin{lemma}\label{lem_oracle_error}
	Pick any $k\in [K]$. For any $t\ge 0$,  with probability $1-2e^{-t/2}$, 
	\[
	\left|
	\sum_{j\in  \oJ} {X_j -  \Pi_j \over \Pi_j}A_{jk}
	\right| \le \sqrt{\rho_k t \over N} + {2\rho_k t\over 3N},
	\]
	with $\rho_k = \max_{j\in\oJ}A_{jk}/\Pi_j$.
\end{lemma}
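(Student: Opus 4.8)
The statement to prove is a Bernstein-type concentration bound for the weighted sum $\sum_{j\in\oJ}\frac{X_j-\Pi_j}{\Pi_j}A_{jk}$, which is exactly the $k$-th coordinate of the error vector $E$ featured in the proof sketch of Theorem~\ref{thm_mle}. The plan is to recognize this sum as a centered average of $N$ i.i.d.\ bounded random variables and apply the one-sided Bernstein inequality twice (once for each tail).

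First I would expand the multinomial structure. Recall that $NX = \sum_{\ell=1}^N Z_\ell$ where $Z_1,\dots,Z_N$ are i.i.d.\ with $Z_\ell\sim\mathrm{Multinomial}_p(1;\Pi)$, i.e.\ each $Z_\ell$ is a canonical basis vector $\be_{J_\ell}$ in $\RR^p$ with $\PP(J_\ell=j)=\Pi_j$. Then
\[
\sum_{j\in\oJ}\frac{X_j-\Pi_j}{\Pi_j}A_{jk} = \frac1N\sum_{\ell=1}^N W_\ell,\qquad W_\ell := \sum_{j\in\oJ}\frac{(Z_\ell)_j-\Pi_j}{\Pi_j}A_{jk} = \frac{A_{J_\ell k}}{\Pi_{J_\ell}}\mathbbm{1}\{J_\ell\in\oJ\} - \sum_{j\in\oJ}A_{jk}.
\]
Here I use $\sum_{j\in\oJ}(Z_\ell)_j A_{jk}/\Pi_j = A_{J_\ell k}/\Pi_{J_\ell}$ when $J_\ell\in\oJ$ (and note $A_{jk}=0$ whenever $\Pi_j=0$ but $j$ could still have $A_{jk}\ne 0$; however if $\Pi_j=0$ then $A_{j\cdot}^\T T_*=0$ with $T_*\succeq 0$, forcing $A_{jk}T_{*k}=0$ — I would simply restrict the sum to $\oJ$ throughout, as the statement does). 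The $W_\ell$ are i.i.d.\ and centered: $\EE W_\ell = \sum_{j\in\oJ}A_{jk} - \sum_{j\in\oJ}A_{jk} = 0$.

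Next I would bound the range and variance of $W_\ell$. For the range: $W_\ell \le \max_{j\in\oJ}A_{jk}/\Pi_j = \rho_k$, and $W_\ell \ge -\sum_{j\in\oJ}A_{jk}\ge -1 \ge -\rho_k$ after noting $\rho_k\ge A_{jk}/\Pi_j$ for the $j$ achieving the max and that each $A_{jk}\le\Pi_j\cdot\rho_k$, so $\sum_{j\in\oJ}A_{jk}\le\rho_k\sum_{j\in\oJ}\Pi_j\le\rho_k$; hence $|W_\ell|\le\rho_k$, or more precisely $W_\ell - \EE W_\ell$ lies in an interval of controllable length. For the variance: $\mathrm{Var}(W_\ell)\le \EE\big[(A_{J_\ell k}/\Pi_{J_\ell})^2\mathbbm 1\{J_\ell\in\oJ\}\big] = \sum_{j\in\oJ}\frac{A_{jk}^2}{\Pi_j} \le \rho_k\sum_{j\in\oJ}A_{jk}\le \rho_k$. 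With these two estimates — variance proxy $\le\rho_k$ and boundedness constant $\le\rho_k$ — Bernstein's inequality for the sum $\frac1N\sum_\ell W_\ell$ gives, for each fixed sign, that the deviation exceeds $\sqrt{2\rho_k t/(2N)} + (\rho_k/3)\cdot(t/2)/N$ hmm — I would just carefully match constants: the standard form yields, for each $t\ge 0$, $\PP\big(\big|\frac1N\sum_\ell W_\ell\big| > \sqrt{\rho_k t/N} + 2\rho_k t/(3N)\big)\le 2e^{-t/2}$, which is precisely the claim. (The exact bookkeeping of the factor in front of $t/N$ depends on whether one uses the variance or the second-moment bound and the centering; this is the routine part, and I would model it on the proof of Lemma~\ref{lem_basic}, which cites the proof of Lemma~15 in \cite{TOP}.)

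The main obstacle is purely cosmetic rather than conceptual: getting the Bernstein constants to land \emph{exactly} as $\sqrt{\rho_k t/N}$ and $2\rho_k t/(3N)$ with failure probability $2e^{-t/2}$, which forces a specific normalization choice in the inequality one invokes. There is no genuine difficulty — the sum is an i.i.d.\ average of bounded centered variables with the stated variance and range proxies both equal to $\rho_k$. I would therefore structure the writeup as: (i) i.i.d.\ decomposition of $NX$; (ii) identification of $W_\ell$ and the computation $\EE W_\ell = 0$; (iii) the two bounds $\mathrm{Var}(W_\ell)\le\rho_k$ and $|W_\ell|\le\rho_k$; (iv) invoke Bernstein once per tail, union bound the two tails to get the factor $2$, reparametrize by $t$ to get $e^{-t/2}$.
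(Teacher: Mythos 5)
Your proposal is correct and follows essentially the same route as the paper: write $X$ as an average of $N$ i.i.d.\ Multinomial$(1,\Pi)$ draws, observe the summands $W_\ell$ are centered with range and second moment both controlled by $\rho_k$ (the paper uses the slightly cruder range bound $2\rho_k$), and apply Bernstein's inequality to both tails. The only blemish is the throwaway chain $-\sum_{j\in\oJ}A_{jk}\ge -1\ge-\rho_k$, which needs $\rho_k\ge 1$ and is not guaranteed, but it is immediately superseded by your own correct bound $\sum_{j\in\oJ}A_{jk}\le\rho_k\sum_{j\in\oJ}\Pi_j\le\rho_k$, so no gap remains.
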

\begin{proof}
	For any $j\in \oJ$, notice that 
	\[
	X_j  - \Pi_j = {1\over N}\sum_{i=1}^N (B_{ij} - \Pi_j)
	\]
	where $B_{ij}\sim \textrm{Bernoulli}(\Pi_j)$ and $(B_{i1},\ldots, B_{ip})^\T \sim \textrm{Multinomial}(1, \Pi_j)$ for $i\in [N]$. Let 
	\[
	\sum_{j\in  \oJ} {X_j -  \Pi_j \over \Pi_j}A_{jk}  = {1\over N}\sum_{i=1}^N Z_i
	\]
	with 
	$$
	Z_i = \sum_{j\in  \oJ} (B_{ij} - \Pi_j){A_{jk} \over \Pi_j}
	$$
	such that $\EE[Z_i] = 0$, $|Z_i|\le 2\max_{j\in \oJ}A_{jk}/\Pi_j = 2\rho_k$ and 
	\[
	\EE[Z_i^2] = \textrm{Var}\left(
	\sum_{j\in  \oJ}{A_{jk} \over \Pi_j}B_{ij}
	\right) \le \sum_{j\in  \oJ} {A_{jk}^2 \over \Pi_j} \le  \rho_k \sum_{j\in  \oJ} A_{jk} \le \rho_k.
	\]
	Then an application of  Bernstein's inequality gives
	\[
	\PP\left\{
	{1\over N}\left|
	\sum_{i=1}^n Z_i
	\right| \ge \sqrt{\rho_k t \over N} + {2\rho_k t \over 3N}
	\right\}\le 2e^{-t/2},\quad \forall t\ge 0,
	\] 
	which completes the proof.
\end{proof}

\bigskip

Recall that 
\[
H = \sum_{j\in \oJ}{A_{j\cdot}A_{j\cdot}^\T \over \Pi_j} .
\]

\begin{lemma}\label{lem_oracle_error_whitening}
	For any $t\ge 0$,  with probability $1-2e^{-t/2+ K\log 5}$, 
	\[
	\left\|
	\sum_{j\in  \oJ} {X_j - \Pi_j \over \Pi_j} H^{-1/2} A_{j\cdot}
	\right\|_2 \le 2\sqrt{t \over N} + {2(1\vee \xi) \over 3\kappa(A_{\oJ},K) \Tm}\cdot {t\over N}.
	\]
\end{lemma}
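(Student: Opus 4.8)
The plan is to control the random vector $\xi_{\rm vec} \coloneqq \sum_{j\in\oJ} (X_j-\Pi_j)\Pi_j^{-1} H^{-1/2}A_{j\cdot}$ by a standard covering-plus-Bernstein argument. First I would write, as in the proof of Lemma~\ref{lem_oracle_error}, $X_j - \Pi_j = N^{-1}\sum_{i=1}^N (B_{ij}-\Pi_j)$ with $(B_{i1},\dots,B_{ip})^\T\sim\textrm{Multinomial}(1,\Pi_*)$, so that $\xi_{\rm vec} = N^{-1}\sum_{i=1}^N Z_i$ with $Z_i = \sum_{j\in\oJ}(B_{ij}-\Pi_j)\Pi_j^{-1}H^{-1/2}A_{j\cdot}\in\R^K$, $\EE[Z_i]=0$, and the $Z_i$ i.i.d.\ across $i$. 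To pass from the Euclidean norm to a scalar Bernstein bound I would use a $1/2$-net $\cN$ of the unit sphere $S^{K-1}$ with $|\cN|\le 5^K$, giving $\|\xi_{\rm vec}\|_2 \le 2\max_{u\in\cN} u^\T\xi_{\rm vec}$; this is where the $e^{K\log 5}$ factor in the stated probability comes from.

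Next, for fixed $u\in S^{K-1}$ I would apply Bernstein to the i.i.d.\ scalars $u^\T Z_i = \sum_{j\in\oJ}(B_{ij}-\Pi_j)\Pi_j^{-1} u^\T H^{-1/2}A_{j\cdot}$. Writing $a_j \coloneqq \Pi_j^{-1} u^\T H^{-1/2} A_{j\cdot}$, I need (i) a variance bound $\EE[(u^\T Z_i)^2]\le \sum_{j\in\oJ}\Pi_j a_j^2 = u^\T \big(\sum_{j\in\oJ}\Pi_j^{-1} H^{-1/2}A_{j\cdot}A_{j\cdot}^\T H^{-1/2}\big) u = u^\T H^{-1/2} H H^{-1/2} u = \|u\|_2^2 = 1$, using that the multinomial covariance is dominated by the diagonal $\diag(\Pi_j)$; and (ii) a uniform bound $|u^\T Z_i|\le 2\max_{j\in\oJ}|a_j|$. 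For (ii) I would bound $|u^\T H^{-1/2}A_{j\cdot}| \le \|H^{-1/2}\|_{\op}\,\|A_{j\cdot}\|_2$; here $\|H^{-1/2}\|_{\op}^2 = 1/\lambda_{\min}(H)$ and by \eqref{kappa_I} one has $v^\T H v \ge \kappa^2(A_{\oJ},K)\|v\|_1^2 \ge \kappa^2(A_{\oJ},K)\|v\|_2^2$, so $\|H^{-1/2}\|_{\op}\le 1/\kappa(A_{\oJ},K)$. Combining with $\|A_{j\cdot}\|_2 \le \|A_{j\cdot}\|_\i^{1/2}\|A_{j\cdot}\|_1^{1/2} \le \|A_{j\cdot}\|_\i^{1/2}$ (columns of $A$ in $\Delta_p$ give $\|A_{j\cdot}\|_1\le$ number of topics, but more sharply one wants $\|A_{j\cdot}\|_1$ times $\|A_{j\cdot}\|_\i$; in fact $\rho = \max_{j\in\oJ}\|A_{j\cdot}\|_\i/\Pi_j$ and $\rho\le(1\vee\xi)/\Tm$ by \eqref{bd_rho}) yields $\max_{j\in\oJ}|a_j| \le \kappa^{-1}(A_{\oJ},K)\max_{j\in\oJ}\|A_{j\cdot}\|_\i^{1/2}/\Pi_j \le \kappa^{-1}(A_{\oJ},K)\,(1\vee\xi)^{1/2}\Tm^{-1/2}\cdot\big(\max_{j}\|A_{j\cdot}\|_\i/\Pi_j\big)^{1/2}$; tracking constants one arrives at a bound of order $(1\vee\xi)/(\kappa(A_{\oJ},K)\Tm)$ as required.

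Then Bernstein gives, for each fixed $u$, $\PP\{ |u^\T\xi_{\rm vec}| \ge \sqrt{t/N} + \tfrac{2}{3}M t/N\} \le 2e^{-t/2}$ with $M$ the per-coordinate bound above; a union bound over $\cN$ multiplies the failure probability by $5^K$, and the net relation $\|\xi_{\rm vec}\|_2\le 2\max_{u\in\cN}u^\T\xi_{\rm vec}$ converts this into $\|\xi_{\rm vec}\|_2 \le 2\sqrt{t/N} + \tfrac{2}{3}(1\vee\xi)\kappa^{-1}(A_{\oJ},K)\Tm^{-1}\,t/N$ with probability $1-2e^{-t/2+K\log 5}$, which is exactly the claim. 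The main obstacle I anticipate is the bookkeeping in step (ii): getting the uniform bound on $|u^\T H^{-1/2}A_{j\cdot}|$ to come out with the precise constant $(1\vee\xi)/(\kappa(A_{\oJ},K)\Tm)$ rather than something looser, which requires combining the lower eigenvalue bound on $H$ from \eqref{kappa_I} with the $\rho\le(1\vee\xi)/\Tm$ estimate carefully, and making sure the factor-$2$ from the multinomial indicator and the factor-$2$ from the $1/2$-net are handled consistently. The variance computation, by contrast, is clean because $H^{-1/2} H H^{-1/2} = \bI_K$ makes it exactly $1$.
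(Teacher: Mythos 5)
Your overall architecture (write $X_j-\Pi_j$ as an average of i.i.d.\ centered multinomial indicators, whiten by $H^{-1/2}$ so the variance is exactly $1$, bound the Euclidean norm by a $1/2$-net of size $5^K$, apply scalar Bernstein and a union bound) is exactly the paper's, and the variance computation and net bookkeeping are fine. The gap is in your step (ii), the uniform bound on $|u^\T Z_i|$, and it is a real one, not just constant-chasing. You bound $|u^\T H^{-1/2}A_{j\cdot}| \le \|H^{-1/2}\|_{\op}\,\|A_{j\cdot}\|_2$ via Cauchy--Schwarz and $\lambda_{\min}(H)\ge \kappa^2(A_{\oJ},K)$, which leaves you needing to control $\max_{j\in\oJ}\|A_{j\cdot}\|_2/\Pi_j$. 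That quantity is \emph{not} bounded by $\rho\le(1\vee\xi)/\Tm$; the best available bound (the one the paper itself uses in the proof of Lemma \ref{lem_I_deviation}, display (\ref{bd_AjIAj})) is of order $(1+\xi\sqrt{K-s})/\Tm$, so your route yields a Bernstein linear term inflated by a factor that can be as large as $\sqrt{K-s}$, i.e.\ a strictly weaker statement than the lemma claims. Moreover the patch you sketch is algebraically invalid: from $\|A_{j\cdot}\|_2\le\|A_{j\cdot}\|_\i^{1/2}\|A_{j\cdot}\|_1^{1/2}$ you cannot drop $\|A_{j\cdot}\|_1^{1/2}$, and the step $\max_j\|A_{j\cdot}\|_\i^{1/2}/\Pi_j\le (1\vee\xi)^{1/2}\Tm^{-1/2}\bigl(\max_j\|A_{j\cdot}\|_\i/\Pi_j\bigr)^{1/2}$ implicitly requires $\Pi_j^{-1/2}\lesssim \Tm^{-1/2}$, which fails since $\Pi_j$ can be arbitrarily small on $\oJ$.

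The missing idea is to avoid the $\ell_2$--$\ell_2$ pairing altogether: bound $|A_{j\cdot}^\T H^{-1/2}v|\le \|A_{j\cdot}\|_\i\,\|H^{-1/2}v\|_1$ (H\"older with the $\ell_\i$--$\ell_1$ pairing), and then use the inequality $v^\T H v\ge \kappa^2(A_{\oJ},K)\|v\|_1^2$ from (\ref{kappa_I}) applied to $w=H^{-1/2}v$, which gives $\|H^{-1/2}v\|_1\le \kappa^{-1}(A_{\oJ},K)\|v\|_2$, i.e.\ display (\ref{bd_I_inv}). Combining the two yields $|Z_i|\le \rho\,\kappa^{-1}(A_{\oJ},K)\le (1\vee\xi)/(\kappa(A_{\oJ},K)\Tm)$ exactly, which is the envelope the stated linear term $\tfrac{2(1\vee\xi)}{3\kappa(A_{\oJ},K)\Tm}\cdot\tfrac{t}{N}$ requires (the deterministic part $\sum_{j\in\oJ}\Pi_j\,|a_j|$ is handled by the same $\max_j$ bound since $\sum_j\Pi_j\le 1$). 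With that replacement the rest of your argument goes through as written.
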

\begin{proof}
	First note that 
	\[
	\left\|
	\sum_{j\in  \oJ} {X_j - \Pi_j \over \Pi_j} H^{-1/2} A_{j\cdot}
	\right\|_2  = \sup_{v: \|v\|_2 = 1} \left|
	\sum_{j\in  \oJ} {X_j - \Pi_j \over \Pi_j} A_{j\cdot}^\T H^{-1/2} v
	\right|.
	\]
	Let $\cN$  be a minimal $(1/2)$-net of $\{v:\|v\|_2=1\}$.  By definition and the property of $(1/2)$-net, we have 
	\[
	\left\|
	\sum_{j\in  \oJ} {X_j - \Pi_j \over \Pi_j} H^{-1/2} A_{j\cdot}
	\right\|_2 \le 2\sup_{v\in \cN }\left|
	\sum_{j\in  \oJ} {X_j - \Pi_j \over \Pi_j} A_{j\cdot}^\T H^{-1/2} v
	\right|.
	\]
	Pick any $v\in \cN$. 
	By similar reasoning as in the proof of Lemma \ref{lem_oracle_error}, we have 
	\[
	\sum_{j\in  \oJ} {X_j - \Pi_j \over \Pi_j} A_{j\cdot}^\T H^{-1/2} v = {1\over N}\sum_{i=1}^N Z_i
	\]
	with 
	$$
	Z_i = \sum_{j\in  \oJ} {B_{ij} - \Pi_j \over \Pi_j} A_{j\cdot}^\T H^{-1/2} v.
	$$
	Note that $\EE[Z_i] = 0$ and 
	\begin{align*}
		|Z_i| & \le \max\left\{\max_{j\in \oJ}{|A_{j\cdot}^\T H^{-1/2}v|  \over \Pi_j}, ~\left|
		\sum_{j\in \oJ}A_{j\cdot}^\T H^{-1/2} v \right|\right\}\\
		&  \le \max\left\{\max_{j\in \oJ}{|A_{j\cdot}^\T H^{-1/2}v|  \over \Pi_j}, ~\max_{j\in \oJ}\left|
		A_{j\cdot}^\T H^{-1/2} v \over \Pi_j \right|\sum_{j\in \oJ}\Pi_j\right\}\\
		&\le \max_{j\in \oJ}{\|A_{j\cdot}\|_\i \|H^{-1/2} v\|_1  \over \Pi_j}\\
		&\le \rho ~ \|H^{-1/2}v\|_1.
	\end{align*}
	Since, for any $u\in \RR^K$, one has 
	\begin{align*}
		u^\T H u &= \sum_{j \in \oJ} {(A_{j\cdot}^\T u)^2 \over \Pi_j}\\
		&=  \sum_{j \in \oJ} {(A_{j\cdot}^\T u)^2 \over \Pi_j} \sum_{j \in \oJ} \Pi_j &\textrm{ by }\sum_{j \in \oJ}\Pi_j = 1 \\
		&\ge \left(\sum_{j \in \oJ} |A_{j\cdot}^\T u|\right)^2 \\
		&\ge \kappa^2(A_{\oJ}, K) \|u\|_1^2, &\textrm{ by }(\ref{def_kappa_A})
	\end{align*}
	from which, we deduce that 
	\begin{equation}\label{bd_I_inv}
		\|H^{-1/2}u\|_1 \le \kappa^{-1}(A_{\oJ}, K) \|u\|_2,\qquad \forall u\in \RR^K.
	\end{equation}
	We thus conclude $ |Z_i| \le \rho \kappa^{-1}(A_{\oJ}, K)$. Furthermore, observe that
	\[
	\EE[Z_i^2] = \textrm{Var}\left(
	\sum_{j\in  \oJ} {B_{ij} \over \Pi_j} A_{j\cdot}^\T H^{-1/2} v
	\right) \le  v^\T H^{-1/2} \sum_{j\in \oJ}{A_{j\cdot}A_{j\cdot}^\T\over \Pi_j} H^{-1/2} v= 1.
	\]
	An application of the Bernstein's inequality gives
	\[
	\PP\left\{
	{1\over N}\left|
	\sum_{i=1}^n Z_i
	\right| \ge \sqrt{t \over N} + {\rho t \over 3\kappa(A_{\oJ}, K)N}
	\right\}\le 2e^{-t/2},\quad \forall t\ge 0.
	\]
	The proof follows immediately by using $\rho \le (1\vee \xi) / \Tm$ and taking a union bound over $v\in \cN$ together with $|\cN| \le 5^K$.
\end{proof}

Recall that 
\[
\wh H = \sum_{j\in \oJ}{X_j \over \Pi_j^2} A_{j\cdot}A_{j\cdot}^\T.
\]
The following lemma provides a concentration inequality of 
$H^{-1/2}(\wh H - H)H^{-1/2}$ via an application of the Matrix Bernstein inequality \citep{tropp2015introduction}.

\begin{lemma}\label{lem_I_deviation}
	For any $t\ge 0$, one has 
	\begin{equation*}
		\PP\left\{
		\left\|H^{-1/2}(\wh H - H)H^{-1/2}\right\|_{\rm op} \le  \sqrt{2B t\over N} + {B t \over 3 N}
		\right\}
		\ge 1 - 2K e^{-t/2},
	\end{equation*}
	with 
	$$
	B =  {1\vee \xi \over \kappa^{2}(A_{\oJ},K)\Tm^2}\left(1 + \xi\sqrt{K-s} \right).
	$$
	Moreover, if 
	\[
	N \ge C B \log K
	\]
	for some sufficiently large constant $C>0$, then, with probability $1-2K^{-1}$, one has 
	\[
	\lambda_{\min}(\wh H) \ge c \lambda_{\min}(H)
	\]
	for some constant $c>0$. 
\end{lemma}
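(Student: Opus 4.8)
The plan is to write $\wh H - H$ as a normalized sum of i.i.d.\ mean-zero random matrices, whiten by $H^{-1/2}$, and apply the Matrix Bernstein inequality \citep{tropp2015introduction}. Recall from Lemma \ref{lem_basic} that $X_j = N^{-1}\sum_{i=1}^N B_{ij}$, where for each $i$ the vector $(B_{i1},\dots,B_{ip})^\T\sim\textrm{Multinomial}(1,\Pi_*)$ has exactly one nonzero coordinate; I denote its index by $j^*(i)\in\oJ$. Writing $u_j\coloneqq H^{-1/2}A_{j\cdot}$ for $j\in\oJ$, the identity $\sum_{j\in\oJ}\Pi_j^{-1}u_ju_j^\T = H^{-1/2}HH^{-1/2} = I_K$ lets one check directly that
\[
H^{-1/2}(\wh H - H)H^{-1/2} = \frac1N\sum_{i=1}^N Q_i, \qquad Q_i = \frac{1}{\Pi_{j^*(i)}^2}u_{j^*(i)}u_{j^*(i)}^\T - I_K,
\]
where the $Q_i$ are i.i.d., symmetric, and $\EE[Q_i] = \sum_{j\in\oJ}\Pi_j(\Pi_j^{-2}u_ju_j^\T) - I_K = 0$.

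Next I would compute the two Matrix Bernstein parameters. The matrix $Q_i$ has eigenvalues $\|u_{j^*(i)}\|_2^2/\Pi_{j^*(i)}^2 - 1$ (with multiplicity one) and $-1$, so $\|Q_i\|_{\op}\le \max_{j\in\oJ}\|u_j\|_2^2/\Pi_j^2 \vee 1$; a short calculation gives $\EE[Q_i^2] = \sum_{j\in\oJ}\Pi_j^{-3}\|u_j\|_2^2\,u_ju_j^\T - I_K \preceq (\max_{j\in\oJ}\|u_j\|_2^2/\Pi_j^2)\,I_K$, so both the uniform operator-norm bound and the matrix variance are governed by $\max_{j\in\oJ}\|u_j\|_2^2/\Pi_j^2$. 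The core of the argument is to bound this quantity by $B$. Using (\ref{kappa_I}), which yields $H\succeq\kappa^2(A_{\oJ},K)I_K$ and hence $\|u_j\|_2^2 = A_{j\cdot}^\T H^{-1}A_{j\cdot}\le\kappa^{-2}(A_{\oJ},K)\|A_{j\cdot}\|_2^2$, it remains to estimate $\|A_{j\cdot}\|_2^2/\Pi_j^2$ for $j\in\oJ$. Splitting $\|A_{j\cdot}\|_2^2 = \|A_{jS_*}\|_2^2 + \|A_{jS_*^c}\|_2^2$, I would bound the $S_*$-part via $\|A_{jS_*}\|_\infty\le\rho\,\Pi_j$ and $\Pi_j\ge\Tm\|A_{jS_*}\|_1$, and the $S_*^c$-part via the definition (\ref{def_xi}) of $\xi$ (so $\|A_{jS_*^c}\|_\infty\le\xi\|A_{jS_*}\|_1\le\xi\Pi_j/\Tm$) together with $|S_*^c| = K-s$; combined with $\rho\le(1\vee\xi)/\Tm$ from (\ref{bd_rho}), this reduces $\max_{j\in\oJ}\|u_j\|_2^2/\Pi_j^2$ to $B$. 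This step is the main obstacle: obtaining the sharp form of $B$ — in particular the dependence $\xi\sqrt{K-s}$ rather than a cruder $\xi^2(K-s)$ — requires being careful about how the off-support block $A_{jS_*^c}$ is handled.

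Granting $\max_{j\in\oJ}\|u_j\|_2^2/\Pi_j^2\le B$, the Matrix Bernstein inequality applied to $\sum_{i=1}^N Q_i$ (ambient dimension $K$, $N$ i.i.d.\ terms with $\|Q_i\|_{\op}\le B$ and variance proxy $N B$), followed by division by $N$, gives for every $t\ge 0$, with probability at least $1-2Ke^{-t/2}$,
\[
\bigl\|H^{-1/2}(\wh H - H)H^{-1/2}\bigr\|_{\op} \le \sqrt{\frac{2Bt}{N}} + \frac{Bt}{3N},
\]
which is the first claim (symmetry and the mean-zero property of the $Q_i$ are immediate, so no extra work is needed there).

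For the second claim, assuming $N\ge CB\log K$ with $C$ large, I would take $t = 4\log K$ in the bound above: the right-hand side is then at most $\sqrt{8/C} + 4/(3C)\le 1/2$, so with probability at least $1-2Ke^{-2\log K} = 1-2K^{-1}$ one has $\|H^{-1/2}(\wh H - H)H^{-1/2}\|_{\op}\le 1/2$, equivalently $H^{-1/2}\wh H H^{-1/2}\succeq \tfrac12 I_K$, i.e.\ $\wh H\succeq\tfrac12 H$. In particular $\lambda_{\min}(\wh H)\ge\tfrac12\lambda_{\min}(H)$, proving the claim with $c = 1/2$; note that $\kappa(A_{\oJ},K)>0$ under the standing conditions where this lemma is invoked, so $H$ — and hence $\wh H$ — is positive definite, which is what makes the inversions in the fast-rate proofs legitimate.
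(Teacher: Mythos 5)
Your overall architecture matches the paper's proof: your $Q_i$ is exactly the paper's $Z_i$ (since $\sum_{j\in\oJ}\Pi_j^{-1}u_ju_j^\T=\bI_K$), the operator-norm and variance computations for $Q_i$ are correct and reduce everything to $\max_{j\in\oJ}A_{j\cdot}^\T H^{-1}A_{j\cdot}/\Pi_j^2$, and the concluding steps (Matrix Bernstein, then $t\asymp\log K$ and $\|H^{-1/2}(\wh H-H)H^{-1/2}\|_{\op}\le 1/2$ to get $\wh H\succeq H/2$) are exactly how the paper finishes. But the one step you yourself flag as "the main obstacle" is genuinely not closed by the route you describe, and it is the heart of the lemma. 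If you bound $A_{j\cdot}^\T H^{-1}A_{j\cdot}\le \kappa^{-2}(A_{\oJ},K)\|A_{j\cdot}\|_2^2$ and then split $\|A_{j\cdot}\|_2^2=\|A_{jS_*}\|_2^2+\|A_{jS_*^c}\|_2^2$ with $\|A_{jS_*^c}\|_\infty\le\xi\|A_{jS_*}\|_1\le\xi\Pi_j/\Tm$, the off-support block contributes $\xi^2(K-s)/\Tm^2$, not $(1\vee\xi)\,\xi\sqrt{K-s}/\Tm^2$; whenever $\xi\sqrt{K-s}\gg 1\vee\xi$ (e.g.\ $\xi\asymp 1$ and $K-s$ large) this strictly exceeds $B\kappa^2(A_{\oJ},K)$, so the claim "this reduces $\max_j\|u_j\|_2^2/\Pi_j^2$ to $B$" does not follow from the inequalities you listed. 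The lossy move is symmetrizing: passing to $\|A_{j\cdot}\|_2^2$ squares the off-support entries.

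The paper avoids this with an asymmetric Hölder bound that keeps one factor of $A_{j\cdot}$ in $\ell_\infty$ and only one in $\ell_2$: first $A_{j\cdot}^\T H^{-1}A_{j\cdot}\le\|A_{j\cdot}\|_\infty\,\|H^{-1}A_{j\cdot}\|_1$, then the $\ell_1$-version of the condition-number bound (display (\ref{bd_I_inv}), i.e.\ $\|H^{-1/2}v\|_1\le\kappa^{-1}(A_{\oJ},K)\|v\|_2$, which comes from $v^\T Hv\ge\kappa^2(A_{\oJ},K)\|v\|_1^2$ — strictly stronger than the $\ell_2$ relaxation you invoke) gives $A_{j\cdot}^\T H^{-1}A_{j\cdot}\le\kappa^{-2}(A_{\oJ},K)\|A_{j\cdot}\|_\infty\|A_{j\cdot}\|_2$. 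Dividing by $\Pi_j^2$, the $\ell_\infty$ factor is absorbed into $\rho\le(1\vee\xi)/\Tm$, and the remaining \emph{unsquared} $\ell_2$ norm satisfies $\|A_{j\cdot}\|_2/\Pi_j\le\Tm^{-1}\bigl(\|A_{jS_*}\|_2/\|A_{jS_*}\|_1+\|A_{jS_*^c}\|_2/\|A_{jS_*}\|_1\bigr)\le\Tm^{-1}(1+\xi\sqrt{K-s})$, which is what produces the $\xi\sqrt{K-s}$ in $B$. With this substitution your proposal becomes a complete proof; without it, the stated constant $B$ is not attained and the downstream sample-size condition (\ref{cond_N_sparse}) would have to be strengthened to involve $\xi^2(K-s)$.
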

\begin{proof}
	By similar arguments in the proof of Lemma \ref{lem_oracle_error}, we have 
	\[
	H^{-1/2}(\wh H - H)H^{-1/2} = {1\over N}\sum_{i=1}^N  Z_i
	\]
	with 
	\[
	Z_i = \sum_{j\in \oJ}{B_{ij} - \Pi_j \over \Pi_j^2} H^{-1/2}A_{j\cdot} A_{j\cdot}^\T H^{-1/2}.
	\]
	Notice $\EE[Z_i] = 0$.
	To apply the Matrix Bernstein inequality, we first find the bound for $\|Z_i\|_{\op}$ as 
	\begin{align*}
		\|Z_i\|_{\op} &\le  
		\max\left\{
		\max_{j\in \oJ}\left\|{H^{-1/2}A_{j\cdot} A_{j\cdot}^\T H^{-1/2}\over \Pi_j^2} \right\|_{\op},~ 
		\left\|\sum_{j\in \oJ}{H^{-1/2}A_{j\cdot} A_{j\cdot}^\T H^{-1/2}\over \Pi_j} \right\|_{\op}
		\right\}\\
		&\le \max\left\{
		\max_{j\in \oJ}{A_{j\cdot}^\T H^{-1} A_{j\cdot}\over \Pi_j^2},~ 1
		\right\}.
	\end{align*}
	Since 
	\[
	A_{j\cdot}^\T H^{-1} A_{j\cdot} \le \|A_{j\cdot}\|_\i\|H^{-1/2}\|_{1,\i}\|H^{-1/2}A_{j\cdot}\|_1,
	\]
	by using (\ref{bd_I_inv}), we obtain
	\[
	A_{j\cdot}^\T H^{-1} A_{j\cdot} \le \|A_{j\cdot}\|_\i \|A_{j\cdot}\|_2 \kappa^{-2}(A_{\oJ},K).
	\]
	Since $\Pi_j \ge \Tm \|A_{jS_T}\|_1$, we conclude
	\begin{align}\label{bd_AjIAj}\nonumber
		\max_{j\in \oJ}{A_{j\cdot}^\T H^{-1} A_{j\cdot}\over \Pi_j^2} &\le \rho \kappa^{-2}(A_{\oJ},K)\max_{j\in \oJ}{ \|A_{j\cdot}\|_2 \over \Pi_j}\\\nonumber
		&\le  {\rho \over \kappa^{2}(A_{\oJ},K)\Tm} \max_{j\in \oJ}\left({ \|A_{jS_T}\|_2 \over \|A_{jS_T}\|_1} +{ \|A_{jS_T^c}\|_2 \over \|A_{jS_T}\|_1}\right)\\\nonumber
		&\le  {\rho \over \kappa^{2}(A_{\oJ},K)\Tm}\left(1 + \xi\sqrt{|S_T|^c} \right)\\
		&\le {1\vee \xi \over \kappa^{2}(A_{\oJ},K)\Tm^2}\left(1 + \xi\sqrt{K-s} \right) = B.
	\end{align}
	We used the definition (\ref{def_xi}) in the penultimate step and used $\rho \le (1\vee \xi) / \Tm$ in the last step. Thus, 
	$
	\|Z_i\|_{\op}  \le B.
	$
	For the second moment of $Z_i$, we have 
	\begin{align*}
		\EE[Z_iZ_i^\T] & = \EE\left[
		\sum_{j\in \oJ}{B_{ij} - \Pi_j \over \Pi_j^2} H^{-1/2}A_{j\cdot} A_{j\cdot}^\T H^{-1/2}\sum_{j\in \oJ}{B_{ij} - \Pi_j \over \Pi_j^2} H^{-1/2}A_{j\cdot} A_{j\cdot}^\T H^{-1/2}
		\right]\\
		&= \EE\left[
		\sum_{j\in \oJ}{B_{ij}\over \Pi_j^2} H^{-1/2}A_{j\cdot} A_{j\cdot}^\T H^{-1/2}\sum_{j\in \oJ}{B_{ij} \over \Pi_j^2} H^{-1/2}A_{j\cdot} A_{j\cdot}^\T H^{-1/2}
		\right]\\
		&\quad + \EE\left[
		\sum_{j\in \oJ}{1 \over \Pi_j} H^{-1/2}A_{j\cdot} A_{j\cdot}^\T H^{-1/2}\sum_{j\in \oJ}{1\over \Pi_j} H^{-1/2}A_{j\cdot} A_{j\cdot}^\T H^{-1/2}
		\right]\\
		&\preceq \EE\left[
		\sum_{j\in \oJ}{B_{ij}\over \Pi_j^2} H^{-1/2}A_{j\cdot} A_{j\cdot}^\T H^{-1/2}
		\right]\max_{j\in \oJ}{H^{-1/2}A_{j\cdot} A_{j\cdot}^\T H^{-1/2} \over \Pi_j^2}  + \bI_K\\
		&= \bI_K \max_{j\in \oJ}{H^{-1/2}A_{j\cdot} A_{j\cdot}^\T H^{-1/2} \over \Pi_j^2}  + \bI_K.
	\end{align*}
	Since 
	\[
	\max_{j\in \oJ}{H^{-1/2}A_{j\cdot} A_{j\cdot}^\T H^{-1/2} \over \Pi_j^2} = \max_{j\in \oJ}{A_{j\cdot}^\T H^{-1} A_{j\cdot}\over \Pi_j^2},
	\]
	by (\ref{bd_AjIAj}), we conclude 
	\[
	\left\|
	\EE[Z_iZ_i^\T]
	\right\|_{\op} = \left\|
	\EE[Z_i^\T Z_i]
	\right\|_{\op}  \le  1 +  B \le 2B.
	\]
	The first result then follows from an application of the Matrix Bernstein inequality. 
	The second result follows immediately by using the first result with $t = 2\log K$ and noting that
	\begin{align*}
		\lambda_{\min}(\wh H) \ge \lambda_{\min}(H) \lambda_{\min}(H^{-1/2}\wh H H^{-1/2}) \ge \lambda_{\min}(H)\left(
		1 - \|H^{-1/2}(\wh H - H)H^{-1/2}\|_{\op}\right).
	\end{align*}
	We use Weyl's inequality in the second step. 
\end{proof}

\section{Algorithm of estimating the word-topic matrix $A$}\label{app_alg_A}

We recommend the following procedure for estimating the word-topic matrix $A$ under Assumption \ref{ass_sep}. It consists of two parts: (a) estimation of  the partition of anchor words, and (b) estimation of the word-topic matrix $A$.  
Step (a) uses the procedure proposed in \cite{TOP}, stated in Algorithm \ref{alg_I} while step (b) uses the procedure proposed in \cite{bing2020optimal}, summarized in Algorithm \ref{alg_1}.

Recall that $\bX = (X^{(1)}, \ldots, X^{(n)})$ with $N_i$ denoting the length of document $i$. 
Define 
\begin{equation}\label{est_Theta}
	\wh\Theta = {1\over n}\sum_{i =1}^n\left[
	{N_i \over N_i - 1}X^{(i)}X^{(i)\top}  - {1\over N_i-1} \textrm{diag}(X^{(i)})\right]
\end{equation}
and
\begin{equation}\label{def_R_hat}
	\wh R = D_X^{-1} 	\wh\Theta  D_X^{-1}
\end{equation}
with $D_X = n^{-1}\diag(\bX\1_n)$.

\subsection{Estimation of the index set of the anchor words, its partition and the number of topics}

We write the set of anchor words as $	I = \cup_{k\in [K]}I_k$ and its partition $\I = \{I_1,\ldots, I_K\}$ where
\[
I_k = \{j\in [p]: A_{jk} > 0, \ A_{\ell k} = 0, \ \forall \ \ell \ne j\}.
\] 
Algorithm \ref{alg_I} estimates the index set $I$, its partition $\mathcal{I}$ and the number of topics $K$ from the input matrix $\wh R$. The choice $C_1 = 1.1$ is recommended and is empirically verified to be robust in \cite{TOP}. A data-driven choice of $\delta_{j\ell}$ is specified in \cite{TOP} as 
\begin{equation}\label{delta3}
	\wh \delta_{j\ell} =  {n^2 \over\| \bX_{j\cdot}\|_1 \|  \bX_{\ell \cdot}\|_1 }
	\!\left\{\wh \eta_{j\ell}  +2 \wh\Theta_{j\ell}   \sqrt{\log M \over n }
	\!\left[\! \frac{n}{\|\bX_{j\cdot}\|_1} \!\left( \frac{1}{n} \sum_{i=1}^n \frac{\bX_{ji} }{ N_i} \right)^{\rs{1\over 2}}\!\!\!+  \!\frac{n}{\|\bX_{\ell\cdot}\|_1}  \!\left( \frac{1}{n} \sum_{i=1}^n \frac{\bX_{\ell i} }{ N_i} \right)^{\rs{1\over 2}}
	\right]\right\}
\end{equation}
with $M = n \vee p \vee \max_i N_i$ and 
\begin{align}\label{def_eta3}
	\wh \eta_{j\ell} =  &~3\sqrt{6}\left( \left\| \bX_{j\cdot}\right\|_\infty^{1\over 2} +\left\| \bX_{\ell\cdot}\right\|_\infty^{1\over 2}\right) \sqrt{ \log M\over n}\left(\frac{1}{n} \sum_{i=1}^n \frac{ \bX_{ji} \bX_{\ell i} }{N_i}\right)^{1\over 2}+
	\\\nonumber &+  {2 \log M \over n}\left( \| \bX_{j\cdot}\|_\infty + \| \bX_{\ell\cdot}\|_\infty \right) \frac1n \sum_{i=1}^n {1\over N_i}
	+ 31 \sqrt{(\log M)^4 \over n}\left({1\over n}\sum_{i =1}^n{\bX_{ji} + \bX_{\ell i} \over N_i^3} \right)^{\rs \frac12} 
\end{align}

\begin{algorithm}[ht]
	\caption{Estimate the partition of the anchor words $\I$ by $\wh \I$}\label{alg_I}
	\begin{algorithmic}[1]
		\Require matrix $\wh R\in\RR^{p\times p}$, $C_1$ and $Q\in\RR^{p\times p}$ such that $Q[j,\ell] := C_1\delta_{j\ell}$ 
		\Procedure{FindAnchorWords}{$\wh R$, $Q$}
		\State initialize $\wh \I = \emptyset$
		\For{$i\in [p]$} 
		\State $ \wh a_i = \argmax_{1\le j\le p}\wh R_{ij}$
		\State set $\wh I^{(i)} =  \{\ell\in [p]: \wh R_{i\wh a_i}-\wh R_{il} \le Q[i,\wh a_i]+ Q[i,\ell]\}$ and $\textsc{Anchor}(i) = \textsc{True}$
		\For {$j \in \wh I^{(i)}$}
		\State $\wh a_j = \argmax_{1\le k\le p}\wh R_{jk}$
		\If {$\Bigl|\wh R_{ij}-\wh R_{j\wh a_j}\Bigr| > Q[i,j] + Q[j, \wh a_j]$}   
		\State $\textsc{Anchor}(i) =\textsc{False}$
		\State \textbf{break}
		\EndIf	
		\EndFor
		\If {$\textsc{Anchor}(i) $}
		\State $\wh \I = \textsc{Merge}(\wh I^{(i)}$, $\wh \I$)
		\EndIf
		\EndFor
		\State\Return $\wh \I = \{ \wh I_1, \wh I_2, \ldots, \wh I_{\wh K}\}$ 
		\EndProcedure
		\Statex
		
		\Procedure{Merge}{$\wh I^{(i)}$, $\wh\I$}
		\For {$G \in \wh \I$}
		\If {$G \cap \wh I^{(i)}\ne \emptyset$} 
		\State replace $G$ in $\wh \I$ by $G\cap \wh I^{(i)}$
		\State\Return $\wh \I$
		\EndIf
		\EndFor
		\State {$\wh I^{(i)} \in \wh \I$}
		\State\Return $\wh \I$
		\EndProcedure
	\end{algorithmic}
\end{algorithm}

\subsection{Estimation of the word-topic matrix $A$ with a given partition of anchor words}

Given the estimated partition of anchor words 
$\wh \I = \{\wh I_1, \ldots, \wh I_{\wh K}\}$ and its index set $\wh I = \cup_{k\in [\wh K]}\wh I_k$, 
Algorithm \ref{alg_1} below estimates the matrix $A$.

\cite{bing2020optimal} recommends to set $\lambda = 0$ whenever $\wh M$ is invertible and otherwise choose $\lambda$  large enough
such that $\wh M + \lambda \bI_K$ is invertible. Specifically, \cite{bing2020optimal} recommends to choose $\lambda$ as 
\begin{equation}\label{rate_lambda_data}
	\lambda(t^*) = 0.01 \cdot  t^*  \cdot K\left({K\log (n\vee p) \over [\min_{i\in \wh I}(D_{X})_{ii}]n}\cdot  \frac{1}{n}\sum_{i=1}^n {1\over N_i}\right)^{1/2}.\\
\end{equation}
where
\[
t^* = \arg\min\left\{t\in \{0,1,2,\ldots\}:\, \wh M + \lambda(t)\bI_K\text{ is invertible}\right\}.
\]

\begin{algorithm}[H]
	\caption{Sparse Topic Model solver (STM)
	}\label{alg_1}
	\begin{algorithmic}[1]
		\Require frequency data matrix $\bX\in\R^{p\times n}$ with document lengths $N_1, \ldots, N_n$;  the partition of anchor words $\{I_1,\ldots, \wh I_{\wh K}\}$ and its index set $\wh I = \cup_{k\in [\wh K]}\wh I_k$, the tuning parameter $\lambda\ge 0$
		\Procedure{}{}
		\State compute $D_X = n^{-1}\diag(\bX\1_n)$, $\wh \Theta$ from (\ref{est_Theta}) and $\wh R$ from (\ref{def_R_hat})
		\State compute $\wh B_{\wh I\cdot}$ by  
		$\wh B_{i\cdot} = \be_k$ for each $i\in \wh I_k$ and  $k\in [\wh K]$
		\State compute $\wh M = \wh B_{\wh I\cdot}^+
		\wh R_{\wh I\wh I} \wh B_{\wh I\cdot}^{+\T}$ and $\wh H = \wh B_{\wh I\cdot}^+\wh R_{\wh I\wh I^c}$ with 
		$\wh B_{\wh I\cdot}^+= (\wh B_{\wh I\cdot}^\top \wh B_{\wh I\cdot})^{-1} \wh B_{\wh I\cdot}^\top$
		and $\wh I^c = [p]\setminus \wh I$
		\State solve $\wh B_{\wh I^c\cdot}$ from 
		\begin{alignat*}{2}
			\wh B_{j\cdot } &= 0, &&\quad \text{if }(D_X)_{jj}\le  {7\log(n\vee p) \over n} \left({1\over n} \sum_{i=1}^n{1 \over N_i}\right),\\
			\wh B_{j\cdot } &=\argmin_{\beta \ge 0,\ \|\beta\|_1 = 1}\beta^\top  (\wh M  + \lambda \bI_K)\beta - 2\beta^\top  \wh h^{(j)}, &&\quad \text{otherwise,}
		\end{alignat*}
		\indent for each $j\in \wh I^c$, with $\wh h^{(j)}$ being the corresponding column of $\wh H$.
		\State compute $\wh A$ by normalizing $D_X\wh B$ to unit column sums
		\State \Return $\wh A$
		\EndProcedure
	\end{algorithmic}
\end{algorithm}

\section{Some existing results on estimation of $A$}\label{app_upperbounds_A}

For completeness, we state the upper bounds of the estimator $\wh A$ of $A$ proposed in \cite{TOP} as well as the conditions under which $\wh A$ is optimal in the minimax sense, up to a logarithmic factor.

Let $N = N_i$ for all $i\in [n]$ for simplicity and write $M = n\vee p\vee N$. Under Assumption \ref{ass_sep}, recall that $I$ denotes the index set of anchor words and $I^c = [p]\setminus I$. From Corollary 8 of \cite{TOP}, under the conditions stated in Appendix \ref{app_sec_A_cond}, the following holds with probability at least  $1-8M^{-1}$,
\begin{align}\label{bd_upper_Ahat_inf_1}
	&\min_{P\in \P_K}\|\wh A - AP\|_{1,\i} \lesssim \sqrt{(|I|+ K |I^c|)\log M \over n N},\\\label{bd_upper_Ahat_1}
	&\min_{P\in \P_K}\|\wh A - AP\|_{1} \lesssim K\sqrt{(|I| + K |I^c|)\log M \over n N},
\end{align}
On the other hand, the minimax lower bounds in Theorem 6 of \cite{TOP} further imply that the rates in (\ref{bd_upper_Ahat_inf_1}) -- (\ref{bd_upper_Ahat_1}) are minimax optimal, up to the $\log(M)$ factor.

\subsection{Conditions under which   (\ref{bd_upper_Ahat_inf_1}) -- (\ref{bd_upper_Ahat_1}) hold}\label{app_sec_A_cond}

Let  $\bT := \bT_*$ and $\bPi:= \bPi_*$. Define
\[
\nu := n\zeta_i\zeta_j \left[
{\zeta_i \over \zeta_j} \wedge {\zeta_j \over \zeta_i} - \cos(\angle(\bT_{i\cdot}, \bT_{i\cdot})) 
\right]
\]
with $\zeta_i = \|\bT_{i\cdot}\|_2 / \|\bT_{i\cdot}\|_1$. This quantity quantifies the incoherence between rows of $\bT$.
\begin{enumerate}
	\item[(1)]  The matrix $A$ satisfies 
	\begin{enumerate}
		\item  the anchor word assumption in Assumption \ref{ass_sep}, 
		\item the balancing condition
		$\max_{i\in I} \|A_{i\cdot}\|_\i \asymp \min_{i\in I} \|A_{i\cdot}\|_\i$ and 
		\[
		{1\over |I^c|} \sum_{j \in I^c} {\|A_{j\cdot}\|_\i \over \max_{i\in I} \|A_{i\cdot}\|_\i}\lesssim 1,
		\]
		\item  the separation condition between anchor and non-anchor words
		\[
		\min_{i\in I, j\in I^c}\|\wt A_{i\cdot} - \wt A_{j\cdot}\|_1 \ge 8\delta / \nu
		\]
		where $A = D_\Pi^{-1}A D_T$ with $D_\Pi =  \diag(\bPi \1_n)$ and $D_T =   \diag(\bT \1_n)$ and the expression of $\delta$ is stated below. 
	\end{enumerate}
	
	\item[(2)] The matrix $\bT := \bT_*$ satisfies 
	\begin{enumerate}
		\item ${\rm rank}(\bT) = K$,
		\item  
		the incoherence condition $\nu > 4\delta$,
		\item  the balancing condition $\max_{k\in [K]} \sum_{i =1}^n\bT_{ki} \asymp \min_{k\in [K]} \sum_{i =1}^n\bT_{ki}$,
		\item the weak dependency condition $\sum_{k'\ne k}\sqrt{C_{kk'}} \lesssim \sqrt{C_{kk}}$ for all $k\in [K]$ with 
		$C = n^{-1}\bT\bT^\T$;
	\end{enumerate}
	\item [(3)] The matrix $\bPi:= \bPi_*$ satisfies $$\min_{j\in  [p]}{1\over n}\sum_{i=1}^n \bPi_{ji} \ge {2\log M \over 3N},\qquad \min_{j\in  [p]} \max_{1\le i\le n} \bPi_{ji} \ge {(3\log M)^2 \over N}.$$
\end{enumerate}

For detailed interpretation and justification of the above conditions, we refer the reader to Remarks 2, 3, 9, 10 \& 11 of \cite{TOP}. 
The quantity $\delta$ mentioned above represents the noise level in the context of estimating $A$, defined as $\delta = \max_{j,\ell \in [p]}\delta_{j\ell}$, where
\begin{equation}\label{delta_wt}
	\delta_{j\ell} := {p^2\eta_{j\ell} \over \mu_j \mu_{\ell}} +{p^2\Theta_{j\ell}\over \mu_j\mu_{\ell}}\left(\sqrt{p \over \mu_j} + \sqrt{p\over \mu_\ell}\right) \sqrt{\log M \over nN}.
\end{equation}
with 	
\begin{align}\label{def_eta_wt}\nonumber
	\eta_{j\ell} & = \sqrt{\Theta_{j\ell}\log M\over nN}\sqrt{{m_j+m_\ell \over p} \vee {\log^2M \over N} }+{2(m_j+m_\ell)\over p}{ \log M \over nN}\\
	&\qquad + \sqrt{\log^4 M \over nN^3}\sqrt{{\mu_j + \mu_{\ell}\over p} \vee {\log M \over N}}.
\end{align}
Here, $\Theta = n^{-1}\bPi\bPi^\T$,
\[
{m_{j} \over p} = \max_{1\le i\le n}\bPi_{ji},\qquad {\mu_j \over p} = {1\over n} \sum_{i=1}^n\bPi_{ji},\quad \ \forall \ j\in [p].
\]

\bigskip

Let $\wh A$ be the estimator obtained the procedure proposed in \cite{TOP}. Write $|I_{\max}| = \max_{k\in [K]} |I_k|$ and recall that $M = n\vee p \vee N$.

\begin{thm}\label{thm_A_sup_norm}
	Under conditions in (1) -- (3) stated in Appendix \ref{app_sec_A_cond}, assume 
	\begin{equation}\label{cond_Imax_KJ}
		(|I_{\max}| + K|I^c|)\log (M) \le c nN
	\end{equation}
	for some absolute constant $c\in (0,1)$. Then
	there exists some permutation matrix $P\in \P_K$ such that, with probability $1-8M^{-1}$, we have 
	\begin{align*}
		\|(\wh A P)_{j\cdot} - A_{j\cdot}\|_\i ~ \lesssim ~ \sqrt{\|A_{j\cdot}\|_\i{K\log(M) \over nN}}\left(
		1 \vee \sqrt{p\|A_{j\cdot}\|_\i}
		\right),\qquad \forall\ j\in [p].
	\end{align*}
\end{thm}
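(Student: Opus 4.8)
\textbf{Proof plan for Theorem \ref{thm_A_sup_norm}.}

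The plan is to trace the error $\|(\wh AP)_{j\cdot} - A_{j\cdot}\|_\infty$ back through the two stages of the estimation algorithm (Algorithm \ref{alg_I} for anchor recovery, Algorithm \ref{alg_1} for the word-topic matrix), leveraging the fact that the main theorem of \cite{TOP} already guarantees, under conditions (1)--(3), that the anchor partition $\wh{\mathcal I}$ is recovered correctly and that $\wh R$ concentrates entrywise around $R$ at the rate encoded in $\delta_{j\ell}$ of (\ref{delta_wt})--(\ref{def_eta_wt}). First I would fix the permutation $P$ to be the one identified in \cite{TOP} that aligns $\wh{\mathcal I}$ with the true partition, and work on the high-probability event (probability $1-8M^{-1}$) on which anchor recovery succeeds and all the concentration bounds from \cite{TOP} hold. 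On this event, for an anchor word $j\in I_k$, $\wh A_{j\cdot}$ is (up to normalization) a canonical basis vector, so the bound is trivial there; the work is entirely for non-anchor rows $j\in I^c$.

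For $j\in I^c$, recall from Algorithm \ref{alg_1} that $\wh A_{j\cdot}$ is obtained by normalizing $(D_X)_{jj}\wh B_{j\cdot}$, where $\wh B_{j\cdot}$ solves the constrained quadratic program $\min_{\beta\ge 0,\|\beta\|_1=1}\beta^\top(\wh M+\lambda\bI_K)\beta - 2\beta^\top\wh h^{(j)}$. The key step is a perturbation analysis of this QP: its population analogue has solution $B_{j\cdot}$ (the true mixing weights for word $j$), with objective defined by $M = B_I^+ R_{II}B_I^{+\top}$ and $h^{(j)} = B_I^+ R_{Ij}$. I would show $\|\wh B_{j\cdot} - B_{j\cdot}\|_1 \lesssim \kappa^{-1}(\|\wh M - M\|_{\mathrm{op}}\|\beta\|_1 + \|\wh h^{(j)} - h^{(j)}\|_\infty)$ for the relevant restricted strong-convexity constant $\kappa$ (coming from condition (2a), ${\rm rank}(\bT)=K$, plus the anchor structure, exactly as in \cite{bing2020optimal}), but crucially I need the sharper \emph{row-wise} rate: $\|\wh h^{(j)}-h^{(j)}\|_\infty$ is governed by the entrywise deviations $\wh R_{i\ell} - R_{i\ell}$ for $i\in I$, $\ell = j$, which by the concentration results behind (\ref{delta3})--(\ref{def_eta3}) scale like $\sqrt{R_{ij}\log M/n}$ up to lower-order terms; translating $R_{ij} = (D_\Pi^{-1}\Theta D_\Pi^{-1})_{ij}$ and then $\Pi_{ji} = A_{j\cdot}^\top T_*^{(i)}$ introduces the factors $\|A_{j\cdot}\|_\infty$ and the corpus-level sums $\mu_j/p$, which after the final normalization by $(D_X)_{jj}\asymp \mu_j/p$ produce the claimed $\sqrt{\|A_{j\cdot}\|_\infty K\log M/(nN)}$ and the $\sqrt{p\|A_{j\cdot}\|_\infty}$ amplification in the regime where the word is concentrated on few documents. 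I would also check that the thresholding step (setting $\wh B_{j\cdot}=0$ when $(D_X)_{jj}$ is below the stated threshold) only kicks in when $\|A_{j\cdot}\|_\infty$ is itself of order $\log M/N$, in which case the trivial bound $\|\wh A_{j\cdot}-A_{j\cdot}\|_\infty\le\|A_{j\cdot}\|_\infty$ already satisfies the claim; condition (\ref{cond_Imax_KJ}) ensures the perturbation is in the regime where the restricted eigenvalue argument is non-degenerate.

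The main obstacle I anticipate is the passage from the \emph{aggregate} $\|\cdot\|_{1,\infty}$ bound of \cite{TOP} (which sums the noise over all words) to a genuinely \emph{row-local} bound: one must re-do the Bernstein-type concentration for $\wh\Theta_{j\cdot}$ and $\wh R_{Ij}$ keeping the variance proxy proportional to $\Pi_{ji}$ rather than bounding it by its maximum, and then carry this word-specific variance through the $B_I^+$ multiplication (which is well-conditioned by the anchor-balancing condition (1b)) and the QP perturbation without losing the $\sqrt{\|A_{j\cdot}\|_\infty}$ factor to a crude $\ell_1/\ell_\infty$ trade-off. A secondary subtlety is the final column renormalization in Algorithm \ref{alg_1}: since $\wh A$ is $D_X\wh B$ rescaled to unit column sums and $D_X$ is itself random, I would need a short argument that $(D_X)_{jj}$ concentrates around its mean $\mu_j/p$ and that the column-sum normalizers concentrate around $1$, so these do not contaminate the row-wise rate. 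Once these pieces are in place, taking a union bound over $j\in[p]$ (absorbed into the $M^{-1}$ probability by the $\log M$ factors) and over the permutation choice completes the argument.
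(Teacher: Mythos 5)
Your plan misidentifies which estimator the theorem is about, and this is a genuine gap rather than a stylistic difference. Theorem \ref{thm_A_sup_norm} concerns the estimator $\wh A$ of \cite{TOP}, whose non-anchor rows are built from quantities of the form $\wh\Theta_{jL}\wh\omega_k$, with $\wh\omega_k$ approximately inverting $\wh\Theta_{LL}$ over the anchor representatives $L=\{i_1,\dots,i_K\}$ — not the simplex-constrained quadratic program of Algorithm \ref{alg_1} from \cite{bing2020optimal} (that algorithm is only the practical recommendation of Appendix \ref{app_alg_A}, and is likely the source of the confusion). Your central device, a restricted-strong-convexity perturbation bound for $\min_{\beta\succeq 0,\1_K^\T\beta=1}\beta^\T(\wh M+\lambda\bI_K)\beta-2\beta^\T\wh h^{(j)}$, therefore analyzes a different estimator and, even if executed, would not prove the stated result. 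The paper's proof instead exploits the explicit structure of the TOP estimator: it decomposes $|\wh A_{jk}-A_{jk}|$ into a column-normalization term, controlled by the aggregate bound $\max_k\|\wh B_{\cdot k}-B_{\cdot k}\|_1/\|B_{\cdot k}\|_1\lesssim\sqrt{(|I_{\max}|+K|I^c|)\log (M)/(nN)}$ from Corollary 8 of \cite{TOP} and absorbed using (\ref{cond_Imax_KJ}), plus a row-local term $\max_k(\alpha_{i_k}/p)|\wh B_{jk}-B_{jk}|$ bounded by importing the row-wise displays (57)--(58) of the TOP supplement; the word-specific variance enters there through $\eta_{ij}$ and the inequality $\langle\bT_{a\cdot},\bPi_{j\cdot}\rangle\le\|A_{j\cdot}\|_\i\,\gamma_a/K$, which is precisely where the $\sqrt{\|A_{j\cdot}\|_\i}$ factor originates. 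You correctly diagnosed that a variance proxy proportional to $\Pi_{ji}$ (rather than its maximum) is the crux, but your plan defers exactly that step ("re-do the Bernstein-type concentration"), whereas the needed row-wise concentration already exists in \cite{TOP} and the actual work is recombining it with the normalization and conditioning bounds ($\|C^{-1}\|_{\i,1}\lesssim K$, $\gamma_k\asymp 1$, $\ua_I\asymp\oa_I$).

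A second concrete error: the anchor rows are not trivial. For $j\in I_k$ the sup-norm error is $|\wh A_{jk}-A_{jk}|$, which still requires estimating the anchor weight through $(D_X)_{jj}$ and the column normalization; in the paper this case is handled by applying the bound on $\|\wh B_{I_k}-B_{I_k}\|_1$ with $I_k=\{j\}$, and it yields the term $\|A_{j\cdot}\|_\i\sqrt{pK\log(M)/(nN)}$ — i.e. exactly the branch of the final rate carrying the $\sqrt{p\|A_{j\cdot}\|_\i}$ amplification. Declaring the anchor case trivial would silently drop one of the two regimes of the stated bound.
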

\begin{proof}
	The proof repeatedly uses the results and proofs in the supplement of \cite{TOP}. We only go through the major steps here and refer the reader to \cite{TOP} for detailed notation and formal statements. 
	
	We work on the event $\E := \E_1\cap \E_2\cap \E_2$ defined in page 8 of the supplement of \cite{TOP}. Recall that $\wh A = T^{-1}\sum_{i=1}^T \wh A^i$. It suffices to prove the desired result for any $i\in [T]$. We follow the arguments in the proof of Theorem 7 of \cite{TOP} and write $\wh A = \wh A^i$ for simplicity. 
	
	We first recall from Theorem 7 of \cite{TOP} that, by assuming the identity permutation without loss of generality, $\wh K = K$ and $\wh I_k = I_k$ for all $k\in [K]$ hold on $\E$.  As a result, we have $\wh L = L$ with $L = \{i_1, \ldots, i_K\}$ and $i_k \in I_k$ for each $k\in [K]$. 
	From page 28 of the supplement of \cite{TOP}, we have, for any $j\in [p]$ and $k\in [K]$,
	\begin{align*}
		|\wh A_{jk} - A_{jk}| &\le 
		{\left| \|\wh B_{\cdot k}\|_1 -  \|B_{\cdot k}\|_1\right|\over \|B_{\cdot k}\|_1}\wh A_{jk} + {|\wh B_{jk} - B_{jk}| \over \|B_{\cdot k}\|_1}\\
		&\le { \|\wh B_{\cdot k}- B_{\cdot k}\|_1\over \|B_{\cdot k}\|_1}\left(A_{jk} + |\wh A_{jk}-A_{jk}|\right) + {|\wh B_{jk} - B_{jk}| \over \|B_{\cdot k}\|_1}
	\end{align*}
	and $\|B_{\cdot k}\|_1 = p / \alpha_{i_k}$, where, following \cite{TOP}, we define  
	\begin{equation}\label{def_alpha_gamma}
		\alpha_j := p\max_{1\le k\le K} A_{jk}, \qquad {\gamma_k} := {K\over n}\sum_{i=1}^nW_{ki},\qquad \text{ for each }j\in[p], \ k\in[K].
	\end{equation}
	Since Corollary 8 of \cite{TOP} ensures that 
	\[
	\max_{k\in [K]}{\|\wh B_{\cdot k}- B_{\cdot k}\|_1\over \|B_{\cdot k}\|_1} \lesssim \sqrt{(|I_{\max}| + K|I^c|)\log (M)\over nN}
	\]
	with $|I_{\max}| = \max_k |I_k|,$ condition (\ref{cond_Imax_KJ}) implies 
	\[
	(1-c)|\wh A_{jk} - A_{jk}| \le  A_{jk} \sqrt{(|I_{\max}| + K|I^c|)\log (M)\over nN} + {\alpha_{i_k}\over p}|\wh B_{jk} - B_{jk}|,
	\]
	hence 
	\begin{equation}\label{disp_Ahatj_Aj}
		\|\wh A_{j\cdot} - A_{j\cdot}\|_\i \lesssim  \|A_{j\cdot}\|_\i \sqrt{(|I_{\max}| + K|I^c|)\log (M)\over nN} + \max_k{\alpha_{i_k}\over p}|\wh B_{jk} - B_{jk}|,
	\end{equation}
	It thus remains to bound the second term. We distinguish two cases: 
	
	(i) If $j\in I_k$ for some $k\in [K]$, then by using the fact that $|\wh B_{jk}-B_{jk}| = \|\wh B_{Ik} - B_{Ik}\|_1$ whenever $|I_k| = 1$, invoking the bound of $\|\wh B_{Ik}-B_{Ik}\|_1$ in display (58) of the supplement of \cite{TOP} with $I_k = \{j\}$ yields 
	\[
	|\wh B_{jk}-B_{jk}| ~ \lesssim ~ {\alpha_j \over \alpha_{i_k} \sqrt{\ua_I \gamma_k}} \sqrt{pK\log(M) \over nN} 
	\]
	where
	\[
	\ua_I = \min_{i\in I}\alpha_i,\qquad \oa_I = \max_{i\in I}\alpha_i.
	\]
	As a result, by using $\gamma_k \asymp 1$ 
	which is implied by condition (2) c) in Appendix \ref{app_sec_A_cond}, we have
	\[
	\max_k{\alpha_{i_k}\over p}|\wh B_{jk} - B_{jk}| ~ \lesssim ~ {\alpha_j \over  \sqrt{\ua_I \gamma_k}} \sqrt{K\log(M) \over npN} = \|A_{j\cdot}\|_\i \sqrt{pK\log (M)\over 
		nN} 
	\]
	where we also use $\ua_I\asymp \alpha_j$ for $j\in I$ from condition (1) b).
	
	(ii) If $j\in I^c$, then following the arguments in page 27 of the supplement of \cite{TOP}, one can deduce 
	\begin{align*}
		|\wh B_{jk}- B_{jk}|
		& \le \|\wh\omega_k\|_1 \|\wh \Theta_{jL} - \Theta_{jL}\|_\i + \|B_{j\cdot}\|_\i
		\left(
		\|\wh\Theta_{LL}\wh\omega_k - \be_k\|_1 + \|\wh\omega_k\|_1\|\wh\Theta_{LL}-\Theta_{LL}\|_{\i, 1}
		\right)\\
		&\le C_0\|\omega_k\|_1\max_{i\in L}\eta_{ij} + 
		2\|B_{j\cdot}\|_\i \|\omega_k\|_1 \lambda \\
		&\lesssim {p^2\over \alpha_{i_k}\ua_I}\|C^{-1}\|_{\i,1}\left(
		\max_{i\in L}\eta_{ij} + {p\over \ua_I}\|A_{j\cdot}\|_\i\max_{i\in L}\sum_{j\in L}\eta_{ij} 
		\right)\\
		&\lesssim {p^2K\over \alpha_{i_k}\ua_I}\left(
		\max_{i\in L}\eta_{ij} + {p\over \ua_I}\|A_{j\cdot}\|_\i \sqrt{\oa_I^3\overline{\gamma}\log M \over Knp^3N}
		\right)
	\end{align*}
	with $\overline{\gamma} = \max_k \gamma_k$,
	where in the penultimate step we have used $\|C^{-1}\|_{\i,1}\lesssim K$ and the bound for $\max_{i\in L}\sum_{j\in L}\eta_{ij}$  in the proof of Corollary 8 of \cite{TOP}. 
	Then by $\max_k \gamma_k \asymp 1$, $\oa_I\asymp \ua_I$, we obtain 
	\begin{align*}
		\max_k{\alpha_{i_k}\over p}|\wh B_{jk} - B_{jk}| ~ \lesssim ~ {pK \over \oa_I}
		\max_{i\in L}\eta_{ij} + \|A_{j\cdot}\|_\i \sqrt{p K\log M \over \ua_I nN}.
	\end{align*}
	Finally, to bound $\max_{i\in L}\eta_{ij}$, recalling from display (57) of the supplement of \cite{TOP}, we have, for any $i\in I_a$ and $a\in [K]$,
	\[
	\eta_{ij} ~ \lesssim ~\sqrt{{1\over n}\langle \bT_{a\cdot}, \bPi_{j\cdot}\rangle}\sqrt{\alpha_i (\alpha_i+\alpha_j)\log M\over np^2N} + {(\alpha_i + \alpha_j)\log M\over npN} +  \sqrt{(\alpha_i+\alpha_j)(\log M)^4\over npN^3}.
	\]
	Since 
	\[
	\langle \bT_{a\cdot}, \bPi_{j\cdot}\rangle = A_{j\cdot}^\T {1\over n}\bT\bT_{a\cdot}\le \|A_{j\cdot}\|_\i {1\over n}\sum_{t=1}^n\bT_{at} = \|A_{j\cdot}\|_\i {\gamma_a \over K}\lesssim {\|A_{j\cdot}\|_\i\over K},  
	\]
	we have
	\begin{align*}
		{pK \over \oa_I} \max_{i\in L}\eta_{ij} &~ \lesssim ~ \sqrt{\|A_{j\cdot}\|_\i\left(1 + {\alpha_j \over \ua_I}\right)}\sqrt{K\log (M)\over nN}
		+ \left(1 + {\alpha_j \over \ua_I}\right){K\log (M)\over nN}\\
		&\quad + \sqrt{\oa_I +\alpha_j \over \ua_I^2}\sqrt{pK^2\log^4(M) \over nN^3}.
	\end{align*}
	By using the same arguments in the proof of Lemma 13 of \cite{TOP}, one can show the last two terms are smaller in order than the first term under condition (3) in the Appendix \ref{app_sec_A_cond}. Therefore, we conclude 
	\[
	{pK \over \oa_I} \max_{i\in L}\eta_{ij} ~ \lesssim ~ \sqrt{\|A_{j\cdot}\|_\i\left(1 + {\alpha_j \over \ua_I}\right)}\sqrt{K\log (M)\over nN} \lesssim \|A_{j\cdot}\|_\i\sqrt{pK\log (M)\over (\alpha_j \wedge  \ua_I)nN}.
	\]
	Since condition (2) b) implies 
	\[
	1\le {1\over p}\sum_{i=1}^p \alpha_i \lesssim {1\over p}\left(|I^c| + |I|\right) \ua_I  = \ua_I
	\]
	and $\|A_{j\cdot}\|_\i = \alpha_j/ p$, we conclude
	\[
	\max_k{\alpha_{i_k}\over p}|\wh B_{jk} - B_{jk}| ~ \lesssim ~ \max\left\{
	\|A_{j\cdot}\|_\i \sqrt{pK\log (M)\over nN} ,\ \sqrt{\|A_{j\cdot}\|_\i {K\log (M)\over nN}}
	\right\}
	\]
	for any $j\in I^c$, which together with case (i), display (\ref{disp_Ahatj_Aj}) and the fact that $|I_{\max}|+K|I^c| \le pK$ completes the proof. 
\end{proof}

\section{Error bounds for $\mle - T_*$ in $\ell_2$ norm}\label{app_rate_ell_2}
In this section we state the results on $\|\mle - T_*\|_2$ with $\mle$ defined in (\ref{def_MLE}) for known $A$. 

Assume the conditions in Theorem \ref{thm_mle_fast}.  
The display (\ref{bd_IDelta_ell_2}) in the proof of Theorem \ref{thm_mle_fast} yields the following $\ell_2$ norm convergence rate of $\Tm - T^*$:
\begin{equation}\label{bd_ell_2}			
	\|\Tm - T^*\|_2 = \cO_\PP
	\left(\sigma^{-1}(I, s)\sqrt{K\over N}\right)
\end{equation}
where 
\[
\sigma^2(I, s) = \min_{S\subseteq [K], |S|\le s}\sup_{v\in \cC(S)} {v^\T I v  \over  \|v\|_2^2},
\qquad\textrm{with }\quad  
I = \sum_{j\in \oJ}{A_{j\cdot}A_{j\cdot}^\T \over \Pi_j}.
\]
On the event $\E_{supp}$,  (\ref{bd_ell_2}) could be improved to 
\begin{equation}\label{bd_ell_2_fast}			
	\|\Tm - T^*\|_2 = \cO_\PP
	\left(\sigma^{-1}(I, s)\sqrt{s\over N}\right)
\end{equation}
When $\sigma^{-1}(I, s)= \cO(1/\sqrt s)$, the rate in (\ref{bd_ell_2_fast}) is minimax optimal according to Theorem \ref{thm_minimax}. Indeed, since $\wh T - T_*\in \cC(s)$ for any $T_*\in \cT'(s)$ and $\wh T \in \Delta_K$, we have $ \|\wh T - T_*\|_1\le 2\|[\wh T - T_*]_{S_*}\|_1 \le 2\sqrt{s}\|\wh T - T_*\|_2$ with $S_* = \supp(T_*)$ and $|S_*| = s$. Consequently, Theorem \ref{thm_minimax} implies 
\[
\inf_{\wh T}\sup_{T_* \in \cT'(s)}\PP\left\{
\|\wh T - T_*\|_2 \ge 
c_0\sqrt{1\over N}
\right\} \ge c_1.
\]

\begin{remark}[Discussion on $\sigma(I, s)$]	
	To understand the magunitude of $\sigma(I, s)$, it is helpful to consider $s = K$, in which case $\sigma^2(I, K)$ is simply the smallest eigenvalue of $I$. We then have 
	\[
	\sigma^2(I, K) \le \min_{k}  \sum_{j\in \oJ}{A_{jk}^2 \over \Pi_j} \le \min_{k}  \sum_{j\in \oJ}{A_{jk}^2 \over A_{jk}T_k}  \le \max_k {1\over T_k} \le K,
	\]
	where the last step uses $\max_k T_k\ge 1/K$.  We also note that this upper bound is attainable  (in terms of rates), for instance, when all words are anchor words and the numbers of anchor words of all topics are the same order.  Immediately, when $\sigma(I, K)  \asymp \sqrt{K}$, (\ref{bd_ell_2}) yields 
	\[
	\|\mle - T^*\|_2 = \cO_\PP\left(
	\sqrt{1/ N}
	\right). 
	\]
	
	Next, we connect  $\sigma(I, s)$ to a quantity that is only related with $A$ and $s$. By (\ref{kappa_I}), we have 
	\[
	\sigma(I, s) \ge \kappa_2(A_{\oJ}, s)
	\]
	where 
	\[
	\kappa_2(A_{\oJ}, s) = \min_{S\subseteq [K], |S|\le s}\sup_{v\in \cC(S)} {\|A_{\oJ} v\|_1 \over \|v\|_2}.
	\]
	Since 
	$$
	{\kappa_2(A_{\oJ}, s) \over \sqrt s} \le  \kappa(A_{\oJ}, s)\le \kappa_2(A_{\oJ}, s),
	$$
	the ideal case is $\kappa_2(A_{\oJ}, s) \asymp \sqrt{s} ~ \kappa(A_{\oJ}, s) $, whence 
	\[
	\|\mle - T^*\|_2 = \cO_\PP\left(
	\kappa^{-1}(A_{\oJ}, s)\sqrt{1/ N}
	\right). 
	\]
\end{remark}

{\small 
	\setlength{\bibsep}{0.85pt}{
\bibliographystyle{ims}
\bibliography{ref}
}
}

\end{document}